\newtheorem{theorem}{Theorem}[section]
\newtheorem{proposition}[theorem]{Proposition}
\newtheorem{lemma}[theorem]{Lemma}
\newtheorem{corollary}[theorem]{Corollary}
\theoremstyle{definition}
\newtheorem{definition}[theorem]{Definition}
\newtheorem{example}[theorem]{Example}
\newtheorem{remark}[theorem]{Remark}
\newcommand{\B}{\mathcal{B}}
\newcommand{\T}{\mathcal{T}}
\newcommand{\Tt}{\overline{\mathcal{T}}}
\newcommand{\K}{\mathds{k}}
\newcommand{\BN}{\mathcal{B}}
\newcommand{\Pp}{\mathcal{P}}
\newcommand{\cL}{\mathcal{L}}
\newcommand{\cI}{\mathcal{I}}
\newcommand{\cF}{\mathcal{F}}
\newcommand{\bN}{\mathbb{N}}
\newcommand\id{\operatorname{id}}
\newcommand\Char{\operatorname{char}}
\newcommand\gr{\operatorname{gr}}
\newcommand\GK{\operatorname{GK}}
\newcommand\corad{\operatorname{corad}}
\newcommand{\Z}{\mathbb{Z}}
\newcommand{\G}{\mathbf{G}}
\newcommand{\st}{\operatorname{st}}
\newcommand{\X}{\langle X \rangle}
\theoremstyle{plain}
\newcounter{maint}
\def\BFdB{\mathcal{B}_{\operatorname{FdB}}}
\def\BFdBnc{\mathcal{B}_{\operatorname{FdB}}^{\operatorname{nc}}}
\def\HFdB{\mathcal{H}_{\operatorname{FdB}}}
\begin{document}
\thispagestyle{empty}
\title[Quotient Hopf algebras of the free bialgebra with PBW bases and GK-dimensions]
{Quotient Hopf algebras of the free bialgebra with PBW bases and GK-dimensions}

\author{
{Huan Jia $^{1,2}$},
{Naihong Hu $^{1}$}, 
{Rongchuan Xiong $^3$} and 
{Yinhuo Zhang $^{2,*}$}}

\address{$^{1}$ School of Mathematical Sciences, East China Normal University, Shanghai 200241, China}

\address{$^{2}$ Department of Mathematics and Statistics,
University of Hasselt, Universitaire Campus, 3590 Diepenbeek, Belgium}

\address{$^{3}$ Department of Mathematics, Changzhou University, Changzhou 213164, China}

\address{Huan Jia $^{1,2}$}
\email{huan.jia@uhasselt.be}

\address{Naihong Hu $^{1}$}
\email{nhhu@math.ecnu.edu.cn}

\address{Rongchuan Xiong $^3$}
\email{rcxiong@foxmail.com}

\address{Yinhuo Zhang $^{2,*}$}
\email{yinhuo.zhang@uhasselt.be}

\subjclass[2010]{16T05; 16T10; 16S10; 16E10}

\date{}

\maketitle

\begin{abstract}
Let $\K$ be a field. We study the free bialgebra $\T$  generated by the coalgebra $C=\K g\oplus \K h$ and its
quotient bialgebras (or Hopf algebras)  over $\K$. We show that the free noncommutative Fa\`a di Bruno bialgebra is a sub-bialgebra of $\T$, and the quotient bialgebra $\Tt:=\T/(E_{\alpha}|~\alpha(g)\ge 2)$ is an Ore extension of the well-known Fa\`a di Bruno bialgebra. The image of the free noncommutative Fa\`a di Bruno bialgebra in the quotient $\Tt$ gives a more reasonable non-commutative version of the commutative Fa\`a di Bruno bialgebra from the PBW basis point view. 
If $\Char\K=p>0$, we obtain a chain of quotient Hopf algebras of $\Tt$: $\Tt\twoheadrightarrow  \Tt_{n}\twoheadrightarrow \Tt_{n}'(p)\twoheadrightarrow \Tt_{n}(p)\twoheadrightarrow \Tt_{n}(p;d_{1}) 
\twoheadrightarrow\ldots \twoheadrightarrow \Tt_{n}(p;d_{j},d_{j-1},\ldots,d_{1})\twoheadrightarrow \ldots \twoheadrightarrow \Tt_{n}(p;d_{p-2},d_{p-3},\ldots,d_{1})$ with finite GK-dimensions.  Furthermore, we study the homological properties and the coradical filtrations of those quotient Hopf algebras. 
\vskip 3mm
\noindent {Keywords: \textit{pointed Hopf algebras, shuffle type polynomials, Lyndon-Shirshov basis, Fa\`a di Bruno Hopf algebra, GK-dimension.} }
\end{abstract}

\section*{Introduction}

Hopf algebras arise from many different aspects, e.g. groups, dg manifolds, Lie algebras, lattices, graphs, braided spaces etc. One of pure structure constructions of Hopf algebras is to generate a free Hopf algebra (or a bialgebra) by a coalgebra \cite{T1971}. Such a free Hopf algebra  is usually too big to have nice properties. So we consider its sub-Hopf algebras and quotient Hopf algebras.  In this paper, we construct some interesting quotient bialgebras and quotient Hopf algebras of  the free bialgebra  $\T$ generated by the coalgebra $C:=\K g \oplus \K h$ with $\Delta(g)=g\otimes g$ and $\Delta(h)=1 \otimes h + h \otimes g$. It is clear that the Sweelder algebra $H_4$ and the Taft algebras $T_n$ are among them.  But they are not the objectives in this paper.  In 1977 Radford  used the shuffle type polynomials (see \ref{def:shuffle-polynomials}) to construct several interesting quotient Hopf algberas of $\T$, of which some are infinite dimensional \cite[Proposition 4.7]{R1977} and some are finite dimensional.  In \cite{JZ2021} the authors used the Lyndon-Shirshov basis  \cite{CFL1958,S1958} to study the shuffle type polynomials, and obtained the non-commutative version of the binomial theorem.   

In this paper,  we  make use of  the Lyndon-Shirshov basis and the shuffle polynomials further to study the quotient Hopf algebras of $\T$.  We first show  that the so-called non-commutative Fa\`a di Bruno bialgebra $\BFdBnc$  is contained in the bialgebra $\T$, see Theorem \ref{thm:L-cong-FdB}. The free non-commutative  Fa\`a di Bruno Hopf algbera  was constructed by Brouder--Frabetti--Krattenthaler \cite{BFK2006} from the noncommutative formal diffeomorphisms, while the (commutative) Fa\`a di Bruno Hopf algebra was introduced in Doubilet \cite{D1974} and Joni--Rota \cite{JR1979} from the set partitions, as an important example of incidence Hopf algebras.  By considering  the Lyndon words of the form $\omega_{r}=g\underbrace{h\cdots h}_{r}$ for $r\ge 0$  and the  Lyndon-Shirshov basis  $\{E_{\omega_{r}}|~r\ge 0\}$, we found  a quotient bialgebra $\overline{\T}$  of $\T$, which is an Ore-extension of the Fa\`a di Bruno bialgebra $\BFdB$, see Theorem \ref{thm:Tt-Ore-extension-of-FdB}.   Moreover, an interesting phenomenon  occurs in $\overline{\T}$:
the image $\pi(\BFdBnc)$ of  the non-commutative Fa\`a di Bruno bialgebra $\BFdBnc$ in the bialgebra $\overline{\T}$ generated by the Lyndon words $\{\omega_r\}_{r\ge 0}$ has a PBW basis, which can be obtained by affixing the generators $\{\omega_r\}_{r\ge 1}$   to the PBW basis of $\BFdB$.  The two bialgebras $\pi(\BFdBnc)$ and $\BFdB$ intersect trivially  and generate a sub-bialgebra  $\mathcal{B}_F$ in $\Tt$ with a combined PBW basis from those of the two Fa\`a di Bruno bialgebras, see Theorem \ref{FdBnc+FdB}. Thus,  it is more reasonable to  view $\pi(\BFdBnc)$, instead of $\BFdBnc$,  as a suitable non-commutative version of the commutative Fa\`a di Bruno bialgebra $\BFdB$, see Remark \ref{rm2.26}.   

In Section \ref{Section 3} and Section \ref{Section 4}, we go further to investigate the quotient Hopf algebras of the bialgebra $\overline{\T}$ which have good homological properties.  Let  $\Char\K=p$, $p|n$, $\mathcal{E}_{gh} \neq 0$, $1\le j\le p-2$ and $d_{1}\le d_{2}\le \ldots \le d_{p-2}$.  We construct the following chain of quotient Hopf algebras of $\Tt$:
\begin{align*}
\Tt \stackrel{}\twoheadrightarrow \Tt_{n}\twoheadrightarrow \Tt_{n}'(p)\twoheadrightarrow \Tt_{n}(p) \twoheadrightarrow \Tt_{n}(p;d_{j},d_{j-1},\ldots,d_{1})\twoheadrightarrow \ldots \twoheadrightarrow \Tt_{n}(p;d_{p-2},d_{p-3},\ldots,d_{1}).
\end{align*}
They have the following properties:
\begin{itemize}
\item [(a)] $\Tt_{n}$ has subexponential growth (of infinite GK-dimension),
see Lemma~\ref{lem:Ttn-PBW};
\item [(b)] $\Tt_{n}'(p)$ is of GK-dimension $p-1$, see Lemma~\ref{lem:Ttnp'-PBW};
\item  [(c)] $\Tt_{n}(p)$ is of GK-dimension $p-2$, see Lemma~\ref{lem:Ttnp-PBW};
\item [(d)] $\Tt_{n}(p;d_{j},d_{j-1},\ldots d_{1})$ is of GK-dimension $p-2-j$, see Theorem~\ref{thm:Ttnpd-PBW};
\item [(e)] Let $j=p-2$ in part (d). Then $\Tt_{n}(p;d_{p-2},d_{p-3},\ldots, d_{1})$ is of dimension $np^{(1+\sum_{i=1}^{p-2}d_{i})}$, see Remark \ref{rmk:Ttnpd-f.d.} (i).
\end{itemize}
The Hopf algebras in $(b)$-$(e)$ are AS-Gorenstein, see Corollary \ref{cor:examples-AS-Gorenstein}.  Examples of AS-regular algebras can be found in Corollary \ref{cor:examples-AS-regular}.

In Section \ref{Section 5}, we study the coradical filtration and the related structures of the Hopf algebra $\Tt_{\pm 1}$, the Fa\`a di Bruno Hopf algebra $\HFdB$ and the Hopf algebra $\Tt_{p}(p;1,\ldots,1)$. 
As a byproduct, we show that  $\gr(\Tt_{\pm 1}^{cop})$ with $\Char\K=0$ and $\gr(\Tt_{p}(p;1,\ldots,1)^{cop})$ with $\Char\K=p$ both are the bosonizations of the Nichols algebras  $\BN(V_{1,2})$, where the braiding vector space $V_{1,2}$ appeared in \cite[Chpater 4]{AAH12021} and \cite[Section 4]{AAH22021}.

\section{Preliminaries}\label{Section 1}

Let $\K$ be a field. 
For a colagebra $C$ over $\K$, denote by $\G(C)$  the set of all grouplike elements of $C$, that is, $\G(C):=\{g\in\ C\mid \Delta(g)=g\otimes g,~\epsilon(g)=1\}$, and by $\Pp_{g,g'}(C)$ for $g,g'\in \G(C)$  the set of $(g,g')$-\textit{shew primitive} elements, that is, $\Pp_{g,g'}(C):=\{h\in C\mid \Delta_{C}(h)=g\otimes h+h\otimes g',\epsilon_{C}(h)=0\}$. In particular, the elements of $\Pp_{1,1}(C)$ are said to be \textit{primitive}. Write $\Pp(C):=\bigcup_{g,g'\in \G(C)}\Pp_{g,g'}(C)$.  The \textit{coradical} $\corad(C)$ of $C$ is the sum of all simple subcoalgebras of $C$. The coalgebra $C$ is said to be \textit{pointed} or  \textit{connected} if corad$(C)=\K\G(C)$ or $\corad(C)=\K$, respectively.

\subsection{Lyndon words, Lyndon-Shirhsov basis and shuffle type polynomials}\qquad

 First we recall some results of Lyndon words and Lyndon-Shirshov basis. We follow the definitions of \cite{L1997}. Suppose that $(X,\preceq)$ is a totally ordered set, and
$\langle X \rangle$ is the free monoid generated by $X$ with the lexicographic order induced by $\preceq$, see \cite[p.64]{L1997}. The empty word is denoted by $1_{X}$. In particular, write $\alpha\prec \beta$ if $\alpha\preceq \beta$, but $\alpha\neq \beta$ for $\alpha,\beta\in \langle X \rangle$. If $\alpha=x_{1}x_{2}\cdots x_{n},~ x_{i}\in X$, the \textit{length} of $\alpha$ is $n$ and is denoted by $|\alpha|$. For $y\in X$, denote by $\alpha(y):=\#\{x_{i}|~x_{i}=y,~1\le i\le n\}$. $\K\langle X\rangle$ is the free associative algebra generated by $X$.

\begin{definition}{\cite[p.64]{L1997}}\label{def:Lyndon-words}
A word $\alpha$ in $\langle X \rangle\setminus \{1\}$ is a \textit{Lyndon word} (or \textit{Lyndon-Shirshov word}) if $\alpha\in X$, or $\alpha=\beta\gamma$ such that $\alpha\prec \gamma\beta$ for $\beta,\gamma\in \langle X \rangle \setminus \{1\}$. The set of all Lyndon words is denoted by $\mathcal{L}$.
\end{definition}

\begin{example}
Let $(X,\preceq)=(\{g,h\},g\prec h)$. The list of Lyndon words is:
\begin{align*}
\mathcal{L}:=\{g,h,gh,ggh,ghh,gggh,gghh,ghhh,ggggh,ggghh,gghgh,gghhh,ghghh,ghhhh,\ldots\}.
\end{align*}
\end{example}

The factorization of a Lyndon word is not unique, see \cite[Propositions 5.1.2, 5.1.3]{L1997}. So we usually consider its standard factorization.  For a composed word $\alpha=\beta\gamma\in \mathcal{L}\setminus X$, $\beta,\gamma\in \mathcal{L}$,  the pair $(\beta,\gamma)$  is called the \textit{standard factorizaion} of $\alpha$ if $\gamma$ is of maximal length. Denote it by $\operatorname{st}(\alpha)$ or $(\alpha_{L},\alpha_{R})$.

\begin{lemma}{\cite[Theorem 5.3.1]{L1997}}\label{lem:Lyndon-Shirshov-basis}
Let $\mathfrak{L}(X)$ be the free Lie algebra over $\K$ generated by $X$. Define a map $E:\mathcal{L}\longrightarrow \mathfrak{L}(X)$ inductively by $E(x)=x$ if $x\in X$ and
\begin{align*}
E(\alpha)=[E(\beta),E(\gamma)]:=E(\beta)E(\gamma)-E(\gamma)E(\beta),
\end{align*}
if $\alpha\in \mathcal{L}\setminus X$ and $\operatorname{st}(\alpha)=(\beta,\gamma)$. Then $\mathfrak{L}(X)$ is a free $\K$-module with $E(\mathcal{L})$ as a basis, called \textit{Lyndon-Shirshov basis}. We  write  $E_{\alpha}$ for $E(\alpha)$  for  the convenience in the computations. 
\end{lemma}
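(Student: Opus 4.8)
The plan is to realize the abstractly-defined free Lie algebra $\mathfrak{L}(X)$ concretely inside the free associative algebra $\K\langle X\rangle$. By Witt's theorem, $\mathfrak{L}(X)$ is the Lie subalgebra of $(\K\langle X\rangle,[-,-])$ generated by $X$, with universal enveloping algebra $U(\mathfrak{L}(X))=\K\langle X\rangle$; under this identification every bracket $E_{\alpha}$ becomes a concrete $\Z$-linear combination of words, which I can compare against the canonical monomial basis $\X$ of $\K\langle X\rangle$. Since bracketing preserves multidegree, each $E_{\alpha}$ is homogeneous of multidegree $(\alpha(y))_{y\in X}$, so it suffices to argue in one fixed multidegree at a time; each such component is finite-dimensional even when $X$ is infinite. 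Both assertions — that $E(\mathcal{L})$ is linearly independent and that it spans — will then be read off from the position of $E_{\alpha}$ relative to the lexicographic order.

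The technical core is a \emph{triangularity} statement: for every $\alpha\in\mathcal{L}$,
\begin{equation*}
E_{\alpha}=\alpha+\sum_{\beta}c_{\beta}\,\beta,\qquad c_{\beta}\in\Z,
\end{equation*}
where the sum ranges over words $\beta$ of the same multidegree as $\alpha$ satisfying $\alpha\prec\beta$; in particular $\alpha$ is the $\prec$-least word occurring in $E_{\alpha}$, and it occurs with coefficient $1$. I would prove this by induction on $|\alpha|$. The base case $\alpha\in X$ is immediate from $E(x)=x$. For the inductive step, write $\operatorname{st}(\alpha)=(\beta,\gamma)$, so that $\alpha=\beta\gamma$ with $\beta,\gamma\in\mathcal{L}$ and $\beta\prec\gamma$, and expand $E_{\alpha}=E_{\beta}E_{\gamma}-E_{\gamma}E_{\beta}$ using the inductive descriptions of $E_{\beta}$ and $E_{\gamma}$. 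The $\prec$-least word of the product $E_{\beta}E_{\gamma}$ is the concatenation $\beta\gamma=\alpha$; the decisive input is the sequence of standard combinatorial lemmas relating the Lyndon factorization to the lexicographic order — that a Lyndon word is strictly smaller than each of its proper cyclic rotations (so in particular $\beta\gamma\prec\gamma\beta$), together with the monotonicity of concatenation on words of matched multidegree — which jointly guarantee that $\alpha$ survives with coefficient $1$ while every other monomial arising from either product is strictly $\succ\alpha$. Granting this, linear independence of $\{E_{\alpha}:\alpha\in\mathcal{L}\}$ is immediate: in a relation $\sum_{\alpha}\lambda_{\alpha}E_{\alpha}=0$ of fixed multidegree with some $\lambda_{\alpha}\neq0$, let $\alpha_{0}$ be the $\prec$-least index with $\lambda_{\alpha_{0}}\neq0$; any other contributing $E_{\alpha}$ involves only words $\succeq\alpha\succ\alpha_{0}$, so the coefficient of $\alpha_{0}$ in the relation equals $\lambda_{\alpha_{0}}$, forcing $\lambda_{\alpha_{0}}=0$, a contradiction.

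For spanning I would avoid a direct closure argument and instead count dimensions. Fix a multidegree $\mathbf{n}=(n_{y})_{y\in X}$ and put $n=\sum_{y}n_{y}$. On one hand, the dimension of the multidegree-$\mathbf{n}$ component of $\mathfrak{L}(X)$ is given by the multigraded Witt formula
\begin{equation*}
\frac{1}{n}\sum_{d\,\mid\,\gcd_{y}n_{y}}\mu(d)\,\frac{(n/d)!}{\prod_{y}(n_{y}/d)!}.
\end{equation*}
On the other hand, the classical necklace-counting argument shows that this same number equals the number of Lyndon words of multidegree $\mathbf{n}$, i.e.\ the number of $\alpha\in\mathcal{L}$ with $(\alpha(y))_{y\in X}=\mathbf{n}$. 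Since the corresponding elements $E_{\alpha}$ are linearly independent by the previous step and their number matches the dimension, they form a basis of that homogeneous component; assembling over all multidegrees shows that $E(\mathcal{L})$ is a $\K$-basis of $\mathfrak{L}(X)$.

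The main obstacle is the triangularity statement of the second step. Although intuitively clear, its rigorous verification rests on several nonobvious facts about how the standard factorization of a Lyndon word interacts with the lexicographic order, and on controlling the many cross terms produced when the inductive expansions of $E_{\beta}$ and $E_{\gamma}$ are multiplied out and the bracket $E_{\gamma}E_{\beta}$ is subtracted. Everything else — the embedding via Witt's theorem, the bookkeeping that turns triangularity into independence, and the Witt/necklace dimension count — is routine once the triangularity lemma is in hand.
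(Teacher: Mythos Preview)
The paper does not prove this lemma; it simply records it as \cite[Theorem 5.3.1]{L1997} and moves on. There is therefore no proof in the paper to compare against --- the statement is imported wholesale from Lothaire.

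That said, your outline is one of the standard correct routes. The triangularity claim (that $E_{\alpha}=\alpha+\text{lex-larger words}$ with leading coefficient $1$) is exactly the key lemma in Lothaire's own treatment, and your independence argument from it is the usual one. Where you diverge from the textbook proof is in the spanning step: Lothaire proves spanning directly by induction, showing via Jacobi and the standard factorization that every bracket $[E_{\alpha},E_{\beta}]$ with $\alpha\prec\beta$ in $\mathcal{L}$ rewrites as a $\Z$-combination of $E_{\gamma}$'s (this is essentially the content of the paper's Lemma~\ref{lem:commutators}). That approach works over any commutative ring. Your dimension-count via the Witt formula is valid over a field, but carries a circularity risk: several modern expositions derive the multigraded Witt formula \emph{from} the Lyndon--Shirshov basis, so you must be sure the version you invoke is proved independently (Witt's original PBW-plus-M\"obius argument is). If you want the result over a general $\K$ as in the paper, the direct spanning argument is cleaner.
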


\begin{example}
Let $(X,\preceq)=(\{g,h\},g\prec h)$. Then 
\begin{align*}
E_{g} &=g, \qquad E_{h}=h, \\
E_{gh} &=[E_{g},E_{h}]=gh-hg,\\
E_{ggh}&=[E_{g},E_{gh}]=[g,[g,h]], \qquad E_{ghh}=[E_{gh},E_{h}]=[[g,h],h],\\
E_{gggh}&=[E_{g},E_{ggh}]=[g,[g,[g,h]]], \qquad E_{gghh}=[E_{g},E_{ghh}]=[g,[[g,h],h]],\\
E_{ghhh}& =[E_{ghh},E_{h}]=[[[g,h],h],h],\\
E_{ggggh} &= [E_{g},E_{gggh}]=[g,[g,[g,[g,h]]]], \qquad E_{ggghh} = [E_{g},E_{gghh}]=[g,[g,[[g,h],h]]],\\
E_{gghgh} &=[E_{ggh},E_{gh}]=[[g,[g,h]],[g,h]], \qquad E_{gghhh}=[E_{g},E_{ghhh}]=[g,[[[g,h],h],h]],\\
E_{ghghh} &=[E_{gh},E_{ghh}]=[[g,h],[[g,h],h]], \qquad E_{ghhhh}=[E_{ghhh},E_{h}]=[[[[g,h],h],h],h].
\end{align*}
Let $\omega_{r}:=g\underbrace{h\cdots h}_{r}$ for $r\ge 1$. Then
\begin{align*}
E_{\omega_{r}}=[E_{\omega_{r-1}},E_{h}]=[[[[g,\underbrace{h],h],\ldots,h}_{r}].
\end{align*}
\end{example}

\begin{remark}\label{rmk:PBW-of-free-algebra}
By \cite[Corollary 5.3.9]{L1997}, $\K\langle X\rangle$ is a free $\K$-module with a $\operatorname{PBW}$ basis
$\{E_{\alpha_{1}}^{n_{\alpha_{1}}}E_{\alpha_{2}}^{n_{\alpha_{2}}}\cdots E_{\alpha_{m}}^{n_{\alpha_{m}}}|\\~\alpha_{1}\succ \alpha_{2}\succ \ldots \succ\alpha_{m}, ~\alpha_{i}\in \mathcal{L},~n_{\alpha_{i}}\ge 0,m\ge 1\}$.
In order to use the Diamond Lemma \cite{B1978}, we define a reduction order $\prec_{red}$ for the Lyndon-Shirshov basis: for $\alpha,\beta\in \mathcal{L}$, $E_{\beta}\prec_{red} E_{\alpha}$ if $\beta\succ \alpha$.
\end{remark}

\begin{lemma}{\cite[Proposition 5.1.4]{L1997}}\label{lem:condition-alpha-zeta-Lyndon-word}
Let $\alpha\in \mathcal{L}$ and $\operatorname{st}(\alpha)=(\alpha_{L},\alpha_{R})$. Then for any $\zeta\in \mathcal{L}$ such that $\alpha\prec \zeta$, the pair $(\alpha,\zeta)=\operatorname{st}(\alpha\zeta)$ if and only if $\alpha_{R}\succeq \zeta$.
\end{lemma}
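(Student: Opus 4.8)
The plan is to reduce everything to two standard facts about Lyndon words, both available from \cite{L1997}: first, that the concatenation $uv$ of two Lyndon words with $u\prec v$ is again Lyndon; and second, that the right factor $\alpha_R$ of the standard factorization of a Lyndon word $\alpha$ is precisely the lexicographically smallest proper suffix of $\alpha$, so that every nonempty proper suffix $t$ of $\alpha$ satisfies $t\succeq\alpha_R$. I will also use the defining characterization that a word $w$ is Lyndon if and only if $w\prec s$ for every nonempty proper suffix $s$ of $w$. Since $\alpha\prec\zeta$, the first fact shows $\alpha\zeta\in\mathcal{L}$, so $\operatorname{st}(\alpha\zeta)$ is defined; by definition its right factor is the \emph{longest} Lyndon suffix of $\alpha\zeta$ admitting a Lyndon complementary prefix.

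For the forward implication I would argue by contraposition. Assuming $\alpha_R\prec\zeta$, the first fact gives that $\alpha_R\zeta$ is Lyndon; since $\alpha_R$ is a proper suffix of $\alpha$ with complementary prefix $\alpha_L\in\mathcal{L}$, the pair $(\alpha_L,\alpha_R\zeta)$ is a factorization of $\alpha\zeta$ into two Lyndon words whose right factor is strictly longer than $\zeta$. This contradicts the maximality of the right factor in $(\alpha,\zeta)=\operatorname{st}(\alpha\zeta)$, whence $\alpha_R\succeq\zeta$.

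For the converse I would assume $\alpha_R\succeq\zeta$ and show that $\zeta$ is in fact the longest Lyndon \emph{suffix} of $\alpha\zeta$ of any kind; since $\alpha\in\mathcal{L}$ supplies a Lyndon complement, this forces $\operatorname{st}(\alpha\zeta)=(\alpha,\zeta)$. Any proper suffix of $\alpha\zeta$ strictly longer than $\zeta$ has the form $t\zeta$ with $t$ a nonempty proper suffix of $\alpha$; by the second fact $t\succeq\alpha_R\succeq\zeta$. A short case analysis of the lexicographic comparison, treating separately the case that $\zeta$ is a prefix of $t$ (in particular $t=\zeta$) and the case of a genuine first disagreement, then shows $t\zeta\succ\zeta$ in every case. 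As $\zeta$ is a proper suffix of $t\zeta$, this violates the suffix characterization of Lyndon words, so $t\zeta\notin\mathcal{L}$. Hence no proper suffix longer than $\zeta$ is Lyndon, which completes the argument.

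The routine but delicate point will be the lexicographic comparison $t\zeta\succ\zeta$ under $t\succeq\zeta$: one must handle carefully the prefix cases, where the longer word automatically dominates, versus a first-letter disagreement, and keep straight that the minimality of $\alpha_R$ among proper suffixes of $\alpha$ is exactly what converts the hypothesis $\alpha_R\succeq\zeta$ into the inequality $t\succeq\zeta$ for \emph{every} competing suffix $t$. Invoking, or briefly re-deriving, the fact that $\alpha_R$ is the smallest proper suffix is thus the main structural input; everything else is bookkeeping on words.
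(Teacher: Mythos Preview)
The paper does not prove this lemma; it simply cites \cite[Proposition 5.1.4]{L1997}. Your proposal is a correct reconstruction of the standard argument and uses exactly the ingredients one expects: the concatenation lemma for Lyndon words, the characterization of $\alpha_R$ as the lexicographically least proper suffix, and the suffix criterion for Lyndon words. Both directions are handled cleanly, and the case analysis for $t\zeta\succ\zeta$ is sound (the only cases compatible with $t\succeq\zeta$ are $t=\zeta$, a genuine first disagreement, or $\zeta$ a proper prefix of $t$, and each gives $t\zeta\succ\zeta$). There is nothing to compare against in the paper itself, and your argument matches the classical proof in Lothaire.
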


The following commutator lemma concerning Lyndon-Shirshov basis is the combination of Lothaire {\cite[Lemma 5.5.3]{L1997}}, Kharchenko {\cite[Lemma 6]{K1999}} and Rosso {\cite[Theorem 1]{R1999}}.

\begin{lemma}\label{lem:commutators}
Let $(X,\preceq)$ be a totally odered set, $\mathcal{L}$ be the set of Lyndon words in $\langle X \rangle$, and $\alpha,\beta\in \mathcal{L}$, $\alpha\prec \beta$, and $\operatorname{st}(\alpha)=(\alpha_{L},\alpha_{R})$ if $\alpha\in \mathcal{L}\setminus X$. Then
\begin{itemize}
\item [(a)] If $\alpha\in X$ or $\alpha_{R}\succeq \beta$, then $[E_{\alpha}, E_{\beta}]=E_{\alpha\beta}$;
\item [(b)] If $\alpha_{R} \prec \beta$,
then there is a set $\Gamma \subseteq \mathcal{L}$ such that $\alpha\beta\in \Gamma$ and
\begin{align}
[E_{\alpha}, E_{\beta}]=\sum_{\gamma\in \Gamma}k_{\gamma}E_{\gamma},
\end{align}
where $k_{\alpha\beta}\neq 0$, for every $\gamma\in \Gamma$, $\gamma$ satisfies: $k_{\gamma}\in \Z$, $\alpha\beta\preceq \gamma\prec \beta$ and $ \gamma(x)=\alpha(x)+\beta(x)~for~all~x\in X$.
\end{itemize}
\end{lemma}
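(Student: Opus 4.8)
The plan is to prove the two parts by different mechanisms: part (a) is a direct unfolding of the inductive definition of the Lyndon--Shirshov basis, while part (b) splits into a cheap homogeneity argument for the multidegree condition, a triangularity argument for the lower bound together with $k_{\alpha\beta}\neq 0$, and a delicate induction via the Jacobi identity for the upper bound $\gamma\prec\beta$.

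For part (a), I would first invoke the standard fact that $\alpha,\beta\in\mathcal{L}$ with $\alpha\prec\beta$ forces $\alpha\beta\in\mathcal{L}$ (\cite[Prop.~5.1.3]{L1997}). It then suffices to show $\operatorname{st}(\alpha\beta)=(\alpha,\beta)$, since $E_{\alpha\beta}=[E_\alpha,E_\beta]$ would then be exactly the defining recursion of Lemma~\ref{lem:Lyndon-Shirshov-basis}. If $\alpha\in X$, then $\beta$ is the longest proper suffix of $\alpha\beta$ and is Lyndon, hence it is the right standard factor and $\operatorname{st}(\alpha\beta)=(\alpha,\beta)$. If $\alpha\in\mathcal{L}\setminus X$, the identity $\operatorname{st}(\alpha\beta)=(\alpha,\beta)$ is precisely Lemma~\ref{lem:condition-alpha-zeta-Lyndon-word} applied with $\zeta=\beta$ under the hypothesis $\alpha_R\succeq\beta$.

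For part (b), the multidegree condition comes for free: $[E_\alpha,E_\beta]$ is homogeneous of multidegree $\deg\alpha+\deg\beta$ in the graded Lie algebra $\mathfrak{L}(X)$, and since $E(\mathcal{L})$ is a homogeneous basis, only $\gamma$ with $\gamma(x)=\alpha(x)+\beta(x)$ for all $x$ (in particular $|\gamma|=|\alpha|+|\beta|$) can occur. For the lower bound and the nonvanishing of $k_{\alpha\beta}$ I would use the triangularity of the basis in $\K\langle X\rangle$ (\cite[Thm.~5.3.1]{L1997}): each $E_\gamma$ equals $\gamma$ plus a $\Z$-combination of lexicographically \emph{larger} words of the same multidegree, so $\gamma$ is the lex-minimal word of $E_\gamma$, with coefficient $1$. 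Writing $[E_\alpha,E_\beta]=E_\alpha E_\beta-E_\beta E_\alpha$, the minimal word of $E_\alpha E_\beta$ is $\alpha\beta$ and that of $E_\beta E_\alpha$ is $\beta\alpha$, each with coefficient $1$; since $\alpha\prec\beta$ and $|\alpha\beta|=|\beta\alpha|$ force $\alpha\beta\prec\beta\alpha$, the minimal word of the commutator is $\alpha\beta$ with coefficient $+1$. Minimality then gives $\alpha\beta\preceq\gamma$ for every occurring Lyndon word $\gamma$, and since only $E_{\alpha\beta}$ can contribute the word $\alpha\beta$, matching coefficients yields $k_{\alpha\beta}=1\neq 0$.

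The hard part will be the upper bound $\gamma\prec\beta$. Here I would argue by induction, peeling off the standard factorization $E_\alpha=[E_{\alpha_L},E_{\alpha_R}]$ and using the Jacobi identity
\[
[E_\alpha,E_\beta]=[E_{\alpha_L},[E_{\alpha_R},E_\beta]]-[E_{\alpha_R},[E_{\alpha_L},E_\beta]].
\]
Since $\alpha_L\prec\alpha_R\prec\beta$, the inner brackets $[E_{\alpha_R},E_\beta]$ and $[E_{\alpha_L},E_\beta]$ fall under case (a) or, inductively, under case (b), expanding into sums of $E_\eta$ and $E_\delta$ with $\eta,\delta\prec\beta$; substituting and re-bracketing reduces everything to commutators $[E_{\alpha_L},E_\eta]$ and $[E_{\alpha_R},E_\delta]$ whose entries are Lyndon words strictly below $\beta$, so a further use of the hypothesis keeps the resulting Lyndon words $\prec\beta$. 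The main obstacle, and the point demanding genuine care, is the bookkeeping of this recursion: the total length $|\alpha|+|\beta|$ need not decrease on the boundary terms (for instance $[E_{\alpha_L},E_{\alpha_R\beta}]$ has the same length as $[E_\alpha,E_\beta]$), so the induction must be run on a finer well-founded measure — such as the length of the first entry — with the two entries flipped via $[E_a,E_b]=-[E_b,E_a]$ whenever the intermediate Lyndon words emerge in the wrong order, and with a case split according to whether $\alpha_R\succ\delta$, $\alpha_R=\delta$ or $\alpha_R\prec\delta$. This is exactly where the combined input of Lothaire \cite[Lemma~5.5.3]{L1997}, Kharchenko \cite[Lemma~6]{K1999} and Rosso \cite[Theorem~1]{R1999} is needed to close all cases and confirm $\alpha\beta\preceq\gamma\prec\beta$.
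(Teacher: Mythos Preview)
The paper does not give its own proof of this lemma, only citing Lothaire, Kharchenko, and Rosso. Your outline is correct and follows the standard arguments in those references: part~(a) via Lemma~\ref{lem:condition-alpha-zeta-Lyndon-word}, and part~(b) via homogeneity for the multidegree, triangularity for the lower bound and $k_{\alpha\beta}=1$, and the Jacobi-identity induction (with the finer well-founded measure you correctly identify as necessary) for the upper bound $\gamma\prec\beta$.
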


\begin{example}
Let $(X,\preceq)=(\{g,h\},g\prec h)$. 
Note that $\st(ggh)=(g,gh)$, $\st(gggh)=(g,ggh)$ and $ggh\prec gh\prec h$. Thus 
\begin{align*}
[E_{ggh},E_{h}] &=[[E_{g},E_{gh}],E_{h}]=[[E_{g},E_{h}],E_{gh}]+[E_{g},[E_{gh},E_{h}]]=[E_{g},E_{ghh}]=E_{gghh},\\
[E_{gggh},E_{h}] &=[[E_{g},E_{ggh}],E_{h}]=[[E_{g},E_{h}],E_{ggh}]+[E_{g},[E_{ggh},E_{h}]]\\
&=-[E_{ggh},E_{gh}]+[E_{g},E_{gghh}]=E_{ggghh}-E_{gghgh},\\
[E_{gghh},E_{h}] &=[[E_{g},E_{ghh}],E_{h}]=[[E_{g},E_{h}],E_{ghh}]+[E_{g},[E_{ghh},E_{h}]]\\
&=[E_{gh},E_{ghh}]+[E_{g},E_{ghhh}]=E_{ghghh}+E_{gghhh}.
\end{align*}
\end{example}
For more details we refer to \cite[Section 1.2]{JZ2021}.

\begin{definition}\label{def:shuffle-polynomials}
Let $X=\{g,h\}$.
Then in $\K\langle X \rangle$,
\begin{align*}
(g+h)^n=\sum_{i=0}^{n}\mathcal{SH}_{k,n-k}(h,g),
\end{align*}
where $\mathcal{SH}_{k,n-k}(h,g)$ is the sum of words $\alpha$ in $(g+h)^{n}$ such that $\alpha(h)=k$ and $\alpha(g)=n-k$, that is, $\mathcal{SH}_{k,n-k}(h,g)=h^{k}\shuffle g^{n-k}$, where $\shuffle$ is the shuffle product (see \cite{R1979}). We call it the $(k,n-k)$-\textit{shuffle type polynomials}.
\end{definition}

\begin{remark}
(i) By definition, $\mathcal{SH}_{i,j}(h,g)$ has the recursion (see also \cite[Section 4.2]{R1977}):
\begin{align}
\label{formula 2}
\mathcal{SH}_{i,j}(h,g)&=h\mathcal{SH}_{i-1,j}(h,g)+g\mathcal{SH}_{i,j-1}(h,g),\\
\label{formula 3}
\mathcal{SH}_{i,j}(h,g)&=\mathcal{SH}_{i-1,j}(h,g)h+\mathcal{SH}_{i,j-1}(h,g)g.
\end{align}
(ii) From Remark \ref{rmk:PBW-of-free-algebra}, the shuffle type polynomials can be written as  linear combinations of products of the Lyndon-Shirshov basis in the lexicographic order, see \cite[Section 2]{JZ2021} for the details.
\end{remark}

\subsection{Ore extension and GK-dimension}\qquad

 We follow the definitions of \cite{KL2000,MR2001,BOZ2015}.
\begin{definition}{\cite[Chapter 1.2.3, p.15]{MR2001}}
Let $S$ be a ring and $R$ a subring of $S$. Suppose that
\begin{enumerate}
\item [(a)] $\sigma$ is a endmorphism of $R$;
\item [(b)]$\delta$ is a $\sigma$-derivation of $R$, that is, for all $a,b\in R$,
\begin{align*}
\delta(ab) = \delta(a)\sigma(b) + a\delta(b);
\end{align*}
\item [(c)] $x\in S\setminus R$.
\end{enumerate}
A ring extension $R\subseteq S$ is called an \textit{Ore extension} of $R$ if $S=R[x;\sigma,\delta]$  freely generated over $R$ by an element $x$ subject to the relation:
\begin{align*}
ax=x\sigma(a)+\delta(a), \qquad \forall a\in R,
\end{align*}
such that $R[x;\sigma,\delta]=\{\sum_{i}x^{i}a_{i}|~\{a_{i}\}\subseteq R ~\text{has finite support}\}$. In case $\sigma=id$ and $\delta\neq 0$, then $R[x;id,\delta]$ is written as $R[x;\delta]$; if $\delta=0$, then written as $R[x;\sigma]$.
\end{definition}

\begin{definition}{\cite[Section 2.1]{BOZ2015}}
Let $H$ be a Hopf algebra over $\K$ and $R$ a Hopf subalgebra of $H$. $H$ is called a \textit{Hopf Ore extension} (or \textit{HOE}) of $R$ if $H=R[x;\sigma,\delta]$, $\sigma$ is an automorphism and there exist $a,b\in R$ and $v,w\in R\otimes R$ such that
\begin{align*}
\Delta_{H}(x)=a\otimes x+x\otimes b+v(x\otimes x)+w.
\end{align*}
\end{definition}

\begin{definition}{\cite{KL2000}}
Let $A$ be a finitely generated algebra over $\K$. Suppose that $V$ is a finite dimensional generating subspace of $A$, that is,
$$
A=\bigcup_{n \in \mathbb{N}} V_{n},\qquad \text { where } V_{n}=\K+V+V^{2}+\ldots+V^{n}.
$$
Denote by $d_{V}(n)$ the function $\text{dim}_{\K}(V_{n})$. The \textit{Gelfand-Kirillov dimension} (or \textit{GK-dimension}) of $A$ is defined as
\begin{align*}
\text{GKdim}(A)=\overline{\lim_{n\to \infty}} \text{~log}_{n}(d_{V}(n)).
\end{align*}
If a $\K$-algebra $B$ is not finitely generated, then we define the GK-dimension of $B$ as
\begin{align*}
\operatorname{GKdim}(B)=\sup_{A}\{\operatorname{GKdim}(A) \mid A \subseteq B, A \text { finitely generated }\}.
\end{align*}
\end{definition}

Note that the GK-dimension of a finitely generated algebra $A$ is independent of the particular choice of $V$, see \cite[Lemma 1]{KL2000}. So we usually choose $V$ such that $1\in V$. Then $V_{n}=V^{n}$.

\begin{lemma}{\cite[Lemma 3.5]{KL2000}}\label{lem:GKdim-Ore-extension}
Let $A$ be a $\K$-algebra with a derivation $\delta$ such that each finite dimensional subspace of $A$ is contained in a $\delta$-stable finitely generated subalgebra of $A$. Then $\operatorname{GKdim}(A[x;\delta])=\operatorname{GKdim}(A)+1$.
\end{lemma}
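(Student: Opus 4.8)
The plan is to prove the two inequalities $\GKdim(A[x;\delta])\le \GKdim(A)+1$ and $\GKdim(A[x;\delta])\ge \GKdim(A)+1$, after first reducing to the case where $A$ is finitely generated. Write $B:=A[x;\delta]$, so that $B=\bigoplus_{i\ge 0}x^{i}A$ is free as a right $A$-module and carries the commutation rule $ax=xa+\delta(a)$. For the reduction, observe that any finitely generated subalgebra $B'\subseteq B$ is generated by finitely many elements $\sum_{i}x^{i}a_{i}$; the finitely many coefficients $a_{i}$ span a finite-dimensional subspace of $A$, which by hypothesis lies in a $\delta$-stable finitely generated subalgebra $A_{0}\subseteq A$. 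Then $B'\subseteq A_{0}[x;\delta]$, and since $A_{0}[x;\delta]$ is itself finitely generated (by a generating set of $A_{0}$ together with $x$), we obtain $\GKdim(B)=\sup_{A_{0}}\GKdim(A_{0}[x;\delta])$, the supremum taken over $\delta$-stable finitely generated subalgebras $A_{0}$. As these are cofinal among all finitely generated subalgebras of $A$, it suffices to prove the equality when $A$ itself is finitely generated, and then pass to the supremum using $\sup_{A_{0}}(\GKdim(A_{0})+1)=\GKdim(A)+1$.

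So assume $A$ is generated by a finite-dimensional subspace $V$ with $1\in V$. Since $\delta(V)$ is finite-dimensional and $A=\bigcup_{m}V^{m}$, there is an integer $c$ with $\delta(V)\subseteq V^{c}$, and a Leibniz induction gives $\delta(V^{m})\subseteq V^{m+c-1}$ for all $m$. For the upper bound I would assign $x$ the weight $c$ and $V$ the weight $1$, and define $F_{m}B:=\sum_{ci+d\le m}x^{i}V^{d}$. The key point is that this is an algebra filtration: using $ax=xa+\delta(a)$ and $\delta(V^{d})\subseteq V^{d+c-1}$, one checks by induction that $a\,x^{i'}\in\sum_{s=0}^{i'}x^{i'-s}V^{d+s(c-1)}$ for $a\in V^{d}$, and every term on the right has weight $ci'+d-s\le ci'+d$; hence $F_{m}B\cdot F_{m'}B\subseteq F_{m+m'}B$. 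Since $W:=V+\K x\subseteq F_{c}B$, we get $W^{n}\subseteq F_{cn}B$ and therefore $\dim_{\K}W^{n}\le\sum_{i=0}^{n}\dim_{\K}V^{cn-ci}\le(n+1)\dim_{\K}V^{cn}$. Taking $\limsup_{n}\log_{n}(\cdot)$ and using $\log_{n}(n+1)\to 1$ together with $\limsup_{n}\log_{n}\dim_{\K}V^{cn}\le\GKdim(A)$ yields $\GKdim(B)\le\GKdim(A)+1$.

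For the matching lower bound I would exploit the freeness of $B$ over $A$: the subspaces $x^{i}V^{n-i}$, $0\le i\le n$, all lie in $W^{n}$ and are in direct sum, so $\dim_{\K}W^{n}\ge\sum_{i=0}^{n}\dim_{\K}V^{n-i}=\sum_{k=0}^{n}\dim_{\K}V^{k}$. Because $1\in V$ the function $k\mapsto\dim_{\K}V^{k}$ is nondecreasing, so for a subsequence realizing the $\limsup$ defining $d:=\GKdim(A)$ one has $\dim_{\K}V^{n_{j}}\gtrsim n_{j}^{\,d-\epsilon}$, whence $\sum_{k=0}^{2n_{j}}\dim_{\K}V^{k}\ge(n_{j}+1)\dim_{\K}V^{n_{j}}\gtrsim n_{j}^{\,d+1-\epsilon}$; this forces $\GKdim(B)\ge d+1$. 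Combining the two inequalities settles the finitely generated case, and the reduction above completes the proof. When $\GKdim(A)=\infty$ the inclusion $A\subseteq B$ already gives $\GKdim(B)=\infty$, so the identity holds trivially.

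The main obstacle is the upper bound in the finitely generated case: a naive degree count fails because the relation $ax=xa+\delta(a)$ can raise the $A$-degree through $\delta$. The crucial device is the choice of weight $c$ for $x$ — large enough that each correction $\delta(a)$ has strictly smaller weight than $ax$ — which makes the filtration multiplicative and keeps the growth of $\dim_{\K}W^{n}$ controlled by that of $\dim_{\K}V^{cn}$. The reduction step is what lets the hypothesis on $\delta$-stable finitely generated subalgebras do its work, converting an infinitely generated $A$ into a supremum of tractable finitely generated pieces.
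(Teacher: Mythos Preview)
The paper does not prove this lemma; it is simply quoted from \cite[Lemma 3.5]{KL2000} and used as a black box. Your argument is correct and is essentially the proof given in that reference: reduce to the finitely generated case via the $\delta$-stability hypothesis (which is exactly what that hypothesis is for), then handle the upper bound by assigning $x$ the weight $c$ so that the filtration $F_{m}B=\sum_{ci+d\le m}x^{i}V^{d}$ is multiplicative, and the lower bound via the direct-sum decomposition $B=\bigoplus_{i}x^{i}A$.
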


\begin{lemma}{\cite[Lemma 4.3]{KL2000}}\label{lem:GKdim=}
Let $A$ be a commutative $\K$-algebra with a subalgebra $B$ such that $A$ is finitely generated as a $B$-module. Then $\operatorname{GKdim}(A)=\operatorname{GKdim}(B)$.
\end{lemma}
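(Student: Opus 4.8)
The plan is to prove the two inequalities $\GKdim(B)\le \GKdim(A)$ and $\GKdim(A)\le \GKdim(B)$ separately. The first is immediate from monotonicity under subalgebra inclusion: since $B\subseteq A$, every finitely generated subalgebra of $B$ is also a finitely generated subalgebra of $A$, so the supremum defining $\GKdim(B)$ is dominated by the one defining $\GKdim(A)$ (and if $B$ or $A$ happens to be finitely generated, it lies in its own supremum set). The whole content is therefore the reverse inequality $\GKdim(A)\le \GKdim(B)$.

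I would first reduce to the finitely generated setting. Fix module generators $A=\sum_{i=1}^{m}Ba_{i}$ with $a_{1}=1$. Given any finitely generated subalgebra $A'\subseteq A$, say generated by $b_{1},\dots,b_{k}$, write each $b_{j}=\sum_{i}c_{ji}a_{i}$ and each product $a_{i}a_{i'}=\sum_{l}d_{ii'l}a_{l}$ with $c_{ji},d_{ii'l}\in B$. Let $B_{1}\subseteq B$ be the (commutative, finitely generated) subalgebra generated by the finitely many elements $c_{ji}$ and $d_{ii'l}$, and set $M:=\sum_{i}B_{1}a_{i}$. Using commutativity of $A$ together with the relations $a_{i}a_{i'}=\sum_{l}d_{ii'l}a_{l}$, one checks that $M$ is a subalgebra of $A$ containing $A'$, that $B_{1}\subseteq M$, and that $M$ is finitely generated as a $B_{1}$-module. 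Since $\GKdim(A')\le \GKdim(M)$ and $\GKdim(B_{1})\le \GKdim(B)$, it then suffices to prove $\GKdim(M)=\GKdim(B_{1})$; taking the supremum over all such $A'$ yields $\GKdim(A)\le \GKdim(B)$.

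This reduces everything to the following finitely generated statement, which is the analytic core: if $B_{1}$ is a finitely generated commutative algebra and $M=\sum_{i=1}^{m}B_{1}a_{i}$ (with $a_{1}=1$) is a subalgebra which is finitely generated as a $B_{1}$-module, then $\GKdim(M)=\GKdim(B_{1})$. I would fix a finite dimensional generating subspace $W$ of $B_{1}$ with $1\in W$, and, after enlarging $W$ if necessary, arrange that all the structure constants $d_{ii'l}$ lie in $W$. Taking $U:=W+\sum_{i}\K a_{i}$ as a generating subspace of $M$, the key estimate is that commuting all factors $a_{i}$ to the right and repeatedly rewriting $a_{i}a_{i'}=\sum_{l}d_{ii'l}a_{l}$ pushes every monomial of degree $\le n$ into normal form (a power of $W$ times a single $a_{i}$), giving a linear bound $U^{n}\subseteq \sum_{i}W^{cn}a_{i}$ for a constant $c$ independent of $n$; hence $d_{U}(n)\le m\,\dim_{\K}(W^{cn})=m\,d_{W}(cn)$.

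The final step converts this into the dimension equality. From $d_{U}(n)\le m\,d_{W}(cn)$ one computes, using $\log_{n}m\to 0$ and $\log(cn)/\log n\to 1$ as $n\to\infty$, that $\limsup_{n}\log_{n}d_{U}(n)\le \limsup_{n}\log_{n}d_{W}(cn)=\GKdim(B_{1})$, so $\GKdim(M)\le \GKdim(B_{1})$; the opposite inequality is again monotonicity, giving equality. The main obstacle I anticipate is the combinatorial bookkeeping behind the estimate $U^{n}\subseteq \sum_{i}W^{cn}a_{i}$: one must verify that reducing an arbitrary length-$n$ word in $U$ to normal form inflates the $W$-degree only by a fixed multiplicative constant, which is precisely where the commutativity of $A$ and the finiteness of the module generators are essential.
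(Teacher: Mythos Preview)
The paper does not supply its own proof of this lemma: it is quoted verbatim as \cite[Lemma 4.3]{KL2000} and used as a black box in the GK-dimension computations of Section~\ref{Section 3}. So there is no in-paper argument to compare against; what you have written is essentially the standard Krause--Lenagan proof, and it is correct.

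One remark on the ``main obstacle'' you flag at the end: the bookkeeping is milder than you fear, and in fact $c=1$ already works. With $1\in W$, $a_{1}=1$, and all $d_{ii'l}\in W$, commutativity lets you sort any length-$n$ monomial in $U=W+\sum_{i}\K a_{i}$ as (factors from $W$)$\cdot$(factors which are $a_{i}$'s); if there are $k$ of the latter, an easy induction on $k$ using $a_{i}a_{i'}=\sum_{l}d_{ii'l}a_{l}$ gives $a_{i_{1}}\cdots a_{i_{k}}\in\sum_{l}W^{\,k-1}a_{l}$, so the whole monomial lands in $\sum_{l}W^{\,n-1}a_{l}\subseteq\sum_{l}W^{\,n}a_{l}$. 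Hence $d_{U}(n)\le m\,d_{W}(n)$ directly, and the $\limsup$ comparison is immediate without the auxiliary constant $c$.
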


\section{The free bialgebra $\T$ and the quotient bialgebra $\Tt$}\label{Section 2}

Let $(X,\prec):=(\{g,h\},g\prec h)$ and denote by $\mathcal{L}$ the set of the Lyndon words in
$\langle X\rangle$.
We study the free bialgebra $\T:=\K\X$ 
and a quotient bialgebra $\Tt$.

\subsection{The free bialgebra $\T$ and the noncommutative Fa\`a di Bruno bialgebra $\BFdBnc$}\label{Section 2.1}\qquad

$\T=\K\langle g,h\rangle$ is a free bialgebra with the counit $\epsilon(g)=1$, $\epsilon(h)=0$ and the coproduct
$\Delta(g)=g\otimes g$ and $\Delta(h)=1\otimes h+h\otimes g$.

Let $\omega_{r}=g\underbrace{h\cdots h}_{r}$ for $r\ge 0$.
It is clear that $\{E_{\alpha}|~\alpha(g)=1\}=\{E_{\omega_{r}}|~r\ge 0\}$.
We now determine the coproduct of $E_{\omega_{r}}$ in $\T$.

\begin{lemma}\cite[4.3]{R1977} For $n>0$,
\begin{align}\label{formula-Radford}
\Delta_{\T}(h^n)=\sum_{k=0}^{n}h^k\otimes \mathcal{SH}_{n-k,k}(h,g).
\end{align}
\end{lemma}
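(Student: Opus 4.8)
The plan is to prove the coproduct formula $\Delta_{\T}(h^n)=\sum_{k=0}^{n}h^k\otimes \mathcal{SH}_{n-k,k}(h,g)$ by induction on $n$, using the fact that $\Delta_{\T}$ is an algebra homomorphism on the free algebra $\T=\K\langle g,h\rangle$. The base case $n=1$ is immediate from the defining relation $\Delta(h)=1\otimes h+h\otimes g$, since $\mathcal{SH}_{1,0}(h,g)=h$ and $\mathcal{SH}_{0,1}(h,g)=g$, giving $\Delta(h)=h^0\otimes h + h^1\otimes g$. For the inductive step I would write $h^{n}=h\cdot h^{n-1}$ and compute $\Delta_{\T}(h^{n})=\Delta_{\T}(h)\,\Delta_{\T}(h^{n-1})$, substituting the inductive hypothesis for $\Delta_{\T}(h^{n-1})$.

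The core of the argument is then a multiplication-and-reindexing computation. Expanding,
\begin{align*}
\Delta_{\T}(h^{n})=(1\otimes h+h\otimes g)\Bigl(\sum_{k=0}^{n-1}h^{k}\otimes \mathcal{SH}_{n-1-k,k}(h,g)\Bigr)=\sum_{k=0}^{n-1}h^{k}\otimes h\,\mathcal{SH}_{n-1-k,k}(h,g)+\sum_{k=0}^{n-1}h^{k+1}\otimes g\,\mathcal{SH}_{n-1-k,k}(h,g).
\end{align*}
In the second sum I would reindex $k\mapsto k-1$ so that both sums are indexed by the same power $h^{k}$ of $h$ in the left tensor factor, and then collect the coefficient of each $h^{k}\otimes(-)$. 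The resulting right-hand tensor factor for a fixed $k$ is $h\,\mathcal{SH}_{n-1-k,k}(h,g)+g\,\mathcal{SH}_{n-k,k-1}(h,g)$, which is exactly the left-multiplication recursion \eqref{formula 2} for $\mathcal{SH}_{n-k,k}(h,g)$ (with $i=n-k$, $j=k$). This identifies the coefficient of $h^{k}$ as $\mathcal{SH}_{n-k,k}(h,g)$, completing the step.

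The main thing to be careful about is the bookkeeping at the endpoints of the summation ranges: after reindexing, the $k=0$ term comes only from the first sum and the $k=n$ term only from the second, so I must check that the boundary values $\mathcal{SH}_{n,0}(h,g)=h^{n}$ and $\mathcal{SH}_{0,n}(h,g)=g^{n}$ match the recursion with the conventions $\mathcal{SH}_{i,j}=0$ when $i<0$ or $j<0$. This is the only genuine (though routine) obstacle; once the convention on negative indices is fixed, the recursion \eqref{formula 2} applies uniformly across $0\le k\le n$ and the two sums assemble cleanly into $\sum_{k=0}^{n}h^{k}\otimes\mathcal{SH}_{n-k,k}(h,g)$. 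I expect no structural difficulty beyond this indexing check, since everything reduces to the shuffle recursion already recorded in Remark~\ref{def:shuffle-polynomials}'s surrounding discussion.
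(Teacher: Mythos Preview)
Your induction argument is correct and is the natural proof: the base case, the reindexing, and the appeal to the left recursion \eqref{formula 2} are all in order, and your handling of the boundary terms $k=0$ and $k=n$ via the convention $\mathcal{SH}_{i,j}=0$ for negative indices is exactly what is needed. Note that the paper itself does not supply a proof of this lemma at all---it simply cites Radford \cite[4.3]{R1977}---so there is no in-paper argument to compare against; your proof is the standard one and matches what one finds in the original reference.
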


\begin{lemma}\label{lem:coproduct-property}
For $r\ge 2$,  the following coproduct holds in $\T$:
\begin{align*}
\Delta(E_{\omega_{r}})= g\otimes E_{\omega_{r}}+\sum_{\omega_{r}} k_{\alpha,\alpha'} E_{\alpha_{1}}^{a_{1}}E_{\alpha_{2}}^{a_{2}}\cdots E_{\alpha_{m}}^{a_{m}}\otimes E_{\alpha_{1}'}^{a_{1}'}E_{\alpha_{2}'}^{a_{2}'}\cdots E_{\alpha_{n}'}^{a_{n}'}+E_{\omega_{r}}\otimes g^{r+1},
\end{align*}
where $k_{\alpha,\alpha'}\in \Z$, $\alpha_{1}\succ \alpha_{2}\succ \ldots \succ \alpha_{m}$, $\alpha_{1}'\succ \alpha_{2}'\succ \ldots \succ \alpha_{n}'$, $\alpha_{i},\alpha_{j}'\in \mathcal{L}$, $a_{i},a_{j}'\ge 1$ for $1\le i\le m$ and $1\le j\le n$.
More precisely,
\begin{itemize}
\item [(a)] If $\alpha_{1}=h$, then $\alpha_{1}'\neq h$ and there exists some $j$ such that $\alpha_{j}'(g)\ge 2$;
\item [(b)] If $\alpha_{1}\neq h$, then $|\alpha_{1}|\ge 2, \alpha_{1}(g)\ge 1$, $|\alpha_{1}'|\ge 2$ and $\alpha_{1}'(g)\ge 1$.
\end{itemize}
\end{lemma}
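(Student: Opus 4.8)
The plan is to separate $\Delta(E_{\omega_r})$ into its two boundary terms and a middle part by means of the grading of $\T$ by the number of $h$'s, to read off the assertions about the left tensor factor from a rigidity of that grading, and then to establish the coupling conditions (a) and (b) by induction on $r$.

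First I would note that $\Delta$ respects the decomposition $\T=\bigoplus_{n\ge0}\T^{(n)}$, where $\T^{(n)}$ is spanned by the words with exactly $n$ letters equal to $h$: indeed $\Delta(g)=g\otimes g$ has $h$-bidegree $(0,0)$, while $\Delta(h)=1\otimes h+h\otimes g$ has $h$-bidegrees $(0,1)$ and $(1,0)$. Hence $\Delta(E_{\omega_r})\in\bigoplus_{i+j=r}\T^{(i)}\otimes\T^{(j)}$, and the two extreme components can be computed directly on words: expanding $\Delta(w)=\prod_\ell\Delta(x_\ell)$ for a word $w=x_1\cdots x_n$, the part with no $h$ on the right arises by choosing the branch $h\otimes g$ at every $h$ and equals $w\otimes g^{n}$, while the part with no $h$ on the left arises from the branch $1\otimes h$ at every $h$ and equals $g^{\deg_g(w)}\otimes w$. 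Since every word occurring in $E_{\omega_r}$ has $\deg_g=1$ and length $r+1$, this pins the boundary terms down as exactly $g\otimes E_{\omega_r}$ and $E_{\omega_r}\otimes g^{r+1}$, and all remaining terms lie in $\bigoplus_{i+j=r,\,i,j\ge1}\T^{(i)}\otimes\T^{(j)}$, so both tensor factors of a middle term contain at least one $h$.

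Next I would exploit that the left factor of every middle term has $g$-degree exactly $1$, since the duplication $g\mapsto g\otimes g$ leaves one $g$ on the left and the branch $h\mapsto h\otimes g$ deposits its extra $g$ only on the right. As the unique Lyndon word free of $g$ is $h$ and the only Lyndon words with a single $g$ are the $\omega_s=gh^{s}$, every PBW monomial occurring in a left factor equals $E_h^{a}E_{\omega_s}$ with $a+s\ge1$. This yields the dichotomy at once: if $a\ge1$ the leading word is $\alpha_1=h$ (case (a)), while if $a=0$ then $\alpha_1=\omega_s$ with $s\ge1$, so $|\alpha_1|\ge2$ and $\alpha_1(g)=1$, which are the left-hand claims of (b). Moreover the right factor then has $g$-degree $1+\deg_h(\text{left})\ge2$ and, having $\deg_h\ge1$, is not a power of $g$; that its leading word is in fact never $h$ and that the stated $g$-content holds is the content of the induction below.

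Finally I would control the right factor by induction on $r$ through $\Delta(E_{\omega_r})=[\Delta(E_{\omega_{r-1}}),\,1\otimes h+h\otimes g]$, with base case $r=2$ computed from $\Delta(E_{gh})=g\otimes E_{gh}+E_{gh}\otimes g^{2}$. Expanding the bracket returns the two boundary terms and leaves the three middle contributions $gh\otimes E_{\omega_{r-1}}g-hg\otimes gE_{\omega_{r-1}}$, $E_{\omega_{r-1}}\otimes[g^{r},h]$, and $[S,\,1\otimes h+h\otimes g]$, where $S$ is the middle part of $\Delta(E_{\omega_{r-1}})$ supplied by the inductive hypothesis; each would be rewritten in PBW-times-PBW form using the commutator Lemma \ref{lem:commutators} and the identities $E_{\omega_{s+1}}=[E_{\omega_s},E_h]$ and $[E_g,E_{\omega_s}]=E_{g\omega_s}=E_{ggh^{s}}$, tracking the leading Lyndon word and the per-factor $g$-degree at each reordering. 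I expect the main obstacle to be that these conditions are invisible term by term: monomials whose right factor is topped by $E_h$, or assembled only from Lyndon words of $g$-degree $\le1$, do appear but cancel. For example $E_hE_g\otimes E_{\omega_{r-1}}E_g$ occurs in the first contribution yet is cancelled by the reordering $gE_{\omega_{r-1}}=E_{\omega_{r-1}}E_g+E_{ggh^{r-1}}$, leaving the survivor $-E_hE_g\otimes E_{ggh^{r-1}}$, whose right factor does carry a Lyndon letter of $g$-degree $2$. The crux is therefore to organize the expansion so that all such cancellations are transparent, showing that no surviving middle term has right leading word $h$, that the right factor begins with $g$ when $\alpha_1\neq h$, and that it contains a $g$-degree-$\ge2$ Lyndon letter when $\alpha_1=h$; since the commutator Lemma preserves total letter-content, $\gamma(x)=\alpha(x)+\beta(x)$, a $g$-degree-$\ge2$ Lyndon letter once produced is never split into letters of smaller $g$-degree, which is what allows the inductive hypothesis to propagate and closes the proof.
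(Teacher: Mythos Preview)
Your proposal is correct and follows essentially the same route as the paper: both argue by induction on $r$ via $\Delta(E_{\omega_{r}})=[\Delta(E_{\omega_{r-1}}),\Delta(h)]$, expand the bracket into boundary terms plus the contributions coming from the middle sum $S$ of the inductive hypothesis, and then reorder into PBW form using the commutator Lemma~\ref{lem:commutators} (together with the reordering facts the paper cites from \cite{JZ2021}) while tracking the $g$-content of the Lyndon factors. Your worked example of the cancellation $E_hE_g\otimes E_{\omega_{r-1}}E_g$ versus $-E_hE_g\otimes E_{g\omega_{r-1}}$ is exactly the mechanism the paper exploits in its Case~1, and your final remark that letter-content is preserved under Lemma~\ref{lem:commutators} is the same principle that makes the inductive hypothesis propagate.

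The one genuine addition in your write-up is the preliminary bigrading argument: observing that every word in $E_{\omega_r}$ has $g$-degree~$1$, so the left tensorand of each term of $\Delta(E_{\omega_r})$ has $g$-degree exactly~$1$ and hence its PBW expression is forced to be $E_h^{a}E_{\omega_s}$. This immediately pins down the boundary terms and yields the left-hand claims of~(b) without any induction, leaving only the right-factor claims to the inductive bookkeeping. The paper does not isolate this observation and instead verifies all four conditions inside the induction; your separation is a clean simplification, but the underlying machinery is the same.
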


\begin{proof}
Note that
\begin{align*}
\Delta(E_{ghh}) =& E_{g}\otimes E_{ghh}+3E_{gh}\otimes E_{gh}E_{g}+(-E_{h}E_{g}+E_{gh})\otimes E_{ggh}+E_{ghh}\otimes E_{g}^{3}.
\end{align*}
Thus Parts (a) and (b) hold for $E_{\omega_{2}}$.  For convenience, we assume $k_{\alpha,\alpha'}=1$. 
By induction on $r$,
\begin{align*}
\Delta(E_{\omega_{r+1}})
=&\Delta([E_{\omega_{r}},h])\\
=&(g\otimes E_{\omega_{r}}+\sum_{\omega_{r}} E_{\alpha_{1}}^{a_{1}}E_{\alpha_{2}}^{a_{2}}\cdots E_{\alpha_{m}}^{a_{m}}\otimes E_{\alpha_{1}'}^{a_{1}'}E_{\alpha_{2}'}^{a_{2}'}\cdots E_{\alpha_{n}'}^{a_{n}'}+E_{\alpha}\otimes g^{r+1})(1\otimes h+h\otimes g)\\
& -(1\otimes h+h\otimes g)(g\otimes E_{\omega_{r}}+\sum_{\omega_{r}}  E_{\alpha_{1}}^{a_{1}}E_{\alpha_{2}}^{a_{2}}\cdots E_{\alpha_{m}}^{a_{m}}\otimes E_{\alpha_{1}'}^{a_{1}'}E_{\alpha_{2}'}^{a_{2}'}\cdots E_{\alpha_{n}'}^{a_{n}'}+E_{\omega_{r}}\otimes g^{r+1})\\
=& g\otimes E_{\omega_{r+1}}+E_{gh}\otimes E_{\omega_{r}}g-hg\otimes E_{g\omega_{r}}+E_{\omega_{r}}\otimes \sum_{k=1}^{r+1}\binom{r+1}{k}E_{\underbrace{g\cdots g}_{k}h}g^{r+1-k}\\
& + E_{\omega_{r+1}}\otimes g^{r+2}+\sum_{\omega_{r}}F_{\text{L}} \otimes (F_{\text{R}}h-hF_{\text{R}}) 
+ \sum_{\omega_{r}} F_{\text{L}}h \otimes F_{\text{R}}g- hF_{\text{L}} \otimes gF_{\text{R}},
\end{align*}
where $F_{\text{L}}:=E_{\alpha_{1}}^{a_{1}}E_{\alpha_{2}}^{a_{2}}\cdots E_{\alpha_{m}}^{a_{m}}$ and $F_{\text{R}}:=E_{\alpha_{1}'}^{a_{1}'}E_{\alpha_{2}'}^{a_{2}'} \cdots E_{\alpha_{n}'}^{a_{n}'}$. To show Part (a) and Part (b) hold for $T_{1}:=F_{\text{L}}\otimes (F_{\text{R}}h-hF_{\text{R}})$ and $T_{2}:=F_{\text{L}}h \otimes F_{\text{R}}g- hF_{\text{L}} \otimes gF_{\text{R}}$, we consider two cases of $F_{\text{L}}\otimes F_{\text{R}}$ in $\Delta(E_{\omega_{r+1}})$:

\textit{Case 1}: $\alpha_{1}=h$ in $F_{\text{L}}\otimes F_{\text{R}}$. By induction on $r+1$, there exists some $j$ such that $\alpha_{j}'(g)\ge 2$ and $\alpha_{1}'\neq h$. By Lemma \ref{lem:commutators} and \cite[Corollary 1.9 (d)]{JZ2021}, there exists a set $\Gamma \subseteq \mathcal{L}$ such that $F_{\text{R}}h= hF_{\text{R}}+\sum_{\gamma\in \Gamma} k_{\gamma} E_{\alpha_{1}'}^{a_{1}'}\cdots E_{\alpha_{\ell}'}^{a_{\ell}'}E_{\gamma}E_{\alpha_{\ell+1}'}^{c_{\ell+1}}\cdots E_{\alpha_{n}'}^{c_{n}}$, where for every $\gamma\in \Gamma$ there exists some $\ell$ such that $\alpha_{\ell}'\succeq \gamma\succ \alpha_{\ell+1}'$, $0\le c_{s}\le a_{s}$, $\ell+1\le s \le n$, $\sum_{s=\ell+1}^{n}(a_{s}'-c_{s})>0$ and $\gamma(x)=h(x)+\sum_{s=\ell+1}^{n}(a_{s}'-c_{s})\alpha_{s}'(x)$ for all $x\in X$. Then $\gamma(g)=\sum_{s=\ell+1}^{n}(a_{s}'-c_{s})\alpha_{s}'(g)$, which means  $\alpha_{j}'(g)\ge 2$ or $\gamma(g)\ge 2$. Thus Part (a) holds for $T_{1}$. Similarly, Part (a) holds for $T_{2}$ from Lemma \ref{lem:commutators} and \cite[Corollary 1.9 (c)-(d)]{JZ2021}. 

\textit{Case 2}: $\alpha_{1}\neq h$ in $F_{\text{L}}\otimes F_{\text{R}}$. The rest is similar to the proof of Case 1.
\end{proof}

Now we show that the noncommutative Fa\`a di Bruno bialgebra is a sub-bialgebra of $\T$. 
Following \cite[Remark 2.1]{BFK2006} and \cite[Section 3.2.6]{ELM2014}, we first recall the Fa\`a di Bruno bialgebra or Hopf algebra, see also \cite{D1974,JR1979}
and \cite[Section 2]{FG2005}.

\begin{definition}{\cite{D1974,JR1979}}\label{def:BFdB}
The \textit{Fa\`a di Bruno bialgebra} $\mathcal{B}_{\operatorname{FdB}}$ as an algebra is defined to be  the polynomial algebra  $\K[u_{1},u_{2},\ldots,u_{n},\ldots]$ in infinitely many variables $\c\{u_{n}\}_{n=1}^{\infty}$, with the counit $\epsilon(u_{n})=\delta_{n,1}$ and the coproduct
\begin{align*}
\Delta(u_{n})=\sum_{k=1}^{n}u_{k}\otimes B_{n,k}(u_{1},u_{2},\ldots,u_{n}),
\end{align*}
where $B_{n,k}(u_{1},u_{2},\ldots,u_{n})$ is the classical partial Bell polynomial, that is,
\begin{align}\label{Bell polynomials}
B_{n,k}(u_{1},u_{2},\ldots,u_{n})=\sum_{k_{1}+k_{2}+\ldots+k_{n}=k \atop k_{1}+2k_{2}+\ldots +nk_{n}=n} \frac{n !}{k_{1}!k_{2}!\ldots k_{n}!}\left(\frac{u_{1}}{1 !}\right)^{k_{1}}\left(\frac{u_{2}}{2 !}\right)^{k_{2}} \ldots\left(\frac{u_{n}}{n !}\right)^{k_{n}}.
\end{align}
The \textit{Fa\`a di Bruno Hopf algebra} is $\mathcal{H}_{\operatorname{FdB}}=\K[u_{1}^{\pm 1},u_{2},\ldots,u_{n},\ldots]$ satisfying the hypothesis of $\mathcal{B}_{\operatorname{FdB}}$ and $\Delta(u_{1}^{-1})=u_{1}^{-1}\otimes u_{1}^{-1}$, see also \cite[Section IX]{JR1979}.
\end{definition}

The Bell polynomials mentioned in above definition were introduced by Bell from the set partitions, see \cite{B1927,B1934}. 
Now we recall the noncommutative Fa\`a di Bruno bialgebra. 

\begin{definition}\cite[Definition 2.4]{BFK2006}.
The \textit{noncommutative Fa\`a di Bruno bialgebra} is defined on the free algebra $\mathcal{B}_{\operatorname{FdB}}^{\operatorname{nc}}=\K\langle a_{0},a_{1},\ldots,a_{n},\ldots\rangle$, with the counit $\epsilon(a_{n})=\delta_{n,0}$ and the coproduct
\begin{align*}
\Delta(a_{n})=\sum_{k=0}^{n}a_{k}\otimes \sum_{i_{0}+i_{1}+\ldots+i_{k}=n-k,\atop i_{0},i_{1},\ldots,i_{k}\ge 0}^{}a_{i_{0}}a_{i_{1}}\cdots a_{i_{k}}.
\end{align*}
\end{definition}

\begin{remark}\label{rmk:BFdBnc-Abelization-cong-BFdB}
$\mathcal{B}_{\operatorname{FdB}}$ (resp. $\mathcal{B}_{\operatorname{FdB}}^{\operatorname{nc}}$) is an $\mathbb{N}$-graded bialgebra, with $\deg(u_{n})=n-1$ for $n\ge 1$ (resp. $\deg(a_{n})=n$ for $n\ge 0$). Thus $\mathcal{B}_{\operatorname{FdB}}$ and $\mathcal{B}_{\operatorname{FdB}}^{\operatorname{nc}}$ are pointed by {\cite[Proposition 4.1.2]{R2012}}. In case $\Char\K=0$, then the abelianization of $\mathcal{B}_{\operatorname{FdB}}^{\operatorname{nc}}$ is $\mathcal{B}_{\operatorname{FdB}}$, that is, there is an isomorphism of bialgebras $$f:\mathcal{B}_{\operatorname{FdB}}^{\operatorname{nc}}/[\mathcal{B}_{\operatorname{FdB}}^{\operatorname{nc}},\mathcal{B}_{\operatorname{FdB}}^{\operatorname{nc}}]\rightarrow \mathcal{B}_{\operatorname{FdB}}, \qquad a_{n}\mapsto \frac{u_{n+1}}{(n+1)!}, \qquad n\ge 0,$$ 
see \cite[Remark 2.12]{BFK2006}.
\end{remark}

Let $L$ be the subalgebra of $\T$ generated by $\{\omega_{n}|~n\ge 0\}$.

\begin{lemma}\label{lem:L-sub-bialgebra}
$L$ is a sub-bialgebra of $\T$.
\end{lemma}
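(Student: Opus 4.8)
The plan is to reduce the whole statement to a single coproduct computation. Since $L$ is by definition the unital subalgebra of $\T$ generated by $\{\omega_r \mid r \geq 0\}$ and $\Delta$ is an algebra homomorphism, the inclusion $\Delta(L) \subseteq L \otimes L$ will follow once I check that $\Delta(\omega_r) \in L \otimes L$ for every $r \geq 0$; the counit condition $\epsilon(L) \subseteq \K$ is automatic, and $1 \in L$ as the empty product, so these facts together yield that $L$ is a sub-bialgebra.

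The one structural observation I would isolate first is a vector-space description of $L$. The submonoid of $\X$ generated by $\{\omega_r\}_{r\geq 0}$ consists of exactly the empty word together with all words beginning with the letter $g$: indeed any word starting with $g$ factors uniquely into blocks $gh^{r_1}gh^{r_2}\cdots = \omega_{r_1}\omega_{r_2}\cdots$ by cutting immediately before each occurrence of $g$, while no product of the $\omega_r$ can begin with $h$. Hence $L = \K 1 \oplus \bigoplus_{w} \K w$, the sum running over all words $w \in \X$ that start with $g$. This gives an immediate membership criterion: any $\K$-linear combination of words each beginning with $g$ lies in $L$.

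Next I would compute the coproduct. Writing $\omega_r = g h^r$ and using multiplicativity of $\Delta$ together with $\Delta(g) = g \otimes g$, one has $\Delta(\omega_r) = (g\otimes g)\Delta(h^r)$. Applying Radford's formula \eqref{formula-Radford} gives
\begin{align*}
\Delta(\omega_r) = (g\otimes g)\sum_{k=0}^{r} h^k \otimes \mathcal{SH}_{r-k,k}(h,g) = \sum_{k=0}^{r} \omega_k \otimes g\,\mathcal{SH}_{r-k,k}(h,g).
\end{align*}
In each summand the left tensor factor is $\omega_k \in L$, and the right tensor factor is $g$ times the shuffle polynomial $\mathcal{SH}_{r-k,k}(h,g)$, which is a sum of words in $g,h$; multiplying on the left by $g$ turns each of these into a word beginning with $g$, so $g\,\mathcal{SH}_{r-k,k}(h,g) \in L$ by the criterion above. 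Thus $\Delta(\omega_r) \in L \otimes L$, which is what was needed.

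There is no serious obstacle here: the essential work is done by the left multiplication by $g\otimes g$, which forces both tensor legs to begin with $g$ and hence to land in $L$. The only point requiring genuine care is the vector-space description of $L$, and in particular verifying that the block decomposition at each occurrence of $g$ is unique, so that membership in $L$ can be read off letter by letter.
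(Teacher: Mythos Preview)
Your argument is correct and follows the same overall line as the paper: compute $\Delta(\omega_r)$ via Radford's formula \eqref{formula-Radford} and check that each right tensor factor $g\,\mathcal{SH}_{r-k,k}(h,g)$ lies in $L$. The only difference is in how that last membership is justified. The paper invokes the recursion \eqref{formula 3}, which (with a short induction on $i+j$) shows $g\,\mathcal{SH}_{i,j}(h,g)\in L$ because multiplying a nonempty product of the $\omega_r$'s on the right by $h$ or by $g$ again yields such a product. You instead give a direct vector-space description of $L$ as $\K 1$ plus the span of all words beginning with $g$, and then the membership is immediate since left-multiplication by $g$ forces every word to start with $g$. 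Your route is slightly more self-contained and avoids the implicit induction hidden behind the paper's appeal to \eqref{formula 3}; the paper's route has the advantage of reusing a recursion already recorded for other purposes. Either way the content is the same.
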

\begin{proof}
By Formula (\ref{formula-Radford}) we have
\begin{align*}
\Delta(\omega_{n})=\sum_{k=0}^{n}\omega_{k}\otimes g\mathcal{SH}_{n-k,k}(h,g),\quad n\ge 0.
\end{align*}
By Formula (\ref{formula 3}), $g\mathcal{SH}_{n-k,k}(h,g)\in L$ for all $0\le k\le n$. Thus $\Delta(L)\subseteq L\otimes L$, as desired
\end{proof}

\begin{theorem}\label{thm:L-cong-FdB}
As bialgebras, $L\cong \mathcal{B}_{\operatorname{FdB}}^{\operatorname{nc}}$. Thus, $\mathcal{B}_{\operatorname{FdB}}^{\operatorname{nc}}$ is a sub-bialgebra of $\T$.
\end{theorem}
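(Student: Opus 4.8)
The plan is to write down the obvious algebra map and verify it is a bialgebra isomorphism. Since $\BFdBnc=\K\langle a_0,a_1,\ldots\rangle$ is free as an algebra, the assignment $a_n\mapsto \omega_n$ extends uniquely to an algebra homomorphism $\phi\colon \BFdBnc\to L$. It is surjective because $L$ is by definition generated by $\{\omega_n\mid n\ge 0\}$. What remains is to check that $\phi$ is a coalgebra morphism and that it is injective.

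For the counit, $\epsilon_{\T}(\omega_n)=\epsilon(g)\epsilon(h)^n=\delta_{n,0}=\epsilon(a_n)$, so $\phi$ preserves counits. For the comultiplication, I would compare Lemma~\ref{lem:L-sub-bialgebra}, which gives $\Delta(\omega_n)=\sum_{k=0}^{n}\omega_k\otimes g\,\mathcal{SH}_{n-k,k}(h,g)$, with the defining coproduct of $a_n$ after applying $\phi\otimes\phi$. The entire verification then reduces to the single combinatorial identity
\begin{align*}
g\,\mathcal{SH}_{n-k,k}(h,g)=\sum_{\substack{i_0+i_1+\cdots+i_k=n-k\\ i_0,\ldots,i_k\ge 0}}\omega_{i_0}\omega_{i_1}\cdots\omega_{i_k},\qquad 0\le k\le n.
\end{align*}
I would prove this by identifying both sides inside the word basis of $\T=\K\langle g,h\rangle$. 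On the one hand, $\mathcal{SH}_{n-k,k}(h,g)=h^{n-k}\shuffle g^{k}$ is the sum, each with coefficient $1$, of all words in $g,h$ having exactly $n-k$ letters $h$ and $k$ letters $g$; left-multiplying by $g$ yields the sum of all words that begin with $g$ and contain $k+1$ letters $g$ and $n-k$ letters $h$. On the other hand, recording the numbers $i_0,\ldots,i_k$ of $h$'s occurring after each successive $g$ writes such a word uniquely as $\omega_{i_0}\cdots\omega_{i_k}=gh^{i_0}\cdots gh^{i_k}$ with $i_0+\cdots+i_k=n-k$, and every such tuple arises in this way. Hence both sides enumerate exactly the same words, each once.

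Injectivity is then immediate: a monomial $a_{n_1}\cdots a_{n_\ell}$ is sent to the single word $\omega_{n_1}\cdots\omega_{n_\ell}=gh^{n_1}gh^{n_2}\cdots gh^{n_\ell}$, and from such a word one recovers $\ell$ as its number of $g$'s and then the tuple $(n_1,\ldots,n_\ell)$ from the $h$-counts between consecutive $g$'s. Thus $\phi$ carries the standard monomial basis of $\BFdBnc$ bijectively onto a subset of the word basis of $\T$, so it is injective. Combining surjectivity, injectivity, and compatibility with $\epsilon$ and $\Delta$, the map $\phi$ is a bialgebra isomorphism onto $L$, giving $L\cong\BFdBnc$; since $L$ is a sub-bialgebra of $\T$ by Lemma~\ref{lem:L-sub-bialgebra}, this exhibits $\BFdBnc$ as a sub-bialgebra of $\T$.

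The only genuinely substantive point is the combinatorial identity, and within it the observation that the shuffle $h^{n-k}\shuffle g^{k}$ has all coefficients equal to $1$: because all the $h$'s agree and all the $g$'s agree, each word of the prescribed content arises from a unique interleaving. Everything else is bookkeeping with the word basis of the free algebra, so I do not expect any serious obstacle beyond stating this coefficient computation cleanly.
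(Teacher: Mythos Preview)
Your proof is correct and follows the same overall strategy as the paper: set up the algebra map between $\BFdBnc$ and $L$ via $a_n\leftrightarrow\omega_n$, and reduce the coalgebra compatibility to the single identity $g\,\mathcal{SH}_{n-k,k}(h,g)=\sum_{i_0+\cdots+i_k=n-k}\omega_{i_0}\cdots\omega_{i_k}$. The paper dispatches this identity by citing an external reference (\cite[Theorem~5.9]{JZ2021}) and handles bijectivity by first observing that the $\omega_n$ are algebraically independent so that $L$ is free. Your argument is more self-contained on both counts: you prove the identity by the elementary word-counting observation that $h^{n-k}\shuffle g^k$ is the multiplicity-free sum of all words of the given content, and you verify injectivity directly by reading the tuple $(n_1,\ldots,n_\ell)$ off the word $gh^{n_1}\cdots gh^{n_\ell}$. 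This buys you a proof that does not depend on the companion paper, at the cost of spelling out the shuffle-coefficient computation; the paper's version is terser but relies on outside machinery.
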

\begin{proof}

Note that $\{\omega_{n}|~n\ge 0\}$ is algebraically independent over $\K$. Thus $L$ is a free bialgebra by Lemma \ref{lem:L-sub-bialgebra}. Then, the assignment $\varphi:\omega_{n}\mapsto a_{n}$, $n\ge 0$ gives rise to an isomorphism of algebras $\varphi$ from $L$ to  $\mathcal{B}_{\operatorname{FdB}}^{\operatorname{nc}}$. To show that $\varphi$ is a morphism of coalgebras, it suffices to verify that
\begin{align}\label{formula 12}
g\mathcal{SH}_{n-k,k}(h,g)=\sum_{i_{0}+i_{1}+\ldots+i_{k}=n-k,\atop i_{0},i_{1},\ldots,i_{k}\ge 0}^{}\omega_{i_{0}}\omega_{i_{1}}\cdots \omega_{i_{k}},\quad 0\le k\le n.
\end{align}
It follows from \cite[Theorem 5.9]{JZ2021}, as desired. 
\end{proof}

\begin{remark}
We consider a general construction of Hopf algebras containing $L$ (or $\mathcal{B}_{\operatorname{FdB}}^{\operatorname{nc}}$) as sub-bialgebra, using these Lyndon words $\{\omega_{n}|~n\ge 0\}$. Let $r\ge 1$ and $\T^{[r]}$ the free algebra generated by $\{g,h\}$. It is clear that $\T^{[r]}$ is a bialgebra with coproduct and counit:
\begin{align*}
\Delta(g)=g\otimes g,\qquad \Delta(h)=1\otimes h+h\otimes g^{r},\qquad \epsilon(g)=1, \qquad \epsilon(h)=0.
\end{align*}
From the proof of Lemma \ref{lem:L-sub-bialgebra} and Theorem \ref{thm:L-cong-FdB}, the subalgebra $L^{[r]}$ of $\T^{[r]}$ generated by $\{\omega_{n}|~n\ge 0\}$ is an $\mathbb{N}$-graded, free, pointed bialgebra, with the degree of $\omega_{n}$ defined by $n$. The coproduct is given by the recursion
\begin{align*}
\Delta(\omega_{n})=(g\otimes g)(1\otimes h+h\otimes g^{r})^{n},\quad n\ge 0,
\end{align*}
or equivalently, by Formula (\ref{formula 12}),
\begin{align}\label{formula 13}
\Delta(\omega_{n})=\sum_{k=0}^{n}\omega_{k}\otimes \sum_{i_{0}+\ldots+i_{k}=n-k,\atop i_{0},\ldots,i_{k}\ge 0}^{}\omega_{i_{0}}\omega_{0}^{r-1}\omega_{i_{1}}\cdots \omega_{i_{s}}\omega_{0}^{r-1}\omega_{i_{s+1}}\cdots \omega_{i_{k-1}}\omega_{0}^{r-1}\omega_{i_{k}},\quad n\ge 0.
\end{align}
It is clear that $L^{[1]}=L$. Furthermore, by Formula (\ref{formula 13}), the subalgebra of $L^{[r]}$ generated by $\{\omega_{0}^{r-1}\omega_{n}|~n\ge 0\}$ is a free bialgebra. Thus $L$ (or $\mathcal{B}_{\operatorname{FdB}}^{\operatorname{nc}}$) is isomorphic to a sub-bialgebra of $L^{[r]}$, with the embedding:
\begin{align*}
L\hookrightarrow L^{[r]},~ \omega_{n}\mapsto \omega_{0}^{r-1}\omega_{n}, \quad n\ge 0.
\end{align*}
\end{remark}

\subsection{The bialgebra $\Tt$ and the Fa\`a di Bruno bialgebra $\BFdB$} \qquad 

Let $\Tt:=\T /\mathcal{I}$, where $\mathcal{I}$ is the ideal of $\T$ generated by $\{E_{\alpha}|~\alpha(g)\ge 2,\alpha\in\cL\}$.
We show that $\Tt$ admits a bialgebra structure.

\begin{lemma}\label{lem:equivalent-ideal}
Let $\mathcal{I}_{1}$, $\mathcal{I}_{2}$ be the ideals of $\T$  generated by $\{E_{\omega_{s}}E_{\omega_{t}}-E_{\omega_{t}}E_{\omega_{s}}|~0\le s<t\}$ and $\{gE_{\omega_{n}}-E_{\omega_{n}}g|~n\ge 1\}$ respectively. Then $\mathcal{I}=\mathcal{I}_{1}=\mathcal{I}_{2}$.
\end{lemma}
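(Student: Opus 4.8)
The plan is to prove the two equalities $\mathcal{I}=\mathcal{I}_1$ and $\mathcal{I}_1=\mathcal{I}_2$ separately, in each case splitting into one trivial inclusion and one inclusion carried by an induction. I would organize everything around the $g$-degree grading of the free Lie algebra: the Lyndon words with $\alpha(g)=0$ are just $\{h\}$, those with $\alpha(g)=1$ are exactly $\{\omega_r\}_{r\ge0}$, and those with $\alpha(g)=2$ are exactly the words $gh^{s}gh^{t}$ with $0\le s<t$. The computational backbone I would record first is that, by Lemma~\ref{lem:commutators}(a), $E_{\omega_s}E_{\omega_t}-E_{\omega_t}E_{\omega_s}=[E_{\omega_s},E_{\omega_t}]=E_{\omega_s\omega_t}=E_{gh^{s}gh^{t}}$ for all $0\le s<t$: indeed $\omega_0=g\in X$, while for $s\ge1$ the standard factorization gives $(\omega_s)_R=h\succeq\omega_t$, so case (a) applies and no lower terms appear. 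In particular the generators of $\mathcal{I}_1$ are precisely $\{E_\alpha:\alpha\in\mathcal{L},\ \alpha(g)=2\}$, and the generators of $\mathcal{I}_2$ are the subfamily with $s=0$, namely $gE_{\omega_n}-E_{\omega_n}g=[g,E_{\omega_n}]=E_{ggh^{n}}$.

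For $\mathcal{I}=\mathcal{I}_1$: the inclusion $\mathcal{I}_1\subseteq\mathcal{I}$ is immediate, since the generators of $\mathcal{I}_1$ are $g$-degree-$2$ elements $E_\alpha$, hence generators of $\mathcal{I}$. For $\mathcal{I}\subseteq\mathcal{I}_1$ I would show $E_\alpha\in\mathcal{I}_1$ for every Lyndon $\alpha$ with $\alpha(g)\ge2$ by induction on $|\alpha|$. The words with $\alpha(g)=2$ are generators of $\mathcal{I}_1$. If $\alpha(g)\ge3$, I write $\st(\alpha)=(\alpha_L,\alpha_R)$; since $\alpha_L(g)+\alpha_R(g)\ge3$, at least one factor has $g$-degree $\ge2$ and is strictly shorter, hence lies in $\mathcal{I}_1$ by the inductive hypothesis, so $E_\alpha=[E_{\alpha_L},E_{\alpha_R}]\in\mathcal{I}_1$ because $\mathcal{I}_1$ is a two-sided ideal.

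For $\mathcal{I}_1=\mathcal{I}_2$: the inclusion $\mathcal{I}_2\subseteq\mathcal{I}_1$ is immediate, as the generators of $\mathcal{I}_2$ are exactly the $s=0$ generators of $\mathcal{I}_1$. The reverse inclusion is the crux, and I expect it to be the main obstacle: I must produce every $g$-degree-$2$ element $E_{gh^{s}gh^{t}}$ with $s\ge1$ from the $s=0$ generators. The key point is that $\mathcal{I}_2$, being a two-sided ideal, is automatically stable under the inner derivation $\ad_h=[h,-]$, since $hz-zh\in\mathcal{I}_2$ whenever $z\in\mathcal{I}_2$. Using $[h,E_{\omega_r}]=-E_{\omega_{r+1}}$, the derivation property of $\ad_h$, and case (a) of Lemma~\ref{lem:commutators} (valid when $s+1<t$), one computes
\[
\ad_h\!\left(E_{gh^{s}gh^{t}}\right)
=-[E_{\omega_{s+1}},E_{\omega_t}]-[E_{\omega_s},E_{\omega_{t+1}}]
=-E_{gh^{s+1}gh^{t}}-E_{gh^{s}gh^{t+1}}.
\]
I then induct on $s$. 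The base $s=0$ holds because $E_{gh^{0}gh^{t}}=E_{ggh^{t}}$ is a generator of $\mathcal{I}_2$. For the step, assuming all $E_{gh^{s}gh^{u}}$ with $u>s$ lie in $\mathcal{I}_2$, the displayed identity gives, for every $t>s+1$,
\[
E_{gh^{s+1}gh^{t}}=-\ad_h\!\left(E_{gh^{s}gh^{t}}\right)-E_{gh^{s}gh^{t+1}},
\]
whose right-hand side lies in $\mathcal{I}_2$ by $\ad_h$-stability and the inductive hypothesis. Hence all $g$-degree-$2$ elements lie in $\mathcal{I}_2$, so $\mathcal{I}_1\subseteq\mathcal{I}_2$.

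Combining, $\mathcal{I}_2\subseteq\mathcal{I}_1\subseteq\mathcal{I}$ and $\mathcal{I}\subseteq\mathcal{I}_1\subseteq\mathcal{I}_2$, giving $\mathcal{I}=\mathcal{I}_1=\mathcal{I}_2$. The only genuinely delicate point will be the $\ad_h$-climbing in the last step; the verifications that the relevant brackets fall into case (a) of Lemma~\ref{lem:commutators} (so the formula carries no unexpected lower-order terms) and the boundary bookkeeping $s+1<t$ are routine once the classification of $g$-degree-$\le2$ Lyndon words is in place.
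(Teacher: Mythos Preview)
Your proof is correct and follows essentially the same strategy as the paper's: the chain $\mathcal{I}_2\subseteq\mathcal{I}_1\subseteq\mathcal{I}$ is immediate, the inclusion $\mathcal{I}\subseteq\mathcal{I}_1$ is handled by induction on the standard factorization (you induct on $|\alpha|$, the paper on $\alpha(g)$), and $\mathcal{I}_1\subseteq\mathcal{I}_2$ is the Jacobi recursion $[E_{\omega_s},E_{\omega_t}]=[[E_{\omega_{s-1}},E_{\omega_t}],h]-[E_{\omega_{s-1}},E_{\omega_{t+1}}]$, which is exactly your $\ad_h$-stability formula reindexed. Your explicit classification of the $g$-degree-$2$ Lyndon words as $\{gh^sgh^t:0\le s<t\}$ and the identification of the generators of $\mathcal{I}_1$ with $\{E_\alpha:\alpha(g)=2\}$ up front makes the base case of $\mathcal{I}\subseteq\mathcal{I}_1$ slightly cleaner than the paper's argument, but the substance is the same.
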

\begin{proof}
Note that $g=E_{\omega_{0}}$. By Lemma \ref{lem:commutators}, it is easy to see $\mathcal{I}_{2}\subseteq \mathcal{I}_{1} \subseteq \mathcal{I}$.

First we show $\mathcal{I}\subseteq \mathcal{I}_{1}$. Let $E_{\alpha}=[E_{\beta},E_{\gamma}]\in \mathcal{I}$ such that $\operatorname{st}(\alpha)=(\beta,\gamma)$. If $\gamma(g)=0$, then $\gamma=h$. By Lemma \ref{lem:condition-alpha-zeta-Lyndon-word} we have $\beta_{R}\succeq \gamma$. Hence $\beta_{R}=h$. Similarly, for $E_{\beta_{L}}$, since $\beta_{R}(g)=0$ it follows that $(\beta_{L})_{R}=h$. Since  $|\beta|$ is finite, $\beta=\omega_{|\beta|-1}$ by induction on the terms $E_{(\beta_{L})_{L}},\ldots,E_{(\ldots((\beta_{L})_{L})\ldots)_{L}}$. Thus, if $\alpha(g)=2$, then it must have $\beta(g)=\gamma(g)=1$. Hence $E_{\alpha}\in \mathcal{I}_{1}$. If $\alpha(g)>2$, then $\gamma(g)<\alpha(g)$ because $\beta(g)\ge 1$. By induction on $\alpha(g)$, $E_{\gamma}\in \mathcal{I}_{1}$ implies that $E_{\alpha}\in \mathcal{I}_{1}$. Therefore $\mathcal{I}=\mathcal{I}_{1}$.

Next we show $\mathcal{I}_{1}\subseteq \mathcal{I}_{2}$. Let $[E_{\omega_{s}},E_{\omega_{t}}]\in \mathcal{I}_{1}$. If $s=1$, then $[E_{gh}, E_{\omega_{t}}]=[[g,E_{\omega_{t}}],h]-[g,E_{\omega_{t+1}}]\in \mathcal{I}_{2}$. If $s\ge 2$, then $[E_{\omega_{s}},E_{\omega_{t}}]=[[E_{\omega_{s-1}},E_{\omega_{t}}],h]-[E_{\omega_{s-1}},E_{\omega_{t+1}}]$. Thus $[E_{\omega_{s}},E_{\omega_{t}}]\in \mathcal{I}_{2}$ by induction on $s$. Therefore $\mathcal{I}_{1}=\mathcal{I}_{2}$.
\end{proof}

\begin{theorem}\label{thm:Tt-bialgebra}
$\Tt$ is a pointed bialgebra.
\end{theorem}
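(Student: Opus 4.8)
The plan is to establish two things: that $\mathcal{I}$ is a biideal, so that $\Tt=\T/\mathcal{I}$ inherits a bialgebra structure from $\T$; and that the resulting quotient bialgebra is pointed. Since $\mathcal{I}$ is a two-sided ideal by definition, the first task reduces to checking that it is a coideal, i.e. $\epsilon(\mathcal{I})=0$ and $\Delta(\mathcal{I})\subseteq \mathcal{I}\otimes\T+\T\otimes\mathcal{I}$. Both conditions need only be verified on a generating set of the ideal, so by Lemma \ref{lem:equivalent-ideal}, which identifies $\mathcal{I}$ with the ideal $\mathcal{I}_{2}$ generated by the commutators $[g,E_{\omega_{n}}]=gE_{\omega_{n}}-E_{\omega_{n}}g$ for $n\ge 1$, it suffices to treat these generators. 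The counit condition is immediate: $\epsilon$ is an algebra map, so $\epsilon([g,E_{\omega_{n}}])=0$.

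For the coproduct I would write $\Delta([g,E_{\omega_{n}}])=[g\otimes g,\Delta(E_{\omega_{n}})]$ using $\Delta(g)=g\otimes g$, and then expand $\Delta(E_{\omega_{n}})$ by Lemma \ref{lem:coproduct-property} (the base case $E_{\omega_{1}}=E_{gh}$ being the skew-primitive $\Delta(E_{gh})=g\otimes E_{gh}+E_{gh}\otimes g^{2}$, i.e. the formula with empty middle sum). Applying the inner derivation $[g\otimes g,-]$ term by term, the extreme terms $g\otimes E_{\omega_{n}}$ and $E_{\omega_{n}}\otimes g^{n+1}$ produce $g^{2}\otimes[g,E_{\omega_{n}}]\in\T\otimes\mathcal{I}$ and $[g,E_{\omega_{n}}]\otimes g^{n+2}\in\mathcal{I}\otimes\T$ respectively.

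The real work is the middle terms $F_{L}\otimes F_{R}$, and this is the main obstacle, since a priori neither tensor leg need lie in $\mathcal{I}$. Here I would use the dichotomy of Lemma \ref{lem:coproduct-property}. In case (a), where the leading factor of $F_{L}$ is $h$, the lemma guarantees some factor $E_{\alpha_{j}'}$ of $F_{R}$ has $\alpha_{j}'(g)\ge 2$, hence $F_{R}\in\mathcal{I}$ and the whole term lies in $\T\otimes\mathcal{I}$, which is stable under $[g\otimes g,-]$. In case (b), where the leading factor of $F_{L}$ is not $h$, the key observation is that $h$ is the largest Lyndon word (as $g\prec h$), so no factor of $F_{L}$ or of $F_{R}$ can equal $h$; every factor is therefore either some $\omega_{r}$ or a word $\beta$ with $\beta(g)\ge 2$, and in either case $[g,E_{\beta}]\in\mathcal{I}$ (the former by Lemma \ref{lem:equivalent-ideal}, the latter because $E_{\beta}\in\mathcal{I}$ already). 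Since $[g,-]$ is a derivation and $\mathcal{I}$ is an ideal, this forces $[g,F_{L}],[g,F_{R}]\in\mathcal{I}$, and the identity $[g\otimes g,F_{L}\otimes F_{R}]=[g,F_{L}]\otimes gF_{R}+F_{L}g\otimes[g,F_{R}]$ places the term in $\mathcal{I}\otimes\T+\T\otimes\mathcal{I}$. Collecting all cases shows $\mathcal{I}$ is a coideal, hence a biideal, so $\Tt$ is a bialgebra.

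For pointedness I would exploit the $\mathbb{N}$-grading of $\T$ by the number of occurrences of $h$: since $\Delta(g)=g\otimes g$ and $\Delta(h)=1\otimes h+h\otimes g$ each preserve this $h$-degree, $\T$ is a graded bialgebra. Each generator $E_{\alpha}$ of $\mathcal{I}$ is homogeneous for this grading and has $h$-degree $\ge 1$ (a Lyndon word with $\alpha(g)\ge 2$ must contain an $h$, since $g$ is the only Lyndon word in the single letter $g$), so $\mathcal{I}$ is a homogeneous ideal contained in positive degrees. Consequently $\Tt$ is a graded bialgebra with $\mathcal{I}\cap\T_{0}=0$, whose degree-zero component is $\Tt_{0}=\K[\bar g]$; the latter is a pointed coalgebra, being spanned by the grouplike elements $\bar g^{\,n}$. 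A graded bialgebra whose degree-zero part is pointed is itself pointed, exactly as in the pointedness of $\BFdBnc$; invoking \cite[Proposition 4.1.2]{R2012} then yields that $\Tt$ is pointed, completing the proof.
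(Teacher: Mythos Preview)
Your proof is correct and follows essentially the same route as the paper: reduce via Lemma~\ref{lem:equivalent-ideal} to the generators $[g,E_{\omega_n}]$, expand $\Delta(E_{\omega_n})$ using Lemma~\ref{lem:coproduct-property}, and show each piece of $[g\otimes g,\Delta(E_{\omega_n})]$ lands in $\mathcal{I}\otimes\T+\T\otimes\mathcal{I}$. Your treatment of the middle terms is in fact more explicit than the paper's: the paper bundles both cases into the single claim $gF_{L}\in F_{L}g+\mathcal{I}$ and $gF_{R}\in F_{R}g+\mathcal{I}$ by appealing to \cite[Corollary 1.9(c)]{JZ2021}, whereas you unpack the dichotomy of Lemma~\ref{lem:coproduct-property} directly and avoid that external reference, which makes the argument more self-contained. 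For pointedness the paper simply invokes \cite[Corollary 5.1.14]{R2012} (quotients of pointed bialgebras are pointed, $\T$ being pointed since it is generated by a grouplike and a skew-primitive), while you instead exhibit the $h$-grading and apply \cite[Proposition 4.1.2]{R2012}; both are standard and the paper itself uses your grading route later (e.g.\ in Proposition~\ref{pro:Tt-N-graded-bialgebra} and Theorem~\ref{FdBnc+FdB}).
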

\begin{proof}
By Lemma \ref{lem:equivalent-ideal} and {\cite[Corollary 5.1.14]{R2012}}, 
it suffices to show that $\mathcal{I}_2$ is a coideal.
It is clear that $\epsilon(\T_2)=0$.
Now we show that $\Delta_{\T}(\mathcal{I}_{2}) \in \T\otimes \mathcal{I}_{2}+\mathcal{I}_{2}\otimes\T$.

Let $[g,E_{\omega_{\ell}}]\in \mathcal{I}_{2}$. If $\ell=1$, then $\Delta([g,E_{gh}])=g^{2}\otimes [g,E_{gh}]+[g,E_{gh}]\otimes g^{3}\in \T\otimes \mathcal{I}_{2}+\mathcal{I}_{2}\otimes \T$. If $\ell\ge 2$, then
\begin{align*}
\Delta([g,E_{\omega_{\ell}}])= &[g\otimes g,g\otimes E_{\omega_{\ell}}+\sum_{\omega_{\ell}} F_{\text{L}}\otimes F_{\text{R}}+E_{\omega_{\ell}}\otimes g^{|\omega_{\ell}|}]\\
=& g^{2}\otimes E_{g\omega_{\ell}}+E_{g\omega_{\ell}}\otimes g^{|\omega_{\ell}|+1}+\sum_{\omega_{\ell}} (gF_{\text{L}}\otimes gF_{\text{R}}-F_{\text{L}}g\otimes F_{\text{R}}g),
\end{align*}
where $F_{\text{L}}:=E_{\alpha_{1}}^{a_{1}}E_{\alpha_{2}}^{a_{2}}\cdots E_{\alpha_{m}}^{a_{m}}$ and $F_{\text{R}}:=E_{\alpha_{1}'}^{a_{1}'}E_{\alpha_{2}'}^{a_{2}'} \cdots E_{\alpha_{n}'}^{a_{n}'}$.
By Lemma \ref{lem:coproduct-property} and \cite[Corollary 1.9 (c)]{JZ2021}, we have
\begin{align*}
gF_{\text{L}}\otimes gF_{\text{R}}-F_{\text{L}}g\otimes F_{\text{R}}g \in
(F_{\text{L}}g+\mathcal{I})\otimes (F_{\text{R}}g+\mathcal{I})-F_{\text{L}}g \otimes F_{\text{R}}g\\
\subseteq
\T\otimes \mathcal{I}+\mathcal{I}\otimes \T=\T\otimes \mathcal{I}_{2}+\mathcal{I}_{2}\otimes \T.
\end{align*}
\end{proof}

 Let $\pi:\T\twoheadrightarrow\Tt$ be the projection.
 Now we determine the coproduct of $\pi(E_{\omega_{k}})$ in  $\Tt$. 

\begin{corollary}\label{cor:coproduct}
The following holds for $k\ge 0$ in $\Tt$:
\begin{align}\label{coproduct formula}
\Delta_{\Tt}(\pi(E_{\omega_{k}})) &=\sum_{r=0}^{k}\pi(E_{\omega_{r}})\otimes \pi(\mathcal{SH}'_{k-r,r+1}),
\end{align}
where $\mathcal{SH}'_{k-r,r+1}$ is the sum of the terms in $\mathcal{SH}_{k-r,r+1}$ (Lyndon-Shirshov basis form) of which the leftmost element is not $E_{h}$, and
\begin{align}\label{SH'-formula}
\pi(\mathcal{SH}_{k-r,r+1}')=\sum_{t_{1}+2t_{2}+\cdots+(k+1)t_{k+1}=k+1 \atop t_{1}+t_{2}+\ldots+t_{k+1}=r+1} \frac{(k+1) !}{t_{1} ! t_{2} ! \cdots t_{k+1} !} \left(\frac{\pi(g)}{1!}\right)^{t_{1}}\left(\frac{\pi(E_{\omega_{1}})}{2!}\right)^{t_{2}}\ldots \left(\frac{\pi(E_{\omega_{k}})}{(k+1)!}\right)^{t_{k+1}}.
\end{align}
\end{corollary}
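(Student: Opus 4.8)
The plan is to prove the two displayed formulas simultaneously by induction on $k$. Since $\pi$ is a morphism of bialgebras (Theorem \ref{thm:Tt-bialgebra}) we have $\Delta_{\Tt}\circ\pi=(\pi\otimes\pi)\circ\Delta_{\T}$, so I may compute inside $\Tt\otimes\Tt$ and freely use the defining relations of $\Tt$, in particular that every $E_\alpha$ with $\alpha(g)\ge 2$ dies. The engine of the induction is the recursion $E_{\omega_{k+1}}=[E_{\omega_{k}},h]$, which gives $\Delta_{\Tt}(\pi(E_{\omega_{k+1}}))=[\Delta_{\Tt}(\pi(E_{\omega_k})),\,\Delta_{\Tt}(\pi(h))]$ with $\Delta_{\Tt}(\pi(h))=1\otimes\pi(h)+\pi(h)\otimes\pi(g)$. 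The base cases $k=0,1$ are immediate from $\Delta_{\T}(g)=g\otimes g$ and $\Delta_{\T}(E_{gh})=g\otimes E_{gh}+E_{gh}\otimes g^{2}$.

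First I would record the relations in $\Tt$ that make the step work. By Lemma \ref{lem:equivalent-ideal} the elements $\pi(g),\pi(E_{\omega_1}),\pi(E_{\omega_2}),\dots$ commute pairwise, and by Lemma \ref{lem:commutators}(a) one has $[\pi(E_{\omega_r}),\pi(h)]=\pi(E_{\omega_{r+1}})$ for all $r\ge 0$ (with $E_{\omega_0}=g$). Hence the operator $D:=[\,\cdot\,,\pi(h)]$ preserves the commutative subalgebra $A:=\K[\pi(g),\pi(E_{\omega_1}),\pi(E_{\omega_2}),\dots]$ and acts on it as the degree-raising derivation fixed by $D(\pi(E_{\omega_s}))=\pi(E_{\omega_{s+1}})$. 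Writing the inductive hypothesis as $\Delta_{\Tt}(\pi(E_{\omega_k}))=\sum_{r=0}^{k}\pi(E_{\omega_r})\otimes R_{k,r}$ with $R_{k,r}\in A$, the bracket with $1\otimes\pi(h)$ applies $D$ to the right leg, while the bracket with $\pi(h)\otimes\pi(g)$ shifts the left leg $\pi(E_{\omega_r})\mapsto\pi(E_{\omega_{r+1}})$ and multiplies the right leg by $\pi(g)$ (here one uses that $\pi(g)$ commutes with every $R_{k,r}\in A$). This produces
\begin{align*}
\Delta_{\Tt}(\pi(E_{\omega_{k+1}}))=\sum_{r=0}^{k+1}\pi(E_{\omega_r})\otimes\bigl(D(R_{k,r})+\pi(g)\,R_{k,r-1}\bigr),
\end{align*}
so the left legs remain the single generators $\pi(E_{\omega_r})$ (this is the shape in \eqref{coproduct formula}) and $R_{k+1,r}=D(R_{k,r})+\pi(g)R_{k,r-1}$.

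To identify $R_{k,r}$, adopt the dictionary $u_i=\pi(E_{\omega_{i-1}})$, under which $\pi(g)=u_1$ and $D=\sum_{i\ge 1}u_{i+1}\partial_{u_i}$. The recursion above then becomes the classical recursion $B_{n+1,j}=u_1B_{n,j-1}+\sum_{i\ge1}u_{i+1}\partial_{u_i}(B_{n,j})$ for the partial Bell polynomials, so induction yields $R_{k,r}=B_{k+1,r+1}(u_1,\dots,u_{k+1})$, which is exactly the right-hand side of \eqref{SH'-formula}. It remains to match the labelling $R_{k,r}=\pi(\mathcal{SH}'_{k-r,r+1})$. Since $h$ is the $\succ$-largest Lyndon word, any Lyndon--Shirshov monomial whose leftmost factor is not $E_h$ contains no $E_h$ and no factor $E_\alpha$ with $\alpha(g)\ge 2$, so $\pi(\mathcal{SH}'_{k-r,r+1})$ indeed lies in $A$; comparing it with the Lyndon--Shirshov expansion of the shuffle polynomial $\mathcal{SH}_{k-r,r+1}(h,g)$ from \cite[Section 2, Theorem 5.9]{JZ2021} (the computation behind \eqref{formula 12}) shows it equals $B_{k+1,r+1}$.

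The main obstacle I anticipate is this last combinatorial identification: equating the $E_h$-free part of the Lyndon--Shirshov expansion of $\mathcal{SH}_{k-r,r+1}(h,g)$ with the partial Bell polynomial $B_{k+1,r+1}$, and checking that the factorial denominators in \eqref{SH'-formula} always recombine into the integer set-partition multiplicities, so that the statement holds in every characteristic. By contrast, the shape \eqref{coproduct formula} and the Bell recursion are routine bracket bookkeeping once the relations $[\pi(E_{\omega_r}),\pi(h)]=\pi(E_{\omega_{r+1}})$ and the commutativity of the $\pi(E_{\omega_s})$ are in hand.
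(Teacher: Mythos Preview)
Your approach is essentially the same as the paper's: induction on $k$ via the commutator $\Delta_{\Tt}(\pi(E_{\omega_{k+1}}))=[\Delta_{\Tt}(\pi(E_{\omega_k})),\Delta_{\Tt}(\pi(h))]$, yielding the recursion $R_{k+1,r}=D(R_{k,r})+\pi(g)\,R_{k,r-1}$ for the right tensor legs, then identifying this with the Bell polynomials. The only cosmetic difference is that the paper carries $\pi(\mathcal{SH}'_{k-r,r+1})$ through the induction from the start, invoking the recursion $[\mathcal{SH}'_{k-1-r,r+1},h]+\mathcal{SH}'_{k-r,r}g=\mathcal{SH}'_{k-r,r+1}$ from \cite[Formula~(37)]{JZ2021}, whereas you first derive the recursion for abstract $R_{k,r}$, recognize it as the Bell recursion, and only then match with $\pi(\mathcal{SH}')$; both routes close with the same citations \cite[Corollary~2.2, Corollary~2.3(a)]{JZ2021} for the explicit Bell form.
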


\begin{proof}
From \cite[Corollary 2.2]{JZ2021} and \cite[Corollary 2.3 (a)]{JZ2021}, $\mathcal{SH}'_{m,1}=E_{\omega_{m}}$ for $m\ge 0$. Then $\Delta(\pi(g))=\pi(g)\otimes \pi(g)=\pi(g)\otimes \pi(\mathcal{SH}'_{0,1})$ and
$\Delta(\pi(E_{gh}))=(\pi\otimes \pi)(g\otimes E_{gh}+E_{gh}\otimes g^{2})=\sum_{r=0}^{1} \pi(E_{\omega_{r}})\otimes \pi(\mathcal{SH}'_{1-r,r+1})$. By Lemma \ref{lem:equivalent-ideal} and the induction on $k$, we have:
\begin{align*}
\Delta(\pi(E_{\omega_{k}}))
=& \Delta(\pi(E_{\omega_{k-1}}))\Delta(\pi(h))-\Delta(\pi(h))\Delta(E_{\omega_{k-1}})
\\
=&\sum_{r=0}^{k-1} \pi(E_{\omega_{r}}) \otimes (\pi(\mathcal{SH}_{k-1-r,r+1}') \pi(h) - \pi(h) \pi(\mathcal{SH}_{k-1-r,r+1}'))\\
& +\pi(E_{\omega_{r}}) \pi(h)\otimes \pi(\mathcal{SH}_{k-1-r,r+1}') \pi(g) - \pi(h)  \pi(E_{\omega_{r}}) \otimes \pi(g) \pi(\mathcal{SH}_{k-1-r,r+1}')\\
=& \pi(g) \otimes \pi([\mathcal{SH}_{k-1,1}',h]) +\sum_{r=1}^{k-1} \pi(E_{\omega_{r}}) \otimes \pi([\mathcal{SH}_{k-1-r,r+1}',h])+\\
& \sum_{r=0}^{k-1} (\pi(h)  \pi(E_{\omega_{r}}) + \pi(E_{\omega_{r+1}})) \otimes \pi(\mathcal{SH}_{k-1-r,r+1}') \pi(g) - \pi(h) \pi(E_{\omega_{r}}) \otimes \pi(\mathcal{SH}_{k-1-r,r+1}') \pi(g)\\
=& \pi(g)\otimes \pi(E_{\omega_{k}}) + \pi(E_{\omega_{k}})\otimes \pi(g)^{k+1}+ \sum_{r=1}^{k-1} \pi(E_{\omega_{r}})\otimes \pi(\left[\mathcal{SH}_{k-1-r,r+1}',h\right]+\mathcal{SH}_{k-r,r}'g)\\
=&\sum_{r=0}^{k}\pi(E_{\omega_{r}})\otimes \pi(\mathcal{SH}_{k-r,r+1}'),
\end{align*}
where $[\mathcal{SH}_{k-1,1}',h]=E_{\omega_{k}}$ since $\mathcal{SH}_{k-1,1}'=E_{\omega_{k-1}}$, and the recursion $\left[\mathcal{SH}_{k-1-r,r+1}',h\right]+\mathcal{SH}_{k-r,r}'g=\mathcal{SH}_{k-r,r+1}'$ follows from \cite[Formula (37)]{JZ2021}. The formula (\ref{SH'-formula}) follows from \cite[Corollary 2.2]{JZ2021} and \cite[Corollary 2.3 (a)]{JZ2021}.
\end{proof}

\begin{remark}\quad \label{rmk:SH'-dual-Bell}
\begin{itemize}
\item[(i)] $\pi(\mathcal{SH}_{k-r,r+1}')$ is the Bell polynomial (\ref{Bell polynomials}).  
Furthermore, $\mathcal{SH}_{k-r,r+1}'$ is the dual Bell differential polynomial $\widehat{B}_{k+1,r+1}^{*}(E_h,E_g)$ introduced by Schimming--Rida \cite{SR1998} for a general noncommutative binomial formula, see \cite[Theorem 4.5 (b)]{JZ2021} for the details.
Therefore, (\ref{coproduct formula}) can be derived from the recursion of the dual Bell polynomials. We just give an approach using the shuffle type polynomials.

\item[(ii)] By the recursion of $\mathcal{SH}_{k-r,r+1}'$ (or $\widehat{B}_{k+1,r+1}^{*}$), Formula (\ref{coproduct formula}) has the recursion:
\begin{align*}
\Delta(\pi(E_{\omega_{k}}))=& (\id\otimes \operatorname{ad}_{r}{\pi(h)}+\operatorname{ad}_{r}{\pi(h)}\otimes \pi(g))^{k}(\pi(g)\otimes \pi(g)),\quad k\ge 0.
\end{align*}
\end{itemize}
\end{remark}

To simplify the notation, we will use $\mathcal{E}_{\omega_k}$ (resp. $h$, $g$) in $\Tt$ instead of $\pi(E_{\omega_k})$ (resp. $\pi(h)$, $\pi(g)$).

\begin{corollary}
Let $\Char\K=p>0$. Then
\begin{align}\label{formula:coproduct-h^p}
\Delta_{\Tt}(h^p)=1\otimes h^{p} + h\otimes \mathcal{E}_{\omega_{p-1}} +h^{p}\otimes g^{p}.
\end{align}
\end{corollary}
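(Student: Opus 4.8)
The plan is to exploit that $\Delta_{\Tt}$ is an algebra homomorphism, so that $\Delta_{\Tt}(h^p)=(\Delta_{\Tt}(h))^p=(1\otimes h+h\otimes g)^p$, and then to expand this $p$-th power in the associative $\K$-algebra $\Tt\otimes\Tt$ by means of Jacobson's formula for $p$-th powers in characteristic $p$. Writing $A:=1\otimes h$ and $B:=h\otimes g$, one has $A^p=1\otimes h^p$ and $B^p=h^p\otimes g^p$, which will supply the two outer terms; the entire content is to show that all the ``cross terms'' collapse to the single summand $h\otimes\mathcal{E}_{\omega_{p-1}}$.

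First I would record the two commutation facts available in $\Tt$: the relation $[\mathcal{E}_{\omega_n},h]=\mathcal{E}_{\omega_{n+1}}$ for all $n\ge 0$ (the defining identity $E_{\omega_{n+1}}=[E_{\omega_n},E_h]$ pushed through $\pi$, with $\mathcal{E}_{\omega_0}=g$), and $[g,\mathcal{E}_{\omega_n}]=0$ (which is exactly the relation generating $\mathcal{I}_2$ in Lemma \ref{lem:equivalent-ideal}). From these I compute the action of $\ad A$ and $\ad B$ on the elements $h\otimes\mathcal{E}_{\omega_n}$: a direct calculation in $\Tt\otimes\Tt$ gives $\ad A(h\otimes\mathcal{E}_{\omega_n})=h\otimes[h,\mathcal{E}_{\omega_n}]=-\,h\otimes\mathcal{E}_{\omega_{n+1}}$ and $\ad B(h\otimes\mathcal{E}_{\omega_n})=h^2\otimes[g,\mathcal{E}_{\omega_n}]=0$. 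Thus $\ad B$ annihilates the whole tower $\{h\otimes\mathcal{E}_{\omega_n}\}_{n\ge 1}$ while $\ad A$ merely shifts it.

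Next I feed this into Jacobson's formula $(A+B)^p=A^p+B^p+\sum_{i=1}^{p-1}s_i(A,B)$, in which the $s_i$ are read off from $\sum_{i=1}^{p-1} i\,s_i(A,B)\,t^{i-1}=\ad(tA+B)^{p-1}(A)$ in $(\Tt\otimes\Tt)[t]$. Starting from $\ad(tA+B)(A)=[B,A]=h\otimes\mathcal{E}_{\omega_1}$ and using the two identities above (so that at each further step only the $\ad A$-part, carrying a factor $t$, survives), a routine induction yields $\ad(tA+B)^{n}(A)=(-1)^{n-1}t^{n-1}\,h\otimes\mathcal{E}_{\omega_n}$ for $1\le n\le p-1$. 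Hence the right-hand polynomial is the single monomial $(-1)^{p-2}t^{p-2}\,h\otimes\mathcal{E}_{\omega_{p-1}}$. Comparing coefficients forces $s_i=0$ for $1\le i\le p-2$ (as $i$ is invertible mod $p$) and $(p-1)s_{p-1}=(-1)^{p-2}\,h\otimes\mathcal{E}_{\omega_{p-1}}$; since $p-1\equiv-1$ and $(-1)^{p-1}\equiv1$ when $\Char\K=p$, this gives $\sum_i s_i(A,B)=s_{p-1}=h\otimes\mathcal{E}_{\omega_{p-1}}$. Combining with $A^p$ and $B^p$ yields \eqref{formula:coproduct-h^p}.

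The main obstacle I anticipate is the bookkeeping inside Jacobson's formula: one must check that $\ad(tA+B)^{p-1}(A)$ really degenerates to a single power of $t$ (which rests precisely on the vanishing $\ad B(h\otimes\mathcal{E}_{\omega_n})=0$, i.e.\ on $g$ being central over the $\mathcal{E}_{\omega_n}$), and then track the scalar $(-1)^{p-2}/(p-1)$ correctly modulo $p$. An alternative, more computational route avoids Jacobson altogether: applying $\pi\otimes\pi$ to Radford's formula \eqref{formula-Radford} gives $\Delta_{\Tt}(h^p)=\sum_{k=0}^{p}h^k\otimes \pi(\mathcal{SH}_{p-k,k})$, after which one reduces each $\pi(\mathcal{SH}_{p-k,k})$ to the PBW normal form of $\Tt$ and shows that $\pi(\mathcal{SH}_{p-1,1})=\mathcal{E}_{\omega_{p-1}}$ (a hockey-stick identity together with $\binom{p}{j}\equiv 0$ for $0<j<p$) while $\pi(\mathcal{SH}_{p-k,k})=0$ for $2\le k\le p-1$. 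The vanishing of these middle terms is exactly where the combinatorics becomes heavy, so I would favour the Jacobson argument.
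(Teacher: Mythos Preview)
Your Jacobson-formula argument is correct, and it is genuinely different from the paper's proof. The paper takes precisely the route you sketch as the ``alternative'': it applies $\pi\otimes\pi$ to Radford's formula \eqref{formula-Radford} and then cites an external combinatorial result (\cite[Corollary~2.3(c)]{JZ2021}) to dispose of the images $\pi(\mathcal{SH}_{p-k,k})$ for $1\le k\le p-1$. Your approach is more self-contained: it uses only the two structural relations of $\Tt$ (the ladder $[\mathcal{E}_{\omega_n},h]=\mathcal{E}_{\omega_{n+1}}$ and the centrality $[g,\mathcal{E}_{\omega_n}]=0$) together with the standard Jacobson machinery, and the collapse of $\ad(tA+B)^{p-1}(A)$ to a single monomial is exactly the clean payoff of working in the quotient $\Tt$ rather than in $\T$. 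The paper's route is shorter on the page only because the heavy lifting---the analysis of $\mathcal{SH}_{p-k,k}$ in the Lyndon--Shirshov basis modulo $p$---is done in the companion paper being cited. Your bookkeeping of the scalar $(-1)^{p-2}/(p-1)\equiv 1$ is fine, and the case $p=2$ (where Jacobson reduces to the single commutator $[B,A]=h\otimes\mathcal{E}_{\omega_1}$) goes through as well.
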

\begin{proof}
It follows from Formula (\ref{formula-Radford}) and \cite[Corollary 2.3 (c)]{JZ2021}.
\end{proof}

\begin{theorem}\label{thm:Tt-GK-PBW}
 $\Tt$ has subexponential growth (infinite \textup{GK}-dimension) with a \textup{$\operatorname{PBW}$} basis $$\{h^{n_{2}} \mathcal{E}_{\omega_{k_{1}}}^{n_{\omega_{k_{1}}}} \cdots \mathcal{E}_{\omega_{k_{m}}}^{n_{\omega_{k_{m}}}}|~k_{1}> \ldots > k_{m} \ge 0,~n_{\omega_{k_{i}}},n_{2}\in \mathbb{N},~1\le i\le m\}$$ satisfying the relations:
\begin{align*}
 \mathcal{E}_{\omega_{s}}h = h\mathcal{E}_{\omega_{s}} + \mathcal{E}_{\omega_{s+1}}, \quad \mathcal{E}_{\omega_{s}} \mathcal{E}_{\omega_{t}} = \mathcal{E}_{\omega_{t}} \mathcal{E}_{\omega_{s}},\quad 0\le s<t.
\end{align*}
\end{theorem}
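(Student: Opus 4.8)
The plan is to recognize $\T$ as the universal enveloping algebra of the free Lie algebra and to present $\Tt$ as the enveloping algebra of an explicit quotient Lie algebra; then both the relations and the PBW basis fall out of the classical PBW theorem, and the growth is read off from the resulting Hilbert series. Concretely, I would recall that $\T=\K\langle g,h\rangle=U(\mathfrak{L})$, where $\mathfrak{L}:=\mathfrak{L}(X)$ is the free Lie algebra on $X=\{g,h\}$ from Lemma~\ref{lem:Lyndon-Shirshov-basis}, and that by Remark~\ref{rmk:PBW-of-free-algebra} the Lyndon--Shirshov basis $\{E_{\alpha}\mid\alpha\in\mathcal{L}\}$ of $\mathfrak{L}$ induces the PBW basis of $\T$. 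Put $\mathfrak{n}:=\operatorname{span}_{\K}\{E_{\alpha}\mid\alpha\in\mathcal{L},\ \alpha(g)\ge 2\}$. The key structural observation is that $\mathfrak{n}$ is a Lie ideal of $\mathfrak{L}$: by Lemma~\ref{lem:commutators}, for Lyndon words $\alpha\prec\beta$ the bracket $[E_{\alpha},E_{\beta}]$ is a $\Z$-combination of basis elements $E_{\gamma}$ with $\gamma(x)=\alpha(x)+\beta(x)$ for all $x\in X$, so in particular $\gamma(g)=\alpha(g)+\beta(g)$; hence whenever $\alpha(g)\ge 2$ every $\gamma$ occurring again has $\gamma(g)\ge 2$, giving $[\mathfrak{n},\mathfrak{L}]\subseteq\mathfrak{n}$.

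Since $\mathfrak{n}$ is a Lie ideal, the two-sided ideal of $U(\mathfrak{L})$ it generates equals $U(\mathfrak{L})\mathfrak{n}=\mathfrak{n}U(\mathfrak{L})=\ker\bigl(U(\mathfrak{L})\twoheadrightarrow U(\mathfrak{L}/\mathfrak{n})\bigr)$; by the very definition of $\cI$ this ideal is $\cI$, so $\Tt=\T/\cI\cong U(\mathfrak{L}/\mathfrak{n})$. Now the only Lyndon word with $\alpha(g)=0$ is $h$, and those with $\alpha(g)=1$ are exactly the $\omega_{r}=gh^{r}$, $r\ge 0$; therefore the images of $\{E_{h}\}\cup\{E_{\omega_{r}}\mid r\ge 0\}$ form a basis of the quotient Lie algebra $\mathfrak{L}/\mathfrak{n}$, with brackets $[E_{\omega_{s}},E_{h}]=E_{\omega_{s+1}}$ and $[E_{\omega_{s}},E_{\omega_{t}}]=0$ (the latter because $[E_{\omega_{s}},E_{\omega_{t}}]\in\mathfrak{n}$). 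Applying the PBW theorem to $U(\mathfrak{L}/\mathfrak{n})$ with the total order $h\succ\omega_{k_{1}}\succ\cdots$ in which $\omega_{k}$ increases with $k$ produces exactly the asserted basis, while the enveloping relations $xy-yx=[x,y]$ specialize to $\mathcal{E}_{\omega_{s}}h-h\mathcal{E}_{\omega_{s}}=\mathcal{E}_{\omega_{s+1}}$ and $\mathcal{E}_{\omega_{s}}\mathcal{E}_{\omega_{t}}=\mathcal{E}_{\omega_{t}}\mathcal{E}_{\omega_{s}}$. This settles the relations and the PBW basis at once.

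For the growth, the basis shows that $\Tt$ is $\N$-graded with $\deg h=1$ and $\deg\mathcal{E}_{\omega_{k}}=k+1$, and that the commuting family $\{\mathcal{E}_{\omega_{k}}\}_{k\ge 0}$ generates a polynomial subalgebra while $h$ is free on the left; hence the Hilbert series is $H_{\Tt}(t)=\frac{1}{1-t}\prod_{j\ge 1}\frac{1}{1-t^{j}}$. Infinite GK-dimension is then immediate: the algebraically independent elements $\mathcal{E}_{\omega_{0}},\dots,\mathcal{E}_{\omega_{N}}$ span a polynomial subalgebra of $\GKdim$ equal to $N+1$ for every $N$, so $\GKdim\Tt=\infty$ by monotonicity of the GK-dimension. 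For subexponential growth I would take $V=\K+\K g+\K h$, note that $\dim V^{n}=\sum_{d\le n}\dim\Tt_{d}$ with $\dim\Tt_{d}=\sum_{i\le d}p(i)$ for the partition function $p$, and invoke the classical bound $p(d)\le e^{\pi\sqrt{2d/3}}$ to get $\dim V^{n}\le (n+1)^{2}e^{\pi\sqrt{2n/3}}$, whence $\overline{\lim}_{n\to\infty}(\dim V^{n})^{1/n}=1$.

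The main obstacle is the Lie-ideal identification $\cI=U(\mathfrak{L})\mathfrak{n}$, equivalently the statement that rewriting an element of $\cI$ into PBW form never produces a monomial all of whose Lyndon factors have $\alpha(g)\le 1$; everything here rests on the bidegree bookkeeping $\gamma(g)=\alpha(g)+\beta(g)$ supplied by Lemma~\ref{lem:commutators}. As an alternative one can present $\Tt$ on the generators $h,\{\mathcal{E}_{\omega_{r}}\}_{r\ge 0}$ by the two displayed relations and run the Diamond Lemma with a monomial order comparing first the total degree (preserved by the relations), then word length, then the number of inversions; the only nontrivial overlap is $\mathcal{E}_{\omega_{a}}\mathcal{E}_{\omega_{b}}h$ with $a<b$, and its two reductions both collapse to $h\mathcal{E}_{\omega_{b}}\mathcal{E}_{\omega_{a}}+\mathcal{E}_{\omega_{b+1}}\mathcal{E}_{\omega_{a}}+\mathcal{E}_{\omega_{b}}\mathcal{E}_{\omega_{a+1}}$, so the system is confluent and yields the same basis.
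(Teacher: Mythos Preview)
Your argument is correct, and it takes a genuinely different route from the paper. The paper proceeds by first invoking Lemma~\ref{lem:equivalent-ideal} to see that the displayed relations hold in $\Tt$, then running the Diamond Lemma directly on those relations (checking the overlaps $(\mathcal{E}_{\omega_r}\mathcal{E}_{\omega_s})\mathcal{E}_{\omega_t}$ and $(\mathcal{E}_{\omega_r}\mathcal{E}_{\omega_s})h$), and finally citing \cite{S1976} and \cite[Example 1.3, \S12.1]{KL2000} for the subexponential/infinite-GK growth without further computation. Your approach instead identifies $\Tt$ with $U(\mathfrak{L}/\mathfrak{n})$ via the Lie-ideal check and then reads off both the relations and the PBW basis from the classical PBW theorem; your growth argument unwinds the Hilbert series explicitly rather than quoting the literature. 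What you gain is a cleaner explanation of \emph{why} the PBW basis exists (no ambiguity resolution needed) and a self-contained growth estimate; what the paper's Diamond Lemma approach buys is a template that is reused verbatim for the later quotients $\Tt_n$, $\Tt_n'(p)$, $\Tt_n(p)$, $\Tt_n(p;d_j,\ldots,d_1)$, where extra relations such as $g^n=1$, $h^p=0$, or $\mathcal{E}_{\omega_k}^{p^{d_k}}=0$ are not Lie-algebra relations and your enveloping-algebra shortcut no longer applies directly. Your closing Diamond Lemma sketch is in fact exactly the paper's argument, with the same single nontrivial overlap and the same reduced form.
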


\begin{proof}
By Lemmma \ref{lem:equivalent-ideal}, the relations hold.  By \cite[Section 8]{S1976} (see also \cite[Example 1.3, Section 12.1]{KL2000}), $\Tt$ has subexponential growth.  Applying the Diamond Lemma \cite{B1978} to the order:$\prec_{red}$: $h\prec_{red}\ldots \prec_{red} \mathcal{E}_{\omega_{k}}\prec_{red}\ldots \prec_{red} \mathcal{E}_{gh} \prec_{red} g$, we just need to show that the following overlap ambiguities are resolvable: for all $0\le r<s<t$:
\begin{align}
\label{ambiguties 1}& (\mathcal{E}_{\omega_{r}} \mathcal{E}_{\omega_{s}}) \mathcal{E}_{\omega_{t}} = \mathcal{E}_{\omega_{r}}(\mathcal{E}_{\omega_{s}}\mathcal{E}_{\omega_{t}}),\\
\label{ambiguties 2} & (\mathcal{E}_{\omega_{r}}\mathcal{E}_{\omega_{s}})h=\mathcal{E}_{\omega_{r}}(\mathcal{E}_{\omega_{s}}h).
 \end{align}
It is clear that overlap \eqref{ambiguties 1} is resolvable. By direct computations, the two sides of \eqref{ambiguties 2} can be reduced to the same expression $h\mathcal{E}_{\omega_{s}}\mathcal{E}_{\omega_{r}}+\mathcal{E}_{\omega_{s+1}}\mathcal{E}_{\omega_{r}}+\mathcal{E}_{\omega_{s}}\mathcal{E}_{\omega_{r+1}}$. So it is resolvable.
\end{proof}

\begin{proposition}\label{pro:Tt-N-graded-bialgebra}
$\Tt:=\oplus_{n=0}^{\infty}\Tt(n)$ is an $\bN$-graded pointed  bialgebra, where
\begin{align*}
\Tt(n):=\bigoplus_{r+k_{m}s_{m}+\ldots+k_{1}s_{1}=n \atop k_{m}>\ldots>k_{1}\ge 0} h^{r}\mathcal{E}_{\omega_{k_{m}}}^{s_{m}} \cdots \mathcal{E}_{\omega_{k_{1}}}^{s_{1}}.
\end{align*}
\end{proposition}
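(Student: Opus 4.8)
The plan is to realize the stated decomposition as the image of an honest $\bN$-grading on the free bialgebra $\T$, and then to check that the defining ideal $\cI$ is homogeneous. Concretely, I would grade $\T=\K\langle g,h\rangle$ by assigning $\deg g=0$ and $\deg h=1$, so that a word has degree equal to its number of $h$'s. Since $\Delta(g)=g\otimes g$ is homogeneous of degree $0$ and $\Delta(h)=1\otimes h+h\otimes g$ is homogeneous of degree $1$, and $\Delta$ is an algebra map into the tensor-product-graded algebra $\T\otimes\T$, the free bialgebra $\T$ is an $\bN$-graded bialgebra; the counit $\epsilon$ kills $h$, hence vanishes in all positive degrees, so it is graded as well.

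Next I would observe that each Lyndon word $\omega_{r}$ has a single $g$ and $r$ copies of $h$, so $E_{\omega_{r}}$ is homogeneous of degree $r$, and more generally every $E_{\alpha}$ is homogeneous of degree $\alpha(h)$. In particular the generators $\{E_{\alpha}\mid \alpha(g)\ge 2\}$ of $\cI$ are homogeneous, so $\cI$ is a homogeneous ideal; by Theorem~\ref{thm:Tt-bialgebra} it is also a coideal. Hence $\Tt=\T/\cI$ inherits a quotient $\bN$-grading simultaneously compatible with product, coproduct and counit. To identify the components, I use the PBW basis of Theorem~\ref{thm:Tt-GK-PBW}: the monomial $h^{n_{2}}\mathcal{E}_{\omega_{k_{1}}}^{n_{\omega_{k_{1}}}}\cdots \mathcal{E}_{\omega_{k_{m}}}^{n_{\omega_{k_{m}}}}$ has $h$-degree $n_{2}+\sum_{i}k_{i}n_{\omega_{k_{i}}}$, so collecting the basis by degree reproduces exactly the spaces $\Tt(n)$ of the statement, proving $\Tt=\bigoplus_{n\ge 0}\Tt(n)$. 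Multiplicativity $\Tt(m)\Tt(n)\subseteq\Tt(m+n)$ is then immediate from the homogeneity of the relations $\mathcal{E}_{\omega_{s}}h=h\mathcal{E}_{\omega_{s}}+\mathcal{E}_{\omega_{s+1}}$ (degree $s+1$) and $\mathcal{E}_{\omega_{s}}\mathcal{E}_{\omega_{t}}=\mathcal{E}_{\omega_{t}}\mathcal{E}_{\omega_{s}}$ (degree $s+t$).

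For the coproduct I would include the explicit check as a sanity test, even though it already follows from the homogeneity of $\cI$. By Corollary~\ref{cor:coproduct}, $\Delta(\mathcal{E}_{\omega_{k}})=\sum_{r=0}^{k}\mathcal{E}_{\omega_{r}}\otimes \mathcal{SH}'_{k-r,r+1}$, so it suffices that $\mathcal{SH}'_{k-r,r+1}$ be homogeneous of degree $k-r$. In Formula~\eqref{SH'-formula} the $i$-th factor of a Bell monomial is $\mathcal{E}_{\omega_{i-1}}$, of degree $i-1$, raised to the power $t_{i}$; using the constraints $\sum_{i}i\,t_{i}=k+1$ and $\sum_{i}t_{i}=r+1$, the degree equals $\sum_{i}(i-1)t_{i}=(k+1)-(r+1)=k-r$, as required, whence $\Delta(\Tt(n))\subseteq\bigoplus_{i+j=n}\Tt(i)\otimes\Tt(j)$. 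Pointedness is already Theorem~\ref{thm:Tt-bialgebra}; alternatively it follows from the grading together with $\Tt(0)=\K[g]$ being pointed, exactly as for $\BFdB$ in Remark~\ref{rmk:BFdBnc-Abelization-cong-BFdB}.

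The main obstacle is not conceptual but the bookkeeping needed to confirm that the coproduct respects the grading, and this reduces entirely to the degree count $\sum_{i}(i-1)t_{i}=k-r$ on the Bell polynomials of Formula~\eqref{SH'-formula}. I expect the only genuine subtlety to be keeping the two indexing conventions aligned, namely that $\deg\mathcal{E}_{\omega_{k}}=k$ matches both the $h$-count of $E_{\omega_{k}}$ in $\T$ and the weight $k+1$ appearing in those constraints; once this is fixed, the grading is forced and compatibility with all three structure maps follows formally from the fact that $\cI$ is a homogeneous biideal.
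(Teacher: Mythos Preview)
Your proposal is correct and follows essentially the same route as the paper: assign degree $0$ to $g$ and degree $1$ to $h$, so $\mathcal{E}_{\omega_k}$ has degree $k$; use the PBW basis of Theorem~\ref{thm:Tt-GK-PBW} to identify the graded pieces and check multiplicativity; and use Corollary~\ref{cor:coproduct} (the Bell-polynomial description of $\Delta(\mathcal{E}_{\omega_k})$) to check the coproduct. Your framing via a homogeneous biideal of the graded free bialgebra $\T$ is slightly more conceptual than the paper's direct verification in $\Tt$, but the content is the same.
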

\begin{proof}
By Theorem \ref{thm:Tt-bialgebra} it suffices to show that $\Tt$ is $\bN$-graded. Observe that $g\in \Tt(0)$, $h\in \Tt(1)$. Then $\mathcal{E}_{\omega_{k}}\in \Tt(k)$ for all $k\ge 0$. By Theorem \ref{thm:Tt-GK-PBW}, $\Tt(n)\Tt(m)\subseteq \Tt(n+m)$. By Corollary \ref{cor:coproduct}, $\Delta(\Tt(n))\subseteq \sum_{k=0}^n\Tt(k)\otimes\Tt(n-k)$,
 as desired.
\end{proof}

Now we show that $\Tt$ is an Ore extension of the Fa\`a di Bruno bialgebra $\mathcal{B}_{\operatorname{FdB}}$. Let $R$ be the subalgebra of $\Tt$ generated by $\{\mathcal{E}_{\omega_{n}}|~n\ge 0\}$.

\begin{lemma}\quad\label{lem:R-Ore-extension}
 The following hold:
\begin{itemize}
\item[(a)] $R$ is a commutative sub-bialgebra of $\Tt$;
\item[(b)] $\Tt$ is an Ore extension of $R$. Namely,  $\Tt=R[h;\operatorname{ad}_{r}h]$, where
$\operatorname{ad}_{r}h(a):=[a,h]$ for $a\in R$.
\end{itemize}
\end{lemma}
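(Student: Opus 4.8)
The plan is to establish part (a) first and then bootstrap part (b) from it. For part (a), I would show that $R$ is commutative and then that it is a sub-coalgebra. Commutativity is immediate from the defining relation $\mathcal{E}_{\omega_s}\mathcal{E}_{\omega_t}=\mathcal{E}_{\omega_t}\mathcal{E}_{\omega_s}$ in Theorem~\ref{thm:Tt-GK-PBW} (equivalently from $\mathcal{I}_1\subseteq\mathcal{I}$ in Lemma~\ref{lem:equivalent-ideal}), since $R$ is generated by the pairwise-commuting elements $\{\mathcal{E}_{\omega_n}\}_{n\ge 0}$. That $R$ is a sub-coalgebra is exactly the content of Corollary~\ref{cor:coproduct}: the formula
\[
\Delta_{\Tt}(\mathcal{E}_{\omega_k})=\sum_{r=0}^{k}\mathcal{E}_{\omega_r}\otimes \pi(\mathcal{SH}'_{k-r,r+1}),
\]
together with Formula~(\ref{SH'-formula}) expressing $\pi(\mathcal{SH}'_{k-r,r+1})$ as a Bell polynomial in $g,\mathcal{E}_{\omega_1},\ldots,\mathcal{E}_{\omega_k}$, shows $\Delta_{\Tt}(R)\subseteq R\otimes R$. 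Since $\epsilon$ clearly restricts to $R$, this gives that $R$ is a sub-bialgebra, and comparing (\ref{coproduct formula})--(\ref{SH'-formula}) with Definition~\ref{def:BFdB} identifies it (via $\mathcal{E}_{\omega_{n-1}}\mapsto u_n$) as a copy of $\BFdB$, consistent with the later theorem.

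For part (b), the PBW basis from Theorem~\ref{thm:Tt-GK-PBW} is the key input. That basis is $\{h^{n}\,\mathcal{E}_{\omega_{k_1}}^{n_1}\cdots\mathcal{E}_{\omega_{k_m}}^{n_m}\}$ with the $h$-power on the left and the $\mathcal{E}_{\omega}$'s forming the chosen monomial basis of $R$. I would read off from this that $\Tt$ is a free left $R$-module (or free right $R$-module after reordering) with basis $\{h^n\}_{n\ge 0}$, i.e. every element of $\Tt$ is uniquely $\sum_n h^n a_n$ with $a_n\in R$ of finite support. This is precisely the freeness condition $R[h;\sigma,\delta]=\{\sum_i h^i a_i\}$ demanded in the definition of an Ore extension. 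The endomorphism is $\sigma=\id$ and the derivation is $\delta=\operatorname{ad}_r h$, so I must verify (i) that $\operatorname{ad}_r h$ maps $R$ into $R$, (ii) that it is a genuine derivation of $R$, and (iii) that the commutation relation $a h = h\,\sigma(a)+\delta(a)=h\,a+\operatorname{ad}_r h(a)$ holds for all $a\in R$.

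Point (iii) on generators is exactly the relation $\mathcal{E}_{\omega_s}h=h\mathcal{E}_{\omega_s}+\mathcal{E}_{\omega_{s+1}}$ of Theorem~\ref{thm:Tt-GK-PBW}, which also establishes (i) on generators since $\operatorname{ad}_r h(\mathcal{E}_{\omega_s})=[\mathcal{E}_{\omega_s},h]=\mathcal{E}_{\omega_{s+1}}\in R$; I would then extend (i) to all of $R$ using the Leibniz rule. Point (ii) is the claim that $\operatorname{ad}_r h$ is an ordinary (untwisted) derivation, i.e. $[ab,h]=[a,h]b+a[b,h]$, which is automatic because $\operatorname{ad}_r h$ is an inner derivation of the associative algebra $\Tt$ (the commutator $a\mapsto[a,h]=ah-ha$ always satisfies the Leibniz rule), and $\sigma=\id$ because $R$ is commutative so no twist is needed. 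The main obstacle, and the one point requiring genuine care rather than formal manipulation, is confirming that the left-normal-form freeness really follows from the PBW basis as stated: one must check that the normal ordering in Theorem~\ref{thm:Tt-GK-PBW} (which places $h$ on the left of a commuting monomial in $R$) is compatible with writing $\Tt$ as $\bigoplus_n h^n R$ with $R$-coefficients, i.e. that reduction never produces $h$-powers entangled inside the $R$-part in a way that breaks uniqueness. Since the only straightening relation involving $h$ is $\mathcal{E}_{\omega_s}h=h\mathcal{E}_{\omega_s}+\mathcal{E}_{\omega_{s+1}}$, repeated application always pushes a single $h$ leftward past the commuting $\mathcal{E}_\omega$-block while keeping the correction terms inside $R$, so the left $R$-module freeness is clean; I would spell this out just enough to justify the unique $\sum_n h^n a_n$ expansion.
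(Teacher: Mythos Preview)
Your proposal is correct and follows essentially the same route as the paper: both invoke Corollary~\ref{cor:coproduct} (Formula~(\ref{coproduct formula})) and Theorem~\ref{thm:Tt-GK-PBW} to get commutativity and the sub-bialgebra property for part~(a), and both use the relation $\mathcal{E}_{\omega_k}h=h\mathcal{E}_{\omega_k}+[\mathcal{E}_{\omega_k},h]$ together with the fact that $\operatorname{ad}_r h$ is a derivation to conclude the Ore extension structure for part~(b). The paper's proof is considerably terser---it simply notes that $\Tt$ is generated by $R$ and $h$ rather than spelling out the PBW freeness $\Tt=\bigoplus_n h^n R$ as you do---but your added detail is sound and arguably clarifies what the paper leaves implicit.
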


\begin{proof}
By Formula (\ref{coproduct formula}) and Theorem \ref{thm:Tt-GK-PBW}, $R$ is a commutative bialgebra and  $\mathcal{E}_{\omega_{k}}h = h\mathcal{E}_{\omega_{k}}+[\mathcal{E}_{\omega_{k}},h]$. On the other hand, $\operatorname{ad}_{r}h$ is a derivation of $R$ and $\Tt$ is generated by $R$ and $h$. Therefore, $\Tt=R[h;\operatorname{ad}_{r}h]$.
\end{proof}

\begin{lemma}\label{lem:R-maximal-sub-bialgebra}
Assume that $\Char\K=0$. Then $R$ is a maximal commutative subalgebra of $\Tt$, hence a maximal commutative sub-bialgebra of $\Tt$.

\end{lemma}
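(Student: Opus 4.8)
The plan is to compute the centralizer $C_{\Tt}(R)$ of $R$ in $\Tt$ and show it equals $R$. This suffices: if $S$ is any commutative subalgebra with $R\subseteq S$, then every $s\in S$ commutes with all of $R$, so $S\subseteq C_{\Tt}(R)=R$ and hence $S=R$; thus $R$ is maximal commutative. Since $R$ is already a commutative sub-bialgebra by Lemma~\ref{lem:R-Ore-extension}(a), maximality among commutative subalgebras forces maximality among the smaller collection of commutative sub-bialgebras, giving the final clause at once.

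By Lemma~\ref{lem:R-Ore-extension}(b) we have $\Tt=R[h;\delta]$ with $\delta:=\operatorname{ad}_{r}h$, and Theorem~\ref{thm:Tt-GK-PBW} yields the direct-sum decomposition $\Tt=\bigoplus_{i\ge 0}h^{i}R$, so each element is written uniquely as $x=\sum_{i=0}^{n}h^{i}a_{i}$ with $a_{i}\in R$. Two structural facts from the PBW basis will be used: first, $R$ is the polynomial algebra $\K[\mathcal{E}_{\omega_{k}}\mid k\ge 0]$ in commuting indeterminates (the $\mathcal{E}$-monomials are exactly the $n_{2}=0$ part of the PBW basis), hence an integral domain; second, the derivation acts by $\delta(\mathcal{E}_{\omega_{s}})=\mathcal{E}_{\omega_{s+1}}$, so in particular $\delta(g)=\mathcal{E}_{\omega_{1}}\neq 0$.

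The core computation is to expand $[x,b]$ for $b\in R$ in normal form. Iterating the Ore relation $ah=ha+\delta(a)$ gives the Leibniz-type identity $a h^{i}=\sum_{j=0}^{i}\binom{i}{j}h^{i-j}\delta^{j}(a)$, and since $R$ is commutative the $j=0$ terms of $bx$ cancel against $xb$, leaving
\[
[x,b]\;=\;-\sum_{k\ge 0}h^{k}\sum_{j\ge 1}\binom{k+j}{j}\,a_{k+j}\,\delta^{j}(b),
\]
where $a_{m}=0$ for $m>n$. Suppose, for contradiction, that $x\in C_{\Tt}(R)\setminus R$, and let $n\ge 1$ be maximal with $a_{n}\neq 0$. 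Reading off the coefficient of $h^{n-1}$ (legitimate by uniqueness of the normal form) kills every term with $j\ge 2$, leaving $n\,a_{n}\,\delta(b)=0$ for all $b\in R$; taking $b=g$ gives $n\,a_{n}\,\mathcal{E}_{\omega_{1}}=0$.

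This last step is exactly where $\Char\K=0$ is essential and is the conceptual heart of the argument: $n$ is invertible in $\K$, and since $R$ is a domain with $a_{n}\neq 0$ and $\mathcal{E}_{\omega_{1}}\neq 0$, the product is nonzero, a contradiction. Hence all $a_{i}$ with $i\ge 1$ vanish and $x\in R$, so $C_{\Tt}(R)=R$. The main technical obstacle is the bookkeeping of the normal-form expansion of $[x,b]$ and the verification that the top $h$-degree coefficient isolates a single summand; the delicate point is the reliance on characteristic zero, which fails precisely when $p\mid n$ and thereby foreshadows the enlarged commutative subalgebras arising from $h^{p}$ in positive characteristic.
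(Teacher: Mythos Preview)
Your proof is correct and follows essentially the same route as the paper: both write an element in the normal form $\sum_{j}h^{j}r_{j}$, commute it with $g$, and read off the $h^{n-1}$-coefficient $n\,r_{n}\,\mathcal{E}_{\omega_{1}}$, invoking $\Char\K=0$ to force $r_{n}=0$. Your framing via the centralizer $C_{\Tt}(R)=R$ is slightly cleaner, and you make explicit the use of $R$ being an integral domain, which the paper leaves implicit when asserting $n\mathcal{E}_{\omega_{1}}r_{n}\neq 0$.
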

\begin{proof}
By Lemma \ref{lem:R-Ore-extension} $(a)$, $R$ is commutative sub-bialgebra. Suppose that there is a commutative subalgebra $D$ such that $R\subsetneq D \subsetneq \Tt$. By Lemma \ref{lem:R-Ore-extension} (b),  $d=\sum_{j=0}^{n}h^{j}r_{j}\in D$ for some $r_{j}\in R$, $0\le j\le n$, and $r_{n}\neq 0$. Note that there holds in $\T$:
\begin{align}\label{formula:E_omega-h^j}
[E_{\omega_{i}},h^{j}]=\sum_{k=1}^{j}\binom{j}{k}h^{j-k} E_{\omega_{i+k}}, \quad j\ge 1,~i\ge 0.
\end{align}
Since $[g,r_{0}]=0$ and $r_{n}\neq 0$,  it follows that $[g,d]=[\mathcal{E}_{\omega_{0}},d]=\binom{n}{1}h^{n-1} \mathcal{E}_{\omega_{1}}r_{n}+\sum_{j=0}^{n-2}h^{j}s_{j}\neq 0$, where $s_{0},\ldots,s_{n-2}\in R$. However, $[g,d]=0$ since $D$ is commutative, a contradiction. Consequently, $R$ is a maximal commutative sub-bialgebra of $\Tt$.
\end{proof}

\begin{lemma}\label{lem:R-cong-FdB}
As bialgebras, $R\cong \mathcal{B}_{\operatorname{FdB}}$.
\end{lemma}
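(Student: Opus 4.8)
The plan is to establish a bialgebra isomorphism $R \cong \BFdB$ by exhibiting an explicit candidate map and checking that it respects both the algebra and coalgebra structures. The most natural choice is to send the generators $\mathcal{E}_{\omega_k}$ of $R$ to suitably normalized generators of $\BFdB$; guided by Remark~\ref{rmk:BFdBnc-Abelization-cong-BFdB}, which records the abelianization isomorphism $a_n \mapsto u_{n+1}/(n+1)!$ for $\BFdBnc$, I would define $\psi: R \to \BFdB$ by $\psi(\mathcal{E}_{\omega_k}) = \dfrac{u_{k+1}}{(k+1)!}$ for $k \ge 0$ (with $\mathcal{E}_{\omega_0} = g \mapsto u_1$).

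First I would verify that $\psi$ is a well-defined algebra isomorphism. By Lemma~\ref{lem:R-Ore-extension}(a), $R$ is commutative, and since the $\{\mathcal{E}_{\omega_k}\}_{k \ge 0}$ form part of the PBW basis of $\Tt$ given in Theorem~\ref{thm:Tt-GK-PBW}, they are algebraically independent; hence $R$ is a polynomial algebra $\K[\mathcal{E}_{\omega_0}, \mathcal{E}_{\omega_1}, \ldots]$. Since $\BFdB = \K[u_1, u_2, \ldots]$ is also a polynomial algebra (Definition~\ref{def:BFdB}), and the scalars $(k+1)!$ are invertible (at least in characteristic zero — I would note the characteristic hypothesis if needed), the assignment on generators extends to a unique algebra isomorphism. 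I would also confirm compatibility with the counit: $\epsilon(\mathcal{E}_{\omega_k})$ should match $\epsilon(u_{k+1}/(k+1)!) = \delta_{k+1,1}/(k+1)! = \delta_{k,0}$, which is consistent since $\mathcal{E}_{\omega_0}=g$ is grouplike with $\epsilon(g)=1$ and $\mathcal{E}_{\omega_k}$ for $k \ge 1$ has counit zero.

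The main work, and the step I expect to be the principal obstacle, is checking that $\psi$ is a coalgebra map, i.e. $(\psi \otimes \psi)\circ \Delta_{\Tt} = \Delta_{\BFdB} \circ \psi$ on each $\mathcal{E}_{\omega_k}$. Here I would invoke Corollary~\ref{cor:coproduct} together with the crucial Formula~\eqref{SH'-formula}, which already expresses $\Delta_{\Tt}(\mathcal{E}_{\omega_k}) = \sum_{r=0}^k \mathcal{E}_{\omega_r} \otimes \pi(\mathcal{SH}'_{k-r,r+1})$ with $\pi(\mathcal{SH}'_{k-r,r+1})$ written explicitly as a Bell-polynomial-type sum in the $\mathcal{E}_{\omega_j}$. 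The point is that, after applying $\psi$ and tracking the factorial normalizations, the second tensor factor $\psi(\pi(\mathcal{SH}'_{k-r,r+1}))$ must coincide with the partial Bell polynomial $B_{k+1,r+1}(u_1, \ldots, u_{k+1})$ appearing in $\Delta_{\BFdB}(u_{k+1})$. Comparing Formula~\eqref{SH'-formula} with Formula~\eqref{Bell polynomials}, the two have identical combinatorial shape: both are sums over compositions $t_1 + 2t_2 + \cdots = k+1$, $t_1 + \cdots = r+1$, with multinomial coefficients $\frac{(k+1)!}{t_1!\cdots t_{k+1}!}$ and factors $(u_j/j!)^{t_j}$ versus $(\mathcal{E}_{\omega_{j-1}}/j!)^{t_j}$. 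So the verification reduces to matching these term by term under the substitution $\mathcal{E}_{\omega_{j-1}} \mapsto u_j/j!$, and confirming that the reindexing $\Delta_{\BFdB}(u_n) = \sum_{k=1}^n u_k \otimes B_{n,k}$ aligns with $n = k+1$, summation index $r+1 \leftrightarrow$ the Bell index. The careful bookkeeping of the factorial factors from the normalization $\psi(\mathcal{E}_{\omega_k}) = u_{k+1}/(k+1)!$ is where errors are most likely, but no genuinely new idea is needed beyond Remark~\ref{rmk:SH'-dual-Bell}(i), which already identifies $\pi(\mathcal{SH}'_{k-r,r+1})$ as the relevant Bell polynomial. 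I would conclude that $\psi$ intertwines the two coproducts, hence is the desired bialgebra isomorphism.
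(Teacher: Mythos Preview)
Your overall strategy is the same as the paper's: define an explicit algebra isomorphism between the polynomial rings and then verify it is a coalgebra map using Corollary~\ref{cor:coproduct}. However, the specific map you propose is wrong. The paper uses $\phi:\BFdB\to R$, $u_{n+1}\mapsto \mathcal{E}_{\omega_n}$, with \emph{no} factorial normalization, and this is what the comparison of \eqref{SH'-formula} with \eqref{Bell polynomials} actually shows. You yourself observe that the two formulas have ``identical combinatorial shape'' with factors $(u_j/j!)^{t_j}$ versus $(\mathcal{E}_{\omega_{j-1}}/j!)^{t_j}$: this matching is precisely $u_j\leftrightarrow \mathcal{E}_{\omega_{j-1}}$, not $u_j/j!\leftrightarrow \mathcal{E}_{\omega_{j-1}}$. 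Remark~\ref{rmk:BFdBnc-Abelization-cong-BFdB} concerns the generators $a_n$ of $\BFdBnc$ (which correspond to $\omega_n=gh^n$, not to $E_{\omega_n}$), so importing that normalization here is a category error.

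Your map $\psi:\mathcal{E}_{\omega_k}\mapsto u_{k+1}/(k+1)!$ is in fact \emph{not} a coalgebra morphism. For instance, take $k=2$, $r=1$: one has $B_{3,2}(u_1,u_2,u_3)=3u_1u_2$, so the $(u_2\otimes B_{3,2})$-term in $\Delta_{\BFdB}(\psi(\mathcal{E}_{\omega_2}))=\tfrac{1}{6}\Delta(u_3)$ contributes $\tfrac{1}{2}\,u_2\otimes u_1u_2$ (after isolating the first tensor factor $u_2$), whereas $(\psi\otimes\psi)\Delta_{\Tt}(\mathcal{E}_{\omega_2})$ has the corresponding term $\tfrac{u_2}{2}\otimes B_{3,2}(u_1,\tfrac{u_2}{2},\tfrac{u_3}{6})=\tfrac{u_2}{2}\otimes \tfrac{3}{2}u_1u_2$, giving $\tfrac{3}{4}\,u_2\otimes u_1u_2$. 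A secondary issue is that your factorials force a characteristic-zero hypothesis that the lemma does not need; with the correct map $u_{n+1}\mapsto \mathcal{E}_{\omega_n}$ the result holds over any field.
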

\begin{proof}
Note that $R$ and $\mathcal{B}_{\operatorname{FdB}}$ are commutative. Then there is an algebra isomorphism: 
$$\phi:\mathcal{B}_{\operatorname{FdB}}\mapsto R, \qquad u_{n+1}\mapsto \mathcal{E}_{\omega_{n}}, \quad n\ge 0.$$ 
By Corollary \ref{coproduct formula} and Theorem \ref{thm:Tt-GK-PBW}, $\phi$ 
is a bialgebra map. Consequently, $R\cong\mathcal{B}_{\operatorname{FdB}}$ as bialgebras.
\end{proof}

\begin{theorem}\label{thm:Tt-Ore-extension-of-FdB}
$\Tt$ is an Ore extension of the Fa\`a di Bruno bialgebra $\B_{\operatorname{FdB}}$. If $\Char\K=0$, then $\B_{\operatorname{FdB}}$ is a maximal commutative sub-bialgebra of $\Tt$.
\end{theorem}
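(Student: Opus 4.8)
The plan is to assemble Theorem~\ref{thm:Tt-Ore-extension-of-FdB} directly from the pieces already established in this subsection, since the hard combinatorial work has been offloaded to Lemma~\ref{lem:R-Ore-extension}, Lemma~\ref{lem:R-cong-FdB}, and Lemma~\ref{lem:R-maximal-sub-bialgebra}. The statement has two claims, and each is essentially a matter of transporting an established structural fact along the isomorphism $R \cong \B_{\operatorname{FdB}}$. First I would record that by Lemma~\ref{lem:R-Ore-extension}(b) we have $\Tt = R[h; \operatorname{ad}_r h]$, an Ore extension of $R$ with trivial twist ($\sigma = \id$) and derivation $\delta = \operatorname{ad}_r h$. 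Then by Lemma~\ref{lem:R-cong-FdB} the bialgebra isomorphism $\phi : \B_{\operatorname{FdB}} \xrightarrow{\sim} R$ sending $u_{n+1} \mapsto \mathcal{E}_{\omega_n}$ identifies $R$ with the Fa\`a di Bruno bialgebra. Composing, $\Tt$ is realized as an Ore extension of a copy of $\B_{\operatorname{FdB}}$ inside $\Tt$; this proves the first sentence.

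For the second sentence, I would simply invoke Lemma~\ref{lem:R-maximal-sub-bialgebra}, which asserts precisely that when $\Char\K = 0$, the subalgebra $R$ is a maximal commutative subalgebra of $\Tt$, hence a maximal commutative sub-bialgebra. Transporting this along $\phi$ identifies $\B_{\operatorname{FdB}}$ with a maximal commutative sub-bialgebra of $\Tt$, giving the claim. The whole argument is thus a short assembly: state the Ore-extension presentation, identify the base with $\B_{\operatorname{FdB}}$ via $\phi$, and quote maximality in characteristic zero.

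The one point requiring a small amount of care is compatibility of the derivation with the identification: I would note that under $\phi$, the derivation $\operatorname{ad}_r h$ on $R$ corresponds to a well-defined derivation on $\B_{\operatorname{FdB}}$ determined by the commutation relations $\mathcal{E}_{\omega_s} h = h \mathcal{E}_{\omega_s} + \mathcal{E}_{\omega_{s+1}}$ of Theorem~\ref{thm:Tt-GK-PBW}, i.e. $u_{n+1} \mapsto u_{n+2}$ in the Fa\`a di Bruno generators. This confirms that the Ore-extension data genuinely lives on $\B_{\operatorname{FdB}}$ and is not merely a presentation of $R$. Since everything needed has already been verified in the preceding lemmas, I do not anticipate any genuine obstacle here; the theorem is a clean corollary, and the proof can reasonably read simply as \textit{``It follows from Lemmas~\ref{lem:R-Ore-extension}, \ref{lem:R-cong-FdB} and \ref{lem:R-maximal-sub-bialgebra}.''}
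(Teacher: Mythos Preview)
Your proposal is correct and matches the paper's approach exactly: the paper's proof consists of the single sentence ``It follows directly from Lemmas~\ref{lem:R-Ore-extension}, \ref{lem:R-maximal-sub-bialgebra} and \ref{lem:R-cong-FdB},'' which is precisely the assembly you describe. Your additional remark on transporting the derivation $\operatorname{ad}_r h$ along $\phi$ (sending $u_{n+1} \mapsto u_{n+2}$) is a helpful sanity check that the paper omits but is consistent with its setup.
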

\begin{proof}
It follows directly from Lemmas \ref{lem:R-Ore-extension}, \ref{lem:R-maximal-sub-bialgebra} and \ref{lem:R-cong-FdB}.
\end{proof}

\begin{remark}\quad\label{rmk:Ttpm}
\begin{itemize}
\item[(i)]$\Tt$ does not admit a Hopf algebra structure. Consider the localization $\Tt_{\pm 1}$ at $g$ with $\Delta(\frac{1}{g})=\frac{1}{g}\otimes \frac{1}{g}$, it is clear that $\Tt_{\pm 1}$ admits a Hopf algebra structure by \cite[Remark 36]{T1971}, see also \cite[Section IX]{JR1979}.
Furthermore, $\Tt_{\pm 1}$ is a HOE of $\mathcal{H}_{\operatorname{FdB}}$.

\item[(ii)] More generally, using the Lyndon-Shirshov basis we can consider a Hopf algebra which contains $\Tt_{\pm 1}$. Let $r\ge 1$ and $\T_{\pm 1}^{[r]}$ be the free algebra generated by $\{g,g^{-1},h\}$ over $\K$. It is clear that $\T_{\pm 1}^{[r]}$ is a bialgebra with the coproduct and counit:
\begin{align*}
\Delta(g^{\pm 1})=g^{\pm 1}\otimes g^{\pm 1},  \qquad \Delta(h)=1\otimes h+h\otimes g^{r}, \qquad \epsilon(g^{\pm 1})=1, \qquad \epsilon(h)=0.
\end{align*}

By Lemma \ref{lem:equivalent-ideal} and the proof of Theorem \ref{thm:Tt-bialgebra}, 
$$\overline{\T_{\pm 1}^{[r]}}:=\T_{\pm 1}^{[r]}/(gg^{-1}-1,~g^{-1}g-1,~E_{\alpha}|~\alpha(g)\ge 2)$$ is an $\mathbb{N}$-graded pointed Hopf algebra, with a \textup{$\operatorname{PBW}$} basis 
$$\{\overline{h}^{n_{2}} \overline{E}_{\omega_{k_{1}}}^{n_{\omega_{k_{1}}}}  \cdots \overline{E}_{\omega_{k_{m}}}^{n_{\omega_{k_{m}}}}g^{n_{1}}|~k_{1}>\ldots>k_{m}\ge 1,~n_{1}\in \mathbb{Z},~n_{2},n_{\omega_{k_{i}}}\in \mathbb{N},~1\le i\le m\}$$ satisfying the relations:
\begin{align*}
\overline{g}~\overline{g}^{-1} = \overline{g}^{-1} \overline{g}=1, \qquad 
\overline{E}_{\omega_{s}} \overline{h} = \overline{h}~\overline{E}_{\omega_{s}} + \overline{E_{\omega_{s+1}}}, \qquad \overline{E_{\omega_{s}}}~\overline{E_{\omega_{t}}}= \overline{E_{\omega_{t}}}~\overline{E_{\omega_{s}}},
\end{align*}
where $0\le s<t$. Similar to Remark \ref{rmk:SH'-dual-Bell} (ii), the coproduct of $\overline{\T_{\pm 1}^{[r]}}$ is given by the recursion:
\begin{align*}
\Delta(\overline{E_{\omega_{k}}})=& (id \otimes \operatorname{ad}_{r}{\overline{h}}+\operatorname{ad}_{r}{\overline{h}\otimes \overline{g}^{r})^{k}(\overline{g}\otimes \overline{g})},\quad k\ge 0.
\end{align*}
Evidently, $\overline{\T_{\pm 1}^{[1]}} \cong \Tt_{\pm 1}$. Furthermore, $\Tt_{\pm 1}$ is isomorphic to a Hopf subalgebra $\overline{\T_{\pm 1}^{[r]}}$, with the embedding:
\begin{align*}
\Tt_{\pm 1}\hookrightarrow \overline{\T_{\pm 1}^{[r]}},~g\mapsto \overline{g}^{r},~h\mapsto \overline{h}, \quad r\ge 0.
\end{align*}
There are some interesting results about $\overline{\T_{\pm 1}^{[r]}}$, but we will discuss them elsewhere.
\end{itemize}
\end{remark}

\subsection{The bialgebra $\mathcal{B}_{F}$ and the Fa\`a di Bruno bialgebras $\BFdB,\BFdBnc$}\qquad 

Recall that $\varphi: \mathcal{B}_{\operatorname{FdB}}^{\operatorname{nc}}\rightarrow L$ is an isomorphism of bialgebras in Theorem \ref{thm:L-cong-FdB} and $\pi: \T \rightarrow \Tt$ is the projection. Then we obtain the following bialgebra map.

\begin{corollary}
$\phi:=\pi \circ \iota \circ \varphi^{-1}:\mathcal{B}_{\operatorname{FdB}}^{\operatorname{nc}} \rightarrow \Tt~(a_{n}\mapsto \pi(\omega_{n}),~n\ge 0)$ is a bialgebra map, where 
$\iota:L\rightarrow \T$ is the inclusion. 
\end{corollary}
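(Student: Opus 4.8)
The plan is to assemble the desired map $\phi$ as the composite of three maps each of which is already known to be a bialgebra morphism, so that the corollary follows immediately from the functoriality of composition in the category of bialgebras. First I would recall that $\varphi:\mathcal{B}_{\operatorname{FdB}}^{\operatorname{nc}}\rightarrow L$ is an isomorphism of bialgebras by Theorem \ref{thm:L-cong-FdB}; being an isomorphism, its inverse $\varphi^{-1}:L\rightarrow \mathcal{B}_{\operatorname{FdB}}^{\operatorname{nc}}$ is again a bialgebra isomorphism (the inverse of a bialgebra isomorphism is a bialgebra isomorphism, since both the algebra and coalgebra structure maps are intertwined). Hence $\varphi^{-1}$, with $\omega_n\mapsto a_n$ for $n\ge 0$, is a bialgebra map.

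Next I would observe that the inclusion $\iota:L\hookrightarrow \T$ is a bialgebra map: by Lemma \ref{lem:L-sub-bialgebra}, $L$ is a sub-bialgebra of $\T$, so the inclusion respects the unit, counit, multiplication, and comultiplication by definition of a sub-bialgebra. Finally, the projection $\pi:\T\twoheadrightarrow \Tt$ is a bialgebra map: by Lemma \ref{lem:equivalent-ideal} the defining ideal $\mathcal{I}=\mathcal{I}_2$ is a two-sided ideal, and by the proof of Theorem \ref{thm:Tt-bialgebra} it is also a coideal (indeed $\Tt$ is a bialgebra precisely because $\mathcal{I}$ is a biideal), so the canonical quotient map $\pi$ is a morphism of bialgebras.

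Since the composite of bialgebra maps is a bialgebra map, the map
\begin{align*}
\phi:=\pi\circ \iota\circ \varphi^{-1}:\mathcal{B}_{\operatorname{FdB}}^{\operatorname{nc}}\longrightarrow \Tt
\end{align*}
is a bialgebra map. To identify it on generators, I would simply trace a generator $a_n$ through the three arrows: $\varphi^{-1}(a_n)=\omega_n\in L$, then $\iota(\omega_n)=\omega_n\in \T$, and then $\pi(\omega_n)=\pi(\omega_n)\in \Tt$, giving $\phi(a_n)=\pi(\omega_n)$ for all $n\ge 0$, as claimed in the statement.

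There is essentially no hard step here: the statement is a formal consequence of the three preceding results, and the only thing to be careful about is that $\varphi^{-1}$ is genuinely a bialgebra map (not merely a set-theoretic inverse), which is automatic once $\varphi$ is known to be a bialgebra isomorphism. The one conceptual remark worth flagging is that $\phi$ need not be injective, even though $\varphi^{-1}$ and $\iota$ are: the projection $\pi$ kills the elements $E_\alpha$ with $\alpha(g)\ge 2$, and the commutators among the $\omega_n$ land in $\mathcal{I}$, so the image $\phi(\mathcal{B}_{\operatorname{FdB}}^{\operatorname{nc}})=\pi(L)$ in $\Tt$ is a genuine quotient of $\mathcal{B}_{\operatorname{FdB}}^{\operatorname{nc}}$; this image $\pi(\BFdBnc)$ is exactly the object studied in the subsequent Theorem \ref{FdBnc+FdB}.
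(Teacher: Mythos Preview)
Your proof is correct and is exactly the argument the paper has in mind: the corollary is stated without proof because it is an immediate consequence of composing the bialgebra isomorphism $\varphi^{-1}$ from Theorem \ref{thm:L-cong-FdB}, the sub-bialgebra inclusion $\iota$ from Lemma \ref{lem:L-sub-bialgebra}, and the quotient map $\pi$ from Theorem \ref{thm:Tt-bialgebra}. One small slip: in your second paragraph you write ``$\varphi^{-1}$, with $\omega_n\mapsto a_n$'', but since $\varphi:L\to\BFdBnc$ sends $\omega_n\mapsto a_n$, the inverse $\varphi^{-1}$ sends $a_n\mapsto \omega_n$, as you in fact use correctly later when tracing the generators.
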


It is clear that the bialgebra $\phi(\BFdBnc)=\pi\circ \iota(L)$ is $\mathbb{N}$-graded since $\BFdBnc$ is $\mathbb{N}$-graded (see Remark \ref{rmk:BFdBnc-Abelization-cong-BFdB}) and $\pi$ is an $\mathbb{N}$-graded bialgebra map (see Proposition \ref{pro:Tt-N-graded-bialgebra}). Note that $\phi(\BFdBnc)(0)=\K[\pi(g)]$. It follows from \cite[Proposition 4.1.2]{R2012} that $\phi(\BFdBnc)$ is pointed. Now we study the $\mathbb{N}$-graded pointed bialgebra $\phi(\BFdBnc)$.
First we give the following relations.
\begin{lemma} 
The following hold in $\T$:
\begin{itemize} 
\item [(a)] For  $n\ge 0$,
\begin{align}
\label{formula:omega_n}
\omega_{n} &= \sum_{k=0}^{n} \binom{n}{k} h^{n-k} E_{\omega_{k}},\\
\label{formula:E_omega_m}
E_{\omega_{n}} &= \sum_{k=0}^{n} \binom{n}{k} (-1)^{k} h^{k}\omega_{n-k}. 
\end{align}
\item [(b)] For $m,n\ge 0$,
\begin{align}
\label{formula:E_omega_m-omega_n}
\pi(E_{\omega_{m}})\pi(\omega_{n}) &= \sum_{k=0}^{n} \binom{n}{k} \pi(\omega_{n-k})\pi(E_{\omega_{m+k}}),\\
\label{formula:omega-h}
\omega_{n+1}= \omega_{n}h &= h\omega_{n}+ \sum_{k=0}^{n} \binom{n}{k} h^{n-k} E_{\omega_{1+k}}.
\end{align}
\item [(c)] For  $m,n,r,s\ge 0$,
\begin{align}
\label{formula:omega_m-omega_n}
\omega_{m}\omega_{n} &= \sum_{k=0}^{n} \binom{n}{k} \omega_{m+n-k}E_{\omega_{k}},\\
\label{formula:omega_r-E_omega_s}
\omega_{r}E_{\omega_{s}} &= \sum_{k=0}^{s} \binom{s}{t} (-1)^{t} \omega_{r+t} \omega_{s-t}.
\end{align}
\end{itemize}
\end{lemma}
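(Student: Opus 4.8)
The plan is to treat all six identities through a single operator device on $\T$. Let $L_h,R_h\colon\T\to\T$ denote left and right multiplication by $h$, and put $D:=R_h-L_h$, so that $D(x)=[x,h]=\operatorname{ad}_rh(x)$ in the notation of Lemma~\ref{lem:R-Ore-extension}. Since $R_hL_h(x)=hxh=L_hR_h(x)$, these operators pairwise commute; in particular $D$ commutes with $L_h$, whence $D(h^jx)=h^jD(x)$ for all $j\ge 0$. By definition $\omega_n=gh^n=R_h^n(g)$, while $E_{\omega_0}=g$ and $E_{\omega_n}=[E_{\omega_{n-1}},h]=D(E_{\omega_{n-1}})$ give $E_{\omega_n}=D^n(g)$. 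Part~(a) is then immediate from the binomial theorem for commuting operators: writing $R_h=L_h+D$ and applying $D^k$ before $L_h^{\,n-k}$,
\[
\omega_n=(L_h+D)^n(g)=\sum_{k=0}^n\binom{n}{k}L_h^{\,n-k}D^k(g)=\sum_{k=0}^n\binom{n}{k}h^{n-k}E_{\omega_k},
\]
and symmetrically $E_{\omega_n}=(R_h+(-L_h))^n(g)=\sum_{k=0}^n\binom{n}{k}(-1)^kh^k\omega_{n-k}$, using $D^k(g)=E_{\omega_k}$, $R_h^{\,n-k}(g)=\omega_{n-k}$ and $L_h^jx=h^jx$. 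These are \eqref{formula:omega_n} and \eqref{formula:E_omega_m}.

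For Part~(b), the equality $\omega_{n+1}=\omega_n h$ is the definition of $\omega_{n+1}$, and $\omega_nh-h\omega_n=D(\omega_n)$. Applying $D$ to the first identity of~(a) and using $D(h^{n-k}E_{\omega_k})=h^{n-k}D(E_{\omega_k})=h^{n-k}E_{\omega_{k+1}}$ yields $[\omega_n,h]=\sum_{k=0}^n\binom{n}{k}h^{n-k}E_{\omega_{k+1}}$, which is \eqref{formula:omega-h}. The identity \eqref{formula:E_omega_m-omega_n} instead lives in $\Tt$, where by Lemma~\ref{lem:equivalent-ideal} (equivalently Theorem~\ref{thm:Tt-GK-PBW}) the classes $\mathcal{E}_{\omega_k}$ pairwise commute and satisfy $\mathcal{E}_{\omega_s}h=h\mathcal{E}_{\omega_s}+\mathcal{E}_{\omega_{s+1}}$. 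I would prove it by induction on $n$: the case $n=0$ is the commutation $\mathcal{E}_{\omega_m}g=g\mathcal{E}_{\omega_m}$ (as $g=\mathcal{E}_{\omega_0}$), and in the inductive step I write $\mathcal{E}_{\omega_m}\omega_{n+1}=(\mathcal{E}_{\omega_m}\omega_n)h$, push each $\mathcal{E}_{\omega_{m+k}}$ past $h$ with the straightening relation, absorb $\omega_{n-k}h=\omega_{n-k+1}$, and reassemble via Pascal's rule $\binom{n}{k}+\binom{n}{k-1}=\binom{n+1}{k}$.

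Part~(c) is handled the same way. For \eqref{formula:omega_m-omega_n} I induct on $n$ in $\T$: the base case $n=0$ is $\omega_m\omega_0=\omega_mg=\omega_mE_{\omega_0}$, and the step multiplies the inductive hypothesis on the right by $h$, using $E_{\omega_k}h=hE_{\omega_k}+E_{\omega_{k+1}}$ (from $[E_{\omega_k},h]=E_{\omega_{k+1}}$, cf.\ Lemma~\ref{lem:commutators}) together with $\omega_{m+n-k}h=\omega_{m+n-k+1}$, and collapses via Pascal's rule exactly as in~(b). Finally \eqref{formula:omega_r-E_omega_s} drops out of the second identity of~(a) with no induction: substituting $E_{\omega_s}=\sum_{t=0}^s\binom{s}{t}(-1)^th^t\omega_{s-t}$ and using $\omega_rh^t=\omega_{r+t}$ gives $\omega_rE_{\omega_s}=\sum_{t=0}^s\binom{s}{t}(-1)^t\omega_{r+t}\omega_{s-t}$.

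None of the steps is a genuine obstacle; the only non-formal inputs are the defining relation $[E_{\omega_k},h]=E_{\omega_{k+1}}$ of the chosen Lyndon--Shirshov elements and, for \eqref{formula:E_omega_m-omega_n}, the commutativity of the $\mathcal{E}_{\omega_k}$ in $\Tt$ from Lemma~\ref{lem:equivalent-ideal}. The one point demanding care is bookkeeping: keeping $D$ applied before $L_h$ (so that $D^k(g)=E_{\omega_k}$ rather than $D^k$ acting on a power of $h$ times $g$), and remembering that \eqref{formula:E_omega_m-omega_n} fails in $\T$ and only becomes correct after passing to $\Tt$.
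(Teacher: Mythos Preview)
Your argument is correct. The operator device $D=R_h-L_h$ with $[L_h,R_h]=0$ is a clean way to package Part~(a): writing $\omega_n=R_h^n(g)$ and $E_{\omega_n}=D^n(g)$ and expanding $(L_h+D)^n$ and $(R_h-L_h)^n$ binomially gives both identities at once. The inductions you sketch for \eqref{formula:E_omega_m-omega_n} and \eqref{formula:omega_m-omega_n} go through exactly as you describe, and your derivation of \eqref{formula:omega-h} by applying $D$ to \eqref{formula:omega_n} and of \eqref{formula:omega_r-E_omega_s} by substituting \eqref{formula:E_omega_m} are both fine.

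The paper organizes things differently. Part~(a) is done by straight induction on $n$ rather than via your operator binomial. For \eqref{formula:E_omega_m-omega_n} the paper does \emph{not} induct: it expands $\pi(\omega_n)$ via \eqref{formula:omega_n}, then pushes $\pi(E_{\omega_m})$ past $h^{n-k}$ using the earlier identity $[E_{\omega_i},h^j]=\sum_{k=1}^{j}\binom{j}{k}h^{j-k}E_{\omega_{i+k}}$ (Formula~\eqref{formula:E_omega-h^j}), swaps the order of summation, and reassembles via \eqref{formula:omega_n} again. Likewise \eqref{formula:omega_m-omega_n} is obtained in one line by substituting \eqref{formula:omega_n} for $\omega_n$ and absorbing $\omega_m h^{n-k}=\omega_{m+n-k}$, with no induction needed. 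So your route trades the paper's use of the pre-established commutator formula \eqref{formula:E_omega-h^j} and direct substitution for a uniform induction-plus-Pascal mechanism; the paper's approach is a bit shorter for (b) and (c), while yours makes Part~(a) more conceptual and avoids citing \eqref{formula:E_omega-h^j}. One small remark: for the base case of \eqref{formula:E_omega_m-omega_n} you only need $[g,\mathcal{E}_{\omega_m}]=0$ in $\Tt$, not the full pairwise commutativity of the $\mathcal{E}_{\omega_k}$.
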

\begin{proof}
Part (a) follows by induction on $n$. We now show Part (b). Using Formulas (\ref{formula:omega_n}) and (\ref{formula:E_omega-h^j}), we have
\begin{align*}
\pi(E_{\omega_{m}})\pi(\omega_{n}) &= \sum_{k=0}^{n} \binom{n}{k} \pi(E_{\omega_{m}}h^{n-k})\pi(E_{\omega_{k}})\\
 &= \sum_{k=0}^{n} \binom{n}{k} \sum_{t=0}^{n-k} \binom{n-k}{t} \pi(h^{n-k-t}E_{\omega_{m+t}})\pi(E_{\omega_{k}})\\
 &= \sum_{t=0}^{n} \binom{n}{t} \sum_{k=0}^{n-t} \binom{n-t}{k} \pi(h^{n-t-k}) \pi(E_{\omega_{k}}) \pi(E_{\omega_{m+t}})\\
 &= \sum_{t=0}^{n} \binom{n}{t} \pi(\omega_{n-t})  \pi(E_{\omega_{m+t}}).
\end{align*}
By the definition of $\omega_{n}$, we have
$\omega_{n}h=\omega_{n+1}$. It follows from Formula (\ref{formula:E_omega-h^j}) that
\begin{align*}
\omega_{n}h &=hgh^{n}+E_{\omega_{1}}h^{n}\\
&=h\omega_{n} + \sum_{k=0}^{n} \binom{n}{k} h^{n-k}E_{1+k}
\end{align*}
Thus Formula (\ref{formula:omega-h}) holds. It remains to show Part (c). Using Formula (\ref{formula:omega_n}), we obtain
\begin{align*}
\omega_{m}\omega_{n} &=  \sum_{k=0}^{n} \binom{n}{k} \omega_{m} h^{n-k} E_{\omega_{k}}\\
&=  \sum_{k=0}^{n} \binom{n}{k} \omega_{m+n-k} E_{\omega_{k}}. 
\end{align*}
Formula (\ref{formula:omega_r-E_omega_s}) follows directly from Formula (\ref{formula:E_omega_m}).
\end{proof}

Let $\widetilde{\omega}_{n}:=\pi(\omega_{n})$ for $n\ge 0$. Denote by $\mathcal{B}_{F}$ the subalgebra of $\Tt$ generated by $\{\widetilde{\omega}_{n},\mathcal{E}_{\omega_{m}}|~n,m\ge 0\}$. Identifying the Fa\`a di Bruno bialgebra $\B_{\operatorname{FdB}}$ with $R$ in Lemma \ref{lem:R-cong-FdB}, we see that $\mathcal{B}_{F}$ is a bialgebra containing the bialgebras $\pi(\mathcal{B}_{\operatorname{FdB}}^{\operatorname{nc}})$ (here we identify $\BFdBnc$ with $L$) and $\mathcal{B}_{\operatorname{FdB}}$ as sub-bialgebras.

\begin{theorem} \label{FdBnc+FdB}
$\mathcal{B}_{F}$ is an $\bN$-graded pointed bialgebra with a PBW basis
\begin{align*}
\{\widetilde{\omega}_{m}^{d} \mathcal{E}_{\omega_{n_{1}}}^{e_{1}}\cdots \mathcal{E}_{\omega_{n_{j}}}^{e_{j}}|~m,j\ge 1,~n_{1}>\ldots>n_{j}\ge 0,~0\le d\le 1,~e_{1},\ldots,e_{j}\ge 0\}
\end{align*}
such that for $m,n,r,s\ge 0$,
\begin{align}
\mathcal{E}_{\omega_{m}}\mathcal{E}_{\omega_{n}} &=\mathcal{E}_{\omega_{n}} \mathcal{E}_{\omega_{m}} \\
\mathcal{E}_{\omega_{m}}\widetilde{\omega}_{n} &= \sum_{k=0}^{n} \binom{n}{k} \widetilde{\omega}_{n-k} \mathcal{E}_{\omega_{m+k}},\\
\widetilde{\omega}_{m} \widetilde{\omega}_{n} &= \sum_{k=0}^{n} \binom{n}{k} \widetilde{\omega}_{m+n-k} \mathcal{E}_{\omega_{k}},\\
\widetilde{\omega}_{r} \mathcal{E}_{\omega_{s}} &= \sum_{k=0}^{s} \binom{s}{t} (-1)^{t} \widetilde{\omega}_{r+t} \widetilde{\omega}_{s-t}.
\end{align}
Moreover, as the sub-bialgebra of $\mathcal{B}_{F}$, $\pi(\mathcal{B}_{\operatorname{FdB}}^{\operatorname{nc}})$ has a PBW basis:
\begin{align*}
\{1\}\cup \{\widetilde{\omega}_{m} \mathcal{E}_{\omega_{n_{1}}}^{e_{1}}\cdots \mathcal{E}_{\omega_{n_{j}}}^{e_{j}}|~m,j\ge 1,~n_{1}>\ldots>n_{j}\ge 0,~e_{1},\ldots,e_{j}\ge 0\}.
\end{align*}
\end{theorem}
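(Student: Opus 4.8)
The plan is to realize $\mathcal{B}_F$ as the quotient of the free algebra on the symbols $\{\widetilde\omega_n,\mathcal{E}_{\omega_m}\}$ by the relations already recorded in Formulas~(\ref{formula:E_omega_m-omega_n}), (\ref{formula:omega_m-omega_n}) and (\ref{formula:omega_r-E_omega_s}) (read inside $\Tt$) together with the commutativity of the $\mathcal{E}_{\omega}$'s from Theorem~\ref{thm:Tt-GK-PBW}, and to extract the displayed PBW basis by a normal-form argument. All these relations are $\bN$-homogeneous, with $\deg\widetilde\omega_n=n$ and $\deg\mathcal{E}_{\omega_m}=m$ by Proposition~\ref{pro:Tt-N-graded-bialgebra}, so every rewriting step preserves total degree and I may argue degree by degree. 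I would orient the relations as rewriting rules: commutativity sorts the $\mathcal{E}$-letters into strictly decreasing index order; Formula~(\ref{formula:E_omega_m-omega_n}) pushes each $\widetilde\omega$ to the left past the $\mathcal{E}$'s; and Formula~(\ref{formula:omega_m-omega_n}) merges any two adjacent $\widetilde\omega$'s into a single $\widetilde\omega$ times a product of $\mathcal{E}$'s. Formula~(\ref{formula:omega_r-E_omega_s}) runs the other way and is \emph{not} used as a reduction; it is a derived identity needed only for the last part.

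Spanning is then a clean induction. Given a word in the generators of fixed degree, first apply Formula~(\ref{formula:E_omega_m-omega_n}) repeatedly to gather all $\widetilde\omega$-letters at the left; this strictly decreases the number of $\mathcal{E}$-letters lying to the left of some $\widetilde\omega$, so the process terminates. If at least two $\widetilde\omega$'s remain, apply Formula~(\ref{formula:omega_m-omega_n}) to the rightmost adjacent pair: each resulting term has one fewer $\widetilde\omega$, so an induction on the number of $\widetilde\omega$-letters (the degree being fixed) reduces everything to a single leading $\widetilde\omega_m$ (the case $d=1$) or to none (the case $d=0$), followed by a product of $\mathcal{E}$'s that commutativity sorts into the required order. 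Hence the displayed monomials span $\mathcal{B}_F$.

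For linear independence I would use the inclusion $\mathcal{B}_F\subseteq\Tt$ together with the PBW basis of $\Tt$ from Theorem~\ref{thm:Tt-GK-PBW}. By Formula~(\ref{formula:omega_n}) one has $\widetilde\omega_m=h^m g+(\text{terms of strictly smaller }h\text{-degree})$, and since $R$ is commutative no further $h$'s are created on multiplying by an $\mathcal{E}$-monomial; thus a normal form $\widetilde\omega_m^{\,d}\mathcal{E}_{\omega_{n_1}}^{e_1}\cdots\mathcal{E}_{\omega_{n_j}}^{e_j}$ has, in the PBW basis of $\Tt$, a well-defined top $h$-degree term, namely $h^m\,\mathcal{E}_{\omega_{n_1}}^{e_1}\cdots\mathcal{E}_{\omega_{n_j}}^{e_j}\mathcal{E}_{\omega_0}$ for $d=1$ (after commuting the surviving $g=\mathcal{E}_{\omega_0}$ into place) and the $\mathcal{E}$-monomial itself, of $h$-degree $0$, for $d=0$. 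Ordering the $\Tt$-PBW basis first by $h$-degree, distinct normal forms have distinct leading terms: the $h$-degree recovers $d$ and $m$, and lowering the $\mathcal{E}_{\omega_0}$-exponent recovers the original $(n_i,e_i)$. This yields linear independence and completes the PBW claim. The $\bN$-grading and pointedness follow because $\mathcal{B}_F$ is generated by homogeneous elements and is a sub-bialgebra of $\Tt$ (its coproducts land in $R\otimes R$ for the $\mathcal{E}_{\omega_m}$ by Corollary~\ref{cor:coproduct}, and in $\pi(L)\otimes\pi(L)$ for the $\widetilde\omega_n$ since $\phi$ is a bialgebra map), with degree-zero part $\K[g]$ and $g$ grouplike, so \cite[Proposition 4.1.2]{R2012} applies exactly as for $\BFdBnc$.

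Finally, $\pi(\BFdBnc)=\pi(L)$ is generated by the $\widetilde\omega_n$ alone. Every product of $\widetilde\omega$'s reduces, by Formula~(\ref{formula:omega_m-omega_n}) as above, to $1$ or to a single leading $\widetilde\omega_m$ followed by commuting $\mathcal{E}$'s, giving the spanning half; conversely Formula~(\ref{formula:omega_r-E_omega_s}) rewrites each monomial $\widetilde\omega_m\mathcal{E}_{\omega_{n_1}}^{e_1}\cdots$ back as a $\Z$-combination of products of $\widetilde\omega$'s, confirming membership in $\pi(L)$, and linear independence is inherited from the basis of $\mathcal{B}_F$ just established. The main obstacle throughout is the termination and confluence bookkeeping for the two non-commuting reductions~(\ref{formula:E_omega_m-omega_n}) and~(\ref{formula:omega_m-omega_n}): one must check that moving a $\widetilde\omega$ leftward and merging a pair of $\widetilde\omega$'s can be interleaved consistently. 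Equivalently, if one invokes the Diamond Lemma~\cite{B1978} directly (as in Theorem~\ref{thm:Tt-GK-PBW}), the real work is resolving the overlap ambiguities $\mathcal{E}_{\omega_r}\mathcal{E}_{\omega_s}\widetilde\omega_n$, $\mathcal{E}_{\omega_m}\widetilde\omega_n\widetilde\omega_p$ and $\widetilde\omega_m\widetilde\omega_n\widetilde\omega_p$, each of which amounts to a Vandermonde-type binomial identity.
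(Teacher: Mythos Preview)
Your proposal is correct. The chief methodological difference from the paper is in how linear independence of the normal forms is secured. The paper invokes the Diamond Lemma directly on the presentation and spends its effort resolving the overlap ambiguities $(\mathcal{E}_{\omega_r}\mathcal{E}_{\omega_s})\widetilde\omega_t$, $(\mathcal{E}_{\omega_r}\widetilde\omega_s)\widetilde\omega_t$ and $(\widetilde\omega_r\widetilde\omega_s)\widetilde\omega_t$ by explicit computation, each collapsing via Vandermonde's identity; confluence then yields both spanning and independence at once. Your primary route instead separates the two: spanning by terminating rewriting (no confluence needed), and independence by reading off distinct leading terms inside the ambient PBW basis of $\Tt$ from Theorem~\ref{thm:Tt-GK-PBW} via Formula~(\ref{formula:omega_n}). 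This is more economical---it recycles the PBW theorem already proved for $\Tt$ and bypasses the binomial manipulations---whereas the paper's approach is self-contained and would work even without knowing $\mathcal{B}_F$ sits inside a larger algebra with a known basis. You do flag the Diamond Lemma route at the end and correctly identify the relevant overlaps and the Vandermonde flavour of their resolution, so in fact you have anticipated the paper's method as well. One small remark: since you establish independence externally, the ``confluence bookkeeping'' you mention as the main obstacle is not actually needed for your argument---only termination of the rewriting is.
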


\begin{proof}
It is clear that $\B_{F}$ is an $\bN$-graded bialgebra since $\pi(\mathcal{B}_{\operatorname{FdB}}^{\operatorname{nc}})$ and $\mathcal{B}_{\operatorname{FdB}}$ are $\bN$-graded bialgebras. Observe that $\B_{F}(0)=\K[\pi(g)]$. Thus $\B_{F}$  is pointed from \cite[Proposition 4.1.2]{R2012}.
The relations of $\mathcal{B}_{F}$ follow from the formulas (\ref{formula:E_omega_m-omega_n}), (\ref{formula:omega_m-omega_n}), (\ref{formula:omega_r-E_omega_s}) and Theorem \ref{thm:Tt-GK-PBW}. Now we show that $\mathcal{B}_{F}$ has the PBW basis as claimed. Note that $\pi(g)=\widetilde{\omega}_{0}= \mathcal{E}_{\omega_{0}}$. Applying the Diamond Lemma \cite{B1978}  to the order $\prec_{red}$:  $\ldots \prec_{red} \widetilde{\omega}_{n} \prec_{red} \ldots \prec_{red} \widetilde{\omega}_{1} \prec_{red} \ldots \prec_{red} \mathcal{E}_{\omega_{m}} \prec_{red} \ldots \prec_{red} \mathcal{E}_{\omega_{1}} \prec_{red} \mathcal{E}_{\omega_{0}}$, we just need to show that the following overlap ambiguities for $0\le r<s<t$ are resolvable: 
\begin{align}
\label{overlap-bf-1}
(\mathcal{E}_{\omega_{r}} \mathcal{E}_{\omega_{s}}) \mathcal{E}_{\omega_{t}} &= \mathcal{E}_{\omega_{r}} (\mathcal{E}_{\omega_{s}} \mathcal{E}_{\omega_{t}}), \\
\label{overlap-bf-2}
(\mathcal{E}_{\omega_{r}} \mathcal{E}_{\omega_{s}}) \widetilde{\omega}_{t} &= \mathcal{E}_{\omega_{r}} (\mathcal{E}_{\omega_{s}} \widetilde{\omega}_{t}),\\
\label{overlap-bf-3}
(\mathcal{E}_{\omega_{r}} \widetilde{\omega}_{s}) \widetilde{\omega}_{t} &= \mathcal{E}_{\omega_{r}} (\widetilde{\omega}_{s} \widetilde{\omega}_{t}),\\
\label{overlap-bf-4}
(\widetilde{\omega}_{r} \widetilde{\omega}_{s}) \widetilde{\omega}_{t} &= \widetilde{\omega}_{r} (\widetilde{\omega}_{s} \widetilde{\omega}_{t}).
\end{align}
It is clear that (\ref{overlap-bf-1}) is resolvable. The calculation
\begin{align*}
(\mathcal{E}_{\omega_{r}} \mathcal{E}_{\omega_{s}}) \widetilde{\omega}_{t} 
&= \mathcal{E}_{\omega_{s}} (\mathcal{E}_{\omega_{r}} \widetilde{\omega}_{t})\\
&= \sum_{a=0}^{t} \binom{t}{a} (\mathcal{E}_{\omega_{s}} \widetilde{\omega}_{t-a}) \mathcal{E}_{\omega_{r+a}} \\
&= \sum_{a=0}^{t} \sum_{b=0}^{t-a} \binom{t}{a} \binom{t-a}{b} \widetilde{\omega}_{t-a-b}  (\mathcal{E}_{\omega_{s+b}} \mathcal{E}_{\omega_{r+a}})\\
&= \sum_{a=0}^{t} \sum_{b=0}^{t-a} \binom{t}{a} \binom{t-a}{b} (\widetilde{\omega}_{t-a-b} \mathcal{E}_{\omega_{r+a}}) \mathcal{E}_{\omega_{s+b}} \\
&= \sum_{b=0}^{t}  \binom{t}{b} (\sum_{a=0}^{t-b} \binom{t-b}{a} \widetilde{\omega}_{t-b-a} \mathcal{E}_{\omega_{r+a}}) \mathcal{E}_{\omega_{s+b}} \\
&= \sum_{b=0}^{t}  \binom{t}{b} (\mathcal{E}_{\omega_{r}} \widetilde{\omega}_{t-b}) \mathcal{E}_{\omega_{s+b}}\\
&= \mathcal{E}_{\omega_{r}} (\sum_{b=0}^{t}  \binom{t}{b}  \widetilde{\omega}_{t-b} \mathcal{E}_{\omega_{s+b}}) \\
& = \mathcal{E}_{\omega_{r}} ( \mathcal{E}_{\omega_{s}} \widetilde{\omega}_{t})
\end{align*}
shows that (\ref{overlap-bf-2}) is resolvable. For the left side of (\ref{overlap-bf-3}) we make the calculation
\begin{align*}
(\mathcal{E}_{\omega_{r}} \widetilde{\omega}_{s}) \widetilde{\omega}_{t} 
&= \sum_{a=0}^{s}  \binom{s}{a} \widetilde{\omega}_{s-a} (\mathcal{E}_{\omega_{r+a}}  \widetilde{\omega}_{t})\\
&= \sum_{a=0}^{s} \sum_{b=0}^{t}\binom{s}{a} \binom{t}{b} (\widetilde{\omega}_{s-a} \widetilde{\omega}_{t-b}) \mathcal{E}_{\omega_{r+a+b}}  \\
&= \sum_{a=0}^{s} \sum_{b=0}^{t} \sum_{c=0}^{t-b} \binom{s}{a}  \binom{t}{b} \binom{t-b}{c} \widetilde{\omega}_{s-a+t-b-c} (\mathcal{E}_{\omega_{c}} \mathcal{E}_{\omega_{r+a+b}})  \\
&= \sum_{a=0}^{s} \sum_{c=0}^{t} \sum_{b=0}^{t-c} \binom{s}{a}  \binom{t}{c} \binom{t-c}{b} \widetilde{\omega}_{s-a+t-b-c} \mathcal{E}_{\omega_{r+a+b}}\mathcal{E}_{\omega_{c}}   \\
&= \sum_{c=0}^{t} \binom{t}{c} (\sum_{d=0}^{s+t-c} \sum_{a+b=d} \binom{s}{a}  \binom{t-c}{b} \widetilde{\omega}_{s+t-c-d} \mathcal{E}_{\omega_{r+d}}) \mathcal{E}_{\omega_{c}},
\end{align*}
Using Vandermonde's Identity: $\binom{m+n}{k}=\sum_{i+j=k}\binom{m}{i}\binom{n}{j}$, we see that (\ref{overlap-bf-3}) amounts to
\begin{align*}
& \sum_{c=0}^{t} \binom{t}{c} (\sum_{d=0}^{s+t-c}\binom{s+t-c}{d} \widetilde{\omega}_{s+t-c-d} \mathcal{E}_{\omega_{r+d}}) \mathcal{E}_{\omega_{c}}\\
= & \sum_{c=0}^{t} \binom{t}{c} (\mathcal{E}_{\omega_{r}} \widetilde{\omega}_{s+t-c} ) \mathcal{E}_{\omega_{c}}\\
= & \mathcal{E}_{\omega_{r}} (\sum_{c=0}^{t} \binom{t}{c} \widetilde{\omega}_{s+t-c}  \mathcal{E}_{\omega_{c}})\\
= & \mathcal{E}_{\omega_{r}} ( \mathcal{E}_{\omega_{s}} \widetilde{\omega}_{t} ).
\end{align*}
Thus (\ref{overlap-bf-3}) is resolvable. Similarly, (\ref{overlap-bf-4}) is resolvable from the following calculation and Vandermonde's Identity:
\begin{align*}
(\widetilde{\omega}_{r} \widetilde{\omega}_{s}) \widetilde{\omega}_{t}
&= \sum_{a=0}^{s} \binom{s}{a}  \widetilde{\omega}_{r+s-a} (\mathcal{E}_{\omega_{a}}  \widetilde{\omega}_{t}) \\
&= \sum_{a=0}^{s} \sum_{b=0}^{t} \binom{s}{a} \binom{t}{b}  (\widetilde{\omega}_{r+s-a}  \widetilde{\omega}_{t-b}) \mathcal{E}_{\omega_{a+b}} \\
&= \sum_{a=0}^{s} \sum_{b=0}^{t} \sum_{c=0}^{t-b}  \binom{s}{a} \binom{t}{b} \binom{t-b}{c}   \widetilde{\omega}_{r+s+t-a-b-c} (\mathcal{E}_{\omega_{c}}\mathcal{E}_{\omega_{a+b}}) \\
&= \sum_{a=0}^{s} \sum_{c=0}^{t} \sum_{b=0}^{t-c}  \binom{s}{a} \binom{t}{c} \binom{t-c}{b}   \widetilde{\omega}_{r+s+t-a-b-c} \mathcal{E}_{\omega_{a+b}} \mathcal{E}_{\omega_{c}} \\
&= \sum_{c=0}^{t} \binom{t}{c} (\sum_{a=0}^{s}  \sum_{b=0}^{t-c}  \binom{s}{a}  \binom{t-c}{b}   \widetilde{\omega}_{r+s+t-c-(a+b)} \mathcal{E}_{\omega_{a+b}}) \mathcal{E}_{\omega_{c}} \\
&= \sum_{c=0}^{t} \binom{t}{c} (\sum_{d=0}^{s+t-c} \binom{s+t-c}{d}  \widetilde{\omega}_{r+s+t-c-d} \mathcal{E}_{\omega_{d}}) \mathcal{E}_{\omega_{c}} \\
&= \sum_{c=0}^{t} \binom{t}{c}  ( \widetilde{\omega}_{r}\widetilde{\omega}_{s+t-c}) \mathcal{E}_{\omega_{c}} \\
&=  \widetilde{\omega}_{r} (\sum_{c=0}^{t} \binom{t}{c}  \widetilde{\omega}_{s+t-c} \mathcal{E}_{\omega_{c}}) \\
&= \widetilde{\omega}_{r} (\widetilde{\omega}_{s}\widetilde{\omega}_{t}).
\end{align*}
Similarly, it is not difficult to show that $\pi(\mathcal{B}_{\operatorname{FdB}}^{\operatorname{nc}})$ has the basis as claimed.
\end{proof}

\begin{remark} \label{rm2.26}
From Theorem \ref{FdBnc+FdB} we see that  $\{1\}\cup \{\widetilde{\omega}_{m} \mathcal{E}_{\omega_{n_{1}}}^{e_{1}}\cdots \mathcal{E}_{\omega_{n_{j}}}^{e_{j}}|~m,j\ge 1,~n_{1}>\ldots>n_{j}\ge 0,~e_{1},\ldots,e_{j}\ge 0\}$ forms a PBW basis of $\pi(\BFdBnc)$, while 
$\{\mathcal{E}_{\omega_{n_{1}}}^{e_{1}}\cdots \mathcal{E}_{\omega_{n_{j}}}^{e_{j}}|~m,j\ge 1,~n_{1}>\ldots>n_{j}\ge 0,~e_{1},\ldots,e_{j}\ge 0\}$ is a PBW basis of $\BFdB$.  From the form of the two  PBW bases, we should view $\pi(\BFdBnc)$, instead of $\BFdBnc$, as the non-commutative version of the commutative Fa\'a di Bruno bialgebra $\BFdB$.
\end{remark}

\section{Quotient Hopf algebras of $\Tt$}\label{Section 3}
In this section, we construct some quotient bialgebras $Q$ of $\Tt$, which admit Hopf algebra structures. Let $\pi_{Q}:\T \twoheadrightarrow Q$ be the projection. We still use the notation $\mathcal{E}_{\omega_k}$ (resp. $h$, $g$) in $Q$ instead of $\pi_{Q}(E_{\omega_k})$ (resp. $\pi_{Q}(h)$, $\pi_{Q}(g)$). 

\begin{lemma}{\cite[Corollary 7.6.7]{R2012}}\label{lem:pointed-Hopf-algebra}
Let $A$ be a bialgebra over $\K$ generated by $\mathcal{S}\cup \Pp(A)$, where $\mathcal{S} \subseteq \operatorname{G}(A)$, and the elements of $\mathcal{S}$ are invertible. Then A is a pointed Hopf algebra.
\end{lemma}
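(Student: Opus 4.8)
The plan is to verify the two conclusions of the statement separately: that $A$ is pointed, and that $A$ admits an antipode. Let $C\subseteq A$ be the subcoalgebra generated by $\mathcal S\cup\Pp(A)$; concretely $C$ is the sum of the lines $\K g$ for $g\in\mathcal S$ together with the subcoalgebras $\K g+\K g'+\K x$ attached to each $x\in\Pp_{g,g'}(A)$ (using $\Delta(x)=g\otimes x+x\otimes g'$). By construction $C$ contains every generator, so $A$ is generated by $C$ as an algebra, and everything will be reduced to properties of $C$.

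First I would show that $A$ is pointed. Each summand of $C$ is a pointed coalgebra: $\K g$ is simple, and $\K g+\K g'+\K x$ has coradical $\K g+\K g'$. Since a sum of pointed subcoalgebras is pointed, $C$ is pointed with $\corad(C)=\K\G(C)$ and $\G(C)\subseteq\G(A)$. I would then invoke the standard fact that a bialgebra generated as an algebra by a pointed subcoalgebra is pointed: $\corad(A)$ is contained in the subalgebra generated by $\corad(C)=\K\G(C)$, and because a product of grouplikes is again grouplike this subalgebra equals $\K\G(A)$; combined with the trivial inclusion $\K\G(A)\subseteq\corad(A)$ this gives $\corad(A)=\K\G(A)$. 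I expect this passage---from ``generated by a pointed subcoalgebra'' to ``pointed'', which rests on the multiplicativity of the coradical filtration---to be the main technical point.

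For the antipode I would exploit pointedness. Because $\G(A)$ is a monoid, $A_0:=\corad(A)=\K\G(A)$ is a subbialgebra, and then the coradical filtration $A_0\subseteq A_1\subseteq\cdots$ is a bialgebra filtration: $A_iA_j\subseteq A_{i+j}$, $\Delta(A_n)\subseteq\sum_i A_i\otimes A_{n-i}$, and $A=\bigcup_n A_n$. Here the hypothesis enters, forcing $\G(A)$ to be a group. Indeed, by the previous step every grouplike of $A$ lies in $\K\G(C)$ and hence is a single product of elements of $\G(C)$; in the present setting these are products of the invertible elements of $\mathcal S$ (for instance, in the applications below one has $\mathcal S=\{g\}$ and the skew-primitive generator $h\in\Pp_{1,g}$ has endpoints $1,g\in\langle g\rangle$), and the inverse of a grouplike is again grouplike since $\Delta$ is multiplicative. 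Hence $A_0=\K\G(A)$ is a group algebra, which is a Hopf algebra.

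Finally I would invoke the standard theorem that a bialgebra carrying a bialgebra filtration whose degree-zero part is a Hopf algebra is itself a Hopf algebra, the antipode being constructed by induction on the filtration degree; applied to the coradical filtration this yields the antipode of $A$. As a heuristic for where invertibility is indispensable, one may note that on $C$ the convolution inverse of the inclusion is forced to be $g\mapsto g^{-1}$ on grouplikes and $x\mapsto -g^{-1}x(g')^{-1}$ on $x\in\Pp_{g,g'}(A)$, which exists exactly when the relevant grouplikes are invertible; the filtration argument is what makes the extension of this data to all of $A$ rigorous.
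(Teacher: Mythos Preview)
The paper gives no proof of this lemma; it is simply quoted from Radford's book. So there is nothing in the paper to compare your argument against, and your write-up is in fact more than the paper supplies.

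Your two-step strategy is the standard one and is correctly outlined: pointedness follows because $A$ is generated as an algebra by a pointed subcoalgebra $C$ (so $\corad(A)$ lies in the subalgebra generated by $\corad(C)=\K\G(C)$, hence in $\K\G(A)$); and once $A_0=\K\G(A)$ is a Hopf algebra, the coradical filtration is a bialgebra filtration with Hopf degree-zero part, and the usual inductive/Takeuchi argument produces the antipode.

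The one genuine gap is exactly the step you hedge on: passing from ``elements of $\mathcal S$ are invertible'' to ``$\G(A)$ is a group''. Your argument shows that every grouplike is a product of elements of $\G(C)$, but $\G(C)$ contains, besides $\mathcal S$, the endpoints $g,g'$ of the skew-primitive generators, and nothing in the hypotheses forces those to lie in the group generated by $\mathcal S$. This is not a mere technicality: the literal statement admits the counterexample $A=\K[g]$ with $\Delta(g)=g\otimes g$ and $\mathcal S=\{1\}$, since $g-1\in\Pp_{1,g}(A)$ generates $A$ together with $1$, yet $g$ is not invertible and $A$ is not a Hopf algebra. So either Radford's actual hypothesis is that all grouplikes are invertible (which is how the result is usually stated), or one must add that the endpoints of the relevant skew-primitives lie in the group generated by $\mathcal S$. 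Your parenthetical remark that ``in the applications below one has $\mathcal S=\{g\}$ and $h\in\Pp_{1,g}$'' is precisely this extra hypothesis, and it is what makes the argument go through in every use the paper makes of the lemma (there $\G(A)=\langle g\rangle$ with $g^n=1$).
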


\subsection{The Hopf algebra $\Tt_n$}\label{Sec:3.1}\qquad

Let $\Tt_n$ be the quotient algebra $\Tt/\cI_n$, where $\mathcal{I}_{n}$ is the ideal of
$\Tt$ generated by $g^{n}-1$ for $n\ge 2$.

\begin{proposition}\label{pro:Ttn-pointed-Hopf}
$\Tt_n$ is an $\bN$-graded pointed Hopf algebra.
\end{proposition}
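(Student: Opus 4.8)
The plan is to realize $\Tt_n$ as a quotient bialgebra of the $\bN$-graded pointed bialgebra $\Tt$ (Proposition \ref{pro:Tt-N-graded-bialgebra}) and then invoke Lemma \ref{lem:pointed-Hopf-algebra} to obtain both pointedness and an antipode at once. First I would check that $\cI_n$ is a bi-ideal. Since $g\in\G(\Tt)$, the power $g^n$ is again grouplike, so $\epsilon(g^n-1)=1-1=0$ and
\[
\Delta(g^n-1)=g^n\otimes g^n-1\otimes 1=(g^n-1)\otimes g^n+1\otimes(g^n-1)\in \cI_n\otimes\Tt+\Tt\otimes\cI_n.
\]
For a general element $a(g^n-1)b\in\cI_n$ I would apply $\Delta$ multiplicatively and split off the two summands, concluding $\Delta(\cI_n)\subseteq\cI_n\otimes\Tt+\Tt\otimes\cI_n$ together with $\epsilon(\cI_n)=0$. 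Hence $\cI_n$ is a coideal as well as an ideal, and $\Tt_n$ is a quotient bialgebra of $\Tt$.

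Second, for the grading I would note that $g\in\Tt(0)$ forces $g^n-1\in\Tt(0)$, so $\cI_n$ is generated by a single homogeneous element and is therefore a homogeneous (graded) ideal. Consequently $\cI_n=\bigoplus_m\bigl(\cI_n\cap\Tt(m)\bigr)$, the $\bN$-grading of $\Tt$ descends to $\Tt_n$, and the structure maps respect it because they do so on $\Tt$ by Proposition \ref{pro:Tt-N-graded-bialgebra}.

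Finally, to upgrade $\Tt_n$ to a pointed Hopf algebra I would apply Lemma \ref{lem:pointed-Hopf-algebra}. As an algebra $\Tt_n$ is generated by the images of $g$ and $h$; the element $g$ is grouplike and now invertible, since $g^n=1$ supplies $g^{-1}=g^{n-1}$, while $h$ satisfies $\Delta(h)=1\otimes h+h\otimes g$ and thus lies in $\Pp(\Tt_n)$. Therefore $\Tt_n$ is generated by $\mathcal{S}\cup\Pp(\Tt_n)$ with $\mathcal{S}=\{g\}\subseteq\G(\Tt_n)$ invertible, and Lemma \ref{lem:pointed-Hopf-algebra} yields that $\Tt_n$ is a pointed Hopf algebra.

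There is essentially no deep obstacle; the only point requiring attention is that the invertibility of the grouplike generator—which is exactly the hypothesis needed for Lemma \ref{lem:pointed-Hopf-algebra}—is precisely what the imposed relation $g^n=1$ provides. This is why the quotient $\Tt_n$, rather than $\Tt$ itself (cf.\ Remark \ref{rmk:Ttpm}(i)), acquires an antipode.
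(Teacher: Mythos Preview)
Your proof is correct and follows essentially the same route as the paper: verify that $\cI_n$ is a graded bi-ideal via the coproduct of $g^n-1$, then invoke Lemma~\ref{lem:pointed-Hopf-algebra} using that $g$ becomes invertible once $g^n=1$. Your version is in fact more explicit than the paper's about why the hypotheses of Lemma~\ref{lem:pointed-Hopf-algebra} are satisfied in $\Tt_n$.
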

\begin{proof}
Since $\Delta(g^n-1)=(g^n-1)\otimes 1+g^n\otimes(g^n-1)$, $\cI_n$ is a bi-ideal. So $\Tt_n$
is a quotient bialgebra of $\Tt$.  By Lemma \ref{lem:pointed-Hopf-algebra}, $\Tt$ is a pointed Hopf algebra. Moreover, $\mathcal{I}_{n}$ is graded. It follows that $\Tt_{n}$ is graded by Propositions \ref{pro:Tt-N-graded-bialgebra}
\end{proof}

\begin{lemma} \quad\label{lem:Ttn-PBW}
\begin{itemize}
\item [(a)] If $\mathcal{E}_{gh}=0$, then $\Tt_n$ is commutative with a
\textup{$\operatorname{PBW}$} basis $\{h^{n_{2}}g^{n_{1}}|~0\le n_{1}\le n-1, n_{2}\in \mathbb{N}\}$.
\item [(b)] If $\mathcal{E}_{gh}\neq 0$, $\Char\K=p>0$ and $p|n$,  then
$\Tt_n$ has infinite \textup{GK}-dimension,
with a \textup{$\operatorname{PBW}$} basis:
\begin{align*}
\{h^{n_{2}}\mathcal{E}_{\omega_{k_{1}}}^{n_{\omega_{k_{1}}}}\mathcal{E}_{\omega_{k_{2}}}^{n_{\omega_{k_{2}}}}
\cdots \mathcal{E}_{\omega_{k_{m}}}^{n_{\omega_{k_{m}}}}g^{n_{1}}|~k_{1}>k_{2}>\ldots
>k_{m}\ge 1,~0\le n_{1}\le n-1,~n_{2},n_{\omega_{k_{i}}}\in \mathbb{N},~1\le i\le m\}.
\end{align*}
Furthermore, $S^{2p}=\id$.
\end{itemize}
\end{lemma}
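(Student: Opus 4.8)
The plan is to isolate the one computation that governs the whole statement. In $\Tt$ (hence in $\Tt_n$) an easy induction on $k$, using $\mathcal{E}_{\omega_0}=g$ and the relation $\mathcal{E}_{\omega_0}h=h\mathcal{E}_{\omega_0}+\mathcal{E}_{\omega_1}$ of Theorem~\ref{thm:Tt-GK-PBW}, gives
\[
g^{k}h=hg^{k}+k\,g^{k-1}\mathcal{E}_{\omega_1},\qquad k\ge 1 .
\]
Taking $k=n$ and using that $g$ is invertible in $\Tt_n$ (with $g^{-1}=g^{n-1}$), the identity $(g^n-1)h=h(g^n-1)$ forces $n\,\mathcal{E}_{gh}=0$ in $\Tt_n$. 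This is exactly the dichotomy of the lemma: if $\Char\K\nmid n$ then $\mathcal{E}_{gh}=0$ in $\Tt_n$, which is Part~(a); if $\Char\K=p$ and $p\mid n$ the relation is vacuous, and moreover $g^{p}$ (a fortiori $g^{n}$) is \emph{central}, since $g^{p}h=hg^{p}+p\,g^{p-1}\mathcal{E}_{\omega_1}=hg^{p}$ when $\Char\K=p$, and $g^p\in R$ commutes with every $\mathcal{E}_{\omega_k}$. This centrality is what drives Part~(b).

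For Part~(a) I would feed $\mathcal{E}_{\omega_1}=\mathcal{E}_{gh}=0$ into $\mathcal{E}_{\omega_s}h=h\mathcal{E}_{\omega_s}+\mathcal{E}_{\omega_{s+1}}$ and induct to get $\mathcal{E}_{\omega_s}=0$ for all $s\ge 1$. Then $\Tt_n$ is generated by the two commuting elements $g,h$ subject only to $g^n=1$, so there is a surjection $\K[g,h]/(g^n-1)\twoheadrightarrow\Tt_n$. Conversely, since $\K[g,h]/(g^n-1)$ is commutative, the homomorphism out of the free algebra $\T=\K\langle g,h\rangle$ determined by $g\mapsto\bar g,\ h\mapsto\bar h$ sends every iterated commutator to $0$, in particular every generator $E_\alpha$ ($\alpha(g)\ge 2$, $\alpha\in\mathcal{L}\setminus X$) of the defining ideal of $\Tt$; it thus descends to $\Tt$ and, killing $g^n-1$, to a map $\Tt_n\to\K[g,h]/(g^n-1)$. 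The composite $\K[g,h]/(g^n-1)\to\Tt_n\to\K[g,h]/(g^n-1)$ fixes the generators, hence is the identity; as the first arrow is also surjective it is an isomorphism, and the monomial basis of $\K[g,h]/(g^n-1)$ is precisely the claimed basis.

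For Part~(b) the engine is centrality of $g^{n}$. Set $Z:=\K[g^{n}]$; the monomials $\{g^{nq}\}_{q\ge 0}$ lie in the PBW basis of $\Tt$, so $Z$ is a central polynomial subalgebra. Because $g=\mathcal{E}_{\omega_0}$ is the rightmost factor of every PBW basis element of $\Tt$, writing its $g$-exponent as $n_1=nq+r$ with $0\le r\le n-1$ and pulling the central factor $(g^n)^q$ to the left exhibits $\Tt$ as a free $Z$-module on the set $B$ of PBW monomials with $0\le n_1\le n-1$. Hence $\Tt_n=\Tt/(g^n-1)\Tt=\Tt\otimes_Z\K$ is free over $\K$ on $B$, which is exactly the asserted PBW basis. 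For the GK-dimension, the commuting family $\{\mathcal{E}_{\omega_k}\}_{k\ge 1}$ is part of $B$, hence linearly independent, so it generates a polynomial ring in infinitely many variables inside $\Tt_n$; containing polynomial rings of arbitrary finite rank forces $\GKdim\Tt_n=\infty$ (subexponential growth being inherited from $\Tt$ as a quotient).

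Finally, for $S^{2p}=\id$ I would compute the antipode (which exists by Proposition~\ref{pro:Ttn-pointed-Hopf}) on generators: $g$ is grouplike, so $S(g)=g^{-1}$ and $S^2(g)=g$, while $\Delta(h)=1\otimes h+h\otimes g$ forces $S(h)=-hg^{-1}$ and thus $S^2(h)=ghg^{-1}$. Therefore $S^2$ agrees with conjugation $\gamma_g\colon x\mapsto gxg^{-1}$ on the algebra generators $g,h$; as both are algebra automorphisms of $\Tt_n$ they coincide, so $S^{2p}=\gamma_g^{\,p}=\gamma_{g^{p}}$, which is the identity because $g^{p}$ is central by the first paragraph. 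The main obstacle is the linear-independence half of Part~(b): one must rule out any further collapse under the new relation $g^n=1$, and this is precisely what freeness of $\Tt$ over the central subalgebra $Z$ accomplishes (equivalently, resolving the new overlap $g^{n}h$ in the Diamond Lemma, which succeeds only when $p\mid n$).
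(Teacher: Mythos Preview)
Your argument is correct.  The dichotomy $n\mathcal{E}_{gh}=0$ is the same computation as in the paper, which phrases it as resolvability of the Diamond Lemma overlap $(g^{n-1}g)h=g^{n-1}(gh)$; but from that point on you take a different and in several respects cleaner route.  For the PBW basis in Part~(b) the paper verifies directly that all Diamond Lemma ambiguities resolve once $p\mid n$, whereas you exploit a single structural fact---that $g^n$ is central in $\Tt$ when $p\mid n$---to exhibit $\Tt$ as a free module over the polynomial ring $\K[g^n]$, so that passing to $\Tt_n$ is just base change and no further ambiguity checks are needed.  For the antipode the paper computes $S^m(h)$ by an explicit induction on $m$ and reads off the order; your observation that $S^2$ agrees with conjugation by $g$ on the algebra generators, hence everywhere, immediately reduces $S^{2p}=\id$ to centrality of $g^p$, which you already established.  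Both approaches prove the same assertions; yours trades the bookkeeping of overlap resolution for the one insight that $g^p$ is central in characteristic $p$, and the antipode argument in particular is shorter and more conceptual than the paper's.
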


\begin{proof}
Applying the Diamond Lemma \cite{B1978}
to the order
$\prec_{red}$: $h\prec_{red}\ldots \prec_{red} \mathcal{E}_{\omega_{k}}\prec_{red}\ldots \prec_{red} \mathcal{E}_{gh} \prec_{red} g$,
we just needx to verify the overlaps \eqref{ambiguties 1}, \eqref{ambiguties 2} and the following are resolvable:
\begin{align}
\label{ambiguities 3} & (g^{n-1}g)\mathcal{E}_{\omega_{s}}=g^{n-1}(g\mathcal{E}_{\omega_{s}}),\quad s\ge 0,\\
\label{ambiguities 4} & (g^{n-1}g)h=g^{n-1}(gh).
\end{align}
By direct computations, the overlaps \eqref{ambiguties 1}, \eqref{ambiguties 2} and \eqref{ambiguities 3} are
 resolvable and \eqref{ambiguities 4} amounts to the condition $\binom{n}{1}\mathcal{E}_{gh}g^{n-1}=0$,
 that is, $n\mathcal{E}_{gh}=0$.
 Thus it leads to two cases: 

 (i) $\mathcal{E}_{gh}=0$; (ii) $\mathcal{E}_{gh}\neq 0$, $\Char\K=p$ and $p|n$.

 Now we show $S^{2p}=\id$. Observe that $S(g)=g^{-1}$, $S(h)=-hg^{-1}$, $S^{2}(h)=h+\mathcal{E}_{gh}g^{-1}$, $S(\mathcal{E}_{gh})=-\mathcal{E}_{gh}g^{-3}$ and $g\mathcal{E}_{gh}=\mathcal{E}_{gh}g$. Then by induction on $n$, we obtain
\begin{equation*}
S^{n}(h)=\left\{
\begin{aligned}
& -hg^{-1}-\frac{n-1}{2}\mathcal{E}_{gh}g^{-2} , & \text{ if } n\text{ is odd}, \\
& h+\frac{n}{2}\mathcal{E}_{gh}g^{-1}  , & \text{ if } n \text{ is even}.
\end{aligned}
\right.
\end{equation*}
Therefore, $S^{2p}=\id$ by the fact that $\Tt_{n}$ is generated by $\{g,h\}$ as an algebra.
\end{proof}

\subsection{The Hopf algebras $\Tt_{n}'(p)$, $\Tt_n(p)$ and $\Tt_{\pm 1}'(p)$}\label{Sec:3.2} \qquad

Now we consider some quotient Hopf algebras with finite $\GK$-dimension.
\begin{definition}
Let $\Char\K=p$ and  suppose that $\mathcal{E}_{gh}\neq0$, $p|n$.
Define $\Tt_{n}'(p):=\Tt_{n}/\mathcal{I}_p'$, where
$\mathcal{I}_p'$ is the ideal generated by $\mathcal{E}_{\omega_{p-1}}$.
\end{definition}
\begin{proposition}\label{pro:Ttnp'-pointed-Hopf}
$\Tt_{n}'(p)$ is an $\bN$-graded pointed Hopf algebra.
\end{proposition}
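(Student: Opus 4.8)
The plan is to show that the ideal $\cI_p'$ generated by $\mathcal{E}_{\omega_{p-1}}$ is a graded bi-ideal of $\Tt_n$, after which pointedness and the Hopf structure follow just as for $\Tt_n$. Since $\mathcal{E}_{\omega_{p-1}}$ is homogeneous of degree $p-1$ in the grading of Proposition \ref{pro:Tt-N-graded-bialgebra}, the ideal $\cI_p'$ is graded, so the $\bN$-grading descends to $\Tt_n'(p)$ immediately. The substantial point is that $\cI_p'$ is a coideal. Because $\Delta$ is an algebra map and $\cI_p'$ is generated as an ideal by the single element $\mathcal{E}_{\omega_{p-1}}$, it suffices to check that $\epsilon(\mathcal{E}_{\omega_{p-1}})=0$ (immediate, as it is homogeneous of positive degree) and that $\Delta(\mathcal{E}_{\omega_{p-1}})\in \Tt_n\otimes \cI_p'+\cI_p'\otimes \Tt_n$.

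First I would expand $\Delta(\mathcal{E}_{\omega_{p-1}})$ via Corollary \ref{cor:coproduct}:
$$
\Delta(\mathcal{E}_{\omega_{p-1}})=\sum_{r=0}^{p-1}\mathcal{E}_{\omega_r}\otimes \pi(\mathcal{SH}'_{p-1-r,r+1}).
$$
The extreme terms are harmless: for $r=0$ one has $\mathcal{SH}'_{p-1,1}=E_{\omega_{p-1}}$, giving $g\otimes \mathcal{E}_{\omega_{p-1}}\in \Tt_n\otimes \cI_p'$; for $r=p-1$ the index constraints in (\ref{SH'-formula}) force $\pi(\mathcal{SH}'_{0,p})=g^{p}$, giving $\mathcal{E}_{\omega_{p-1}}\otimes g^{p}\in \cI_p'\otimes \Tt_n$. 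The crux is the middle range $1\le r\le p-2$, where I claim every coefficient of $\pi(\mathcal{SH}'_{p-1-r,r+1})$ vanishes in characteristic $p$, so that these terms disappear entirely.

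The key computation, which I expect to be the main obstacle, is a divisibility fact about the partial Bell polynomials. By (\ref{SH'-formula}) the factor $\pi(\mathcal{SH}'_{p-1-r,r+1})$ is the Bell polynomial $B_{p,r+1}(g,\mathcal{E}_{\omega_1},\ldots,\mathcal{E}_{\omega_{p-1}})$, whose monomial $g^{t_1}\mathcal{E}_{\omega_1}^{t_2}\cdots \mathcal{E}_{\omega_{p-1}}^{t_p}$ (well defined since the $\mathcal{E}_{\omega_i}$ commute by Theorem \ref{thm:Tt-GK-PBW}) carries the nonnegative integer coefficient $\dfrac{p!}{t_1!\cdots t_p!\,(1!)^{t_1}(2!)^{t_2}\cdots (p!)^{t_p}}$, subject to $\sum_i t_i=r+1$ and $\sum_i i\,t_i=p$. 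For $1\le r\le p-2$ every block size satisfies $i\le p-r\le p-1$ and every multiplicity satisfies $t_i\le r+1\le p-1$; hence the denominator is coprime to $p$ while $p\mid p!$, so the coefficient is divisible by $p$. Therefore $\pi(\mathcal{SH}'_{p-1-r,r+1})=0$ in $\Tt_n$ for $1\le r\le p-2$, and we are left with
$$
\Delta(\mathcal{E}_{\omega_{p-1}})=g\otimes \mathcal{E}_{\omega_{p-1}}+\mathcal{E}_{\omega_{p-1}}\otimes g^{p},
$$
exhibiting $\mathcal{E}_{\omega_{p-1}}$ as a $(g,g^{p})$-skew primitive. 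This places $\Delta(\mathcal{E}_{\omega_{p-1}})$ in $\Tt_n\otimes \cI_p'+\cI_p'\otimes \Tt_n$, so $\cI_p'$ is a coideal and thus a bi-ideal, making $\Tt_n'(p)=\Tt_n/\cI_p'$ a graded quotient bialgebra.

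Finally, to upgrade to a pointed Hopf algebra I would invoke Lemma \ref{lem:pointed-Hopf-algebra} exactly as for $\Tt_n$: the algebra $\Tt_n'(p)$ is generated by the image of $g$, an invertible grouplike since $g^{n}=1$, together with the image of $h$, which is $(1,g)$-skew primitive; hence it is a pointed Hopf algebra. Equivalently, $\Tt_n'(p)$ is a quotient coalgebra of the pointed coalgebra $\Tt_n$, so it is pointed, and $\cI_p'$ is a Hopf ideal because $S(\mathcal{E}_{\omega_{p-1}})=-g^{-1}\mathcal{E}_{\omega_{p-1}}g^{-p}\in \cI_p'$. Combining this with the grading established above yields the claim.
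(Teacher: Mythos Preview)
Your proof is correct and follows the same route as the paper: both reduce to showing that $\mathcal{E}_{\omega_{p-1}}$ is $(g,g^p)$-skew primitive via Corollary~\ref{cor:coproduct} and formula~(\ref{SH'-formula}), so that $\cI_p'$ is a graded coideal, and then invoke the Hopf structure already established for $\Tt_n$. The paper's argument is terser---it simply cites (\ref{coproduct formula}) and (\ref{SH'-formula}) and records the skew-primitive identity---while you spell out the divisibility step showing that for $1\le r\le p-2$ every Bell coefficient $\dfrac{p!}{t_1!\cdots t_p!\,(1!)^{t_1}\cdots(p!)^{t_p}}$ is an integer divisible by~$p$; this is exactly the content the paper leaves implicit.
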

\begin{proof}
By Proposition \ref{pro:Ttn-pointed-Hopf} it suffices to show that $\cI_p'$ is a graded coideal. Indeed,
using Formulas (\ref{coproduct formula}),(\ref{SH'-formula}), we have
\begin{equation}\label{formula:coproduct-E_omega_p-1}
\Delta_{\Tt_{n}'(p)}(\mathcal{E}_{\omega_{p-1}})=
g\otimes \mathcal{E}_{\omega_{p-1}}+\mathcal{E}_{\omega_{p-1}}\otimes g^{p}.
\end{equation}
\end{proof}

\begin{lemma}\label{lem:Ttnp'-PBW}
 $\Tt_{n}'(p)$ has \textup{GK}-dimension $p-1$,
 with a \textup{$\operatorname{PBW}$} basis
 $$\{h^{n_{2}}\mathcal{E}_{\omega_{p-2}}^{n_{\omega_{p-2}}}\mathcal{E}_{\omega_{p-3}}^{n_{\omega_{p-3}}}\cdots
g^{n_{1}}|~0\le n_{1}\le n-1,~n_2,n_{\omega_{k}}\in \mathbb{N},~1\le k\le p-2\}$$
satsifying the relations:
\begin{align*}
& g^{n}=1, \qquad \mathcal{E}_{\omega_{s}}h=h\mathcal{E}_{\omega_{s}}+\mathcal{E}_{\omega_{s+1}}, \\
& \mathcal{E}_{\omega_{p-2}}h=h\mathcal{E}_{\omega_{p-2}},\qquad \mathcal{E}_{\omega_{s}}\mathcal{E}_{\omega_{t}}=\mathcal{E}_{\omega_{t}}\mathcal{E}_{\omega_{s}},
\end{align*}
where $0\le s<t\le p-2$.
\end{lemma}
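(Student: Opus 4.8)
The plan is to first determine the defining relations and the surviving generators of $\Tt_{n}'(p)$, then confirm the $\operatorname{PBW}$ basis with the Diamond Lemma, and finally read off the GK-dimension from an Ore-extension decomposition via Lemmas~\ref{lem:GKdim-Ore-extension} and~\ref{lem:GKdim=}.

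First I would analyze the ideal $\mathcal{I}_{p}'$. Since $\mathcal{E}_{\omega_{k+1}}=[\mathcal{E}_{\omega_{k}},h]$ holds in $\Tt$ by Lemma~\ref{lem:equivalent-ideal} and Theorem~\ref{thm:Tt-GK-PBW}, every $\mathcal{E}_{\omega_{k}}$ with $k\ge p-1$ lies in the two-sided ideal generated by $\mathcal{E}_{\omega_{p-1}}$, and conversely these elements generate $\mathcal{I}_{p}'$. Hence in the quotient all $\mathcal{E}_{\omega_{k}}$ with $k\ge p-1$ vanish, the surviving generators are $g,h,\mathcal{E}_{\omega_{1}},\ldots,\mathcal{E}_{\omega_{p-2}}$, and the straightening relation $\mathcal{E}_{\omega_{s}}h=h\mathcal{E}_{\omega_{s}}+\mathcal{E}_{\omega_{s+1}}$ degenerates at the top to $\mathcal{E}_{\omega_{p-2}}h=h\mathcal{E}_{\omega_{p-2}}$; together with the commutativity of the $\mathcal{E}_{\omega_{s}}$ and the relation $g^{n}=1$ inherited from $\Tt_{n}$, this is exactly the stated list of relations.

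Next I would apply the Diamond Lemma~\cite{B1978} with the reduction order of Lemma~\ref{lem:Ttn-PBW} (augmented by $g^{n}\to 1$) restricted to the generators $h,\mathcal{E}_{\omega_{1}},\ldots,\mathcal{E}_{\omega_{p-2}},g$. The overlap ambiguities to resolve are those appearing in Theorem~\ref{thm:Tt-GK-PBW} and Lemma~\ref{lem:Ttn-PBW}: the associativity overlap $(\mathcal{E}_{\omega_{r}}\mathcal{E}_{\omega_{s}})\mathcal{E}_{\omega_{t}}$, the straightening overlap $(\mathcal{E}_{\omega_{r}}\mathcal{E}_{\omega_{s}})h$, and the two overlaps $(g^{n-1}g)\mathcal{E}_{\omega_{s}}$ and $(g^{n-1}g)h$ coming from $g^{n}=1$. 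They resolve exactly as before; the only compatibility requiring the hypotheses is $(g^{n-1}g)h$, which reduces to the condition $n\mathcal{E}_{gh}=0$, guaranteed by $\Char\K=p$ and $p\mid n$. Crucially, the truncated relation $\mathcal{E}_{\omega_{p-2}}h=h\mathcal{E}_{\omega_{p-2}}$ introduces no new unresolvable ambiguity precisely because $\mathcal{E}_{\omega_{p-1}}=0$, so the reductions remain confluent. This yields the claimed $\operatorname{PBW}$ basis.

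Finally, for the GK-dimension I would exhibit $\Tt_{n}'(p)$ as an Ore extension of its commutative subalgebra. Let $\bar{R}$ denote the image of $R$; by Lemma~\ref{lem:R-cong-FdB} and the relations above, $\bar{R}\cong\K[g,\mathcal{E}_{\omega_{1}},\ldots,\mathcal{E}_{\omega_{p-2}}]/(g^{n}-1)$. Since $g^{n}=1$, the element $g$ is integral over $\K[\mathcal{E}_{\omega_{1}},\ldots,\mathcal{E}_{\omega_{p-2}}]$, so $\bar{R}$ is module-finite over this polynomial ring and Lemma~\ref{lem:GKdim=} yields $\GKdim\bar{R}=p-2$. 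The derivation $\delta=\operatorname{ad}_{r}h$ stabilizes $\bar{R}$, sending $\mathcal{E}_{\omega_{s}}\mapsto\mathcal{E}_{\omega_{s+1}}$, $\mathcal{E}_{\omega_{p-2}}\mapsto 0$ and $g\mapsto\mathcal{E}_{gh}$, and by the $\operatorname{PBW}$ basis just established $\Tt_{n}'(p)=\bar{R}[h;\delta]$ as in Lemma~\ref{lem:R-Ore-extension}. As $\bar{R}$ is finitely generated and $\delta$-stable, Lemma~\ref{lem:GKdim-Ore-extension} applies and gives $\GKdim\Tt_{n}'(p)=\GKdim\bar{R}+1=p-1$. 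I expect the main obstacle to be the Diamond Lemma bookkeeping of the previous paragraph, specifically checking that the top-degree truncation $\mathcal{E}_{\omega_{p-2}}h=h\mathcal{E}_{\omega_{p-2}}$ leaves all overlaps confluent, whereas the GK-dimension computation is then immediate from the cited lemmas.
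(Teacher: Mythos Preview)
Your proposal is correct and follows essentially the same approach as the paper: apply the Diamond Lemma with the order $h\prec_{red}\mathcal{E}_{\omega_{p-2}}\prec_{red}\cdots\prec_{red} g$ to establish the PBW basis (checking the same overlap families (\ref{ambiguties 1})--(\ref{ambiguities 4}) you list), and then compute the GK-dimension via the Ore-extension decomposition $\bar R[h;\operatorname{ad}_r h]$ using Lemmas~\ref{lem:GKdim-Ore-extension} and~\ref{lem:GKdim=}. Your discussion of why the truncation $\mathcal{E}_{\omega_{p-1}}=0$ creates no new unresolved ambiguities and your module-finiteness argument for $\GKdim\bar R=p-2$ are in fact more explicit than the paper's terse sketch.
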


\begin{proof}
Consider the order $\prec_{red}$: $h\prec_{red}\mathcal{E}_{\omega_{p-2}}\prec_{red} \mathcal{E}_{\omega_{p-3}}\prec_{red}\ldots \prec_{red} g$, the overlaps (\ref{ambiguties 1}), (\ref{ambiguties 2}), (\ref{ambiguities 3}), (\ref{ambiguities 4}) and the following (\ref{ambiguities 5}) are resolvable in $\Tt_{n}(p)$ for all $0\le s\le p-2$:
\begin{align}
\label{ambiguities 5}
& (\mathcal{E}_{\omega_{s}}h)h^{p-1}=\mathcal{E}_{\omega_{s}}(h^{p-1}).
\end{align}
Let $R=\K[g]/(g^{n})$. Since $\Tt_{n}(p)=R[\mathcal{E}_{gh}]\ldots [\mathcal{E}_{\omega_{p-2}}][h;\operatorname{ad}_{r}h]$, the GK-dimension of $\Tt_{n}(p)$ is $p-1$ by Lemma \ref{lem:GKdim-Ore-extension} and Lemma \ref{lem:GKdim=}.
\end{proof}

\begin{remark}\quad \label{rmk:Ttpm'}
\begin{itemize}
\item [(i)] Let $R_{n}'(p)$ be the subalgebra of $\Tt_{n}'(p)$ generated by $\{\mathcal{E}_{\omega_{r}}|~0\le r\le p-2\}$.  Then $\Tt_{n}'(p)=R'_{n}(p)[h;\operatorname{ad}_{r}h]$.
\item [(ii)] From Formula (\ref{formula:coproduct-E_omega_p-1}), Lemma \ref{lem:GKdim-Ore-extension} and Lemma \ref{lem:GKdim=},
 $\Tt_{\pm 1}'(p):=\Tt_{\pm 1}/(\mathcal{E}_{\omega_{p-1}})$ is an $\bN$-graded pointed Hopf algebras of GK-dimension $p$.
\end{itemize}
\end{remark}

\begin{definition}
Let $\Char\K=p$ and  suppose that $\mathcal{E}_{gh}\neq0$, $p|n$.
Define $\Tt_n(p):=\Tt'_n(p)/\cI_p$,
where $\cI_p$ is the ideal of $\Tt'_{n}(p)$ generated by $h^{p}$.
\end{definition}
\begin{proposition}\label{pro:Ttnp-pointed-Hopf}
$\Tt_n(p)$ is an $\bN$-graded pointed Hopf algebra.
\end{proposition}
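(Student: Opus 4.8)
The plan is to imitate the pattern of Propositions \ref{pro:Ttn-pointed-Hopf} and \ref{pro:Ttnp'-pointed-Hopf}. Since $\Tt_{n}'(p)$ is already an $\bN$-graded pointed Hopf algebra by Proposition \ref{pro:Ttnp'-pointed-Hopf}, it suffices to show that the ideal $\cI_p$ generated by $h^p$ is a graded coideal. The quotient is then an $\bN$-graded bialgebra, and since it is generated by the grouplike $g$ (invertible because $g^n=1$) together with the skew-primitive $h$, Lemma \ref{lem:pointed-Hopf-algebra} forces it to be a pointed Hopf algebra.

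The key step is the computation of $\Delta(h^p)$ inside $\Tt_{n}'(p)$. By Formula \eqref{formula:coproduct-h^p} we have in $\Tt$
\begin{align*}
\Delta_{\Tt}(h^p)=1\otimes h^{p} + h\otimes \mathcal{E}_{\omega_{p-1}} + h^{p}\otimes g^{p}.
\end{align*}
Passing to $\Tt_{n}'(p)=\Tt_{n}/\mathcal{I}_p'$, the middle term vanishes because $\mathcal{E}_{\omega_{p-1}}=0$ there by the definition of $\mathcal{I}_p'$, so that
\begin{align*}
\Delta_{\Tt_{n}'(p)}(h^p)=1\otimes h^{p} + h^{p}\otimes g^{p}.
\end{align*}
Hence $h^p$ is a $(1,g^p)$-skew primitive element of $\Tt_{n}'(p)$.

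From this the coideal property is immediate. Since $\Delta$ is multiplicative and $h^p\in\cI_p$ while $g^p\in\Tt_{n}'(p)$, we have $\Delta(h^p)\in \Tt_{n}'(p)\otimes \cI_p + \cI_p\otimes \Tt_{n}'(p)$; spreading this over arbitrary products $a\,h^p\,b$ with $a,b\in\Tt_{n}'(p)$ shows $\Delta(\cI_p)\subseteq \Tt_{n}'(p)\otimes \cI_p + \cI_p\otimes \Tt_{n}'(p)$. Moreover $\epsilon(h^p)=\epsilon(h)^p=0$, so $\cI_p$ is a coideal, and it is homogeneous of degree $p$, hence graded. This yields the $\bN$-graded bialgebra structure, and Lemma \ref{lem:pointed-Hopf-algebra} completes the argument.

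There is essentially no serious obstacle here; the only point requiring care is the reduction of Formula \eqref{formula:coproduct-h^p}, which hinges entirely on the relation $\mathcal{E}_{\omega_{p-1}}=0$ already holding in $\Tt_{n}'(p)$. Everything downstream is formal.
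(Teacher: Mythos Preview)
Your proof is correct and follows essentially the same approach as the paper's: the paper's one-line proof simply cites Lemma \ref{lem:pointed-Hopf-algebra} together with formulas \eqref{formula:coproduct-h^p} and \eqref{formula:coproduct-E_omega_p-1}, and you have spelled out exactly what this entails. Your more explicit verification that $\cI_p$ is a graded coideal (via $h^p$ becoming skew-primitive once $\mathcal{E}_{\omega_{p-1}}=0$) is precisely the content the paper leaves implicit.
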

\begin{proof}
It follows from Lemma \ref{lem:pointed-Hopf-algebra} and the formulas (\ref{formula:coproduct-h^p}), (\ref{formula:coproduct-E_omega_p-1}).
\end{proof}

\begin{lemma}\label{lem:Ttnp-PBW}
 $\Tt_{n}(p)$ has \textup{GK}-dimension $p-2$,
 with a \textup{$\operatorname{PBW}$} basis
 $$\{h^{n_{2}}\mathcal{E}_{\omega_{p-2}}^{n_{\omega_{p-2}}}\mathcal{E}_{\omega_{p-3}}^{n_{\omega_{p-3}}}\cdots
g^{n_{1}}|~0\le n_{2}\le p-1,~0\le n_{1}\le n-1,~n_{\omega_{k}}\in \mathbb{N},~1\le k\le p-2\}$$
satisfying the relations:
\begin{align*}
& g^{n}=1, \qquad h^{p}=0,\qquad \mathcal{E}_{\omega_{s}}h=h\mathcal{E}_{\omega_{s}}+\mathcal{E}_{\omega_{s+1}}, \\
& \mathcal{E}_{\omega_{p-2}}h=h\mathcal{E}_{\omega_{p-2}},\qquad \mathcal{E}_{\omega_{s}}\mathcal{E}_{\omega_{t}}=\mathcal{E}_{\omega_{t}}\mathcal{E}_{\omega_{s}},
\end{align*}
where $0\le s<t\le p-2$.
\end{lemma}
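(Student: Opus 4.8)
The plan is to follow the proof of Lemma~\ref{lem:Ttnp'-PBW} almost verbatim, the only genuinely new ingredient being the relation $h^{p}=0$, and then to read off the growth from the resulting monomial basis. First I would fix the reduction order $\prec_{red}$: $h\prec_{red}\mathcal{E}_{\omega_{p-2}}\prec_{red}\cdots\prec_{red}\mathcal{E}_{\omega_{1}}\prec_{red} g$ and adjoin to the reduction system of $\Tt_{n}'(p)$ the single new rule $h^{p}\mapsto 0$. Since the overlaps \eqref{ambiguties 1}, \eqref{ambiguties 2}, \eqref{ambiguities 3}, \eqref{ambiguities 4} and \eqref{ambiguities 5} were already shown resolvable in Lemma~\ref{lem:Ttnp'-PBW}, by the Diamond Lemma it suffices to resolve the overlap ambiguities created by $h^{p}$. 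The self-overlaps $h^{p+k}$ with $1\le k\le p-1$ reduce to $0$ from either end and so are trivially resolvable; the only essential case is $\mathcal{E}_{\omega_{s}}h^{p}$ for $0\le s\le p-2$, where reducing the suffix $h^{p}$ first gives $0$ while pushing $h$ to the left must be shown to collapse to the same value.

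This is where the main obstacle lies, and it is resolved by the characteristic-$p$ arithmetic. Using Formula~\eqref{formula:E_omega-h^j} one has $\mathcal{E}_{\omega_{s}}h^{p}=h^{p}\mathcal{E}_{\omega_{s}}+\sum_{k=1}^{p}\binom{p}{k}h^{p-k}\mathcal{E}_{\omega_{s+k}}$; in characteristic $p$ every $\binom{p}{k}$ with $1\le k\le p-1$ vanishes, leaving $h^{p}\mathcal{E}_{\omega_{s}}+\mathcal{E}_{\omega_{s+p}}$. Both terms are zero: $h^{p}=0$ by the defining relation, and $\mathcal{E}_{\omega_{r}}=0$ for every $r\ge p-1$, since $\mathcal{E}_{\omega_{p-1}}=0$ in $\Tt_{n}'(p)$ and the recursion $\mathcal{E}_{\omega_{r}}=[\mathcal{E}_{\omega_{r-1}},h]$ propagates the vanishing by induction on $r$. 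Hence the new overlap is resolvable, the listed standard monomials form a PBW basis, and the bound $0\le n_{2}\le p-1$ is exactly the effect of $h^{p}=0$ (while $0\le n_{1}\le n-1$ comes from $g^{n}=1$ and the $\mathcal{E}$-exponents remain unbounded).

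Finally I would compute the GK-dimension from this basis. Let $R$ be the subalgebra generated by $\{\mathcal{E}_{\omega_{r}}\mid 0\le r\le p-2\}$; by the PBW basis it is commutative and equals $\K[\mathcal{E}_{\omega_{1}},\dots,\mathcal{E}_{\omega_{p-2}}]$ on $p-2$ algebraically independent elements, extended by the torsion element $g=\mathcal{E}_{\omega_{0}}$ with $g^{n}=1$. As in Lemma~\ref{lem:Ttnp'-PBW}, $R$ is module-finite over the polynomial subalgebra $\K[\mathcal{E}_{\omega_{1}},\dots,\mathcal{E}_{\omega_{p-2}}]$, so Lemma~\ref{lem:GKdim=} gives $\GKdim R=p-2$; by monotonicity under subalgebras, $\GKdim\Tt_{n}(p)\ge p-2$. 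For the reverse inequality I would use the $\bN$-grading with $\deg g=0$, $\deg h=1$, $\deg\mathcal{E}_{\omega_{k}}=k$: the nilpotent $h$ (exponent $<p$) and the torsion $g$ (exponent $<n$) contribute only bounded factors, while the $p-2$ commuting variables $\mathcal{E}_{\omega_{1}},\dots,\mathcal{E}_{\omega_{p-2}}$ contribute $\prod_{k=1}^{p-2}(1-t^{k})^{-1}$, a Hilbert series with a pole of order $p-2$ at $t=1$. Thus $\dim(\K+V+\cdots+V^{m})$ grows like $m^{p-2}$, forcing $\GKdim\Tt_{n}(p)=p-2$. Equivalently, $\Tt_{n}(p)$ is a free left $R$-module of rank $p$ with basis $\{1,h,\dots,h^{p-1}\}$, and a module-finite extension preserves the GK-dimension.
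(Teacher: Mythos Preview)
Your proof is correct and takes essentially the same Diamond Lemma approach as the paper, with the same reduction order; you are in fact more explicit than the paper in resolving the overlap $\mathcal{E}_{\omega_s}h^{p}$ via the vanishing of $\binom{p}{k}$ in characteristic $p$ and the inductive vanishing of $\mathcal{E}_{\omega_{r}}$ for $r\ge p-1$. For the GK-dimension the paper simply writes $\Tt_{n}(p)=(R[\mathcal{E}_{gh}]\cdots[\mathcal{E}_{\omega_{p-2}}][h;\ad_{r}h])/(h^{p})$ and invokes Lemmas~\ref{lem:GKdim-Ore-extension} and~\ref{lem:GKdim=}, which in substance is your module-finiteness argument over the commutative subalgebra $R_{n}(p)$; your Hilbert-series count and the free-rank-$p$ observation make that step more transparent.
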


\begin{proof}
Applying the Diamond Lemma to the order $\prec_{red}$: $h\prec_{red}\mathcal{E}_{\omega_{p-2}}\prec_{red} \mathcal{E}_{\omega_{p-3}}\prec_{red}\ldots \prec_{red} g$, the ambiguities (\ref{ambiguties 1}), (\ref{ambiguties 2}), (\ref{ambiguities 3}), (\ref{ambiguities 4}) and following are resolvable in $\Tt_{n}(p)$ for all $0\le s\le p-2$:
\begin{align}
\label{ambiguities 6}
& (hh^{p-1})h=h(h^{p-1}h).
\end{align}
Let $R=\K[g]/(g^{n})$. Since $\Tt_{n}(p)=(R[\mathcal{E}_{gh}]\ldots [\mathcal{E}_{\omega_{p-2}}][h;\operatorname{ad}_{r}h])/(h^{p})$, its GK-dimension is $p-2$ by Lemma \ref{lem:GKdim-Ore-extension} and Lemma \ref{lem:GKdim=}.
\end{proof}

\begin{remark}\label{rmk:Rnp-Ttnp}
Let $R_{n}(p)$ be the subalgebra of $\Tt_{n}(p)$ generated by $\{\mathcal{E}_{\omega_{k}}|~0\le k\le p-2\}$. Then $\Tt_{n}(p)=R_{n}(p)[h;\operatorname{ad}_{r}h]$.
\end{remark}

Recall that Radford \cite[Proposition 4.7]{R1977} introduced the pointed Hopf algebra $\overline{\mathcal{SH}_{n}}:=\T/\mathcal{C}_{n}$, where $\mathcal{C}_{n}$ is the ideal of $\T$ generated by $\{g^{n}-1, \mathcal{SH}_{n-k,k}(h,g)|~0\le k \le n-1\}$, see also \cite[Section 5.3.2]{JZ2021}. We show that the relationship between $\Tt_{n}(p)$ and $\overline{\mathcal{SH}_{n}}$ is as follows.

\begin{corollary}\label{cor:Ttnp-SHn}
Let $n\ge 3$ and $\mathcal{I}'$  be the ideal of $\overline{\mathcal{SH}_{n}}$ generated by $\{\mathcal{E}_{\alpha}\in \overline{\mathcal{SH}_{n}}|~\alpha(1)\ge 2\}$.
\begin{itemize}
\item [(a)] If $\Char\K=0$, then $\overline{\mathcal{SH}_{n}}/\mathcal{I}'\cong \K \mathbb{Z}_{n}$.
\item [(b)] If $\Char\K=p$, then $
\overline{\mathcal{SH}_{p}}/\mathcal{I}'\cong \Tt_{p}(p)$.
\end{itemize}
\end{corollary}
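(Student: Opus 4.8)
The plan is to realise both sides of each isomorphism as quotients of the free bialgebra $\T$ by explicit two-sided ideals and then to compare those ideals. Writing $\overline{\mathcal{SH}_{n}}=\T/\mathcal{C}_{n}$, the ideal $\mathcal{I}'$ is generated by the images of $\{E_{\alpha}\mid\alpha(g)\ge 2\}$, so $\overline{\mathcal{SH}_{n}}/\mathcal{I}'=\T/(\mathcal{C}_{n}+\mathcal{I})$, where $\mathcal{I}$ is the defining ideal of $\Tt$. Since $g^{n}-1\in\mathcal{C}_{n}$, this quotient factors through $\Tt_{n}=\Tt/(g^{n}-1)$; concretely, $\overline{\mathcal{SH}_{n}}/\mathcal{I}'=\Tt_{n}/\mathcal{J}$, where $\mathcal{J}$ is the ideal of $\Tt_{n}$ generated by the images of the shuffle polynomials $\mathcal{SH}_{n-k,k}(h,g)$ for $0\le k\le n-1$. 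Everything therefore reduces to computing these images in the PBW basis of $\Tt_{n}$ furnished by Lemma~\ref{lem:Ttn-PBW}, and identifying the ideal $\mathcal{J}$ they generate.

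For part (b) I work in characteristic $p$ with $n=p$. Applying the projection $\pi$ to Radford's formula (\ref{formula-Radford}) gives $\Delta_{\Tt}(h^{p})=\sum_{k=0}^{p}h^{k}\otimes \overline{\mathcal{SH}_{p-k,k}(h,g)}$, and comparing this with the already-established formula (\ref{formula:coproduct-h^p}), using the linear independence of the PBW elements $h^{0},h^{1},\dots,h^{p}$ in $\Tt$, lets me read off the image of each shuffle polynomial at once: $\mathcal{SH}_{p,0}=h^{p}$, $\mathcal{SH}_{p-1,1}=\mathcal{E}_{\omega_{p-1}}$, and $\mathcal{SH}_{p-k,k}=0$ for $2\le k\le p-1$. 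Hence $\mathcal{J}$ is generated in $\Tt_{p}$ by $h^{p}$ and $\mathcal{E}_{\omega_{p-1}}$, which is precisely the composite ideal defining $\Tt_{p}(p)=\Tt_{p}/(\mathcal{E}_{\omega_{p-1}},h^{p})$; this yields $\overline{\mathcal{SH}_{p}}/\mathcal{I}'\cong\Tt_{p}(p)$. The hypothesis $n\ge 3$ (so $p\ge 3$) guarantees that the range $2\le k\le p-1$ is nonempty and that $\mathcal{E}_{\omega_{p-1}}$ is distinct from $\mathcal{E}_{gh}=\mathcal{E}_{\omega_{1}}$, ruling out the degenerate case $p=2$ in which one would be forced to kill $\mathcal{E}_{gh}$.

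For part (a), $\Char\K=0$. The key observation is that in $\Tt_{n}$ the relation $g^{n}=1$ is compatible with the PBW basis only through the identity $n\mathcal{E}_{gh}=0$, which is exactly the overlap condition behind Lemma~\ref{lem:Ttn-PBW}; in characteristic $0$ this forces $\mathcal{E}_{gh}=0$, whence $\mathcal{E}_{\omega_{k}}=0$ for all $k\ge 1$ and $\Tt_{n}$ is the commutative algebra $\K[h,g]/(g^{n}-1)$ of Lemma~\ref{lem:Ttn-PBW}(a). In this commutative quotient the shuffle generator of lowest $h$-degree, $\mathcal{SH}_{1,n-1}(h,g)=\sum_{i=0}^{n-1}g^{i}hg^{n-1-i}$, collapses to $n\,hg^{n-1}$; since $g$ is invertible and $\Char\K=0$, the relation $\mathcal{SH}_{1,n-1}=0$ forces $h=0$. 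The remaining generators $\mathcal{SH}_{n-k,k}$ with $k\le n-2$ have $h$-degree $n-k\ge 2$ and hence already lie in $(h)$, so $\mathcal{J}=(h)$ and $\overline{\mathcal{SH}_{n}}/\mathcal{I}'=\K[g]/(g^{n}-1)\cong\K\mathbb{Z}_{n}$.

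The main obstacle is the computation of the images of the shuffle generators $\mathcal{SH}_{n-k,k}(h,g)$ in the PBW basis of $\Tt_{n}$, which in general requires the commutator identity (\ref{formula:E_omega-h^j}) and the Lyndon--Shirshov machinery of Lemma~\ref{lem:commutators}. In part (b) this is bypassed by extracting all the relevant images simultaneously from the single coproduct formula (\ref{formula:coproduct-h^p}) via linear independence, while in part (a) it is trivialised by the prior collapse of $\Tt_{n}$ to a commutative algebra. Once these images are identified, both ideal comparisons are immediate.
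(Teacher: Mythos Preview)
Your proof is correct and follows the same overall strategy as the paper --- realise $\overline{\mathcal{SH}_n}/\mathcal{I}'$ as a quotient of $\Tt_n$ and then identify the ideal generated by the shuffle polynomials --- but the tactical details differ in an instructive way. For part~(a), the paper invokes an explicit expression for $\mathcal{SH}_{1,n-1}$ from \cite[Corollary~2.2]{JZ2021} (namely $n((n-1)gh-(n-3)hg)=0$) and combines it with $\mathcal{E}_{ggh}=0$ to force $h=0$; you instead first observe that the Diamond--Lemma overlap forces $n\mathcal{E}_{gh}=0$ as a \emph{relation} in $\Tt_n$, collapsing $\Tt_n$ to $\K[h,g]/(g^n-1)$ in characteristic~$0$, and then the computation of $\mathcal{SH}_{1,n-1}$ becomes the trivial commutative one. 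For part~(b), the paper appeals directly to \cite[Corollary~2.3(c)]{JZ2021} to identify $\mathcal{C}_p+\mathcal{I}$; your coproduct-comparison trick --- matching Radford's expansion against formula~(\ref{formula:coproduct-h^p}) and reading off the images of all $\mathcal{SH}_{p-k,k}$ at once via linear independence of $1,h,\dots,h^p$ in $\Tt$ --- is a clean way to recover the same information using only results already stated in the paper. The two routes are equivalent in content (formula~(\ref{formula:coproduct-h^p}) itself rests on the cited result from \cite{JZ2021}), but your packaging is more self-contained.
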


\begin{proof}
From \cite[Corollary 2.2]{JZ2021}, $\mathcal{SH}_{1,n-1}=0$ implies that $n\left((n-1)gh-(n-3)hg\right)=0$ in $\mathcal{SH}_{n}/\mathcal{I}'$. Note that $\mathcal{E}_{ggh}=[g,[g,h]]=0$. If $\Char\K=0$, then $h=0$ and Part (a) holds. Let $\mathcal{K}$ be the ideal of $\T$ generated by $\{E_{\alpha},g^{p}-1,E_{\omega_{p-1}},h^{p}|~\alpha(g)\ge 2\}$. If $\Char\K=p$, then $\mathcal{I}'+\mathcal{C}_{p}=\mathcal{K}$ by \cite[Corollary 2.3 (c)]{JZ2021}. It is clear that $\overline{\mathcal{SH}_{p}}/\mathcal{I}'=\T/\mathcal{C}_{p}/(\mathcal{I}'+\mathcal{C}_{p})/\mathcal{C}_{p}\cong \T/\mathcal{K}$. Therefore, $\overline{\mathcal{SH}_{p}}/\mathcal{I}'\cong \Tt_{p}(p)$ from Theorem \ref{thm:Tt-bialgebra}, Propositions \ref{pro:Ttn-pointed-Hopf}, \ref{pro:Ttnp'-pointed-Hopf}, \ref{pro:Ttnp-pointed-Hopf}.
\end{proof}

\subsection{The Hopf algebra $\Tt_{n}(p;d_{j},d_{j-1},\ldots,d_{1})$} \qquad

Let $\Char\K=p$ and suppose that $\mathcal{E}_{gh}\neq0$, $p|n$, $1\le j\le p-2$,
$d_{j}\ge d_{j-1}\ge \ldots \ge d_{1}\ge 1$. Let $\Tt_{n}(p;d_{j},d_{j-1},\ldots,d_{1}):=\Tt_{n}(p)/\mathcal{I}(d_{j},d_{j-1},\ldots,d_{1})$,
where $\mathcal{I}(d_{j},d_{j-1},\ldots,d_{1})$ is the ideal
of $\Tt_{n}(p)$ generated by $\{\mathcal{E}_{\omega_{k}}^{p^{d_{k}}}|~1\le k\le j\}$.

\begin{proposition}\label{pro:Ttnpd-pointed-Hopf}
$\Tt_{n}(p;d_{j},d_{j-1},\ldots,d_{1})$ is an $\bN$-graded pointed Hopf algebra.
\end{proposition}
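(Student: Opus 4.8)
The plan is to show that $\mathcal{I}(d_{j},\ldots,d_{1})$ is a graded bi-ideal of $\Tt_{n}(p)$, after which Proposition \ref{pro:Ttnp-pointed-Hopf} together with Lemma \ref{lem:pointed-Hopf-algebra} finishes the argument. First I would record the easy parts. Since $\mathcal{E}_{\omega_{k}}$ sits in degree $k$, the generator $\mathcal{E}_{\omega_{k}}^{p^{d_{k}}}$ is homogeneous of degree $kp^{d_{k}}$, so $\mathcal{I}(d_{j},\ldots,d_{1})$ is automatically a graded ideal; and since $\epsilon(\mathcal{E}_{\omega_{k}})=0$ for $k\ge 1$ (each $\mathcal{E}_{\omega_{k}}$ is a commutator), we get $\epsilon(\mathcal{E}_{\omega_{k}}^{p^{d_{k}}})=0$. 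It then remains to prove the coideal property
$$\Delta(\mathcal{E}_{\omega_{k}}^{p^{d_{k}}})\in \Tt_{n}(p)\otimes \mathcal{I}(d_{j},\ldots,d_{1})+\mathcal{I}(d_{j},\ldots,d_{1})\otimes \Tt_{n}(p), \qquad 1\le k\le j.$$

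The heart of the proof is the computation of this coproduct. By Corollary \ref{cor:coproduct},
$$\Delta(\mathcal{E}_{\omega_{k}})=g\otimes \mathcal{E}_{\omega_{k}}+\mathcal{E}_{\omega_{k}}\otimes g^{k+1}+\sum_{r=1}^{k-1}\mathcal{E}_{\omega_{r}}\otimes \mathcal{SH}'_{k-r,r+1},$$
and by Formula (\ref{SH'-formula}) every $\mathcal{SH}'_{k-r,r+1}$ is a polynomial in $g,\mathcal{E}_{\omega_{1}},\ldots,\mathcal{E}_{\omega_{k}}$. Since $k\le j\le p-2$, all tensor legs appearing above lie in the commutative subalgebra $R_{n}(p)$ of Remark \ref{rmk:Rnp-Ttnp}, so $\Delta(\mathcal{E}_{\omega_{k}})$ lies in the commutative algebra $R_{n}(p)\otimes R_{n}(p)$. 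Working in characteristic $p$, I would apply the Frobenius identity $(\sum_{i}a_{i})^{p^{d}}=\sum_{i}a_{i}^{p^{d}}$ in this commutative ring to obtain
$$\Delta(\mathcal{E}_{\omega_{k}}^{p^{d_{k}}})=\Delta(\mathcal{E}_{\omega_{k}})^{p^{d_{k}}}=g^{p^{d_{k}}}\otimes \mathcal{E}_{\omega_{k}}^{p^{d_{k}}}+\mathcal{E}_{\omega_{k}}^{p^{d_{k}}}\otimes g^{(k+1)p^{d_{k}}}+\sum_{r=1}^{k-1}\mathcal{E}_{\omega_{r}}^{p^{d_{k}}}\otimes (\mathcal{SH}'_{k-r,r+1})^{p^{d_{k}}}.$$

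Finally I would read off membership in the ideal term by term. The first summand lies in $\Tt_{n}(p)\otimes \mathcal{I}$ because $\mathcal{E}_{\omega_{k}}^{p^{d_{k}}}$ is a generator of $\mathcal{I}(d_{j},\ldots,d_{1})$; the second lies in $\mathcal{I}\otimes \Tt_{n}(p)$ for the same reason. For each middle term $1\le r\le k-1$ we have $r<k\le j$, so $r$ also indexes a generator, and here the monotonicity hypothesis $d_{1}\le d_{2}\le\cdots\le d_{j}$ is exactly what is needed: from $d_{r}\le d_{k}$ we get $\mathcal{E}_{\omega_{r}}^{p^{d_{k}}}=\bigl(\mathcal{E}_{\omega_{r}}^{p^{d_{r}}}\bigr)^{p^{d_{k}-d_{r}}}\in \mathcal{I}$, placing the middle terms in $\mathcal{I}\otimes \Tt_{n}(p)$. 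This is the step I expect to be the crux: the freshman's-dream collapse only works because $\Delta(\mathcal{E}_{\omega_{k}})$ lands in a commutative subalgebra, and the cross terms only return to the ideal because the exponents $d_{i}$ are ordered. With the coideal property established, $\mathcal{I}(d_{j},\ldots,d_{1})$ is a graded bi-ideal, so $\Tt_{n}(p;d_{j},\ldots,d_{1})$ is an $\bN$-graded bialgebra; as it is generated by the invertible grouplike $g$ (with $g^{n}=1$) and the skew-primitive $h$, Lemma \ref{lem:pointed-Hopf-algebra} shows it is an $\bN$-graded pointed Hopf algebra.
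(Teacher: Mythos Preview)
Your proof is correct and follows the same strategy as the paper's own proof, which simply says that it suffices to show $\mathcal{I}(d_{j},\ldots,d_{1})$ is a graded coideal and that this follows from formulas (\ref{coproduct formula}) and (\ref{SH'-formula}). Your version is considerably more explicit---in particular, you spell out the Frobenius computation in the commutative subalgebra $R_{n}(p)\otimes R_{n}(p)$ and make transparent why the monotonicity hypothesis $d_{1}\le\cdots\le d_{j}$ is needed for the cross terms $\mathcal{E}_{\omega_{r}}^{p^{d_{k}}}$ to land back in the ideal, a point the paper's terse proof leaves to the reader.
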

\begin{proof}
By Lemma \ref{lem:pointed-Hopf-algebra} it suffices to show that $\mathcal{I}(d_{j},d_{j-1},\ldots,d_{1})$ is a graded coideal. It follows from the formulas (\ref{coproduct formula}) and (\ref{SH'-formula}).
\end{proof}

\begin{theorem}\label{thm:Ttnpd-PBW}
$\Tt_{n}(p;d_{j},d_{j-1},\ldots,d_{1})$ has
 \textup{GK}-dimension $(p-2-j)$,
 with a \textup{$\operatorname{PBW}$} basis
\begin{align*}
\{h^{n_{2}}\mathcal{E}_{\omega_{p-2}}^{n_{\omega_{p-2}}}\mathcal{E}_{\omega_{p-3}}^{n_{\omega_{p-3}}}
\cdots g^{n_{1}}|~ & 0\le n_{2}\le p-1,~0\le n_{\omega_{k}}
\le p^{d_{k}}-1,~1\le k\le j,\\ &  n_{\omega_{k'}}\in \mathbb{N},~j< k'\le p-2,~0
\le n_{1}\le n-1\}
\end{align*}
and the relations:
\begin{align*}
& g^{n}=1, \qquad \mathcal{E}_{\omega_{k}}^{p^{d_{k}}}=0, \qquad h^{p}=0, \\
& \mathcal{E}_{\omega_{s}}h=h\mathcal{E}_{\omega_{s}}+\mathcal{E}_{\omega_{s+1}}, \qquad \mathcal{E}_{\omega_{p-2}}h= h\mathcal{E}_{\omega_{p-2}},\qquad \mathcal{E}_{\omega_{s}}\mathcal{E}_{\omega_{t}} = \mathcal{E}_{\omega_{t}}\mathcal{E}_{\omega_{s}},
\end{align*}
where $1\le k\le j$ and $0\le s<t\le p-2$.
\begin{proof}
Following Lemmas \ref{lem:GKdim-Ore-extension} and \ref{lem:GKdim=}, the GK-dimension of $\Tt_{n}(p;d_{j},d_{j-1},\ldots,d_{1})$ is $p-2-j$. Applying the Diamond Lemma  to the order $\prec_{red}$: $h\prec_{red}\mathcal{E}_{\omega_{p-2}}\prec_{red} \mathcal{E}_{\omega_{p-3}}\prec_{red}\ldots 
\prec_{red} g$, we see that the overlaps (\ref{ambiguties 1}), (\ref{ambiguties 2}), (\ref{ambiguities 3}), (\ref{ambiguities 4}), (\ref{ambiguities 5}), (\ref{ambiguities 6}) and the following are resolvable in $\Tt_{n}(p;d_{j},d_{j-1},\ldots,d_{1})$ for all $i,k,\ell$ such that $0\le i< k\le j$ and $k<\ell\le p-2$:
\begin{align*}
& (\mathcal{E}_{\omega_{i}}\mathcal{E}_{\omega_{k}})\mathcal{E}_{\omega_{k}}^{p^{d_{k}}-1}=\mathcal{E}_{\omega_{i}}(\mathcal{E}_{\omega_{k}}\mathcal{E}_{\omega_{k}}^{p^{d_{k}}-1}),\\
& (\mathcal{E}_{\omega_{k}}^{p^{d_{k}}-1}\mathcal{E}_{\omega_{k}})\mathcal{E}_{\omega_{\ell}}=\mathcal{E}_{\omega_{k}}^{p^{d_{k}}-1}(\mathcal{E}_{\omega_{k}}\mathcal{E}_{\omega_{\ell}}),\\
& (\mathcal{E}_{\omega_{k}}^{p^{d_{k}}-1}\mathcal{E}_{\omega_{k}})h=\mathcal{E}_{\omega_{k}}^{p^{d_{k}}-1}(\mathcal{E}_{\omega_{k}}h),\\
& (\mathcal{E}_{\omega_{k}}\mathcal{E}_{\omega_{k}}^{p^{d_{k}}-1})\mathcal{E}_{\omega_{k}}=\mathcal{E}_{\omega_{k}}(\mathcal{E}_{\omega_{k}}^{p^{d_{k}}-1}\mathcal{E}_{\omega_{k}}).
\end{align*}
\end{proof}
\end{theorem}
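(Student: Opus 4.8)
The plan is to prove the two assertions---the value of the GK-dimension and the PBW basis together with its relations---by the same two tools used for $\Tt_n(p)$ in Lemma~\ref{lem:Ttnp-PBW}: the GK-dimension will come from the Ore-tower/finite-module estimates of Lemmas~\ref{lem:GKdim-Ore-extension} and~\ref{lem:GKdim=}, and the PBW basis will come from the Diamond Lemma~\cite{B1978} once all overlap ambiguities of the enlarged reduction system are shown to be resolvable.

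For the GK-dimension, I would present $\Tt_{n}(p;d_{j},\ldots,d_{1})$ as the Ore extension $S[h;\operatorname{ad}_r h]/(h^p)$, where $S$ is the commutative quotient of $R_n(p)$ by the relations $\mathcal{E}_{\omega_k}^{p^{d_k}}=0$, $1\le k\le j$. Since $\K[g]/(g^n)$ and each $\K[\mathcal{E}_{\omega_k}]/(\mathcal{E}_{\omega_k}^{p^{d_k}})$ are finite-dimensional, $S$ is a finitely generated module over the polynomial subalgebra $\K[\mathcal{E}_{\omega_{j+1}},\ldots,\mathcal{E}_{\omega_{p-2}}]$ in $p-2-j$ variables, so $\operatorname{GKdim}(S)=p-2-j$ by Lemma~\ref{lem:GKdim=}. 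The claimed PBW basis then exhibits the whole algebra as a free right $S$-module on $1,h,\ldots,h^{p-1}$; as this is a finite module extension, its GK-dimension equals that of $S$, exactly as for $\Tt_n(p)$ in Lemma~\ref{lem:Ttnp-PBW}, the relation $h^p=0$ being precisely what prevents adjoining $h$ from raising the dimension (cf. Lemma~\ref{lem:GKdim-Ore-extension}).

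For the PBW basis I would attach to the order $\prec_{red}$: $h\prec_{red}\mathcal{E}_{\omega_{p-2}}\prec_{red}\cdots\prec_{red}g$ the reduction rules read off the listed relations and apply the Diamond Lemma. All overlaps not involving the new rules $\mathcal{E}_{\omega_k}^{p^{d_k}}\to 0$ are exactly the ambiguities \eqref{ambiguties 1}--\eqref{ambiguities 6} already resolved for $\Tt_n(p)$, so only the overlaps in which a factor $\mathcal{E}_{\omega_k}^{p^{d_k}}$ meets a commutation rule $\mathcal{E}_{\omega_s}\mathcal{E}_{\omega_t}$ on either side, the rule $\mathcal{E}_{\omega_k}h$, or a second copy of itself remain; inspecting leading words shows these are precisely the four families displayed after the statement. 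For the overlaps with the commuting-$\mathcal{E}$ rules and with itself, both reduction paths leave a surviving factor $\mathcal{E}_{\omega_k}^{p^{d_k}}$ that reduces to $0$, so they resolve at once.

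The one substantive overlap---and the step I expect to be the main obstacle---is $\mathcal{E}_{\omega_k}^{p^{d_k}-1}\mathcal{E}_{\omega_k}\,h$. Reducing $\mathcal{E}_{\omega_k}^{p^{d_k}-1}\mathcal{E}_{\omega_k}=\mathcal{E}_{\omega_k}^{p^{d_k}}$ first gives $0$, whereas reducing $\mathcal{E}_{\omega_k}h$ first yields $\mathcal{E}_{\omega_k}^{p^{d_k}-1}h\,\mathcal{E}_{\omega_k}+\mathcal{E}_{\omega_k}^{p^{d_k}-1}\mathcal{E}_{\omega_{k+1}}$, which must also collapse to $0$. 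I would first record the auxiliary identity $\mathcal{E}_{\omega_k}^{m}h=h\mathcal{E}_{\omega_k}^{m}+m\,\mathcal{E}_{\omega_{k+1}}\mathcal{E}_{\omega_k}^{m-1}$, obtained from $\mathcal{E}_{\omega_k}h=h\mathcal{E}_{\omega_k}+\mathcal{E}_{\omega_{k+1}}$ and the commutativity of the $\mathcal{E}_{\omega_\bullet}$ (with the convention $\mathcal{E}_{\omega_{p-1}}=0$ covering the boundary $k=p-2$). Taking $m=p^{d_k}-1$ and reattaching the trailing $\mathcal{E}_{\omega_k}$ regenerates $h\mathcal{E}_{\omega_k}^{p^{d_k}}$, which dies, plus a term whose coefficient $p^{d_k}-1\equiv-1\pmod p$ cancels $\mathcal{E}_{\omega_k}^{p^{d_k}-1}\mathcal{E}_{\omega_{k+1}}$; hence both paths reach $0$. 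This cancellation is exactly where $\car\K=p$ is indispensable, and confirming it together with the boundary case $k=p-2$ is the crux of the argument.
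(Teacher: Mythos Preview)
Your proposal is correct and follows essentially the same route as the paper: the GK-dimension via Lemmas~\ref{lem:GKdim-Ore-extension} and~\ref{lem:GKdim=} (finite module over a polynomial ring in $p-2-j$ variables), and the PBW basis via the Diamond Lemma with precisely the same reduction order $\prec_{red}$ and the same four new overlap families arising from the rules $\mathcal{E}_{\omega_k}^{p^{d_k}}\to 0$. Your explicit verification of the critical overlap $\mathcal{E}_{\omega_k}^{p^{d_k}-1}\mathcal{E}_{\omega_k}\,h$ (using $\mathcal{E}_{\omega_k}^{m}h=h\mathcal{E}_{\omega_k}^{m}+m\,\mathcal{E}_{\omega_{k+1}}\mathcal{E}_{\omega_k}^{m-1}$ and $p^{d_k}\equiv 0\pmod p$) is in fact more detailed than the paper's proof, which simply asserts that the listed overlaps are resolvable.
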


\begin{remark}\quad\label{rmk:Ttnpd-f.d.}
\begin{enumerate}
\item [(i)] Let $j=p-2$ in Theorem \ref{thm:Ttnpd-PBW}. Then it derives a class of $np^{\left(1+\sum_{i=1}^{p-2}d_{i}\right)}$-dimensional pointed Hopf algebras $\Tt_{n}(p;d_{p-2},d_{p-3},\ldots,d_{1})$ over $\K$ of charateristic $p$.

\item [(ii)] From Lemmas \ref{lem:Ttn-PBW}, \ref{lem:Ttnp'-PBW}, \ref{lem:Ttnp-PBW} and Theorem \ref{thm:Ttnpd-PBW}, one sees that $\Tt$ has a chain of quotient Hopf algebras:  $\Tt \twoheadrightarrow \Tt_{n}\twoheadrightarrow \Tt_{n}'(p)\twoheadrightarrow \Tt_{n}(p)\twoheadrightarrow 
\ldots \twoheadrightarrow \Tt_{n}(p;d_{j},d_{j-1},\ldots,d_{1})\twoheadrightarrow \ldots \twoheadrightarrow \Tt_{n}(p;d_{p-2},d_{p-3},\ldots,d_{1})$.
\noindent
\end{enumerate}
\end{remark}

\section{Homological properties}\label{Section 4}

In this section, we study  some homological properties of those pointed Hopf algebras constructed in Section \ref{Section 3} with finite GK-dimension.
We begin by introducing some basic knowledge, see e.g. \cite{MR2001,WZ2003,BZ2008} for more details.

Let $R$ be a ring. $R$ is called \textit{affine} if it is finitely generated.
If $R_{R}$ (respectively $_{R}R$) is Noetherian, then $R$ is a \textit{right Noetherian ring} (respectively \textit{left Noetherian ring}). A ring $R$ is a \textit{Noetherian ring} if $R$ is left Noetherian and right Noetherian. 
If there exists some monic polynomial $f\in \mathbb{Z}\langle x_{1},x_{2},\ldots \rangle$ such that $f(r_{1},\ldots,r_{n})=0$ for all $r_{i}\in R$. Then $R$ is said to be a \textit{polynoimal identity ring} (\textit{PI ring} for short).

\begin{lemma}{\cite[Theorem (iv), p.17]{MR2001}}\label{lem:Ore-extention-Noetherian}
Let $S=R[x;\sigma,\delta]$. If $\sigma$ is an automorphism and $R$ is right (or left) Noetherian, then $S$ is right (repectively left) Noetherian.
\end{lemma}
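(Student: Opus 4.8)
The plan is to prove this by adapting the classical Hilbert Basis Theorem to the skew-polynomial setting. I will treat the right Noetherian statement in detail; the left case is analogous (with the paper's convention, left multiplication by $x$ preserves leading coefficients, and one checks using the invertibility of $\sigma$ that the relevant leading-coefficient sets are left ideals of $R$ forming an ascending chain). Throughout I write a nonzero element as $f=\sum_{i=0}^{d}x^{i}a_{i}$ with $a_{d}\neq 0$, calling $d=\deg f$ its degree and $a_{d}$ its leading coefficient.

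First I would record how right multiplication acts on the top term. From the defining relation $ax=x\sigma(a)+\delta(a)$ one gets $x^{n}a\,x=x^{n+1}\sigma(a)+x^{n}\delta(a)$, so for $f$ of degree $d$ the product $fx$ has degree $d+1$ with leading coefficient $\sigma(a_{d})$; likewise right multiplication by $r\in R$ keeps the degree and sends the leading coefficient $a_{d}$ to $a_{d}r$. Now fix a right ideal $I\subseteq S$ and, for each $n\ge 0$, set $L_{n}=\{a\in R : a=0 \text{ or } a \text{ is the leading coefficient of some } f\in I \text{ with } \deg f=n\}$. The two observations above show $L_{n}$ is a right ideal of $R$, and that $\sigma(L_{n})\subseteq L_{n+1}$.

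The essential point, and the only place where the automorphism hypothesis is indispensable, is that the sequence $(L_{n})$ need not ascend because of the $\sigma$-twist. I would repair this by normalizing: put $M_{n}:=\sigma^{-n}(L_{n})$, which is again a right ideal of $R$ since $\sigma$ is an automorphism, whereupon $\sigma(L_{n})\subseteq L_{n+1}$ rewrites as $M_{n}\subseteq M_{n+1}$, giving an honest ascending chain. Since $R$ is right Noetherian the chain stabilizes, say $M_{n}=M_{N}$ for all $n\ge N$, which means $L_{d}=\sigma^{d-N}(L_{N})$ for $d\ge N$; moreover each $L_{n}$ with $0\le n\le N$ is finitely generated as a right ideal. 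For each such $n$ choose $f_{n,1},\dots,f_{n,k_{n}}\in I$ of degree $n$ whose leading coefficients generate $L_{n}$, and let $J\subseteq S$ be the right ideal they generate, so $J\subseteq I$. To prove $I\subseteq J$ I induct on the degree of $f\in I$. If $\deg f=d\le N$, write the leading coefficient as $\sum_{j}a_{d,j}r_{j}$ and subtract $\sum_{j}f_{d,j}r_{j}$ to strictly lower the degree while staying in $I$. If $d>N$, then $L_{d}=\sigma^{d-N}(L_{N})$ is generated by the leading coefficients $\sigma^{d-N}(a_{N,j})$ of the degree-$d$ elements $f_{N,j}x^{d-N}\in J$, so again a suitable right-$R$-combination of these lowers the degree within $I$. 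In both cases the induction hypothesis places the difference in $J$, hence $f\in J$; thus $I=J$ is finitely generated and $S$ is right Noetherian.

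The main obstacle is precisely this $\sigma$-twisting of leading coefficients. If $\sigma$ were merely an injective endomorphism one could not form $\sigma^{-n}$, the normalized chain $(M_{n})$ would be unavailable, and the conclusion genuinely fails in that generality; so the surjectivity of $\sigma$ is doing real work and not just decoration. Everything else is routine degree bookkeeping, and the one substantive idea is the $\sigma^{-n}$-normalization converting the twisted inclusions $\sigma(L_{n})\subseteq L_{n+1}$ into a bona fide ascending chain to which the Noetherian hypothesis on $R$ can be applied.
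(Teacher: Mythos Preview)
Your argument is correct and is essentially the standard Hilbert Basis Theorem proof for Ore extensions, as in the cited reference \cite[Theorem 2.9, p.17]{MR2001}. The paper itself does not supply a proof of this lemma but simply quotes it from McConnell--Robson, so there is no in-paper argument to compare against; your write-up recovers that classical proof, including the key $\sigma^{-n}$-normalization of the leading-coefficient ideals that turns the twisted inclusions into a genuine ascending chain.
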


\begin{lemma}{\cite[Corollary (iii), p.481]{MR2001}}\label{lem:PI-ring}
 If $R$ is finitely generated as a right module over a commutative subring $A$, then $R$ is a PI ring.
\end{lemma}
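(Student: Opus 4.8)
The plan is to reduce to the Amitsur--Levitzki theorem, which guarantees that the full matrix ring $M_m(A)$ over a commutative ring $A$ satisfies the standard identity $s_{2m}$ and is therefore a PI ring. Since any subring of a PI ring is again PI (an identity holding on all elements holds a fortiori on a subring) and any homomorphic image of a PI ring is PI (the identity descends to the quotient), it suffices to exhibit $R$ as a subring of a homomorphic image of a subring of some $M_m(A)$. First I would use the hypothesis to embed $R$ into an endomorphism ring: writing $R$ as a right $A$-module $R_A$, left multiplication $\lambda_r\colon x\mapsto rx$ is right $A$-linear because $\lambda_r(xa)=rxa=\lambda_r(x)a$, so $\lambda\colon R\to \End_A(R_A)$, $r\mapsto \lambda_r$, is a ring homomorphism; it is injective since $\lambda_r(1)=r$. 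Thus it is enough to prove that $\End_A(R_A)$ is a PI ring.

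Next I would realize $\End_A(R_A)$ as a quotient of a subring of $M_m(A)$. Since $R_A$ is generated by $m$ elements $u_1,\dots,u_m$, fix the free right $A$-module $A^m$ with basis $e_1,\dots,e_m$ and the surjection $\phi\colon A^m\twoheadrightarrow R_A$, $e_i\mapsto u_i$, with kernel $K$. Identifying $\End_A(A^m)$ with $M_m(A)$ (legitimate since $A$ is commutative), set $S:=\{F\in M_m(A)\mid FK\subseteq K\}$. One checks directly that $S$ is a unital subring of $M_m(A)$, and that each $F\in S$ induces a well-defined $\bar F\in\End_A(R_A)$ via $\bar F(\phi(x))=\phi(Fx)$; the assignment $F\mapsto \bar F$ is then a ring homomorphism $S\to\End_A(R_A)$.

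The key---and only delicate---point will be the surjectivity of $S\to\End_A(R_A)$, where the obstacle is the non-uniqueness of lifts of an endomorphism to $A^m$. Given $f\in\End_A(R_A)$, I would choose for each $i$ a preimage $v_i\in A^m$ of $f(u_i)$ and define $F\in M_m(A)$ by $Fe_i=v_i$; by $A$-linearity this yields $\phi\circ F=f\circ\phi$. Although $F$ depends on these choices, any such $F$ automatically preserves $K$: for $k\in K$ we have $\phi(Fk)=f(\phi(k))=f(0)=0$, so $Fk\in K$ and hence $F\in S$ with $\bar F=f$. Therefore $\End_A(R_A)$ is a homomorphic image of the subring $S\subseteq M_m(A)$, so it is PI, and consequently its subring $\lambda(R)\cong R$ is PI as well. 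The whole argument is formal once the embedding $R\hookrightarrow\End_A(R_A)$ and the lifting trick are in place; I expect the only point requiring care is the verification that lifts preserve $K$, which makes the induced map on $\End_A(R_A)$ well defined and surjective.
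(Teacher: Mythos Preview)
Your argument is correct. The paper does not supply its own proof of this lemma: it is simply quoted from \cite[Corollary (iii), p.481]{MR2001} and used as a black box in Proposition~\ref{pro:PI-algebras}. Your route via the regular representation $R\hookrightarrow \End_A(R_A)$, realizing $\End_A(R_A)$ as a quotient of the stabilizer subring $S\subseteq M_m(A)$, and then invoking Amitsur--Levitzki together with the stability of PI under subrings and quotients, is exactly the standard textbook proof (and indeed is essentially how it is done in McConnell--Robson). The lifting step you flagged as ``delicate'' is handled correctly: the identity $\phi\circ F=f\circ\phi$ forces $FK\subseteq K$ regardless of the choices of preimages, so surjectivity of $S\to\End_A(R_A)$ goes through. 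There is nothing to add.
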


\begin{definition}{\cite[Definition 1.2]{BZ2008}} 
Let $(A,\epsilon)$ be an augmented Noetherian algebra. Then $A$ is \textit{Artin-Schelter Gorenstein} (or \textit{AS-Gorenstein} for short) if
\begin{enumerate}
\item [(a)] injdim$_{A}A=d<\infty$,
\item [(b)] dim$_{\K}\text{Ext}^{d}_{A}(_{A}\K,~_{A}A)=1$ and dim$_{\K}\text{Ext}^{i}_{A}(_{A}\K,~_{A}A)=0$ for all $i\neq d$,
\item [(c)] the right $A$-module versions of (a) and (b) hold.
\end{enumerate}
And if $\text{gldim}~A=d$, then $A$ is called \textit{Artin-Schelter regular} (or \textit{AS-regular}).
\end{definition}

The following lemma is the combination of \cite[Theorem 0.1]{WZ2003} and \cite[Theorem 0.2 (1)]{WZ2003}, see also \cite[Lemma 2.1]{L2020}.

\begin{lemma}{\cite[Theorem 0.1, Theorem 0.2]{WZ2003}}\label{thm:Wu-Zhang}
Every affine noetherian PI Hopf algebra is AS-Gorenstein.
\end{lemma}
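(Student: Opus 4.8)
The statement is precisely the main result of Wu and Zhang \cite{WZ2003} (their Theorem 0.1, sharpened by Theorem 0.2(1)), so I will only sketch the strategy. Write $H$ for our affine Noetherian PI Hopf algebra and set $d=\GKdim H$, which is finite because $H$ is affine and PI. By the Definition of AS-Gorenstein it suffices to establish two things: \textbf{(i)} $\operatorname{injdim}_{H}H=\operatorname{injdim}_{H^{\mathrm{op}}}H<\infty$; and \textbf{(ii)} the groups $\operatorname{Ext}^{i}_{H}({}_{H}\K,{}_{H}H)$ vanish for $i\neq d$ and are one-dimensional for $i=d$, together with the right-handed analogue. Condition (ii) is exactly the assertion that the homological integral of $H$ is a one-dimensional character. (The concrete Hopf algebras of Section~\ref{Section 3} satisfy the hypotheses: they are affine, Noetherian by Lemma~\ref{lem:Ore-extention-Noetherian}, and PI by Lemma~\ref{lem:PI-ring}; but the proof of the general theorem is independent of these.)

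The first step is to produce a dualizing complex. An affine Noetherian PI algebra satisfies the $\chi$-condition and has finite local-cohomological dimension, so by the theory of rigid dualizing complexes over PI algebras (Yekutieli--Zhang and Stafford--Zhang) the algebra $H$ possesses an Auslander, Cohen--Macaulay, rigid dualizing complex $R$ of bimodules. I would invoke this machinery as a black box: the affine and PI hypotheses are precisely what guarantee its applicability, while the Noetherian hypothesis ensures the module categories and the finite generation of the relevant Ext-modules are well behaved.

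The second step extracts the Ext-groups from $R$. The Cohen--Macaulay property says that the grade (depth) of the trivial module satisfies $j(\K)=d-\GKdim\K=d$; hence $\operatorname{RHom}_{H}(\K,H)$ is concentrated in cohomological degree $d$. This yields the vanishing in (ii) and, in particular, the bound $\operatorname{injdim}_{H}H\le d<\infty$ that settles (i). It remains to show that the surviving group $\operatorname{Ext}^{d}_{H}(\K,H)$ is one-dimensional, and here the Hopf structure is essential. The antipode interchanges the left and right regular modules and the counit gives $\K\otimes_{\K}\K\cong\K$, so the rigidity of $R$ forces $R\cong{}^{1}H^{\mu}[d]$ for a suitable algebra automorphism $\mu$ (a Nakayama-type automorphism). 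Dualizing the one-dimensional module $\K$ via $\mathbb{D}=\operatorname{RHom}_{H}(-,R)$ returns $\mathbb{D}(\K)\cong\K$; reading this off against $R={}^{1}H^{\mu}[d]$ identifies $\operatorname{Ext}^{d}_{H}(\K,H)$, up to the twist by $\mu$, with $\K$, so it is one-dimensional. This is the integral, and both (i) and (ii) are now in hand.

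The main obstacle is the rigidity/concentration argument of the second step, namely showing that the a priori complex $R$ collapses to a single twisted copy of the regular bimodule placed in one degree. For a general Noetherian algebra this can fail; it is the combination of the Cohen--Macaulay property (a consequence of the affine PI hypothesis) with the Hopf-theoretic symmetry of the dualizing complex under the antipode that forces the collapse. Once $R\cong{}^{1}H^{\mu}[d]$ is established, the AS-Gorenstein conditions, including the one-dimensionality of the top Ext coming from the rank-one twisted bimodule, follow at once.
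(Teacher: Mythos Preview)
The paper does not prove this lemma at all: it is stated purely as a citation of \cite[Theorem 0.1, Theorem 0.2(1)]{WZ2003} (with a cross-reference to \cite[Lemma 2.1]{L2020}) and is used as a black box in Corollary~\ref{cor:examples-AS-Gorenstein}. Your proposal therefore goes well beyond the paper by sketching the internal strategy of Wu--Zhang, and there is nothing in the paper's own argument to compare it against.

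As a sketch of the Wu--Zhang proof your outline is broadly faithful: the existence of an Auslander rigid dualizing complex for affine Noetherian PI algebras (via Yekutieli--Zhang / Stafford--Zhang), followed by the Hopf-theoretic identification $R\cong {}^{1}H^{\mu}[d]$ and the resulting one-dimensionality of $\operatorname{Ext}^{d}_{H}(\K,H)$, is indeed the backbone of their argument. One point to be careful about: you invoke the Cohen--Macaulay property (so that $j(\K)=d$) as if it were an input, but for a bare affine Noetherian PI algebra this is not automatic; in Wu--Zhang it is the Hopf structure that is used to pin down the shape of the dualizing complex first, and the Gorenstein/Cohen--Macaulay conclusions are then read off from $R\cong {}^{1}H^{\mu}[d]$ rather than the other way round. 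For the purposes of this paper, however, simply citing the result as the authors do is entirely adequate.
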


We obtain the homological properties of $\Tt_{n}'(p)$, $\Tt_{n}(p)$ and $\Tt_{n}(p;d_{j},d_{j-1},\ldots,d_{1})$ for $1\le j\le p-2$.

\begin{proposition}\label{pro:affine-Noetherian}
$\Tt_{n}'(p)$, $\Tt_{n}(p)$ and $\Tt_{n}(p;d_{j},d_{j-1},\ldots,d_{1})$ are affine and Noetherian.
\begin{proof}

$\Tt_{n}'(p)$ is affine because it is generated by $\{g,h\}$. Note that $R_{n}'(p)=\K[g,\mathcal{E}_{\omega_{1}},\ldots,\mathcal{E}_{\omega_{p-2}}]/(g^{n})$ in Remark \ref{rmk:Ttpm'} (i) is Noetherian. Thus $\Tt_{n}'(p)=R_{n}'(p)[h;\operatorname{ad}_{r}h]$ is Noetherian by Lemma \ref{lem:Ore-extention-Noetherian}. $\Tt_{n}(p)$ and $\Tt_{n}(p;d_{j},d_{j-1},\ldots,d_{1})$ are also affine and Noetherian, as quotients of $\Tt_{n}'(p)$.
\end{proof}
\end{proposition}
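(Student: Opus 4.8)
The plan is to reduce the entire statement to the single algebra $\Tt_{n}'(p)$, since $\Tt_{n}(p)$ and $\Tt_{n}(p;d_{j},d_{j-1},\ldots,d_{1})$ are by construction quotients of $\Tt_{n}'(p)$, and both affineness and the Noetherian property descend to homomorphic images. Thus it suffices to verify that $\Tt_{n}'(p)$ itself is affine and Noetherian, and then invoke the standard fact that a quotient of an affine (respectively Noetherian) ring is again affine (respectively Noetherian).

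For affineness, I would simply observe that $\Tt_{n}'(p)$ is generated as an algebra by the two elements $g$ and $h$, since each $\mathcal{E}_{\omega_{k}}$ arises as an iterated commutator $\operatorname{ad}_{r}h$ applied to $g$ via the relation $\mathcal{E}_{\omega_{s}}h=h\mathcal{E}_{\omega_{s}}+\mathcal{E}_{\omega_{s+1}}$ of Lemma \ref{lem:Ttnp'-PBW}; hence it is finitely generated.

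For the Noetherian property, the key is the Ore extension presentation recorded in Remark \ref{rmk:Ttpm'} (i): one has $\Tt_{n}'(p)=R_{n}'(p)[h;\operatorname{ad}_{r}h]$, where $R_{n}'(p)=\K[g,\mathcal{E}_{\omega_{1}},\ldots,\mathcal{E}_{\omega_{p-2}}]/(g^{n})$. First I would note that $R_{n}'(p)$ is a finitely generated commutative $\K$-algebra, hence Noetherian by the Hilbert basis theorem. Since the endomorphism appearing in this Ore extension is $\sigma=\id$, which is trivially an automorphism, Lemma \ref{lem:Ore-extention-Noetherian} applies directly and yields that $\Tt_{n}'(p)$ is Noetherian. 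Passing to the quotients $\Tt_{n}(p)$ and $\Tt_{n}(p;d_{j},d_{j-1},\ldots,d_{1})$ then completes the argument.

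There is no serious obstacle here, as all the structural work was already carried out in Section \ref{Section 3}. The only point requiring a moment's care is confirming that $\operatorname{ad}_{r}h$ genuinely descends to a well-defined derivation of the commutative quotient $R_{n}'(p)$, so that the Ore extension presentation is legitimate; but this is precisely encoded in the relations of the PBW basis in Lemma \ref{lem:Ttnp'-PBW}, and hence the cited Noetherian criterion for Ore extensions can be invoked without further verification.
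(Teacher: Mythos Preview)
Your proposal is correct and follows essentially the same approach as the paper: establish that $\Tt_{n}'(p)$ is affine (generated by $g,h$) and Noetherian (via the Ore extension $R_{n}'(p)[h;\operatorname{ad}_{r}h]$ over the commutative Noetherian ring $R_{n}'(p)$, invoking Lemma~\ref{lem:Ore-extention-Noetherian}), and then pass to the quotients. The only difference is that you spell out a few justifications (Hilbert basis theorem, $\sigma=\id$ being an automorphism, well-definedness of the derivation) that the paper leaves implicit.
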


\begin{proposition}\label{pro:PI-algebras}
$\Tt_{n}'(p)$, $\Tt_{n}(p)$ and $\Tt_{n}(p;d_{j},d_{j-1},\ldots,d_{1})$ 
are PI algebras.
\end{proposition}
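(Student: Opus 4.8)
The plan is to invoke Lemma~\ref{lem:PI-ring}, which says that a ring finitely generated as a right module over a commutative subring is a PI ring. Thus for each of the three algebras I only need to exhibit a suitable commutative subalgebra. Since $\Tt_{n}(p)=\Tt_{n}'(p)/(h^{p})$ and every $\Tt_{n}(p;d_{j},\ldots,d_{1})$ is a further quotient of $\Tt_{n}(p)$, and since any quotient of a PI ring is again PI (a polynomial identity descends to quotients), it suffices to prove that $\Tt_{n}'(p)$ is PI; the other two families follow at once.

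For $\Tt_{n}'(p)$ the naive candidate is the commutative subalgebra $R_{n}'(p)=\K[g,\mathcal{E}_{\omega_{1}},\ldots,\mathcal{E}_{\omega_{p-2}}]/(g^{n})$ of Remark~\ref{rmk:Ttpm'}(i), but $\Tt_{n}'(p)=R_{n}'(p)[h;\operatorname{ad}_{r}h]$ is \emph{not} finitely generated over it, because $h$ has infinite order. The remedy is to enlarge $R_{n}'(p)$ by $h^{p}$, and the crux of the argument is to show that $h^{p}$ is central. As $\Tt_{n}'(p)$ is generated by $g$ and $h$ and $h^{p}$ commutes with $h$, centrality reduces to computing $[\mathcal{E}_{\omega_{i}},h^{p}]$. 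By Formula~(\ref{formula:E_omega-h^j}) one has $[\mathcal{E}_{\omega_{i}},h^{p}]=\sum_{k=1}^{p}\binom{p}{k}h^{p-k}\mathcal{E}_{\omega_{i+k}}$; in characteristic $p$ the coefficients $\binom{p}{k}$ vanish for $1\le k\le p-1$, leaving $[\mathcal{E}_{\omega_{i}},h^{p}]=\mathcal{E}_{\omega_{i+p}}$. Finally, because $\mathcal{E}_{\omega_{p-1}}=0$ forces $\mathcal{E}_{\omega_{k}}=0$ for all $k\ge p-1$ in $\Tt_{n}'(p)$ (apply $\operatorname{ad}_{r}h$ repeatedly to $\mathcal{E}_{\omega_{p-1}}$, since $\operatorname{ad}_{r}h(\mathcal{E}_{\omega_{s}})=\mathcal{E}_{\omega_{s+1}}$ lies in the two-sided ideal generated by $\mathcal{E}_{\omega_{s}}$), the right-hand side is zero. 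Hence $h^{p}$ commutes with every $\mathcal{E}_{\omega_{i}}$, in particular with $g=\mathcal{E}_{\omega_{0}}$, and is therefore central.

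With centrality in hand, $A:=R_{n}'(p)[h^{p}]$ is a commutative subalgebra of $\Tt_{n}'(p)$. Using the PBW basis of Lemma~\ref{lem:Ttnp'-PBW}, I would write an arbitrary basis monomial $h^{n_{2}}\mathcal{E}_{\omega_{p-2}}^{n_{\omega_{p-2}}}\cdots g^{n_{1}}$ with $n_{2}=pq+r$, $0\le r\le p-1$, and pull the central factor $(h^{p})^{q}$ to the right, presenting the monomial as $h^{r}$ times an element of $A$. This shows $\Tt_{n}'(p)=\sum_{r=0}^{p-1}h^{r}A$, so $\{1,h,\ldots,h^{p-1}\}$ generates $\Tt_{n}'(p)$ as a right $A$-module; Lemma~\ref{lem:PI-ring} then yields that $\Tt_{n}'(p)$ is a PI ring, and the quotients $\Tt_{n}(p)$ and $\Tt_{n}(p;d_{j},\ldots,d_{1})$ are PI as well.

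The only genuine obstacle I foresee is the centrality of $h^{p}$; the module-finiteness is then routine bookkeeping with the PBW basis. It is worth emphasizing that the vanishing $\mathcal{E}_{\omega_{k}}=0$ for $k\ge p-1$ is precisely what makes the cancellation conclusive—this is special to $\Tt_{n}'(p)$ and its quotients. Indeed $h^{p}$ is not central in $\Tt_{n}$, where the higher $\mathcal{E}_{\omega_{k}}$ persist, which is consistent with $\Tt_{n}$ having infinite GK-dimension and lying outside the present PI claim.
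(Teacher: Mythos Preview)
Your proof is correct and follows essentially the same route as the paper: exhibit the commutative subalgebra $R_{n}'(p)[h^{p}]$ and show $\Tt_{n}'(p)$ is finitely generated over it via $\{1,h,\ldots,h^{p-1}\}$, then apply Lemma~\ref{lem:PI-ring}. The paper simply asserts that this subalgebra is commutative, whereas you supply the computation $[\mathcal{E}_{\omega_{i}},h^{p}]=\mathcal{E}_{\omega_{i+p}}=0$ justifying it; and the paper treats the three algebras separately (using $R_{n}(p)$ directly for $\Tt_{n}(p)$), while you economize by proving only the $\Tt_{n}'(p)$ case and passing to quotients---a harmless and slightly cleaner variation.
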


\begin{proof}
By Remark \ref{rmk:Rnp-Ttnp}, $R_{n}(p)$ is a commutative subalgebra of $\Tt_{n}(p)$. Observe that $\Tt_{n}(p)$ is finitely generated as a right $R_{n}(p)$-module because $h^{p}=0$. Thus $\Tt_{n}(p)$ is PI by Lemma \ref{lem:PI-ring}. Similarly, $\Tt_{n}(p;d_{j},d_{j-1},\ldots,d_{1})$ is PI. Note that the subalgebra $R'$ of $\Tt_{n}'(p)$ generated by $\{g,\mathcal{E}_{gh},\ldots,\mathcal{E}_{\omega_{p-2}},h^{p}\}$ is commutative and $\Tt_{n}'(p)$ is a free right $R'$-module with a basis $\{1,h,h^{2},\ldots,h^{p-1}\}$. Therefore, the algebra $\Tt_{n}'(p)$ is PI by Lemma \ref{lem:PI-ring}.
\end{proof}

\begin{corollary}\label{cor:examples-AS-Gorenstein}
$\Tt_{n}'(p)$, $\Tt_{n}(p)$ and $\Tt_{n}(p;d_{j},d_{j-1},\ldots,d_{1})$
are AS-Gorenstein.
\end{corollary}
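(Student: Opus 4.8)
The plan is to invoke the structure theorem for Hopf algebras established earlier in the excerpt, namely Lemma \ref{thm:Wu-Zhang}, which asserts that every affine Noetherian PI Hopf algebra is AS-Gorenstein. The entire argument therefore reduces to verifying that each of the three families $\Tt_{n}'(p)$, $\Tt_{n}(p)$ and $\Tt_{n}(p;d_{j},d_{j-1},\ldots,d_{1})$ simultaneously satisfies the three hypotheses: being a Hopf algebra, being affine and Noetherian, and being PI. The first of these is already in hand, since Propositions \ref{pro:Ttnp'-pointed-Hopf}, \ref{pro:Ttnp-pointed-Hopf} and \ref{pro:Ttnpd-pointed-Hopf} show that all three algebras are (pointed) Hopf algebras.

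For the remaining two hypotheses I would simply cite the two immediately preceding results. Proposition \ref{pro:affine-Noetherian} establishes that all three algebras are affine and Noetherian: the key input there is that $\Tt_{n}'(p) = R_{n}'(p)[h;\operatorname{ad}_{r}h]$ is an Ore extension of a Noetherian commutative base ring by an automorphism (here $\sigma = \id$), so Lemma \ref{lem:Ore-extention-Noetherian} applies, and the other two algebras inherit both properties as quotients. Proposition \ref{pro:PI-algebras} establishes that all three are PI algebras, the essential observation being that each is finitely generated as a (free) module over a commutative subalgebra (for instance $R_{n}(p)$, or the subalgebra generated by $\{g,\mathcal{E}_{gh},\ldots,\mathcal{E}_{\omega_{p-2}},h^{p}\}$), so Lemma \ref{lem:PI-ring} applies.

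With all three hypotheses verified, the conclusion is immediate. I would write: by Propositions \ref{pro:Ttnp'-pointed-Hopf}, \ref{pro:Ttnp-pointed-Hopf} and \ref{pro:Ttnpd-pointed-Hopf} these algebras are Hopf algebras, by Proposition \ref{pro:affine-Noetherian} they are affine and Noetherian, and by Proposition \ref{pro:PI-algebras} they are PI; hence Lemma \ref{thm:Wu-Zhang} applies and they are AS-Gorenstein.

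There is essentially no obstacle here, since the corollary is a clean assembly of three previously proven inputs fed into one citation. The only point demanding a moment's care is that the hypothesis of Lemma \ref{thm:Wu-Zhang} requires the \emph{two-sided} Noetherian condition and the full PI condition on the whole algebra (not merely on a subalgebra); but Proposition \ref{pro:affine-Noetherian} already delivers the two-sided Noetherian property via Lemma \ref{lem:Ore-extention-Noetherian} (which covers the left/right versions), and Proposition \ref{pro:PI-algebras} delivers the PI property for the ambient algebra itself through the module-finiteness over the commutative subalgebra. Thus no independent computation is needed, and the proof is a single sentence citing the four relevant results.
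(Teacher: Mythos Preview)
Your proposal is correct and matches the paper's own proof essentially verbatim: the paper's argument is the single line ``It follows from Proposition \ref{pro:affine-Noetherian}, Proposition \ref{pro:PI-algebras} and Lemma \ref{thm:Wu-Zhang}.'' Your additional explicit citation of the Hopf-algebra propositions is harmless extra care, but not needed since those facts are already established upstream.
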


\begin{proof}
It follows from Proposition \ref{pro:affine-Noetherian}, Proposition \ref{pro:PI-algebras} and Lemma \ref{thm:Wu-Zhang}
\end{proof}

The above examples may be not AS-regular, so we consider $\Tt_{\pm 1}'(p)$ in Remark \ref{rmk:Ttpm'} (ii).

\begin{corollary}\label{cor:examples-AS-regular}
$\Tt_{\pm 1}'(p)$ is AS-regular.
\end{corollary}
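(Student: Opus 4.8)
The plan is to present $\Tt_{\pm 1}'(p)$ as an iterated Ore extension over the Laurent polynomial ring $\K[g^{\pm 1}]$ and to carry the Artin--Schelter regular property up through the tower. Writing $R_{\pm 1}'(p)$ for the subalgebra generated by $\{g^{\pm 1},\mathcal{E}_{\omega_{1}},\ldots,\mathcal{E}_{\omega_{p-2}}\}$, Remark~\ref{rmk:Ttpm'}(ii) and its proof (the localized analogue of Remark~\ref{rmk:Ttpm'}(i)) give
\[
\Tt_{\pm 1}'(p)=\K[g^{\pm 1}][\mathcal{E}_{\omega_{1}}]\cdots[\mathcal{E}_{\omega_{p-2}}][h;\ad_{r}h],
\]
where the $\mathcal{E}_{\omega_{k}}$ are adjoined as mutually commuting, $g$-central indeterminates and the final step is the derivation Ore extension of Lemma~\ref{lem:R-Ore-extension}(b). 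In particular $R_{\pm 1}'(p)=\K[g^{\pm 1},\mathcal{E}_{\omega_{1}},\ldots,\mathcal{E}_{\omega_{p-2}}]$ is commutative, and every extension in the tower has $\sigma=\id$, an automorphism.

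First I would establish the commutative base. The ring $\K[g^{\pm 1}]=\K\Z$ is Noetherian and AS-regular of global dimension $1$: the complex $0\to \K[g^{\pm 1}]\xrightarrow{\,g-1\,}\K[g^{\pm 1}]\to\K\to0$ is a finite free resolution of the counit module and yields $\operatorname{Ext}^{1}(\K,\K[g^{\pm 1}])=\K$ with the remaining $\operatorname{Ext}$-groups vanishing. Adjoining the $p-2$ commuting variables $\mathcal{E}_{\omega_{k}}$ leaves $R_{\pm 1}'(p)$ a commutative regular affine (Laurent) algebra of Krull and global dimension $p-1$, hence Noetherian and AS-regular with the counit as augmentation. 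Thus the only remaining task is the last Ore extension by $h$.

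Next I would run the AS-Gorenstein argument of Corollary~\ref{cor:examples-AS-Gorenstein} and then upgrade it. Noetherianity of $\Tt_{\pm 1}'(p)=R_{\pm 1}'(p)[h;\ad_{r}h]$ is Lemma~\ref{lem:Ore-extention-Noetherian}; affineness is clear since $g,g^{-1},h$ generate; and the algebra is PI because in characteristic $p$ the element $h^{p}$ is central. Indeed $\operatorname{ad}(h^{p})=\operatorname{ad}(h)^{p}$, which sends each $\mathcal{E}_{\omega_{k}}$ (for $0\le k\le p-2$, including $g=\mathcal{E}_{\omega_{0}}$) to $\operatorname{ad}(h)^{p}(\mathcal{E}_{\omega_{k}})=(-1)^{p}\mathcal{E}_{\omega_{k+p}}=0$ and hence annihilates $g^{\pm 1}$ and every generator; consequently $\Tt_{\pm 1}'(p)$ is free of rank $p$ over the commutative subalgebra $\K[g^{\pm 1},\mathcal{E}_{\omega_{1}},\ldots,\mathcal{E}_{\omega_{p-2}},h^{p}]$ with basis $\{1,h,\ldots,h^{p-1}\}$, so Lemma~\ref{lem:PI-ring} applies. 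As $\Tt_{\pm 1}'(p)$ is an affine Noetherian PI Hopf algebra, Lemma~\ref{thm:Wu-Zhang} makes it AS-Gorenstein with some injective dimension $d$. To conclude AS-regularity I would show $\operatorname{gldim}\Tt_{\pm 1}'(p)=d$: the Ore-extension global-dimension bound of McConnell--Robson~\cite{MR2001} gives $\operatorname{gldim}\Tt_{\pm 1}'(p)\le\operatorname{gldim}R_{\pm 1}'(p)+1=p$, while a Chevalley--Eilenberg/Koszul-type complex adapted to the iterated Ore structure produces a minimal free resolution of ${}_{A}\K$ of length exactly $p$.

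The hard part is precisely this last matching: AS-regularity demands $\operatorname{gldim}A=d$, where $d=\operatorname{injdim}{}_{A}A$ is the degree supplied by AS-Gorenstein, and a priori $d$ could be strictly smaller than the $p$ coming from the Ore-extension count. I would close the gap using that $A$ is a Hopf algebra, so its global dimension is detected by the trivial module, $\operatorname{gldim}A=\operatorname{pd}_{A}\K$; the explicit length-$p$ minimal resolution then gives $\operatorname{pd}_{A}\K=p$, whence $\operatorname{gldim}A=p$, and reading $\operatorname{Ext}^{p}_{A}(\K,A)=\K$ off the same complex forces $d=p$. Combined with the symmetric right-module computation, this shows $\operatorname{gldim}A=\operatorname{injdim}{}_{A}A=p$, so $\Tt_{\pm 1}'(p)$ is AS-regular of global dimension $p$.
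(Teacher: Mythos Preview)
Your argument follows the paper's proof almost exactly: iterated Ore presentation $\Tt_{\pm 1}'(p)=\K[g^{\pm 1}][\mathcal{E}_{\omega_1}]\cdots[\mathcal{E}_{\omega_{p-2}}][h;\ad_r h]$, Noetherianity via Lemma~\ref{lem:Ore-extention-Noetherian}, the PI property via the commutative subalgebra $R'=\K[g^{\pm 1},\mathcal{E}_{\omega_1},\ldots,\mathcal{E}_{\omega_{p-2}},h^{p}]$ over which $\Tt_{\pm 1}'(p)$ is free of rank $p$, and AS-Gorenstein from Wu--Zhang (Lemma~\ref{thm:Wu-Zhang}). The only divergence is your ``hard part'', which the paper dispatches in one line: by \cite[Theorem~7.5.3, p.~263]{MR2001} an iterated Ore extension of a ring of finite global dimension again has finite global dimension, so $\operatorname{gldim}\Tt_{\pm 1}'(p)<\infty$.

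The worry that $d=\operatorname{injdim}A$ might be strictly smaller than $\operatorname{gldim}A$ is unfounded here and no explicit resolution is needed. For a Noetherian ring $A$ of finite global dimension one always has $\operatorname{injdim}{}_AA=\operatorname{gldim}A$; alternatively, in the Hopf setting $\operatorname{gldim}A=\operatorname{pd}_A\K$, and the AS-Gorenstein condition $\operatorname{Ext}^d_A(\K,A)\cong\K$ already forces $\operatorname{pd}_A\K\ge d$, while $\operatorname{Ext}^i_A(\K,A)=0$ for $i>d$ together with finiteness of $\operatorname{pd}_A\K$ forces $\operatorname{pd}_A\K\le d$. Either way, finite global dimension plus AS-Gorenstein immediately gives AS-regular, so your Koszul/Chevalley--Eilenberg construction, while not wrong, is superfluous.
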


\begin{proof}
It is evident that $\Tt_{\pm 1}'(p)$ is affine. Since $\Tt_{\pm 1}'(p)=\K[g^{\pm 1}][\mathcal{E}_{gh}]\ldots [\mathcal{E}_{\omega_{p-2}}][h;\operatorname{ad}_{r}h]$, 
$\Tt_{\pm 1}'(p)$ is Noetherian by Lemma \ref{lem:Ore-extention-Noetherian}. It follows from \cite[Theorem 7.5.3, p.263]{MR2001} that $\Tt_{\pm 1}'(p)$ is of a finite global dimenison. 
Observe that the subalgebra $R'$ of $\Tt_{\pm 1}'(p)$ generated by $\{g^{\pm 1},\mathcal{E}_{gh},\ldots,\mathcal{E}_{\omega_{p-2}},h^{p}\}$ is commutative and $\Tt_{\pm 1}'(p)$ is a free right $R'$-module with a basis $\{1,h,h^{2},\ldots,h^{p-1}\}$. Thus $\Tt_{\pm 1}'(p)$ is PI by Lemma \ref{lem:PI-ring}. Therefore, $\Tt_{\pm 1}'(p)$ is AS-regular from Lemma \ref{thm:Wu-Zhang}.
\end{proof}

\section{The coradical filtrations and related structures}\label{Section 5}
In this section, we study the coradical filtration of $\Tt_{\pm 1}$, $\HFdB$ and $\Tt_{p}(p;\underbrace{1,\ldots,1}_{p-2})$ respectively. If there is no confusion, we set $\Tt_{p}(p;1,\ldots,1):=\Tt_{p}(p;\underbrace{1,\ldots,1}_{p-2})$ for convenience in the sequel.

\subsection{Nichols algebras}\qquad

Let $H$ be a Hopf algebra  with a bijective antipode. Let $^{H}_{H}\mathcal{YD}$ be the category of left Yetter-Drinfel'd modules (see \cite[Definition 11.6.2]{R2012}), whose object $(M,\cdot,\rho)$ is a left $H$-module $(M,\cdot)$  and a left $H$-comodule $(M,\rho)$ (write $\rho(m):=m_{(-1)}\otimes m_{(0)}$), satifying the compatiblity condition:
\begin{align*}
\rho(h\cdot m)=h_{1}m_{(-1)}S_{H}(h_{(3)})\otimes h_{(2)}\cdot m_{(0)}, \quad h\in H,~m\in M.
\end{align*}
$^{H}_{H}\mathcal{YD}$ is a braided monoidal category with the braiding structure: for $M,N\in {^{H}_{H}\mathcal{YD}}$, the braiding 
\begin{align*}
c:M\otimes N\rightarrow N\otimes M, \qquad c(m\otimes n)=m_{(-1)}\cdot n\otimes m_{(0)}, \quad m\in M,~n\in N.
\end{align*}
A Hopf algebra $B$ in $^{H}_{H}\mathcal{YD}$ is called a braided Hopf algebra (see \cite[p. 370]{R2012} and \cite[Section 1]{AS2002}), with the coproduct $\Delta_{B}$ satisfying: $\Delta_{B}(1)=1 \otimes 1$ and
\begin{align*}
\Delta_{B}(bc)=b_{(1)}(b_{(2)(-1)}\cdot c_{(1)}) \otimes b_{(2)(0)}c_{(2)}, \qquad b,c\in B.
\end{align*}

For Subections 5.2 and 5.3, we need the following conclusions.
\begin{lemma}\cite[Theorem 11.7.1]{R2012}\label{lem:Radfordbiproduct}
Let $A$, $H$ be Hopf algebras over $\K$. If there are Hopf algebra maps $\pi: A \rightarrow H$ and $\jmath: H \rightarrow A$  such that $\pi\circ \jmath =id$, then there exists a braided Hopf algebra $(B,\cdot,\rho)\in {_{H}^{H}\mathcal{YD}}$ such that
\begin{align*}
A&\cong B\# H,\\
B &=A^{co~\pi}=\{a\in A|~(id\otimes \pi)(\Delta(a))=a\otimes 1\},\\
\Delta_{B}&=(\Pi\otimes id)\circ \Delta_{A}|B,\\
x\cdot b &=\jmath(x_{(1)})b\jmath(S({x_{(2)}})),\\
\rho(b) &=\pi(b_{(1)})\otimes b_{(2)},
\end{align*}
where $\Pi=id*(\jmath\circ S\circ\pi)$, $x\in H$ and $b\in B$.
\end{lemma}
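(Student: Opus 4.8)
The plan is to prove this as the standard structure theorem of Radford for Hopf algebras equipped with a projection, the whole argument being organised around the \emph{Radford projection}
\[
\Pi=\id*(\jmath\circ S\circ\pi)\colon A\longrightarrow A,\qquad \Pi(a)=a_{(1)}\,\jmath\bigl(S\pi(a_{(2)})\bigr).
\]
First I would record its elementary properties. Since $\pi,\jmath$ are Hopf maps and $\pi\circ\jmath=\id$, the convolution computation $\pi(\Pi(a))=\pi(a_{(1)})\,S\pi(a_{(2)})=\epsilon(a)1_H$ shows $\pi\circ\Pi=\epsilon\,1_H$, and a coassociativity argument then gives $\Pi(a)\in A^{co\,\pi}=B$; conversely $\Pi|_B=\id_B$, so $\Pi$ is an idempotent with image exactly $B$. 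The decisive reconstruction identity $a=\Pi(a_{(1)})\,\jmath(\pi(a_{(2)}))$ follows by expanding $\Pi(a_{(1)})$, using that $\jmath$ is an algebra map to combine $\jmath(S\pi(a_{(2)}))\,\jmath(\pi(a_{(3)}))=\jmath\bigl(S\pi(a_{(2)})\pi(a_{(3)})\bigr)$, and then applying the antipode axiom of $H$.

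Next I would equip $B$ with its Yetter--Drinfel'd structure, so that $(B,\cdot,\rho)\in{}^{H}_{H}\mathcal{YD}$. The action $x\cdot b=\jmath(x_{(1)})\,b\,\jmath(S(x_{(2)}))$ is the $\jmath(H)$-adjoint action; one checks $x\cdot b\in B$ and that $B$ is a subalgebra of $A$ (closure under multiplication uses $\Pi(\Pi(a)\Pi(a'))=\Pi(a)\Pi(a')$). For the coaction $\rho(b)=\pi(b_{(1)})\otimes b_{(2)}$, well-definedness into $H\otimes B$ is the delicate point: from $b\in B$ one has $b_{(1)}\otimes b_{(2)}\otimes\pi(b_{(3)})=b_{(1)}\otimes b_{(2)}\otimes 1$, and applying $\pi$ to the first leg yields $\pi(b_{(1)})\otimes b_{(2)}\otimes\pi(b_{(3)})=\pi(b_{(1)})\otimes b_{(2)}\otimes 1$, i.e.\ the component $b_{(2)}$ becomes coinvariant in this combination. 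The compatibility axiom $\rho(x\cdot b)=x_{(1)}b_{(-1)}S_H(x_{(3)})\otimes x_{(2)}\cdot b_{(0)}$ is then a direct Sweedler computation combining the module/comodule axioms of $A$ with the fact that $\pi$ is a bialgebra map.

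I would then verify that $B$ carries a braided Hopf algebra structure with coproduct $\Delta_B=(\Pi\otimes\id)\circ\Delta_A|_B$; the same first-leg argument shows $\Delta_B(b)\in B\otimes B$, the counit is $\epsilon|_B$, and the braided antipode is the convolution inverse of $\id_B$ built from $\Pi$ and $S$. The isomorphism $A\cong B\# H$ is then realised by
\[
\Theta\colon B\# H\longrightarrow A,\qquad b\otimes x\longmapsto b\,\jmath(x),
\]
whose two-sided inverse is $a\mapsto \Pi(a_{(1)})\otimes\pi(a_{(2)})$ by the reconstruction identity; translating the smash-product multiplication and the smash-coproduct of $B\# H$ through $\jmath$ shows $\Theta$ is simultaneously an algebra and a coalgebra map, hence a Hopf algebra isomorphism.

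The main obstacle I anticipate lies not in the bijection $\Theta$, which is essentially formal once the reconstruction identity is available, but in verifying the \emph{braided} bialgebra axiom
\[
\Delta_B(bc)=b_{(1)}\bigl(b_{(2)(-1)}\cdot c_{(1)}\bigr)\otimes b_{(2)(0)}c_{(2)}
\]
together with the Yetter--Drinfel'd compatibility: both require carefully tracking how $\Pi$ interweaves with $\Delta_A$ and with the $\jmath(H)$-conjugation, and it is precisely here that the hypotheses $\pi\circ\jmath=\id$ and the Hopf-map property of both $\pi$ and $\jmath$ are used most essentially. Since the statement is verbatim \cite[Theorem 11.7.1]{R2012}, in practice I would simply invoke that reference.
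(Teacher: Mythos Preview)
Your sketch is a correct outline of the standard proof of Radford's projection theorem, but the paper does not prove this lemma at all: it is stated as a citation of \cite[Theorem 11.7.1]{R2012} and used as a black box. Your final sentence---simply invoking that reference---is exactly what the paper does, so the detailed argument, while accurate, is superfluous here.
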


Recall that
the \textit{coradical filtration} of a coalgebra $C$ over $\K$ is a coalgebra filtration $\{C_{n}\}_{n=0}^{\infty}$ of $C$ such that $C_{0} \subseteq C_{1} \subseteq C_{2} \ldots \subseteq \cup_{n=0}^{\infty} C_{n}$ and $\Delta(C_{n}) \subseteq \sum_{\ell=0}^{n} C_{n-\ell} \otimes C_{\ell}$ , where $C_{0}=\operatorname{corad}{C}$ and  $C_{n}:=C_{n-1} \wedge C_{0}= \Delta^{-1}(C_{n-1} \otimes C + C \otimes C_{0})$ for $n\ge 0$.

Suppose that $\{H_{n}\}_{n=0}^{\infty}$ is the coradical filtration of a Hopf algebra $H$ over $\K$, which is also an algebra filtration. The \textit{graded associated Hopf algebra} $\operatorname{gr}(H)$ of $H$ is an $\mathbb{N}$-graded Hopf algebra such that $\operatorname{gr}(H) = \bigoplus_{n=0}^{\infty} H(n)$, $H(n)=H_{n}/H_{n-1}$, with the multiplication, the coproduct $\Delta_{\operatorname{gr}(H)}$ and the antipode $S_{\operatorname{gr}(H)}$ respectively defined by: let $\overline{a} \in H(m), \overline{b}\in H(n)$. $\overline{a}\overline{b}= \overline{ab}$ is determined by $(a+H_{m-1})(b+H_{n-1}) = ab+H_{m+n-1}$, $a\in H_{m}, b\in H_{n}$;  $S_{\operatorname{gr}(H)}(\overline{a})= \overline{S_{H}(a)}$ is determined by $S_{\operatorname{gr}(H)}(a+H_{m-1})= S_{H}(a)+H_{m-1}$, $a\in H_{m}$ ; and  
\begin{align*}
& \Delta_{\operatorname{gr}(H)}(\overline{a}) = \sum_{\ell=0}^{m} (\pi_{n-\ell} \otimes \pi_{\ell})\circ \Delta_{H}(a), \qquad \pi_{j}: H_{j} \rightarrow H(j)=H_{j}/H_{j-1},~h\mapsto \overline{h}, \qquad a\in H_{m}.
\end{align*}

\begin{definition}{\cite[Definition 2.1]{AS2002}}\label{def:Nichols-algebra}
Let $V\in {^{H}_{H}\mathcal{YD}}$. A braided Hopf algebra $R=\bigoplus_{n=0}^{\infty} R(n) \in {^{H}_{H}\mathcal{YD}}$ is called a \textit{Nichols algebra} if $R(0)\cong \K$, $R(1)\cong V$, $\Pp(R)=R(1)$ and $R$ is generated as an algebra by $R(1)$.
\end{definition}

Note that \cite[Proposition 2.2]{AS2002} shows the existence and the uniqueness of Nichols algebras $R$: 
$R\cong B(V)=T(V)/I(V)$, where $I(V)$ is the sum of $I$ such that $I$ is a homogeneous bi-ideal of $T(V)$ generated by homogeneous elements of degree $\ge 2$.

\subsection{The coradical filtrations of $\Tt_{\pm 1}$ and $\HFdB$}\qquad

We study the coradical filtration of $\Tt_{\pm 1}$
Set $y:=h$, $x:=g$, $x_{0}:=1$ and $x_{m}:=\mathcal{E}_{\omega_{m}}g^{-1}$ for all $m\ge 1$.
Observe that $\K[x^{\pm 1}] \cong \K\mathbb{Z}$.
Then we rewrite the defining relations of $\Tt_{\pm 1}^{cop}$.

\begin{lemma}\label{lem:redefinition-Ttpm}
Let $z_{2}:=x_{2}-\frac{3}{2}x_{1}^{2}$ and $z_{n}:=x_{n}-\frac{(n+1)!}{2^{n}}x_{1}^{n}$ for $n\ge 2$. Then the relations of $\Tt_{\pm 1}^{cop}$ are equivalent to
\begin{align}
\label{relation1-Ttpm}
[x,y] &= x_{1}x,\\
\label{relation2-Ttpm}
[x_{1},y] &= \frac{1}{2} x_{1}^{2}+z_{2},\\
\label{relation3-Ttpm}
[z_{n},y] &= -z_{n}x_{1}-\frac{n(n+1)!}{2^{n}}z_{2}x_{1}^{n-1}+z_{n+1}, \qquad n\ge 2,\\
\label{relation4-Ttpm}
[x,x_{1}] &=0, \qquad [x,z_{m}] =0,\qquad [x_{1},z_{n}]=0, \qquad [z_{n},z_{m}]=0,  \qquad 2\le n< m.
\end{align}
\end{lemma}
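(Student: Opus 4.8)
The plan is to read Lemma \ref{lem:redefinition-Ttpm} as an invertible change of generators. Passing to $cop$ leaves the multiplication unchanged, so by Theorem \ref{thm:Tt-GK-PBW} (and invertibility of $g$ in the localization) the defining algebra relations of $\Tt_{\pm 1}^{cop}$ are
\begin{align*}
[\mathcal{E}_{\omega_s}, \mathcal{E}_{\omega_t}] = 0\ (0\le s<t), \qquad [\mathcal{E}_{\omega_s}, h] = \mathcal{E}_{\omega_{s+1}}\ (s\ge 0), \qquad g g^{-1} = g^{-1}g = 1,
\end{align*}
where $g = \mathcal{E}_{\omega_0}$. With $x = g$, $y = h$, $x_m = \mathcal{E}_{\omega_m} g^{-1}$ (so $\mathcal{E}_{\omega_m} = x_m x$ and $x_0 = 1$), and $z_n = x_n - c_n x_1^{n}$ where $c_n := \frac{(n+1)!}{2^n}$, the passage $\{g^{\pm1}, h, \mathcal{E}_{\omega_1}, \mathcal{E}_{\omega_2}, \dots\}\leftrightarrow\{x^{\pm1}, y, x_1, z_2, z_3,\dots\}$ is polynomial and invertible (triangular in the natural grading). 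I would prove the lemma by rewriting each defining relation in the new variables and checking it becomes one of \eqref{relation1-Ttpm}--\eqref{relation4-Ttpm}; invertibility of the substitution then yields both implications at once, so the two relation ideals coincide.

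First I would settle the commutation relations \eqref{relation4-Ttpm}. As $g = \mathcal{E}_{\omega_0}$ commutes with every $\mathcal{E}_{\omega_m}$, it commutes with each $x_m = \mathcal{E}_{\omega_m} g^{-1}$, hence with $x_1$ and all $z_n$; and from $[\mathcal{E}_{\omega_s}, \mathcal{E}_{\omega_t}] = [x_s, x_t]x^2 = 0$ with $x$ invertible we get $[x_s, x_t] = 0$, so $x_1, z_2, z_3,\dots$ pairwise commute, giving $[x_1,z_n]=[z_n,z_m]=0$. The case $s = 0$ of $[\mathcal{E}_{\omega_s}, h] = \mathcal{E}_{\omega_{s+1}}$ is $[x, y] = x_1 x$, i.e. \eqref{relation1-Ttpm}. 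For $m\ge 1$, expanding $[x_m x, y] = x_m[x,y] + [x_m, y]x = x_{m+1}x$, substituting $[x,y] = x_1 x$ and right-multiplying by $x^{-1}$ gives the recursion $[x_m, y] = x_{m+1} - x_m x_1$; its $m = 1$ instance is $[x_1, y] = x_2 - x_1^2 = \tfrac12 x_1^2 + z_2$, which is \eqref{relation2-Ttpm}.

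The remaining relation \eqref{relation3-Ttpm} ($n\ge 2$) is the computational heart. Since $[x_1, y] = \tfrac12 x_1^2 + z_2$ commutes with $x_1$ (using $[x_1, z_2]=0$), we have $[x_1^n, y] = n x_1^{n-1}[x_1, y] = \tfrac n2 x_1^{n+1} + n z_2 x_1^{n-1}$, whence
\begin{align*}
[z_n, y] = [x_n, y] - c_n[x_1^n, y] = (x_{n+1} - x_n x_1) - c_n\Bigl(\tfrac n2 x_1^{n+1} + n z_2 x_1^{n-1}\Bigr).
\end{align*}
Substituting $x_{n+1} = z_{n+1} + c_{n+1}x_1^{n+1}$ and $x_n = z_n + c_n x_1^n$ collects the pure powers of $x_1$ into the coefficient $c_{n+1} - c_n - \tfrac n2 c_n = c_{n+1} - \tfrac{n+2}{2}c_n$, and the normalization is chosen precisely so that this vanishes, since $\tfrac{n+2}{2}c_n = \frac{(n+2)!}{2^{n+1}} = c_{n+1}$; what remains is exactly $[z_n, y] = z_{n+1} - z_n x_1 - \frac{n(n+1)!}{2^n}z_2 x_1^{n-1}$. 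I expect this cancellation to be the only genuine obstacle, everything else being bookkeeping inside the commuting polynomial algebra $\K[x_1, z_2, z_3,\dots]$ over the central Laurent ring $\K[x^{\pm1}]$. Finally, since each rewriting is reversible and the generator change is invertible, \eqref{relation1-Ttpm}--\eqref{relation4-Ttpm} are equivalent to the original relations; completeness can be double-checked, if one wishes, by verifying that both presentations admit the same PBW basis via the Diamond Lemma, as in Theorem \ref{thm:Tt-GK-PBW}.
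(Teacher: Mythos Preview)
Your proof is correct and follows essentially the same route as the paper: the paper first rewrites the relations of $\Tt_{\pm 1}^{cop}$ in the variables $x,y,x_n$ as $[x,y]=x_{1}x$, $[x_{n},y]=-x_{n}x_{1}+x_{n+1}$, $[x,x_{n}]=0$, $[x_{n},x_{m}]=0$, and then says ``the claim holds by induction on $n$.'' You simply unpack that induction explicitly, carrying out the substitution $z_n=x_n-c_n x_1^{\,n}$ and checking the cancellation $c_{n+1}=\tfrac{n+2}{2}c_n$, which is exactly the content of the inductive step.
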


\begin{proof}
Rewrite the relations of $\Tt_{\pm 1}^{cop}$ as follows:
\begin{align*}
[x,y]=x_{1}x,\qquad [x_{n},y]=-x_{n}x_{1}+x_{n+1}, \qquad [x,x_{n}]&=0,\qquad [x_{n},x_{m}]=0, \quad 1\le n<m.
\end{align*}
Then the claim holds by induction on $n$.
\end{proof}

Let $\cF_n^{\pm}:=F_{n}^{\pm}\K[x^{\pm 1}]$ for $n \ge 0$, where
\begin{align*}
F_{n}^{\pm}= \bigoplus_{r+(i_{m}-1)s_{m}+\ldots+(i_{2}-1)s_{2}+s_{1}\le n,\atop  i_{m}>\ldots>i_{2}\ge 2} \K  y^{r}z_{i_{m}}^{s_{m}}\cdots z_{i_{2}}^{s_{2}}x_{1}^{s_{1}}
\end{align*}
For example,
\begin{align*}
\cF_{0}^{\pm} &=\K [x^{\pm 1}],\\
\cF_{1}^{\pm} &= (\K 1 \oplus \K y \oplus \K z_{2} \oplus \K x_{1}) \K [x^{\pm 1}],\\
\cF_{2}^{\pm} &= (\K 1 \oplus \K y \oplus \K z_{2} \oplus \K x_{1} \oplus \K z_{3} \oplus \K y^{2} \oplus \K yz_{2} \oplus \K yx_{1} \oplus \K z_{2}^{2} \oplus \K z_{2}x_{1 }\oplus \K x_{1}^{2}) \K [x^{\pm 1}].
\end{align*}
It is clear that $\cF_0^{\pm} \subseteq \cF_1^{\pm} \subseteq \ldots \subseteq \cF_n^{\pm} \subseteq \ldots \subseteq \Tt_{\pm 1}^{cop}$. Following the definition of  $F_n^{\pm}$, we have:

\begin{lemma}\label{lem:Fn-FsFt}
For $n\ge 0$,
$F_{n}^{\pm}\subseteq \sum_{s+t=n\atop s,t<n} F_{s}^{\pm}F_{t}^{\pm}.$
\end{lemma}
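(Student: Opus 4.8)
The plan is to verify the inclusion on the monomial basis of $F_n^{\pm}$ exhibited in its defining direct sum. I would assign to each ordered monomial $y^{r}z_{i_m}^{s_m}\cdots z_{i_2}^{s_2}x_1^{s_1}$ the \emph{weight} $r+(i_m-1)s_m+\cdots+(i_2-1)s_2+s_1$ that appears in the index set, so that $y$ and $x_1$ each carry weight $1$, each $z_i$ carries weight $i-1$, and $F_n^{\pm}$ is exactly the span of the monomials of weight $\le n$. Since the indexing sum on the right-hand side is empty unless $n\ge 2$, the substantive case is $n\ge 2$, and it then suffices to show that each spanning monomial $m$ lies in $\sum_{s+t=n,\,s,t<n}F_s^{\pm}F_t^{\pm}$.

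First I would dispose of the monomials that split multiplicatively. If $m$ has weight $<n$, then $m\in F_{n-1}^{\pm}$ and $m=m\cdot 1\in F_{n-1}^{\pm}F_1^{\pm}$, which is one of the summands since $1\in F_1^{\pm}$ and $n-1,1<n$ for $n\ge 2$. If $m$ has weight exactly $n$ and is a product of at least two generators, I would peel off its leftmost generator $g$, of some weight $w$ with $1\le w\le n-1$; then $g\in F_w^{\pm}$, the remaining ordered monomial $m'$ lies in $F_{n-w}^{\pm}$, and $m=gm'\in F_w^{\pm}F_{n-w}^{\pm}$ with $w,n-w<n$ and $w+(n-w)=n$. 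This settles every decomposable spanning monomial.

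The only case left is an indecomposable monomial of weight $n\ge 2$, that is, a single generator of weight $n$; since $y$, $x_1$ and $z_2$ all have weight $1$, this forces $m=z_{n+1}$. Here I would invoke relation \eqref{relation3-Ttpm} with index $n$, rewritten as $z_{n+1}=[z_n,y]+z_nx_1+\tfrac{n(n+1)!}{2^n}z_2x_1^{n-1}$. Every term on the right is a product of two factors of weights $n-1$ and $1$: indeed $z_n\in F_{n-1}^{\pm}$, $y,x_1\in F_1^{\pm}$, $z_2\in F_1^{\pm}$ and $x_1^{n-1}\in F_{n-1}^{\pm}$, so that $z_{n+1}\in F_{n-1}^{\pm}F_1^{\pm}+F_1^{\pm}F_{n-1}^{\pm}\subseteq\sum_{s+t=n,\,s,t<n}F_s^{\pm}F_t^{\pm}$. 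Together with the previous paragraph this covers all spanning monomials and hence all of $F_n^{\pm}$.

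The main obstacle is precisely this indecomposable generator $z_{n+1}$, which carries weight $n$ yet cannot be broken up as a product and must instead be recovered through the commutator with $y$. This is exactly why the change of variables in Lemma \ref{lem:redefinition-Ttpm} was arranged as it was: the coefficients in $z_n=x_n-\tfrac{(n+1)!}{2^n}x_1^n$ are chosen so that the leading part of $[z_n,y]$ cancels and $z_{n+1}$ emerges modulo products of strictly smaller weight, which is what makes \eqref{relation3-Ttpm} land in the required filtration pieces. The routine bookkeeping, namely confirming that $z_{n+1}$ is the only single-generator monomial of weight $n$ and that the peeling-off split always produces factors in the claimed $F_w^{\pm}$, I would leave to a short direct check.
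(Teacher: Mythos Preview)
Your argument is correct and in fact supplies what the paper omits: the paper records this lemma with no proof beyond the phrase ``following the definition of $F_n^{\pm}$,'' whereas you correctly isolate the one nontrivial point, namely that the indecomposable generator $z_{n+1}$ of weight $n$ must be produced via relation~\eqref{relation3-Ttpm} rather than by a naive splitting of an ordered monomial. Your decomposition into the three cases (weight $<n$; weight $n$ with at least two factors; the single generator $z_{n+1}$) is exactly the natural one, and the bookkeeping is sound.

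One small remark on the statement itself rather than your proof: as you implicitly observe, for $n\in\{0,1\}$ the right-hand side is the empty sum while $F_0^{\pm}=\K 1$ and $F_1^{\pm}\supsetneq 0$, so the inclusion is literally false in those degrees. You glide past this by calling $n\ge 2$ the ``substantive case,'' which is fine since the paper only ever invokes the lemma for $n\ge 2$ (in the induction of Lemma~\ref{lem:hopf-filtration}); but it would be cleaner to say outright that the range should read $n\ge 2$.
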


\begin{lemma}\label{lem:hopf-filtration}
$\cup_{n=0}^{\infty}\cF_n^{\pm}=\Tt_{\pm 1}^{cop}$. Furthermore, $\{\cF_n^{\pm}\}_{n=0}^{\infty}$ is a Hopf algebra filtration of $\Tt_{\pm 1}^{cop}$.
\end{lemma}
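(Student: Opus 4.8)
The plan is to verify the defining properties of a Hopf algebra filtration one by one — exhaustion, the algebra filtration, the coalgebra filtration, and antipode-stability — with the unifying idea that each redefined generator $z_n$ is skew-primitive modulo lower filtration degree, which is precisely what allows it to sit in $\cF_{n-1}^{\pm}$ rather than $\cF_n^{\pm}$. For exhaustion, by the PBW basis of $\Tt_{\pm1}$ in Remark~\ref{rmk:Ttpm}(i) and Theorem~\ref{thm:Tt-GK-PBW} the monomials $y^{r}\mathcal{E}_{\omega_{i_m}}^{s_m}\cdots\mathcal{E}_{\omega_{i_2}}^{s_2}x_1^{s_1}x^{k}$ form a $\K$-basis; since $x_n=z_n+\tfrac{(n+1)!}{2^{n}}x_1^{n}$ is a unipotent triangular change of variables for the grading $\deg x_n=n$, the monomials $y^{r}z_{i_m}^{s_m}\cdots z_{i_2}^{s_2}x_1^{s_1}x^{k}$ are again a basis, and each lies in $\cF_N^{\pm}$ with $N=r+\sum_{j\ge 2}(i_j-1)s_j+s_1$. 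Thus $\bigcup_{n}\cF_n^{\pm}=\Tt_{\pm1}^{cop}$.

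For the algebra filtration $\cF_s^{\pm}\cF_t^{\pm}\subseteq\cF_{s+t}^{\pm}$ I would read the relations of Lemma~\ref{lem:redefinition-Ttpm} as rewriting rules toward the normal-form order in which $y$ is smallest and $x$ largest, and check that under the weight $\deg y=\deg x_1=1$, $\deg x=0$, $\deg z_n=n-1$ none of them raises the weight: both sides of \eqref{relation1-Ttpm} and \eqref{relation4-Ttpm} are homogeneous of equal weight, while the leading terms of \eqref{relation2-Ttpm} and \eqref{relation3-Ttpm} keep the weight and the remaining terms strictly lower it. This is exactly the role of the $-1$ in the exponent $(i_j-1)$. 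Hence normalizing a product of two monomials yields only monomials of weight at most the sum of their weights, which gives the containment.

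The heart of the matter is the coalgebra filtration $\Delta(\cF_n^{\pm})\subseteq\sum_{\ell=0}^{n}\cF_{n-\ell}^{\pm}\otimes\cF_\ell^{\pm}$; since this condition is invariant under the tensor flip, it is harmless to compute with $\Delta_{\Tt}$ in place of $\Delta^{cop}$. The property is multiplicative — if $a\in\cF_s^{\pm}$ and $b\in\cF_t^{\pm}$ both satisfy it, then so does $ab$, because $\Delta$ is an algebra map and the algebra filtration is already available — so I would induct on $n$ using Lemma~\ref{lem:Fn-FsFt}: for $n\ge 2$ that lemma gives $\cF_n^{\pm}\subseteq\sum_{s+t=n,\,s,t<n}\cF_s^{\pm}\cF_t^{\pm}$, and the inductive hypothesis plus multiplicativity closes the step. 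Everything therefore reduces to $\cF_0^{\pm},\cF_1^{\pm}$, i.e. to the generators $x^{\pm1},y,x_1,z_2$: the grouplike $x^{\pm1}$ and the $(1,g)$-skew-primitives $y,x_1$ are immediate, and the decisive computation — the main obstacle — is that $z_2$ is genuinely $(1,g^{2})$-skew-primitive. Starting from $\Delta_{\Tt}(\mathcal{E}_{\omega_2})=g\otimes\mathcal{E}_{\omega_2}+3\mathcal{E}_{gh}\otimes\mathcal{E}_{gh}g+\mathcal{E}_{\omega_2}\otimes g^{3}$ (Corollary~\ref{cor:coproduct}) and multiplying by $g^{-1}\otimes g^{-1}$ gives $\Delta(x_2)=1\otimes x_2+3x_1\otimes x_1 g+x_2\otimes g^{2}$, whose cross term is cancelled exactly by $-\tfrac32\Delta(x_1^{2})$, so that $\Delta(z_2)=1\otimes z_2+z_2\otimes g^{2}$. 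This cancellation is the reason for the coefficient $\tfrac32$ in the definition of $z_2$; the delicate point is that demanding $z_n\in\cF_{n-1}^{\pm}$ forces the top-weight part of $\Delta(z_n)$ to vanish, and one must see that \eqref{relation3-Ttpm} propagates the $z_2$ cancellation to every $z_n$ (equivalently, a direct computation reduces the vanishing to the Lah-number evaluation $B_{n+1,r+1}(1!,2!,3!,\dots)=\tfrac{(n+1)!}{(r+1)!}\binom{n}{r}$).

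Finally, for antipode-stability $S(\cF_n^{\pm})\subseteq\cF_n^{\pm}$ I would induct on $n$: as $\cF_0^{\pm}=\K[x^{\pm1}]$ is a Hopf subalgebra closed under $S$ and the reduced coproduct of any $a\in\cF_n^{\pm}$ lands in $\sum_{1\le\ell\le n-1}\cF_{n-\ell}^{\pm}\otimes\cF_\ell^{\pm}$ by the coalgebra filtration just proved, the recursive formula for the antipode writes $S(a)$ in terms of $S$ applied to elements of strictly smaller filtration degree, giving $S(\cF_n^{\pm})\subseteq\cF_n^{\pm}$. Combining the four parts shows that $\{\cF_n^{\pm}\}_{n=0}^{\infty}$ is a Hopf algebra filtration of $\Tt_{\pm1}^{cop}$.
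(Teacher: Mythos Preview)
Your proof is correct and follows essentially the same route as the paper: exhaustion via the PBW basis, the algebra filtration via the relations of Lemma~\ref{lem:redefinition-Ttpm}, the coalgebra filtration by induction through Lemma~\ref{lem:Fn-FsFt} together with multiplicativity, and the antipode by the standard recursive argument. The only notable difference is that you make the base case of the coalgebra induction explicit by computing $\Delta(z_2)$ directly, whereas the paper leaves the cases $n=0,1$ implicit; this is a genuine addition of detail rather than a different method.
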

\begin{proof}
Note that $\{y^{r}z_{i_{m}}^{s_{m}}\cdots z_{i_{2}}^{s_{2}}x_{1}^{s_{1}}x^{t}|~i_{m}>\ldots>i_{2}\ge 2,~r,s_{m},\ldots,s_{1}\ge 0,~t\in \mathbb{Z}\}$ is a PBW basis of $\Tt_{\pm 1}^{cop}$. Thus $\cup_{n=0}^{\infty}\cF_n^{\pm}=\Tt_{\pm 1}^{cop}$. It is evident that $\cF_{0}^{\pm} \cF_{n}^{\pm} \subseteq \cF_{n}^{\pm}$ for all $n\ge 0$ from Lemma \ref{lem:redefinition-Ttpm}. To show $\cF_{m}^{\pm} \cF_{n}^{\pm} \subseteq \cF_{m+n}^{\pm}$ for all $m,n\ge 0$, it suffices to show $F_{m}^{\pm} F_{n}^{\pm} \subseteq F_{m+n}^{\pm}$. By Lemma \ref{lem:redefinition-Ttpm}, it is easy to show that $F_{1}^{\pm}F_{n}^{\pm} \subseteq F_{n+1}^{\pm}$
for all $n\ge 0$. Consequently, $F_{2}^{\pm}F_{n}^{\pm} \subseteq (\sum F_{1}^{\pm}F_{1}^{\pm}) F_{n}^{\pm} \subseteq F_{n+2}^{\pm}$.  By induction on $m$, we have $F_{m}^{\pm} F_{n}^{\pm} \subseteq (\sum_{s+t=m\atop s,t<m} F_{s}^{\pm}F_{t}^{\pm}) F_{n}^{\pm} \subseteq F_{m+n}^{\pm}$, using Lemma \ref{lem:Fn-FsFt}. Thus $\{\cF_n^{\pm}\}_{n=0}^{\infty}$ is an algebra filtration. Now we show $\Delta(\cF_{n}^{\pm}) \subseteq \sum_{\ell=0} \cF_{n-\ell}^{\pm} \otimes \cF_{\ell}^{\pm}$. By induction on $n$, we obtain that
\begin{align*}
\Delta(\cF_{n}^{\pm}) &\subseteq \Delta(\sum_{s+t=n \atop s,t<n} \cF_{s}^{\pm}\cF_{t}^{\pm})=\sum_{s+t=n \atop s,t<n} \Delta(\cF_{s}^{\pm})\Delta(\cF_{t}^{\pm})
\subseteq \sum_{s+t=n \atop s,t<n} (\sum_{s_{1}+s_{2}=s} \cF_{s_{1}}^{\pm} \otimes \cF_{s_{2}}^{\pm})(\sum_{t_{1}+t_{2}=t} \cF_{t_{1}}^{\pm} \otimes \cF_{t_{2}}^{\pm}) \\
&\subseteq \sum_{s_{1}+t_{1}+s_{2}+t_{2}=n} \cF_{s_{1}+t_{1}}^{\pm} \otimes \cF_{s_{2}+t_{2}}^{\pm}= \sum_{\ell=0}^{n} \cF_{n-\ell}^{\pm} \otimes \cF_{\ell}^{\pm}.
\end{align*}
Thus $\{\cF_n^{\pm}\}_{n=0}^{\infty}$ is a coalgebra filtration. To show $\{\cF_n^{\pm}\}_{n=0}^{\infty}$  is a Hopf filtration it remains to show $S(\cF_{n}^{\pm}) \subseteq \cF_{n}^{\pm}$. Note that $\sum_{\ell=0}^{n} S(\cF_{n-\ell}^{\pm}) \cF_{\ell}^{\pm} = \K1$. It is clear that $S(\cF_{0}^{\pm}) \subseteq \cF_{0}^{\pm}$. For $n\ge 1$,
by induction on $n$, we obtain:
\begin{align*} 
S(\cF_{n}^{\pm})\cF_{0}^{\pm} &= \K1 + \sum_{\ell=1}^{n} S(\cF_{n-\ell}^{\pm}) \cF_{\ell}^{\pm} \subseteq  \K1+\sum_{\ell=1}^{n} \cF_{n-\ell}^{\pm}\cF_{\ell}^{\pm} \subseteq \cF_{n}^{\pm}.
\end{align*}
\end{proof}

\begin{lemma}\label{lem:first-term-coradicalfil}
Assume $\Char\K=0$. Then $\cF_1^{\pm}$ is the first term of the coradical filtration of $\Tt_{\pm 1}^{cop}$.
\end{lemma}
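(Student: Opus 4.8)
The plan is to prove the two inclusions $\cF_1^{\pm}\subseteq C_1$ and $C_1\subseteq \cF_1^{\pm}$, where $C_1$ denotes the first term of the coradical filtration of $\Tt_{\pm 1}^{cop}$. Since $\Tt_{\pm 1}$ is pointed with grouplikes $\G(\Tt_{\pm 1}^{cop})=\langle x\rangle$, its coradical is $\corad(\Tt_{\pm 1}^{cop})=\K[x^{\pm 1}]=\cF_0^{\pm}$. I would use the standard description of the first filtration term of a pointed coalgebra: $C_1$ is spanned by $\corad$ together with all skew-primitive elements, i.e.\ $C_1=\corad+\sum_{a,b\in\langle x\rangle}\Pp_{a,b}$. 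Thus establishing $\cF_1^{\pm}=C_1$ reduces entirely to a classification of the skew-primitives of $\Tt_{\pm 1}^{cop}$: one must see that the spanning set of $\cF_1^{\pm}$ consists of grouplikes and skew-primitives, and conversely that no skew-primitive escapes $\cF_1^{\pm}$.

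First I would verify $\cF_1^{\pm}\subseteq C_1$. Taking $\Delta^{cop}$ of the coproducts underlying Corollary~\ref{cor:coproduct} gives $\Delta^{cop}(y)=x\otimes y+y\otimes 1$ and $\Delta^{cop}(x_1)=x\otimes x_1+x_1\otimes 1$; the whole point of the substitution $z_2=x_2-\tfrac32 x_1^2$ is that the interior term cancels, leaving $\Delta^{cop}(z_2)=x^{2}\otimes z_2+z_2\otimes 1$. Right-multiplying by a grouplike $x^{t}$ then shows $yx^{t},x_1x^{t}\in \Pp_{x^{t+1},x^{t}}$ and $z_2x^{t}\in \Pp_{x^{t+2},x^{t}}$. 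As $\cF_1^{\pm}$ is spanned by $\cF_0^{\pm}$ and these elements, every spanning vector is a grouplike or a skew-primitive, and each skew-primitive $p$ satisfies $\Delta^{cop}(p)\in\corad\otimes \Tt_{\pm1}^{cop}+\Tt_{\pm1}^{cop}\otimes\corad$, hence lies in $C_1$; thus $\cF_1^{\pm}\subseteq C_1$.

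For the reverse inclusion I would exploit that $\Tt_{\pm 1}^{cop}$ is $\bN$-graded with graded coproduct, so the skew-primitives form a graded subspace and it suffices to treat a homogeneous skew-primitive $c$ of degree $d$. Matching the components of $\Delta^{cop}(c)$ whose first, respectively second, tensor factor is a grouplike forces all PBW monomials occurring in $c$ to end in one common power $x^{t}$; after right translation by $x^{-t}$ I may assume $c$ has type $(x^{d},1)$ and is a combination of degree-$d$ monomials in $y,x_1,z_2,z_3,\dots$. For $d=1,2$ the claim follows by direct computation: imposing vanishing of the interior component $\Tt(1)\otimes\Tt(1)$ of the reduced coproduct and invoking linear independence of PBW monomials (here $\Char\K=0$ is used to invert the integer coefficients that appear) yields $\Pp_{x,1}\cap\Tt(1)=\K y\oplus \K x_1$ and $\Pp_{x^{2},1}\cap\Tt(2)=\K z_2$, all inside $\cF_1^{\pm}$.

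The crux is the case $d\ge 3$, where I must prove $\Pp_{x^{d},1}\cap\Tt(d)=0$, so that in particular $z_d$ is \emph{not} skew-primitive. The plan is to isolate a suitable interior bidegree component of the reduced coproduct $\Delta^{cop}(c)-x^{d}\otimes c-c\otimes 1$, expand it in the PBW basis $\{y^{r}z_{i_m}^{s_m}\cdots x_1^{s_1}\}$ of $\Tt(d)$ using the normal-form relations of Lemma~\ref{lem:redefinition-Ttpm}, and show the resulting homogeneous linear system has only the zero solution. The relations $[z_n,y]=z_{n+1}-z_nx_1-\tfrac{n(n+1)!}{2^{n}}z_2x_1^{\,n-1}$ and $[z_n,z_m]=0$ control exactly how $\ad_{r}y$ and multiplication interact, and since $\Char\K=0$ none of the coefficients produced ever vanishes, so distinct PBW monomials cannot conspire to cancel. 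I expect this bookkeeping — organized as an induction on $d$ that feeds the already-resolved lower degrees back in — to be the technical heart of the argument, the essential input being that $z_2$ is the unique genuine skew-primitive beyond $y$ and $x_1$ and that nothing analogous survives in higher degree. Combining the two inclusions gives $\cF_1^{\pm}=C_1$, as claimed.
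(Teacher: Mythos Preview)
Your overall strategy --- reduce via Taft--Wilson to classifying $(x^N,1)$-skew-primitives, handle $N=1,2$ directly, and for $N\ge 3$ kill everything by coefficient extraction --- is exactly the paper's. The gap is that you have not actually carried out the $d\ge 3$ step: ``isolate a suitable interior bidegree component'' and ``I expect this bookkeeping to work'' is a plan, not a proof, and you give no indication of which components to extract or why the resulting linear system is nonsingular. You also do not first eliminate the monomials containing a positive power of $y$, which the paper does as a preliminary reduction.

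The paper's execution differs from your sketch in one way that matters for actually finishing: it works in the $x_n$ basis rather than the $z_n$ basis. The coproduct of $x_n$ is given explicitly by Corollary~\ref{cor:coproduct} as a Bell-polynomial expansion, whereas to compute $\Delta^{cop}(z_n)$ you would first have to unwind $z_n=x_n-\tfrac{(n+1)!}{2^n}x_1^n$ back into the $x_i$. Concretely, after eliminating the $y$-terms, the paper writes the candidate as $z=\sum\ell_{s_N,\dots,s_1}x_N^{s_N}\cdots x_1^{s_1}$ and extracts the components of $\Delta^{cop}(z)$ ending in $\otimes\, x_1$ and in $\otimes\, x_{N-1}$; only the monomials $x_N$ and $x_{N-1}x_1$ contribute there, yielding the $2\times 2$ system $(N+1)\ell+\ell'=0=\binom{N+1}{2}\ell+\ell'$ in their coefficients. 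Its determinant $(N+1)-\binom{N+1}{2}$ is nonzero in characteristic~$0$ precisely for $N\ge 3$, killing both coefficients, and the argument then descends by reading off the components ending in $\otimes\, x_{N-2}$, $\otimes\, x_{N-3}$, and so on. If you want to complete your argument rather than sketch it, reverting to the $x_n$ basis for this lemma is the path of least resistance.
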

\begin{proof}
By the Taft-Wilson Theorem, it suffices to show that any non-trivial $(x^N,1)$-skew primitive elements of $\Tt_{\pm 1}^{cop}$ is contained in $F_1^{\pm}$ for $N\ge 0$. It is clear that $\Pp_{x,1}(\Tt_{\pm 1}^{cop})=\K\{1-x,y,x_1\}$.
Let $z$ be a non-trivial $(x^{N},1)$-skew primitive element for $N\ge 2$. Since $z_{2}=x_{2}-\frac{3}{2}x_{1}^{2}\in \Pp_{x^{2},1}(\Tt_{\pm 1}^{cop})$ it is enough to show that $N=2$ and $z=\alpha z_{2}$ for some $0\neq \alpha\in \K$.

Note that $\{y^{r}x_{i_{m}}^{s_{i_{m}}} \cdots x_{i_{1}}^{s_{i_{1}}}x^{s}|~r,s_{i_{m}},\ldots,s_{i_{1}}\ge 0, s\in \mathbb{Z}, i_{m}> \ldots> i_{1}\ge 1\}$ is a basis of $\Tt_{\pm 1}^{cop}$. It follows that
$$z=\sum_{r+Ns_{N}+\ldots+s_{1}=N \atop r, s_{N},\ldots,s_{1}\ge 0}^{} k_{r,s_{N},\ldots,s_{1} } y^{r}x_{N}^{s_{N}} \cdots x_{1}^{s_{1}}.$$
By Formula (\ref{formula-Radford}), we have
\begin{align*}
\Delta(z) =& \sum_{r+Ns_{N}+\ldots+s_{1}=N \atop r, s_{N},\ldots,s_{1}\ge 0}^{} k_{r,s_{N},\ldots,s_{1}} (y^{r} \otimes 1 +  \sum_{k=1}^{n-1} \mathcal{SH}_{n-k,k}(y,x) \otimes y^{k} + x^{r} \otimes y^{r})\\
& \cdot (x_{N}^{s_{N}} \cdots x_{1}^{s_{1}} \otimes 1 + \ldots + x^{N-r} \otimes x_{N}^{s_{N}}\cdots x_{1}^{s_{1}}).
\end{align*}
Thus, if $r\neq 0$, then we obtain
\begin{align*}
\sum_{r+Ns_{N}+\ldots+s_{1}=N \atop r, s_{N},\ldots,s_{1}\ge 0}^{} k_{r,s_{N},\ldots,s_{1}} x^{r}x_{N}^{s_{N}} \cdots x_{1}^{s_{1}}=0.
\end{align*}

Thus $k_{r,s_{N},\ldots,s_{1}}=0$ for all $s_{N},\ldots,s_{1}$ and so $z=\sum_{Ns_{N}+\ldots+s_{1}=N}^{} \ell_{s_{N},\ldots,s_{1}} x_{N}^{s_{N}} \cdots x_{1}^{s_{1}}$. Let $N\ge 2$. We focus on the terms  of the forms $x^{N-1}x \otimes x_{1}$ and $x_{1}x^{N-1}\otimes x_{N-1}$ in $\Delta(z)$. They only occur in $\Delta(\ell_{1,0,\ldots,0}x_{N})$ and $\Delta(\ell_{0,1,0,\ldots,1}x_{N-1}x_{1})$. We make the calculation:
\begin{align*}
\Delta(x_{N}) =& x_{N} \otimes 1 + ((N+1)x_{N-1}+ \binom{N+1}{2} x_{N-2}x_{1}+\ldots)x \otimes x_{1}+ \ldots  \\
& + \binom{N+1}{2} x_{1}^{}x^{N-1} \otimes x_{N-1}  + x^{N} \otimes x_{N}, \\
\Delta(x_{N-1}x_{1}) 
=& x_{N-1}x_{1} \otimes 1+ x_{N-1}x\otimes x_{1}+ \ldots + x_{1}x^{N-1} \otimes x_{N-1} +x^{N} \otimes x_{N-1}x_{1}.
\end{align*}
If $N=2$, then $z=\alpha z_{2}$ for some $0\neq \alpha\in \K$. If $N\ge 3$, then
\begin{align*}
\ell_{1,0,\ldots,0}(N+1)+\ell_{0,1,0,\ldots,1} &=0,\\
\ell_{1,0,\ldots,0}\binom{N+1}{2}+\ell_{0,1,0,\ldots,1} &=0.
\end{align*}
Since $\Char\K=0$, we have $\ell_{1,0,\ldots,0}=\ell_{0,1,0,\ldots,1}=0$. Thus $z=\sum_{(N-2)s_{N-2}+\ldots+s_{1}=N}^{} \ell_{0,0,s_{N-2},\ldots,s_{1}}\\ x_{N-2}^{s_{N-2}} \cdots x_{1}^{s_{1}}$. Now, if $N=3$, then $z=\ell_{0,0,3}x_{1}^{3}$ is not primitive because $\Char\K=0$. Thus $\ell_{0,0,3}=0$. If $N\ge 4$, then we focus on the terms of the form $\underline{\quad} \otimes x_{N-2}$ in $\Delta(z)$. Observe that they only occur in $\Delta(\ell_{0,0,1,\ldots,1,0}x_{N-2}x_{2})$ and $\Delta(\ell_{0,0,1,\ldots,0,2}x_{N-2}x_{1}^{2})$. Then
\begin{align*}
\ell_{0,0,1,\ldots,1,0}x_{2}x^{N-2}+\ell_{0,0,1,\ldots,0,2}x_{1}^{2}x^{N-2}=0.
\end{align*}
Thus $\ell_{0,0,1,\ldots,1,0}=\ell_{0,0,1,\ldots,0,2}=0$ and $z=\sum_{(N-3)s_{N-3}+\ldots+s_{1}=N}^{} \ell_{0,0,s_{N-3},\ldots,s_{1}} x_{N-3}^{s_{N-3}} \cdots x_{1}^{s_{1}}$. Similarly, we obtain inductively $\ell_{s_{N},\ldots,s_{1}}=0$ for all $s_{N},\ldots,s_{1}$ such that $Ns_{N}+\ldots+s_{1}=N$. Thus $z=0$ if $N\ge 3$. Therefore, $N=2$ and $z=\alpha z_{2}$ for $0\neq \alpha\in \K$.
\end{proof}

Let $\gr_{\cF^{\pm}} (\Tt_{\pm 1}^{cop})$ be the graded Hopf algebra associated to the filtration $\{\cF_n^{\pm}\}_{n=0}^{\infty}$.

\begin{theorem}\label{thm:Radfordbiproduct-grTtpm}
$\gr_{\cF^{\pm}}(\Tt_{\pm 1}^{cop})\cong B_{\pm}\# \K\Z$, where $B_{\pm}$ is a braided Hopf algebra in $ {}^{\K\Z}_{\K\Z}\mathcal{YD}$ determined by
\begin{align}
\label{relation1-B_pm}
[x_{1},y] &= \frac{1}{2} x_{1}^{2}, \qquad [z_{n},y] = -z_{n}x_{1}-\frac{n(n+1)!}{2^{n}}z_{2}x_{1}^{n-1}+z_{n+1}, \qquad n\ge 2,\\
\label{relation2-B_pm}
[x_{1},z_{n}] &=0, \qquad [z_{n},z_{m}]=0,  \qquad 2\le n< m.\\
x\cdot x_{1} &=x_{1}, \qquad x\cdot y=y+x_{1}, \qquad x\cdot z_{n}=z_{n}, \qquad n\ge 2\\
\rho(x_{1}) &=x\otimes x_{1}, \qquad \rho(y)=x\otimes y, \qquad \rho(z_{n})=x^{n}\otimes z_{n}, \qquad n\ge 2.
\end{align}
If $\Char\K=0$, then $B_{\pm}$ is the Nichols algebra of $V_{1,2}=\K\{y,z_2,x_{1}\}\in {}^{\K\Z}_{\K\Z}\mathcal{YD}$.
\end{theorem}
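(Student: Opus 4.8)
The plan is to realise $\gr_{\cF^{\pm}}(\Tt_{\pm 1}^{cop})$ as a Radford biproduct via Lemma \ref{lem:Radfordbiproduct} and then to read off the braided Hopf structure of the diagram directly from the associated graded. First I would note that, by Lemma \ref{lem:hopf-filtration}, $A:=\gr_{\cF^{\pm}}(\Tt_{\pm 1}^{cop})$ is an $\bN$-graded Hopf algebra with $A(0)=\cF_0^{\pm}=\K[x^{\pm 1}]\cong \K\Z$. Because the coproduct of a coradical-type associated graded respects the grading, the canonical projection $\pi\colon A\twoheadrightarrow A(0)=\K\Z$ onto degree zero and the inclusion $\jmath\colon \K\Z\hookrightarrow A$ are Hopf algebra maps with $\pi\circ\jmath=\id$. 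Applying Lemma \ref{lem:Radfordbiproduct} then yields $A\cong B_{\pm}\# \K\Z$ with $B_{\pm}=A^{co\,\pi}$, together with the stated action $x\cdot b=\jmath(x_{(1)})\,b\,\jmath(S(x_{(2)}))$ and coaction $\rho(b)=\pi(b_{(1)})\otimes b_{(2)}$.

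Next I would determine $B_{\pm}$ explicitly. Assigning degrees $\deg y=\deg x_1=1$ and $\deg z_n=n-1$ (matching the weights in the definition of $F_n^{\pm}$), with $x$ in degree $0$, I pass the relations of Lemma \ref{lem:redefinition-Ttpm} to the associated graded and keep careful track of which summands survive. In $[x_1,y]=\tfrac12 x_1^2+z_2$ the term $z_2$ has degree $1<2$, hence vanishes in $A(2)$, giving $[x_1,y]=\tfrac12 x_1^2$; by contrast every summand of $[z_n,y]=-z_n x_1-\tfrac{n(n+1)!}{2^n}z_2 x_1^{n-1}+z_{n+1}$ is homogeneous of degree $n$, so this relation survives intact, and the commutativity relations $[x_1,z_n]=[z_n,z_m]=0$ are already homogeneous. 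Since $x$ is grouplike the action is conjugation $x\cdot b=xbx^{-1}$, so $[x,x_1]=[x,z_n]=0$ and $[x,y]=x_1x$ yield $x\cdot x_1=x_1$, $x\cdot z_n=z_n$, $x\cdot y=y+x_1$. For the coaction I use that $x_1,y\in\Pp_{x,1}(\Tt_{\pm 1}^{cop})$ and $z_n\in\Pp_{x^n,1}(\Tt_{\pm 1}^{cop})$, whence $\rho(x_1)=x\otimes x_1$, $\rho(y)=x\otimes y$, $\rho(z_n)=x^n\otimes z_n$. This reproduces exactly \eqref{relation1-B_pm}--\eqref{relation2-B_pm} and the displayed Yetter--Drinfel'd structure, proving the first assertion.

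For the Nichols algebra claim assume $\Char\K=0$. The grading makes $B_{\pm}$ a connected graded braided Hopf algebra with $B_{\pm}(0)=\K$ and $B_{\pm}(1)=\K\{y,z_2,x_1\}=V_{1,2}$. Rewriting \eqref{relation1-B_pm} as $z_{n+1}=[z_n,y]+z_n x_1+\tfrac{n(n+1)!}{2^n}z_2 x_1^{n-1}$ and inducting on $n$ shows that every $z_n$ lies in the subalgebra generated by $\{y,z_2,x_1\}$, so $B_{\pm}$ is generated in degree one; consequently $\cF_n^{\pm}=(\cF_1^{\pm})^n$. Combining this with Lemma \ref{lem:first-term-coradicalfil} (so $\cF_1^{\pm}=H_1$, the first term of the coradical filtration $\{H_n\}$) and the minimality of the coradical filtration among coalgebra filtrations ($H_n\subseteq \cF_n^{\pm}$), I obtain $\cF_n^{\pm}=(\cF_1^{\pm})^n=H_1^{\,n}\subseteq H_n\subseteq \cF_n^{\pm}$, forcing $\cF^{\pm}$ to be the coradical filtration and $A$ to be coradically graded. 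For a coradically graded pointed Hopf algebra the associated diagram satisfies $\Pp(B_{\pm})=B_{\pm}(1)$ (cf.\ \cite{AS2002}); together with generation in degree one this verifies all the conditions of Definition \ref{def:Nichols-algebra}, so $B_{\pm}=\BN(V_{1,2})$.

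The main obstacle I anticipate is the second paragraph: one must maintain precise degree bookkeeping to be certain that exactly the right lower-order correction terms (notably the $z_2$ in $[x_1,y]$) drop out in the associated graded, and one must separately check via $\Pi=\id*(\jmath\circ S\circ\pi)$ that $y,z_2,x_1$ become primitive in $B_{\pm}$ while the higher $z_n$ acquire the nontrivial braided coproducts forced by \eqref{relation1-B_pm}. The conceptual crux of the final paragraph is the upgrade $\cF_1^{\pm}=H_1\Rightarrow \cF^{\pm}=\{H_n\}$, which is precisely what turns ``$B_{\pm}$ is generated in degree one'' into the Nichols-defining condition $\Pp(B_{\pm})=B_{\pm}(1)$.
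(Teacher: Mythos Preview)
Your overall strategy coincides with the paper's: apply Radford's biproduct (Lemma \ref{lem:Radfordbiproduct}) to the associated graded, read off the relations of $B_{\pm}$ by passing Lemma \ref{lem:redefinition-Ttpm} to $\gr_{\cF^{\pm}}$, and then invoke Lemma \ref{lem:first-term-coradicalfil} for the Nichols claim. Your Nichols argument is in fact more explicit than the paper's: you show directly that $\cF_n^{\pm}=(\cF_1^{\pm})^n$ and deduce that $\{\cF_n^{\pm}\}$ equals the coradical filtration, which the paper records separately as Corollary \ref{cor:coradical-filtration-Ttpmcop}.

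There is, however, one incorrect justification that you should fix. You write ``$z_n\in\Pp_{x^n,1}(\Tt_{\pm 1}^{cop})$'' to derive $\rho(z_n)=x^n\otimes z_n$. This is false for $n\ge 3$: Lemma \ref{lem:first-term-coradicalfil} shows precisely that the only nontrivial $(x^N,1)$-skew primitives with $N\ge 2$ are scalar multiples of $z_2$, and you yourself note in the final paragraph that the higher $z_n$ have nontrivial braided coproduct. The coaction formula is nonetheless correct, but for a different reason: $\rho(b)=(\pi\otimes\id)\Delta_A(b)$ only sees the degree-zero component of the left tensorand. Using Corollary \ref{cor:coproduct} (or the explicit coproducts of $x_n$ and $x_1^n$ in $\Tt_{\pm 1}^{cop}$), the only term of $\Delta^{cop}(z_n)$ whose first factor lies in $\K[x^{\pm 1}]$ is $x^n\otimes z_n$, whence $\rho(z_n)=x^n\otimes z_n$. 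Replace the skew-primitive claim by this degree argument and the proof goes through.
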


\begin{proof}
Note that $z_{n}\in \cF_{n-1}^{\pm}$ for $n\ge 2$. Then, by Lemma \ref{lem:redefinition-Ttpm}, the relations of $\operatorname{gr}_{\cF}(\Tt_{\pm 1}^{cop})$ are determined by (\ref{relation1-Ttpm}), (\ref{relation1-B_pm}) and (\ref{relation4-Ttpm}).

Thus $\operatorname{gr}_{\cF}(\Tt_{\pm 1}^{cop})\cong B_{\pm}\# \K\Z$ by Lemma \ref{lem:Radfordbiproduct}. 
Observe that $B_{\pm}$ is generated by $B(1)=V_{1,2}$. If $\Char\K=0$, then  $\Pp(B_{\pm})=B_{\pm}(1)=V_{1,2}$ by Lemma \ref{lem:first-term-coradicalfil}. Therefore, $B_{\pm}\cong\BN(V_{1,2})$.
\end{proof}

\begin{corollary}\label{cor:coradical-filtration-Ttpmcop}
Assume $\Char\K$ =0. Then $\{\cF_n^{\pm}\}_{n=0}^{\infty}$ is the coradical filtration of $\Tt_{\pm 1}^{cop}$ (and $\Tt_{\pm 1}$).
\end{corollary}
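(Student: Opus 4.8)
The plan is to derive the corollary from the identification of the associated graded Hopf algebra obtained in Theorem~\ref{thm:Radfordbiproduct-grTtpm}, together with the standard principle that a Hopf algebra filtration starting at the coradical and having coradically graded associated graded must itself be the coradical filtration.

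First I would fix the bottom term. By Lemma~\ref{lem:hopf-filtration}, $\{\cF_n^{\pm}\}_{n\ge 0}$ is a Hopf algebra filtration of $\Tt_{\pm 1}^{cop}$ with $\cF_0^{\pm}=\K[x^{\pm 1}]\cong\K\Z$. Since $\Tt_{\pm 1}$ is pointed with group of grouplikes $\{g^{m}\mid m\in\Z\}$, its coradical is $\K[g^{\pm 1}]=\K[x^{\pm1}]$; passing to the opposite comultiplication changes neither the coradical as a subspace nor the associated coradical filtration, because the $n$-fold wedge obeys $X\wedge^{cop}Y=Y\wedge X$ and simple subcoalgebras are unchanged. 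Hence $\cF_0^{\pm}=\corad(\Tt_{\pm 1}^{cop})$, and writing $\{H_n\}_{n\ge 0}$ for the coradical filtration of $\Tt_{\pm 1}^{cop}$, the minimality of the coradical filtration among coalgebra filtrations starting at the coradical yields $H_n\subseteq\cF_n^{\pm}$ for all $n$.

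Next I would prove that $\gr_{\cF^{\pm}}(\Tt_{\pm 1}^{cop})$ is coradically graded. By Theorem~\ref{thm:Radfordbiproduct-grTtpm}, since $\Char\K=0$, we have $\gr_{\cF^{\pm}}(\Tt_{\pm 1}^{cop})\cong\BN(V_{1,2})\#\K\Z$. The Nichols algebra $\BN(V_{1,2})$ has $\BN(V_{1,2})(0)=\K$ and, by its defining property, no primitive elements outside degree one, i.e. $\Pp(\BN(V_{1,2}))=\BN(V_{1,2})(1)$. This is exactly the condition forcing the bosonization $\BN(V_{1,2})\#\K\Z$ to be coradically graded, with coradical $\K\Z$ concentrated in degree zero and each coradical layer equal to the corresponding graded component.

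Finally I would invoke the comparison lemma: a Hopf algebra filtration $\{\cF_n^{\pm}\}$ with $\cF_0^{\pm}=\corad$ and coradically graded associated graded coincides with the coradical filtration. Concretely, coradical gradedness identifies the $n$-th coradical layer of $\gr_{\cF^{\pm}}(\Tt_{\pm 1}^{cop})$ with $\cF_n^{\pm}/\cF_{n-1}^{\pm}$, and combined with $H_n\subseteq\cF_n^{\pm}$ an induction on $n$ forces $H_n=\cF_n^{\pm}$. The resulting filtration is the coradical filtration of $\Tt_{\pm 1}^{cop}$, and by the $cop$-symmetry of the coradical filtration noted above it is equally the coradical filtration of $\Tt_{\pm 1}$. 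The step I expect to require the most care is making the final comparison rigorous in this infinite-dimensional graded setting: rather than global dimensions, one argues within each fixed $\bN$-degree using the explicit PBW basis underlying the definition of $F_n^{\pm}$, so that every count is finite and the induction closes.
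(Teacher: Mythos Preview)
Your proposal is correct and follows essentially the same route as the paper: the paper's proof simply says the claim follows directly from Theorem~\ref{thm:Radfordbiproduct-grTtpm}, and what you have written is precisely the unpacking of that implication --- namely that since $B_{\pm}\cong\BN(V_{1,2})$ is a Nichols algebra, the bosonization $\BN(V_{1,2})\#\K\Z$ is coradically graded, which forces the Hopf filtration $\{\cF_n^{\pm}\}$ starting at the coradical to coincide with the coradical filtration. Your explicit treatment of the $cop$ symmetry and the finite-dimensionality-in-each-degree point are details the paper leaves implicit.
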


\begin{proof}
Note that the coradical filtration of $\Tt_{\pm 1}^{cop}$ is also the coradical filtration of $\Tt_{\pm 1}$. Thus the claim follows directly from Theorem \ref{thm:Radfordbiproduct-grTtpm}.
\end{proof}

\begin{remark}\label{rmk:B(V_1,2)}
(i) With the notations in Theorem \ref{thm:Radfordbiproduct-grTtpm},
using the formula of the braiding in ${}_{\K\Z}^{\K\Z}\mathcal{YD}$, we have the braided matrix of $V_{1,2}$ given by
\begin{align*}
c\left(
\begin{array}{lll}
x_{1} \otimes x_{1} & x_{1}\otimes y & x_{1}\otimes z_{2} \\
y \otimes x_{1}     & y \otimes y  & y\otimes z_{2}\\
z_{2} \otimes x_{1}  & z_{2}\otimes y & z_{2} \otimes z_{2}
\end{array}
\right)
=
\left(
\begin{array}{lll}
x_{1} \otimes x_{1} & (y+x_{1})\otimes x_{1}  & z_{2} \otimes x_{1}  \\
x_{1} \otimes y     & (y+x_{1})\otimes y      & z_{2} \otimes y\\
x_{1} \otimes z_{2}     & (y+2x_{1})\otimes z_{2} & z_{2}\otimes z_{2}
\end{array}
\right).
\end{align*}
Hence $\K\{x_1,y\}$ is of Jordan type (see e.g. \cite{CLW2009} and \cite{AAH12021}). The braiding of $V_{1,2}$ also appeared in \cite[Section 4.1.1]{AAH12021} with the ghost equal to $-4$.  If $\Char\K=0$, $\BN(V_{1,2})\cong B_{\pm}$ has infinite $\GK$-dimension and its defining relations are given by relations (\ref{relation1-B_pm}) and (\ref{relation2-B_pm}).\\

\noindent
(ii) Let $z_{2}':=z_{2}$ and
$z_{n+1}':=[y,z_{n}']_{c}=yz_{n}'-(y_{(-1)}\cdot z_{n}')y_{(0)}$ for $n\ge 2$. It is clear that $z_{n+1}'
=yz_{n}'-(x\cdot z_{n}')y=[y,z_{n}']$. Set $z_{0}=z_{1}=0$. We can prove that
\begin{align}
z_{n}'&=(-1)^{n}(z_{n}-(n+1)z_{n-1}x_{1}+\frac{n(n+1)}{2}z_{n-2}x_{1}^{2}), \qquad n\ge 2.
\end{align}
In case $\Char\K=0$, the relations of $B_{\pm}$ (or $\BN(V_{1,2})$) are equivalent to
\begin{align*}
x_{1}y &= yx_{1}+\frac{1}{2}x_{1}^{2}, \qquad yz_{n}' = z_{n}'y+z_{n+1}', \qquad n\ge 2,\\
x_{1}z_{n}' &= z_{n}'x_{1}, \qquad  z_{n}'z_{m}' = z_{m}'z_{n}', \qquad 2\le n\le m.
\end{align*}
Let $c_{n,k}\in \K$ for $1\le k\le n+1$, $c_{n,n}:=1$ and $c_{n,1}=c_{n,n+1}=0$. Then
\begin{align*}
\Delta(z_{n}') &=z_{n}' \otimes 1 + \sum_{k=2}^{n}c_{n,k} x_{1}^{n-k}\otimes z_{k}',
\end{align*}
where $c_{n,k}$ is determined by the recursion:
\begin{align*}
c_{n,k} &=c_{n-1,k-1}-\frac{n-1+k}{2}c_{n-1,k}, \qquad 2\le k\le n.
\end{align*}
More precisely,
\begin{align}
c_{n,k} &= (-\frac{1}{2})^{n-k}\binom{n-2}{k-2}(k+2)(k+3)\cdots (n+1), \qquad 2\le k\le n-1.
\end{align}
\end{remark}

\begin{remark}
In case $\Char\K=p>0$, $B_{\pm}$ is not a Nichols algebra of $V_{1,2}$.
\end{remark}

Let $\cF_n^{\operatorname{FdB}}:=F_{n}^{\operatorname{FdB}}\K[x^{\pm 1}]$ for $n \ge 0$, where
\begin{align*}
F_{n}^{\operatorname{FdB}}= \bigoplus_{(i_{m}-1)s_{m}+\ldots+(i_{2}-1)s_{2}+s_{1}\le n,\atop  i_{m}>\ldots>i_{2}\ge 2} \K  z_{i_{m}}^{s_{m}}\cdots z_{i_{2}}^{s_{2}}x_{1}^{s_{1}}
\end{align*}
For example,
\begin{align*}
\cF_{0}^{\operatorname{FdB}} &=\K [x^{\pm 1}],\\
\cF_{1}^{\operatorname{FdB}} &= (\K 1 \oplus \K z_{2} \oplus \K x_{1}) \K [x^{\pm 1}],\\
\cF_{2}^{\operatorname{FdB}} &= (\K 1 \oplus \oplus \K z_{2} \oplus \K x_{1} \oplus \K z_{3} \oplus \K z_{2}^{2} \oplus \K z_{2}x_{1 }\oplus \K x_{1}^{2}) \K [x^{\pm 1}].
\end{align*}
It is clear that $\cF_0^{\operatorname{FdB}} \subseteq \cF_1^{\operatorname{FdB}} \subseteq \ldots \subseteq \cF_n^{\operatorname{FdB}}\subseteq \ldots \subseteq \mathcal{H}_{\operatorname{FdB}}^{cop}$.
\begin{corollary}
Assume $\Char \K=0$. Then $\{\cF_n^{\operatorname{FdB}}\}_{n=0}^{\infty}$ is the coradical filtration of $\mathcal{H}_{\operatorname{FdB}}^{cop}$ (and $\mathcal{H}_{\operatorname{FdB}})$.
\end{corollary}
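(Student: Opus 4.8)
The plan is to deduce this corollary from the already-established coradical filtration of $\Tt_{\pm 1}^{cop}$ (Corollary~\ref{cor:coradical-filtration-Ttpmcop}) by realizing $\HFdB^{cop}$ as a Hopf subalgebra of $\Tt_{\pm 1}^{cop}$ and invoking the general principle that the coradical filtration of a subcoalgebra is the intersection of the ambient coradical filtration with that subcoalgebra. Concretely, by Remark~\ref{rmk:Ttpm}~(i) together with Lemma~\ref{lem:R-cong-FdB}, $\HFdB$ is the localization at $g=x$ of $R\cong\BFdB$, hence a Hopf subalgebra of $\Tt_{\pm 1}$; passing to the opposite coproduct, $\HFdB^{cop}$ is the Hopf subalgebra of $\Tt_{\pm 1}^{cop}$ generated by $\{x^{\pm 1},x_1,z_2,z_3,\ldots\}$, where the $z_n$ are as in Lemma~\ref{lem:redefinition-Ttpm}.

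First I would record the relevant PBW description. In the notation of Subsection~5.2 the family $\{y^{r}z_{i_m}^{s_m}\cdots z_{i_2}^{s_2}x_1^{s_1}x^{t}\}$ is a basis of $\Tt_{\pm 1}^{cop}$, and since $\HFdB^{cop}$ contains no $y=h$, its $y$-free subfamily $\{z_{i_m}^{s_m}\cdots z_{i_2}^{s_2}x_1^{s_1}x^{t}\}$ (the case $r=0$) is a basis of $\HFdB^{cop}$. Comparing the definitions of $F_n^{\pm}$ and $F_n^{\operatorname{FdB}}$, the weight bound defining $\cF_n^{\pm}$ reduces, when $r=0$, exactly to the weight bound defining $\cF_n^{\operatorname{FdB}}$; hence a basis element lies in $\cF_n^{\operatorname{FdB}}$ precisely when it is a basis element of $\cF_n^{\pm}$ with $r=0$. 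By PBW linear independence this upgrades to the subspace identity $\cF_n^{\operatorname{FdB}}=\cF_n^{\pm}\cap\HFdB^{cop}$ for all $n\ge 0$.

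Next I would apply the standard intersection property of coradical filtrations for subcoalgebras: if $D$ is a subcoalgebra of a coalgebra $C$ with coradical filtration $\{C_n\}$, then the coradical filtration $\{D_n\}$ of $D$ satisfies $D_n=D\cap C_n$. Taking $C=\Tt_{\pm 1}^{cop}$ and $D=\HFdB^{cop}$, and using Corollary~\ref{cor:coradical-filtration-Ttpmcop} (which requires $\Char\K=0$) to identify the coradical filtration of $C$ with $\{\cF_n^{\pm}\}$, I obtain $(\HFdB^{cop})_n=\HFdB^{cop}\cap\cF_n^{\pm}=\cF_n^{\operatorname{FdB}}$. Since a coalgebra and its co-opposite have the same underlying coradical filtration (the wedge of the common coradical with itself is order-insensitive), this is simultaneously the coradical filtration of $\HFdB$, which is the assertion.

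The main obstacle is the subspace-level verification that $\cF_n^{\operatorname{FdB}}=\cF_n^{\pm}\cap\HFdB^{cop}$: one must argue that a linear combination of the ambient PBW basis lies in $\HFdB^{cop}$ exactly when all of its $y$-carrying components vanish, and that it lies in $\cF_n^{\pm}$ exactly when each surviving component meets the weight bound—both following from PBW independence, but needing to be stated carefully. If one prefers to avoid citing the subcoalgebra intersection property, the alternative is to mirror the treatment of $\Tt_{\pm 1}^{cop}$ directly: show $\{\cF_n^{\operatorname{FdB}}\}$ is a Hopf filtration as in Lemmas~\ref{lem:Fn-FsFt}--\ref{lem:hopf-filtration}, establish that $\cF_1^{\operatorname{FdB}}$ is the first term of the coradical filtration by a Taft--Wilson skew-primitive computation as in Lemma~\ref{lem:first-term-coradicalfil}, and identify $\gr(\HFdB^{cop})$ as a bosonization whose primitives sit in degree one (as in Theorem~\ref{thm:Radfordbiproduct-grTtpm})—there the Taft--Wilson computation is the laborious step, though it is lighter here than for $\Tt_{\pm 1}^{cop}$ since no $y$-terms intervene.
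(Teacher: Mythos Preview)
Your proposal is correct and follows essentially the same route as the paper: identify $\HFdB^{cop}$ as a Hopf subalgebra of $\Tt_{\pm 1}^{cop}$, verify the intersection identity $\cF_n^{\operatorname{FdB}}=\HFdB^{cop}\cap\cF_n^{\pm}$, and then invoke the subcoalgebra coradical-filtration intersection property together with Corollary~\ref{cor:coradical-filtration-Ttpmcop}. The paper's proof is just a two-line version of this—asserting the intersection identity directly and citing \cite[Corollary 4.2.2 (a)]{R2012} for the intersection property—whereas you spell out the PBW justification for the intersection and state the general principle explicitly.
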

\begin{proof}
Note that $\cF_n^{\operatorname{FdB}}=\mathcal{H}_{\operatorname{FdB}}^{cop} \cap \cF_{n}^{\pm}$ for all $n\ge 0$. Thus the claim follows from \cite[Corollary 4.2.2 (a)]{R2012} and Corollary \ref{cor:coradical-filtration-Ttpmcop}.
\end{proof}

\subsection{The coradical filtration of $\Tt_{p}(p;1,\ldots,1)$}\qquad

We study the coradical filtration of $\Tt_{p}(p;1,\ldots,1)$. Let $\pi_{\pm}:\Tt_{\pm 1}^{cop}\rightarrow \Tt_{p}(p;1,\ldots,1)^{cop}$ be the projection. If there is no confusion, we still write $x$ for $\pi_{\pm}(x)$, $\forall x\in\Tt^{\pm}$.

 Let $\cF_n=\pi_{\pm}(\cF_n^{\pm})$. Then $\cF_n:=F_{n}\K[x]/(x^{p})$, where
\begin{align*}
F_{n}= \bigoplus_{r+(i_{m}-1)s_{m}+\ldots+(i_{2}-1)s_{2}+s_{1}\le n,\atop  i_{m}>\ldots>i_{2}\ge 2} \K y^{r}z_{i_{m}}^{s_{m}}\cdots z_{i_{2}}^{s_{2}}x_{1}^{s_{1}}
\end{align*}
\begin{lemma}
The filtration $\{\cF_n\}_{n=0}^{\infty}$ is a Hopf algebra filtration of $\Tt_{p}(p;1,\ldots,1)^{cop}$.
\end{lemma}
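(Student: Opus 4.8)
The plan is to deduce the statement formally from the fact, already established in Lemma~\ref{lem:hopf-filtration}, that $\{\cF_n^{\pm}\}_{n=0}^{\infty}$ is a Hopf algebra filtration of $\Tt_{\pm 1}^{cop}$, by pushing this filtration forward along the projection $\pi_{\pm}$. First I would record that
$\pi_{\pm}\colon \Tt_{\pm 1}^{cop}\to \Tt_{p}(p;1,\ldots,1)^{cop}$
is a \emph{surjective morphism of Hopf algebras}: $\Tt_{p}(p;1,\ldots,1)$ is one of the successive quotient Hopf algebras of $\Tt_{\pm 1}$ produced in Section~\ref{Section 3} (the localization map $\T\to\Tt_{\pm 1}$ factors through each $\Tt_n$ since $g$ is already invertible there), and passing to co-opposite coproducts preserves Hopf algebra maps. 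Thus $\pi_{\pm}$ is simultaneously an algebra map, a coalgebra map, and compatible with the antipodes, and by definition $\cF_n=\pi_{\pm}(\cF_n^{\pm})$.

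I would then verify the defining properties of a Hopf algebra filtration one at a time, each reducing to the corresponding property of $\{\cF_n^{\pm}\}$. The chain $\cF_0\subseteq \cF_1\subseteq \cdots$ and the exhaustiveness $\bigcup_{n}\cF_n=\Tt_{p}(p;1,\ldots,1)^{cop}$ are immediate from the inclusions $\cF_n^{\pm}\subseteq \cF_{n+1}^{\pm}$, from $\bigcup_n\cF_n^{\pm}=\Tt_{\pm 1}^{cop}$, and from surjectivity of $\pi_{\pm}$ (image commutes with unions). For the algebra-filtration property I would use multiplicativity of $\pi_{\pm}$:
\begin{align*}
\cF_m\cF_n=\pi_{\pm}(\cF_m^{\pm})\,\pi_{\pm}(\cF_n^{\pm})=\pi_{\pm}(\cF_m^{\pm}\cF_n^{\pm})\subseteq \pi_{\pm}(\cF_{m+n}^{\pm})=\cF_{m+n}.
\end{align*}
For the coalgebra-filtration property I would use that $\pi_{\pm}$ is a coalgebra map, whence $\Delta\circ\pi_{\pm}=(\pi_{\pm}\otimes\pi_{\pm})\circ\Delta$, so that
\begin{align*}
\Delta(\cF_n)=(\pi_{\pm}\otimes\pi_{\pm})\bigl(\Delta(\cF_n^{\pm})\bigr)\subseteq (\pi_{\pm}\otimes\pi_{\pm})\Bigl(\sum_{\ell=0}^{n}\cF_{n-\ell}^{\pm}\otimes \cF_{\ell}^{\pm}\Bigr)=\sum_{\ell=0}^{n}\cF_{n-\ell}\otimes \cF_{\ell}.
\end{align*}
Finally, antipode-stability $S(\cF_n)\subseteq \cF_n$ follows from $S\circ\pi_{\pm}=\pi_{\pm}\circ S$ together with $S(\cF_n^{\pm})\subseteq \cF_n^{\pm}$.

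I expect the genuine content of this lemma to be slight: once $\pi_{\pm}$ is recognized as a surjective Hopf algebra map, the five verifications above are purely formal, and the step I would treat most carefully is precisely the identification of $\pi_{\pm}$ as such. I would emphasize that the explicit description $\cF_n=F_n\,\K[x]/(x^p)$ in the statement is \emph{not} needed for the filtration claim itself—it records the image of the PBW spanning set of $\cF_n^{\pm}$ under $\pi_{\pm}$ for later use—so that even though the defining relations of $\Tt_{p}(p;1,\ldots,1)$ (namely $g^p=1$, $h^p=0$, $\mathcal{E}_{\omega_{p-1}}=0$ and $\mathcal{E}_{\omega_k}^{p}=0$) may collapse some spanning monomials to lower filtration degree, this can only shrink $\pi_{\pm}(\cF_n^{\pm})$ and never pushes a term outside $\cF_n$. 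Hence the filtration structure survives the quotient, which is exactly the assertion.
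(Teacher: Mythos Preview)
Your argument is correct and is a genuinely different route from the paper's. The paper simply writes ``It is similar to the proof of Lemma~\ref{lem:hopf-filtration}'', i.e.\ it asks the reader to rerun the explicit verifications (algebra filtration via the $F_n$-spanning sets, coalgebra filtration by induction, antipode-stability) inside the quotient. You instead deduce the result formally by pushing the known filtration $\{\cF_n^{\pm}\}$ forward along the surjective Hopf algebra map $\pi_{\pm}$; each of the five checks then becomes a one-line consequence of the compatibility of $\pi_{\pm}$ with the relevant structure. This is cleaner and avoids any repetition of the combinatorics in Lemma~\ref{lem:hopf-filtration}. One small wording issue: your parenthetical ``the localization map $\T\to\Tt_{\pm 1}$ factors through each $\Tt_n$'' is phrased backwards; what you want is that the quotient map $\Tt\to\Tt_{p}(p;1,\ldots,1)$ \emph{extends} to $\Tt_{\pm 1}$ (by the universal property of Ore localization, since $g$ becomes invertible in $\Tt_{p}$). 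This does not affect the argument, since the paper already posits $\pi_{\pm}$ as ``the projection'' and your formal pushforward only uses that it is a surjective Hopf algebra map.
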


\begin{proof}
It is similar to the proof of Lemma \ref{lem:hopf-filtration}.
\end{proof}

\begin{lemma}\label{lem:1stcoradfiltration-f.d.}
Assume $\Char\K=p>3$. Then $\cF_1$ is the first term of the coalgebra filtration of $\Tt_{p}(p;1,\ldots,1)^{cop}$.
\end{lemma}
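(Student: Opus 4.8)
The plan is to adapt the skew-primitive computation of Lemma \ref{lem:first-term-coradicalfil} to the finite-dimensional characteristic-$p$ setting. By the Taft--Wilson theorem the first term $C_1$ of the coradical filtration of the pointed coalgebra $\Tt_{p}(p;1,\ldots,1)^{cop}$ is the span of the grouplikes $\langle x\rangle=\{1,x,\ldots,x^{p-1}\}$ together with all skew-primitive elements. Translating by a grouplike reduces any skew-primitive to type $\Pp_{x^{N},1}$, and $\cF_1=F_1\K[x]/(x^{p})$ is $\langle x\rangle$-stable, so it suffices to show that every $(x^{N},1)$-skew-primitive lies in $F_1$. Moreover $\Tt_{p}(p;1,\ldots,1)^{cop}$ is $\bN$-graded with $\K\langle x\rangle$ in degree zero (Proposition \ref{pro:Ttnpd-pointed-Hopf}), and the condition $\Delta(z)=x^{N}\otimes z+z\otimes 1$ is compatible with the grading since $x^{N}$ has degree $0$; hence each $\bN$-homogeneous component of such a $z$ is again $(x^{N},1)$-skew-primitive, and a nonzero component of degree $d$ forces $d\equiv N\pmod p$. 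As the homogeneous degrees occurring in $F_1$ are exactly $0,1,2$, the lemma reduces to two claims: in degrees $1$ and $2$ the only homogeneous skew-primitives are $\K\{y,x_1\}$ and $\K z_2$, and in every degree $d\ge 3$ there is no nonzero homogeneous skew-primitive.

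First I would dispose of the low degrees. In degree $1$ a direct check gives $\Pp_{x,1}=\K\{1-x,y,x_1\}\subseteq F_1$. In degree $2$, after discarding the monomials containing $y$ (the terms with $r\ge 1$ vanish exactly as in Lemma \ref{lem:first-term-coradicalfil}), a homogeneous skew-primitive has the form $z=\ell_1 x_2+\ell_2 x_1^{2}$; expanding $\Delta(z)$ via Formula (\ref{formula-Radford}) forces $\ell_2=-\tfrac32\ell_1$, so that $z\in\K z_2\subseteq F_1$, where $z_2=x_2-\tfrac32 x_1^{2}$ is the skew-primitive introduced in Lemma \ref{lem:redefinition-Ttpm}. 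Here $p>3$ is used twice: it guarantees that $2$ is invertible (so $z_2$ is defined) and that $x_2=\mathcal{E}_{\omega_2}g^{-1}\ne 0$ (so $\mathcal{E}_{\omega_2}$ is a genuine generator of $\Tt_{p}(p;1,\ldots,1)$, which requires $p-2\ge 2$).

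For $d\ge 3$ I would run the downward peeling argument of Lemma \ref{lem:first-term-coradicalfil}. One first shows, by isolating in $\Delta(z)$ the components $x^{r}x_{p-2}^{s_{p-2}}\cdots x_1^{s_1}\otimes(\cdots)$ produced by Formula (\ref{formula-Radford}), that every PBW coefficient with $r\ge 1$ vanishes; hence $z$ is a polynomial in the commuting generators $x_1,\ldots,x_{p-2}$. One then isolates successively the tensor components of the form $(\cdots)\otimes x_j$ for decreasing $j$: these are linearly independent in the PBW basis of Theorem \ref{thm:Ttnpd-PBW}, so their coefficients must vanish one by one. The characteristic-$p$ inputs are that every generator $x_j$ with $j\ge p-1$ is zero, that $x_j^{p}=0$ and $h^{p}=0$, and, for the initial case $d=3$, that $x_1^{3}$ is not skew-primitive because $\Delta(x_1^{3})$ contains the cross terms $3x_1^{2}x\otimes x_1+3x_1 x^{2}\otimes x_1^{2}$, nonzero precisely when $3\ne 0$, i.e. $p>3$.

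The hard part is the analogue of the $2\times 2$ linear system solved in Lemma \ref{lem:first-term-coradicalfil}, whose determinant $\tfrac12(d+1)(2-d)$ is nonzero in characteristic $0$ but can vanish modulo $p$. For $3\le d\le p-2$ one has $d+1\le p-1$ and $2-d\not\equiv 0\pmod p$, so the determinant is invertible and the characteristic-zero argument goes through unchanged. The only degenerate case is $d\equiv p-1\pmod p$, where $d+1\equiv 0$; but this is exactly where the system governed the coefficient of the top generator $x_d$, and for $d\ge p-1$ that generator is already zero in $\Tt_{p}(p;1,\ldots,1)$. Thus the problematic step is vacuous, and the peeling begins instead at the genuine top generator $x_{p-2}$, where the relations $x_j^{p}=0$ render the remaining systems triangular with invertible diagonal. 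The expected difficulty is therefore purely organizational: tracking which degree-$d$ monomials survive the relations $x_j^{p}=0$ for $1\le j\le p-2$ and verifying that each resulting linear system over $\K$ of characteristic $p>3$ admits only the trivial solution.
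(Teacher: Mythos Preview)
Your approach coincides with the paper's: both invoke the Taft--Wilson theorem to reduce to $(x^{N},1)$-skew primitives and then adapt the peeling argument of Lemma \ref{lem:first-term-coradicalfil}. The paper's proof is terse (``the rest is similar to the proof of Lemma \ref{lem:first-term-coradicalfil}''), so your write-up is considerably more explicit. In particular, your observation that a homogeneous skew primitive of $\bN$-degree $d$ must satisfy $d\equiv N\pmod p$ (rather than $d=N$ as in characteristic zero) is the correct replacement for the implicit degree constraint in Lemma \ref{lem:first-term-coradicalfil}, and your treatment of the range $3\le d\le p-2$, including the verification that the determinant $\tfrac12(d+1)(2-d)$ is a unit there and the role of $p>3$ at $d=3$, is accurate.

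There is, however, a genuine gap in your handling of degrees $d\ge p-1$. You assert that once $x_{d},x_{d-1},\ldots,x_{p-1}$ vanish, ``the relations $x_{j}^{p}=0$ render the remaining systems triangular with invertible diagonal,'' and you describe the residual work as ``purely organizational.'' This is not justified, and it is not obviously true. The coproduct $\Delta^{\mathrm{cop}}(x_{j})$ is not simply $x^{j}\otimes x_{j}+x_{j}\otimes 1$: by Corollary \ref{cor:coproduct} and the Bell-polynomial formula (\ref{SH'-formula}) it contains many cross terms, so that for a monomial $x_{p-2}^{s_{p-2}}\cdots x_{1}^{s_{1}}$ of large degree $d$ the contributions to a given component $(\,\underline{\ \ }\,)\otimes x_{j}$ come from several sources, not just from the factor $x_{j}$ itself. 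The resulting linear systems over $\K$ are therefore not evidently triangular, and their invertibility modulo $p$ requires an argument you have not supplied. The paper's own proof does not address this range either (its phrasing ``$z=0$ if $3\le N\le p-1$'' and the appeal to Lemma \ref{lem:first-term-coradicalfil} suggest only the case $d=N<p$ is in view), so you have not introduced a new gap relative to the paper, but you have also not closed the one it leaves open.
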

\begin{proof}
By the Taft-Wilson Theorem, it suffices to show that any non-trivial $(x^N,1)$-skew primitive element is contained in $F_1$, where $0\le N<p$. It is clear that $\Pp_{x,1}(\Tt_{p}(p;1,\ldots,1)^{cop})=\K\{1-x,y,x_1\}$.
Let $z\in \Pp_{x^{N},1}(\Tt_{p}(p;1,\ldots,1)^{cop})$ for $2\le N<p$. It remains to show that $z=0$ if $3\le N\le p-1$; and $z=\alpha z_{2}$ for some $\alpha\neq 0$ if $N=2$.
Note that $\{y^{r}x_{p-2}^{s_{p-2}}\cdots x_{1}^{s_{1}}x^{s}|~0\le r,s_{p-2},\ldots,s_{1},s\le p-1\}$ is a basis of $\Tt_{p}(p;1,1,\ldots,1)^{cop}$. The rest is similar to the proof of Lemma \ref{lem:first-term-coradicalfil}.
\end{proof}

Let $\gr_{\cF} (\Tt_{p}(p;1,\ldots,1)^{cop})$ be the graded Hopf algebra associated to the filtration $\{\cF_n\}_{n=0}^{\infty}$.

\begin{theorem}\label{thm:Ttpp1-biproduct}
Assume $\Char\K=p>3$. Then $\operatorname{gr}_{\cF}(\Tt_{p}(p;1,\ldots,1)^{cop})\cong B\# \K\Z_{p}$, where $B$ is of dimension $p^{p-1}$, determined by
\begin{align*}
[x_{1},y] &= \frac{1}{2} x_{1}^{2}, \qquad [z_{n},y] = -z_{n}x_{1}-\frac{n(n+1)!}{2^{n}}z_{2}x_{1}^{n-1}+z_{n+1}, \quad 2\le n\le p-2, \qquad z_{p-1} =0,\\ 
[x,x_{1}] &=0, \qquad [x,z_{n}] =0, \qquad[x_{1},z_{n}]=0, \qquad [z_{n},z_{m}]=0,  \qquad 2\le n\le  m\le p-2,\\
x_{1}^{p}&=0, \qquad y^{p}=0, \qquad z_{n}^{p}=0, \qquad 2\le n\le p-2,\\
x\cdot x_{1} &=x_{1}, \qquad x\cdot y=y+x_{1}, \qquad x\cdot z_{n}=z_{n}, \qquad 2\le n\le p-2,\\
\rho(x_{1}) &=x\otimes x_{1}, \qquad \rho(y)=x\otimes y, \qquad \rho(z_{n})=x^{n}\otimes z_{n}, \qquad 2\le n\le p-2.
\end{align*}
Furthermore, $B\cong \B(V_{1,2})$, where $V_{1,2}=\K\{y,z_2,x_1\}\in {}^{\K\Z_{p}}_{\K\Z_{p}}\mathcal{YD}$.

\end{theorem}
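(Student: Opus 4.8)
The plan is to run, in the finite-dimensional setting, exactly the argument used for Theorem \ref{thm:Radfordbiproduct-grTtpm}. Write $A:=\Tt_{p}(p;1,\ldots,1)^{cop}$. First I would record the degrees imposed by the filtration $\{\cF_n\}$: since $z_n\in\cF_{n-1}$ for $n\ge 2$, one has $\deg(x)=0$, $\deg(y)=\deg(x_1)=1$ and $\deg(z_n)=n-1$. Pushing the relations of $A$ (the images under $\pi_{\pm}$ of those in Lemma \ref{lem:redefinition-Ttpm}) into the associated graded, the only term that drops is the $z_2$ in $[x_1,y]=\tfrac12 x_1^2+z_2$, whose degree $1$ is below that of the bracket; every other relation is homogeneous and survives verbatim. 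In particular $[z_n,y]=-z_nx_1-\tfrac{n(n+1)!}{2^{n}}z_2x_1^{n-1}+z_{n+1}$ for $2\le n\le p-2$, with $z_{p-1}=0$ because $x_{p-1}=\mathcal{E}_{\omega_{p-1}}g^{-1}=0$ and $\tfrac{p!}{2^{p-1}}=0$ in characteristic $p$. The power relations $y^p=x_1^p=z_n^p=0$ descend from $h^p=0$ and $\mathcal{E}_{\omega_k}^p=0$ of Theorem \ref{thm:Ttnpd-PBW}: indeed $x_1,\dots,x_{p-2}$ commute, so $z_n^p=x_n^p-c^p x_1^{np}=0$ where $c=\tfrac{(n+1)!}{2^n}$.

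Second, the grouplike $x$ generates $\K[x]/(x^{p})\cong\K\Z_p$, and the canonical projection $\pi$ and inclusion $\jmath$ between $\gr_{\cF}(A)$ and $\K\Z_p$ satisfy $\pi\circ\jmath=\id$. Lemma \ref{lem:Radfordbiproduct} then gives $\gr_{\cF}(A)\cong B\#\K\Z_p$ with $B=\gr_{\cF}(A)^{co\,\pi}$; evaluating $x\cdot b=\jmath(x)\,b\,\jmath(S(x))$ and $\rho(b)=\pi(b_{(1)})\otimes b_{(2)}$ on the generators (using $[x,x_1]=[x,z_n]=0$ and $[x,y]=x_1x$) produces precisely the stated action $x\cdot x_1=x_1$, $x\cdot y=y+x_1$, $x\cdot z_n=z_n$ and the stated coaction $\rho(x_1)=x\otimes x_1$, $\rho(y)=x\otimes y$, $\rho(z_n)=x^{n}\otimes z_n$. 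The dimension of $B$ is then forced: Remark \ref{rmk:Ttnpd-f.d.} (i) gives $\dim A=p^{p}$, whence $\dim B=\dim A/\dim\K\Z_p=p^{p-1}$.

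Third, I would identify $B$ with the Nichols algebra $\BN(V_{1,2})$ of $V_{1,2}=\K\{y,z_2,x_1\}=B(1)$. Rewriting the graded relation as $z_{n+1}=[z_n,y]+z_nx_1+\tfrac{n(n+1)!}{2^{n}}z_2x_1^{n-1}$ shows inductively that every $z_n$ lies in the subalgebra generated by $\{y,z_2,x_1\}$, so $B$ is generated by $B(1)=V_{1,2}$; together with $B(0)=\K$ this leaves only $\Pp(B)=B(1)$ to check the characterization of Nichols algebras recalled after Definition \ref{def:Nichols-algebra}. This final identity is supplied by Lemma \ref{lem:1stcoradfiltration-f.d.}, which identifies $\cF_1$ as the first term of the coradical filtration of $A$ and hence translates into $\Pp(B)=B(1)=V_{1,2}$, giving $B\cong\BN(V_{1,2})$.

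The main obstacle is the primitivity statement $\Pp(B)=B(1)$: all the algebraic bookkeeping (graded relations, biproduct, action and coaction, dimension count) is a routine transcription of the proof of Theorem \ref{thm:Radfordbiproduct-grTtpm}, but the assertion that no new primitives appear in higher degree is genuinely delicate and rests entirely on the skew-primitive computation of Lemma \ref{lem:1stcoradfiltration-f.d.}, which is also where the hypothesis $p>3$ is needed (to keep the relevant binomial and factorial coefficients nonzero in $\K$).
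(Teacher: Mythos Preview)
Your proposal is correct and follows essentially the same approach as the paper's own proof, which simply says ``Similar to the proof of Theorem \ref{thm:Radfordbiproduct-grTtpm}'' for the biproduct decomposition and then invokes Lemma \ref{lem:1stcoradfiltration-f.d.} to conclude $\Pp(B)=B(1)=V_{1,2}$. You have usefully unpacked the details (degree bookkeeping, why $z_{p-1}=0$, the dimension count via Remark \ref{rmk:Ttnpd-f.d.} (i), and the role of $p>3$) that the paper leaves implicit.
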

\begin{proof}
Similar to the proof of Theorem \ref{thm:Radfordbiproduct-grTtpm}, we have $\gr_{\cF}(\Tt_{p}(p;1,\ldots,1)^{cop})\cong B\# \K\Z_{p}$. It is clear that $B$ is generated by $B(1)=V_{1,2}$. By Lemma \ref{lem:1stcoradfiltration-f.d.}, $\Pp(B)=B(1)=V_{1,2}$. Therefore, $B\cong \B({V_{1,2}})$.
\end{proof}
\begin{remark}
With the notations in Remark \ref{rmk:B(V_1,2)} (ii), if $\Char\K=p>3$, then the relations of $\BN(V_{1,2})$ are equivalent to
\begin{align*}
x_{1}y &= yx_{1}+\frac{1}{2}x_{1}^{2}, \qquad yz_{n}' = z_{n}'y+z_{n+1}', \qquad 2\le n\le p-2,\\
z_{p-1}' &=0, \qquad x_{1}z_{n}' = z_{n}'x_{1}, \qquad  z_{n}'z_{m}' = z_{m}'z_{n}', \qquad 2\le n\le m\le p-2,\\
x_{1}^{p} &=0, \qquad y^{p}=0, \qquad z_{n}'^{p}=0, \qquad 2\le n\le p-2.
\end{align*}

We mention that this braided Hopf algebra has already appeared in \cite[Proposition 4.10]{AAH22021}.
\end{remark}

Note that when $\Char\K=0$, $\BN(V_{1,2})$ is of infinite $\GK$-dimension, while $\BN(V_{1,2})$ is finite-dimensional if $\Char\K=p>3$.
\begin{remark}
If $\Char\K=3$, then  $\Tt_{3}(3;1)\cong \operatorname{gr}(\Tt_{3}(3;1))\cong \B(V)\# \K\Z_{3}$, where $V=\K y\oplus \K x_{1}$ and $\B(V)$ is determined by
\begin{align*}
x_{1}y &= yx_{1}+\frac{1}{2}x_{1}^{2}, \qquad x_{1}^{3}=0, \qquad y^{3}=0.
\end{align*}
We mention that $\Tt_{3}(3;1)$ has appeared in \cite[Corollary 3.14]{CLW2009}, and its Nichols algebra is of Jordan type (see also \cite[Theorem 3.5]{CLW2009} and \cite[Section 3.5]{AS2002}).
\end{remark}

\begin{corollary}
Assume $\Char\K=p>3$. Then $\{\cF_{n}\}_{n=0}^{\infty}$ is the coalgebra filtration of $\Tt_{p}(p;1,\ldots,1)^{cop}$ (and $\Tt_{p}(p;1,\ldots,1)$).
\end{corollary}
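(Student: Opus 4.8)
The plan is to obtain this from Theorem~\ref{thm:Ttpp1-biproduct} in the same way that Corollary~\ref{cor:coradical-filtration-Ttpmcop} was obtained from Theorem~\ref{thm:Radfordbiproduct-grTtpm}. First I would record the two preliminary facts that make the reduction legitimate: since $\Tt_{p}(p;1,\ldots,1)$ is pointed with group of grouplikes $\langle g\rangle\cong\Z_{p}$, its coradical is $\K\Z_{p}=\cF_{0}$; and since the coradical together with the wedge operation are unaffected by passing to the co-opposite coalgebra, the coradical filtration of $\Tt_{p}(p;1,\ldots,1)^{cop}$ coincides with that of $\Tt_{p}(p;1,\ldots,1)$. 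Hence it suffices to identify $\{\cF_{n}\}$ as the coradical filtration of the cop version.

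The mechanism I would use is the standard principle that a Hopf algebra filtration $\{\cF_{n}\}$ with $\cF_{0}=\corad$ is the coradical filtration as soon as the associated graded Hopf algebra $\gr_{\cF}$ is \emph{coradically graded}. By Theorem~\ref{thm:Ttpp1-biproduct} we have $\gr_{\cF}(\Tt_{p}(p;1,\ldots,1)^{cop})\cong\B(V_{1,2})\#\K\Z_{p}$, the bosonization of a Nichols algebra. Such a bosonization is coradically graded: its coradical is the degree-zero component $\K\Z_{p}$, and because $\Pp(\B(V_{1,2}))=V_{1,2}=\B(V_{1,2})(1)$ (which is exactly the information Lemma~\ref{lem:1stcoradfiltration-f.d.} provides), the first term of its coradical filtration is precisely the sum of the components of degrees $0$ and $1$.

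It then remains to run the transfer from $\gr_{\cF}$ back to the filtered object, which is the only genuine step. One inclusion is automatic: $\{\cF_{n}\}$ is a coalgebra filtration (established in the lemma preceding Lemma~\ref{lem:1stcoradfiltration-f.d.}) with $\cF_{0}=\corad$, so by the minimality of the coradical filtration among such filtrations one has $C_{n}\subseteq\cF_{n}$ for all $n$, writing $\{C_{n}\}$ for the coradical filtration. For the reverse inclusion $\cF_{n}\subseteq C_{n}$ I would induct on $n$: assuming $\cF_{n-1}=C_{n-1}$, the coradical-gradedness of $\gr_{\cF}$ forces every element of $\cF_{n}$ to lie in $C_{n}$ modulo $\cF_{n-1}$, closing the induction. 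The main obstacle is thus this transfer lemma, but it is a standard fact whose hypotheses have all been verified above, so no new computation is required; combining it with the cop-versus-original remark yields both assertions.
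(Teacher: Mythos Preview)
Your proposal is correct and follows essentially the same approach as the paper: the paper's proof simply says ``It follows directly from Theorem~\ref{thm:Ttpp1-biproduct},'' and you have spelled out the standard mechanism behind that implication (the associated graded is the bosonization of a Nichols algebra, hence coradically graded, hence the filtration agrees with the coradical filtration), together with the cop-versus-original remark that the paper makes explicit in the parallel Corollary~\ref{cor:coradical-filtration-Ttpmcop}.
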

\begin{proof}
It follows directly from Theorem \ref{thm:Ttpp1-biproduct}.
\end{proof}

\section*{\textbf{ACKNOWLEGEMENT}}
The first author thanks both the China Scholarship Council (No. 201906140164) and BOF-UHasselt  for its financial support during his research study at the University of Hasselt.

\bibliographystyle{plain}

\end{document}